\definecolor{Gray}{gray}{0.9}
\definecolor{Gray1}{gray}{0.9}
\definecolor{Gray2}{gray}{0.8}
\definecolor{Gray3}{gray}{0.7}
\newtheorem{thm}{Theorem}[section]
\newtheorem*{thm*}{Theorem}
\newtheorem{lemma}[thm]{Lemma}
\newtheorem{prop}[thm]{Proposition}
\newtheorem*{cor*}{Corollary}
\theoremstyle{definition}
\newtheorem{defn}[thm]{Definition}
\newtheorem{example}[thm]{Example}
\theoremstyle{remark}
\newtheorem{remark}[thm]{Remark}
\newcommand {\dual}  {\ensuremath{\mathcal{D}}}
\newcommand {\Sa}    {\ensuremath{\mbox{$\mathcal{S}$}}}
\newcommand {\real}  {\ensuremath{\mathbb{R}}}
\newcommand {\nat}  {\ensuremath{\mathbb{N}}}
\newcommand {\intg}  {\ensuremath{\mathbb{Z}}}
\newcommand {\cplx}  {\ensuremath{\mathbb{C}}}
\newcommand {\rat}   {\ensuremath{\mathbb{Q}}}
\newcommand {\Tor}   {\ensuremath{\operatorname{Tor}}}
\newcommand {\im}    {\operatorname{im}}
\newcommand {\colim} {\ensuremath{\operatorname{colim}}}
\newcommand {\smlhf} {\ensuremath{\mbox{$\frac{1}{2}$}}}
\newcommand {\pro}   {\ensuremath{\operatorname{pr}}}
\newcommand {\pt}    {\ensuremath{\operatorname{pt}}}
\newcommand {\id}    {\ensuremath{\operatorname{id}}}
\newcommand {\BO}   {\ensuremath{\operatorname{BO}}}
\newcommand {\BPL}   {\ensuremath{\operatorname{BPL}}}
\newcommand {\BBPL}   {\ensuremath{\operatorname{B}\widetilde{\operatorname{PL}}}}
\newcommand {\SO}   {{\ensuremath{\operatorname{SO}}}}
\newcommand {\Th}   {\ensuremath{\operatorname{Th}}}
\newcommand {\KO}   {{\ensuremath{\operatorname{KO}}}}
\newcommand {\K}   {{\ensuremath{\operatorname{K}}}}
\newcommand {\Or}    {\ensuremath{\operatorname{O}}}
\newcommand {\reg}    {{\ensuremath{\operatorname{reg}}}}
\newcommand {\BM}    {{\ensuremath{\operatorname{BM}}}}
\newcommand {\Ad}    {{\ensuremath{\operatorname{Ad}}}}
\newcommand {\wgr}    {{\ensuremath{\widetilde{\operatorname{Gr}}}}}
\newcommand {\pr}  {\ensuremath{\mathbb{P}}}
\newcommand {\Gr}    {\ensuremath{\operatorname{Gr}}}
\begin{document}

%******************** TITLE PAGE *****************

\title[The Equivariant L-Class]
  {The Equivariant L-Class of Pseudomanifolds}

\author{Markus Banagl}

\address{Institut f\"ur Mathematik, Universit\"at Heidelberg,
  Im Neuenheimer Feld 205, 69120 Heidelberg, Germany}

\email{banagl@mathi.uni-heidelberg.de}

\thanks{This work is funded in part by a research grant of the
 Deutsche Forschungsgemeinschaft (DFG, German Research Foundation)
 -- Projektnummer 495696766.}
 
\date{December 2024}

\subjclass[2020]{55N33, 57N80, 55N91, 57R20, 57R91, 57S15}

% 55N33  Intersection homology and cohomology 
% 57N80  Stratifications
% 55N91  Equivariant homology and cohomology in algebraic topology
% 57R20  Characteristic classes and numbers (manifolds and cell complexes) 
% 57R91  Equivariant algebraic topology of manifolds
% 57S15  Compact Lie groups of differentiable transformations

\keywords{Stratified Spaces, Intersection Homology, Characteristic Classes, 
 Transformation Groups, compact Lie Groups, Equivariant Homology}

%********************** ABSTRACT ****************************

\begin{abstract}
We construct an equivariant L-class for orientation preserving actions 
of a compact Lie group on a Whitney stratified compact oriented pseudomanifold
that satisfies the Witt condition, for example on a compact pure-dimensional 
complex algebraic variety. The class lies in equivariant rational homology and 
its restriction to the trivial group is the Goresky-MacPherson L-class.
For a smooth action on a manifold, the class is equivariantly Poincar\'e dual
to the Hirzebruch L-class of the Borel homotopy quotient of the
tangent bundle. We also provide a product formula under the equivariant
Künneth isomorphism. If the group acts freely, then the equivariant L-class
identifies with the Goresky-MacPherson L-class of the orbit space.
The construction method rests on establishing
Whitney (B)-regularity for finite-dimensional compact pseudomanifold approximations to the
Borel construction, and on the author's Verdier-Riemann-Roch type formulae for
the Goresky-MacPherson L-class.
\end{abstract}

\maketitle

%******************** TABLE OF CONTENTS *********************

\tableofcontents

%=================================================================
%=================================================================
%=================================================================

\section{Introduction}

In \cite{gmih1},
Goresky and MacPherson used intersection homology to construct
an $L$-class $L_* (X) \in H_* (X;\rat)$ for compact, oriented Whitney stratified 
pseudomanifolds $X$ in an ambient smooth manifold $M$.
This was initially done for spaces without odd-codimensional strata
(such as complex algebraic varieties), but then extended by 
Siegel to Witt spaces (\cite{siegel}) and by the author to more general
spaces whose strata of odd codimension admit Lagrangian structures in
their middle dimensional link cohomology sheaves 
(\cite{banagl-mem}, \cite{banagl-lcl}, \cite{banagltiss}).
For a smooth manifold, this class is the Poincar\'e dual of Hirzebruch's
cohomological $L$-class of the tangent bundle.

Up to now, no construction of an equivariant $L$-class
for singular spaces has been given, as has been pointed out for example in
\cite{cmssequivcharcl}.
We propose here the construction of a $G$-equivariant $L$-class
$L^G_* (X) \in H^G_* (X;\rat)$ in equivariant rational homology for
compact, oriented Whitney stratified pseudomanifolds $X \subset M$ that are
invariant under the smooth action of a compact Lie group
$G$ on an ambient smooth manifold $M$. 
The pseudomanifold $X$ is assumed to satisfy the Witt condition.

Our method rests on the Verdier-Riemann-Roch (VRR) type formulae for the
Goresky-MacPherson $L$-class established in 
\cite{banaglnyjm} (immersive case) and 
\cite{banaglbundletransfer} (submersive case).
Let $X \subset M$ be a Whitney stratified subset of a smooth manifold $M$
and let $G$ be a compact Lie group acting smoothly on $M$ such that
$X$ is $G$-invariant and the induced action of $G$
on $X$ is compatible with the stratification, 
i.e. every stratum is $G$-invariant and the induced action on the stratum is smooth.
If $E$ is a smooth manifold, then $E\times X$ is Whitney (B)-regular in
$E\times M$ (Lemma \ref{lem.productwhitneystrat}).
We apply this principle to compact smooth Stiefel manifolds $EG_k$, 
equipped with a free and smooth $G$-action, that serve
as finite dimensional $(k-1)$-connected approximations to a free contractible
$G$-space $EG$ as $k\to \infty$.
Thus $EG_k \times X \subset EG_k \times M$ is Whitney (B)-regular.
We go on to show that if $G$ acts freely on $M$, then the embedding
$X/G \subset M/G$ is Whitney (B)-regular (Theorem \ref{thm.orbitspaceoffreeisbregular}),
and if $X$ is a pseudomanifold, then $X/G$ is a pseudomanifold
(Lemma \ref{lem.quotientoffreeispseudomfd}). 
Applying this to the free $G$-space $EG_k \times X \subset EG_k \times M$,
we conclude that $X_G (k) = EG_k \times_G X$ is a Whitney stratified
subset of the smooth manifold $M_G (k) = EG_k \times_G M$
(Theorem \ref{thm.xgkwhitneystratpsdmfdwitt}).
Moreover, if $X$ is a compact pseudomanifold that satisfies the Witt condition,
then $X_G (k)$ inherits these properties.
Orientability questions are treated in Section \ref{sec.orientability}. 
The main result there is that the $X_G (k)$ are orientable
if $G$ is bi-invariantly orientable and its action preserves the
orientation of $X$. We will subsequently assume that $G$ is
bi-invariantly orientable. This includes all (compact) connected groups,
all finite groups and all (compact) abelian groups.
Thus $X_G (k)$ possesses a Goresky-MacPherson $L$-class, which can be used,
after correction by the cohomological $L$-class of $BG_k = EG_k/G$,
to define a stage-$k$ equivariant $L$-class $L^G_{*,k} (X)$
(Definition \ref{def.stagekequivlclass}).
The VRR-Theorem is then used (Proposition \ref{prop.stagekp1lmapstostagekl}) to prove 
that these classes $\{ L^G_{*,k} (X) \}_k$
constitute an element of the inverse limit defining equivariant homology $H^G_*$.
This element is $L^G_* (X)$ (Definition \ref{def.equivlclasssingular}, which is central
to this paper).
Equivariant homology is reviewed in Section \ref{sec.equivarianthomology}. 
It is generally nontrivial in negative degrees and thus not the nonequivariant homology of $EG\times_G X$.
Oriented compact $G$-pseudomanifolds of dimension $m$ have an equivariant fundamental
class $[X]_G \in H^G_m (X)$ (Section \ref{ssec.equivfundclass}).
If $X=M$ is smooth, then equivariant Poincar\'e duality is the
isomorphism $H^{m-i}_G (M) \cong H^G_i (M)$ given by capping with $[M]_G$
(see Section \ref{ssec.equivpd}). Here, equivariant cohomology $H^*_G (-)$
means ordinary cohomology of the Borel space $(-)_G = EG\times_G (-)$.

The above construction of $L^G_* (X)$ involves choices of models for $EG_k$.
We prove, using our VRR-formula in the submersive case, that $L^G_* (X)$ is independent 
of these choices (Theorem \ref{thm.stageklclassindepofgrpemb}).

The equivariant $L$-class has the following properties:
The top-degree class is the equivariant fundamental class $[X]_G$.
If $X=M$ is smooth, then $L^G_* (M)$ is the equivariant Poincar\'e dual
of the usual equivariant cohomology class $L^*_G (M) = L^* ((TM)_G)$,
where $(TM)_G$ is the homotopy quotient vector bundle of the equivariant tangent 
bundle $TM$ (Theorem \ref{thm.manifoldcase}).
Functoriality in the group variable is established in
Theorem \ref{thm.equivlchangeingroup}:
An inclusion $G' \subset G$ of a closed subgroup induces a
restriction map $H^{G}_* (X;\rat) \to H^{G'}_* (X;\rat)$. We prove that this map
sends $L^{G}_* (X)$ to $L^{G'}_* (X)$.
The class $L^G_* (X)$ contains all of the information of the
Goresky-MacPherson class $L_* (X)$, since the 
restriction map $H^G_* (X;\rat) \to H^{\{ 1 \}}_* (X;\rat) \cong H_* (X;\rat)$ 
induced by the inclusion of
the trivial group $\{ 1 \}$ into $G,$ sends $L^G_* (X)$ to $L_* (X)$
(Proposition \ref{prop.equivlmapstononequivl}).
Equivariant homology satisfies a Künneth theorem,
Proposition \ref{prop.equivariantkunneth}, which asserts that
the cross product induces an isomorphism
$H^{G}_* (X;\rat) \otimes H^{G'}_* (X';\rat)
   \cong H^{G\times G'}_* (X\times X';\rat).$
We show in Theorem \ref{thm.productequivl} that
the equivariant $L$-class satisfies the product formula
$L^{G \times G'}_* (X\times X') 
      = L^G_* (X) \times L^{G'}_* (X')$
under this isomorphism.       
If $G$ acts trivially on $X$, then
the equivariant Künneth theorem shows that
$H^G_* (X;\rat) \cong H^G_* (\pt;\rat) \otimes H_* (X;\rat)$.
In this case, the above product formula implies that
$L^G_* (X) = [\pt]_G \times L_* (X)$,
see Proposition \ref{prop.trivialaction}.
For free actions, there is a canonical identification 
$H^G_* (X;\rat) \cong H_{*-\dim G} (X/G;\rat)$ under which
$L^G_* (X)$ corresponds to the
Goresky-MacPherson class $L_* (X/G)$ of the orbit space
(Theorem \ref{thm.freeaction}).

The $L$-class VRR-formulae of \cite{banaglnyjm} and 
\cite{banaglbundletransfer} have been provided in a
piecewise linear (PL) context. The technical Appendix
\ref{sec.plstructures} establishes this context for the $G$-actions considered
in the present paper. The key point is this: the ambient
action on $M$ is smooth and one may consider the
smooth principal $G$-bundle 
$EG_k \times M \to M_G (k)$. Then one
forgets that this bundle is principal and only considers its
underlying smooth fiber bundle.
Under piecewise differentiable (PD) homeomorphisms of base and total space, the
bundle is a PL bundle, whose fiber is the unique PL manifold
that underlies the smooth manifold $G$ (forgetting the group structure).
We use the fact that Whitney stratified
subsets can be triangulated (Goresky \cite{goreskytriang}, Verona \cite{verona}).

The above construction method for $L^G_*$ is consistent with constructions
of equivariant characteristic classes in complex algebraic geometry.
The equivariant version of Chern-Schwartz-MacPherson classes
was developed by Ohmoto in \cite{ohmoto1} for algebraic actions
of a complex reductive linear algebraic group $\mathbb{G}$
on a possibly singular complex algebraic variety $X$.
Equivariant Todd classes for such actions were introduced by
Brylinski and Zhang in \cite{brylinskizhang}.
Equivariant intersection theory in equivariant Chow groups for actions of 
linear algebraic groups on algebraic spaces has been developed by
Edidin and Graham in \cite{edidingraham}.
The work of Brasselet, Schürmann and Yokura (\cite{bsy}) explains how Hirzebruch's 
generalized Todd class $T^*_y$ can be homologically extended to singular varieties.
In \cite{weber}, Weber developed an equivariant version of that
class for algebraic actions on possibly singular complex algebraic varieties,
with a focus on actions of the torus $\mathbb{G} = (\cplx^*)^r$.
In all of the above complex algebraic situations, the construction rests on an
algebraic approximation of the classifying space $E\mathbb{G}$
used by Totaro in \cite{totaro}.

For the trivial group, the Goresky-MacPherson class $L_* (X)$ is the
Pontrjagin character of a $K$-theoretic orientation class $\Delta (X)
\in \KO_* (X) \otimes \intg [\smlhf]$, 
\cite{banaglko}, \cite{siegel}.
$K$-theoretic questions such as the relation of $L^G_* (X)$ to
$G$-signatures, to equivariant orientation classes $\Delta^G$
in $\KO^G_* [\smlhf]$, and to analytic $\K$-homology classes defined
by the signature operator (Albin, Leichtnam, Mazzeo, Piazza, \cite{almpsigpack})
will be explored elsewhere.
For a finite group $G$ acting on a Witt space $X$
(satisfying weak regularity properties on the fixed point sets),
Cappell, Shaneson and Weinberger indicated the construction of a $G$-equivariant
class $\Delta^G (X)$ and a corresponding $G$-signature theorem in \cite{csw}.
Free group actions on Witt spaces were considered by Curran in \cite{curran}.
While we do not use equivariant Thom-Mather control data in the present paper,
the basic equivariant setup of Whitney (B)-regular stratified spaces
used by Pflaum and Wilkin in \cite{pflaumequivcontrol} served as a 
valuable inspiration.

\section{Stratifications}

The main results of the present paper are established for
Whitney stratified subsets of smooth manifolds.
This concept involves both purely topological assumptions on the stratified
space, and regularity requirements on how the space is embedded.
In this section, we recall the former assumptions, while a review of the latter
requirements are the subject of Section \ref{sec.whitneystratspaces}.
Any closed subspace of a smooth manifold is automatically separable, locally compact
and Hausdorff.
Hence, with a view towards Whitney stratified subsets of manifolds, 
we may focus on spaces with these properties.
Thus let $X$ be a separable locally compact Hausdorff space and
let $P$ be a poset with order relation $<$. Given $i,j\in P$,
we will write $i\leq j$ if $i<j$ or $i=j$.
A \emph{stratification} $\Sa$ of $X$ over $P$ is a partition
$\Sa = \{ S_i ~|~ i\in P \}$ of $X$ into locally closed subspaces
$S_i \subset X$ (called the \emph{strata}) such that\\

\noindent (S1) $\Sa$ is locally finite,

\noindent (S2) $S_i$ is a smooth manifold for every $i\in P$, and

\noindent (S3) $\Sa$ satisfies the \emph{condition of frontier}:
 $S_i$ intersects the closure $\overline{S}_j$ of $S_j$ nontrivially if and only if
 $S_i \subset \overline{S}_j$. This is to be the case if and only if
 $i\leq j$.\\
 
\noindent The pair $(X,\Sa)$ will then be called a \emph{stratified space}.
If $X$ is compact, then (S1) implies that $\Sa$ contains only finitely many
strata: Every point has an open neighborhood that intersects only finitely
many strata, and $X$ is covered by a finite number of such neighborhoods.
If $\Sa$ is a stratification of $X$ and $N$ a nonempty smooth manifold, then
$\Sa \times N$ will denote the partition $\{ S_i \times N ~|~ i\in P \}$
of $X\times N$. The proof of the following lemma is a straightforward
verification of the stratification axioms (S1) -- (S3).

\begin{lemma} \label{lem.productstrat}
The partition $\Sa \times N$ is a stratification of $X\times N$.
\end{lemma}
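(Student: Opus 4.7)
The plan is to directly verify the three axioms (S1), (S2), (S3) for the partition $\Sa \times N = \{ S_i \times N \mid i \in P \}$ of $X \times N$, indexed by the same poset $P$ with the same order relation. That this family is indeed a partition of $X \times N$ is immediate from the fact that $\{ S_i \}_{i \in P}$ partitions $X$. Local closedness of each $S_i \times N$ in $X \times N$ I would obtain by writing $S_i = U_i \cap C_i$ with $U_i \subset X$ open and $C_i \subset X$ closed, whence $S_i \times N = (U_i \times N) \cap (C_i \times N)$ is the intersection of an open and a closed set in $X \times N$.

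For (S1), given $(x,n) \in X \times N$, I would invoke local finiteness of $\Sa$ to choose an open neighborhood $W \subset X$ of $x$ meeting only finitely many strata $S_i$; then $W \times N$ is an open neighborhood of $(x,n)$ meeting only the corresponding finitely many product strata $S_i \times N$. Axiom (S2) is immediate, since a product of two smooth manifolds is smooth.

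For (S3), the key ingredient is the elementary identity $\overline{S_j \times N} = \overline{S}_j \times N$ (the closure of a product where the second factor equals the whole ambient space is the product of the closure of the first factor with $N$). Consequently
\[
(S_i \times N) \cap \overline{S_j \times N} = (S_i \cap \overline{S}_j) \times N.
\]
Since $N$ is nonempty by hypothesis, this intersection is nonempty if and only if $S_i \cap \overline{S}_j$ is nonempty, which by (S3) for $\Sa$ occurs precisely when $S_i \subset \overline{S}_j$; and this in turn is equivalent to $S_i \times N \subset \overline{S}_j \times N = \overline{S_j \times N}$, equivalently $i \leq j$. Thus the condition of frontier transfers from $\Sa$ to $\Sa \times N$ with the same poset structure.

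There is no substantive obstacle here; the only point that deserves a moment of care is the explicit use of the nonemptiness of $N$ in the verification of (S3), without which the strata $S_i \times N$ would all be empty and the order-theoretic content of the condition of frontier would collapse.
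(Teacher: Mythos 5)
Your verification of (S1)--(S3) is exactly the ``straightforward verification'' the paper alludes to without spelling out, and each step (the local-closedness via $S_i = U_i \cap C_i$, the local finiteness via $W \times N$, and the closure identity $\overline{S_j \times N} = \overline{S}_j \times N$ for (S3)) is correct. You correctly flag the one place nonemptiness of $N$ is needed; the only thing you leave tacit is that $X \times N$ is again separable, locally compact, and Hausdorff, which is immediate since both factors are.
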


The stratification $\Sa \times N$ is called the
\emph{product stratification} on $X\times N$.

\section{Whitney Stratified Spaces}
\label{sec.whitneystratspaces}

In order to recall Whitney's condition of \emph{(B)-regularity}, we begin
by setting up notation concerning the parallel translation of 
planes in Euclidean space. 
If $a,b \in \real^k$ are points in Euclidean space, we shall write
$[a,b]$ for the line segment consisting of the points $ta+(1-t)b$, $t\in [0,1]$.
If $a\not= b$, then the parallel translate of the affine line
$\{ ta + (1-t)b ~|~ t\in \real \}$ to the origin $0\in \real^k$ will be
denoted by $G[a,b]$. We may then regard $G[a,b]$
as a linear subspace of $\real^k$, or as a point of
projective space $\real \pr^{k-1}$.
If we wish to emphasize the ambient dimension, we shall also write
$G_k [a,b]$ for $G[a,b]$.
As a one-dimensional linear subspace of $\real^k$, $G[a,b]$ is spanned by
the nonzero vector $b-a$.
The following simple observation will assist us in proving 
Lemma \ref{lem.productwhitneystrat} on the product of a 
Whitney stratified subset with a manifold factor.
\begin{lemma} \label{lem.projoflinesegments}
Let $\pi: \real^k \times \real^j \to \real^k$ be the projection onto the
first factor. Let $a,b\in \real^{k+j} = \real^k \times \real^j$ be points
such that $b-a \not\in \ker \pi$. Then
\[ \pi [a,b] = [\pi (a), \pi (b)]~ \text{ and }~
  G_k [\pi (a), \pi (b)] = \pi (G_{k+j} [a,b]). \]
\end{lemma}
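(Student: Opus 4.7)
The plan is to exploit the linearity of $\pi$; the lemma is essentially a book-keeping statement about how a linear projection interacts with the affine and vector-space structures used to encode line segments and their parallel translates.

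For the first equality, I would simply compute: since $\pi$ is $\real$-linear,
\[ \pi(ta+(1-t)b) = t\pi(a) + (1-t)\pi(b) \]
for every $t\in[0,1]$, so $\pi[a,b] = [\pi(a),\pi(b)]$ as subsets of $\real^k$. I would note that the hypothesis $b-a\notin \ker\pi$ means $\pi(b)-\pi(a) = \pi(b-a) \neq 0$, hence $\pi(a) \neq \pi(b)$ and the segment $[\pi(a),\pi(b)]$ is nondegenerate; this is what makes the symbol $G_k[\pi(a),\pi(b)]$ meaningful in the second part.

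For the second equality, I would unwind the definition of $G$: $G_{k+j}[a,b]$ is the one-dimensional linear subspace of $\real^{k+j}$ spanned by the nonzero vector $b-a$. Its image under the linear map $\pi$ is spanned by $\pi(b-a) = \pi(b) - \pi(a)$, and by the previous paragraph this vector is nonzero. Hence
\[ \pi(G_{k+j}[a,b]) = \real\cdot(\pi(b)-\pi(a)) = G_k[\pi(a),\pi(b)], \]
again by the definition of $G_k$ applied to the distinct points $\pi(a),\pi(b)\in\real^k$.

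There is no real obstacle here; the only subtlety is merely to invoke the hypothesis $b-a \notin \ker\pi$ at the right moment so that both sides of each equality are well-defined (the segment is nondegenerate, and the projected line through the origin is a genuine one-dimensional subspace rather than $\{0\}$). Everything else is pure linearity.
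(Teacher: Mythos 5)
Your proof is correct and is exactly the straightforward linearity argument that the paper has in mind; the paper in fact states this lemma without proof, labeling it a ``simple observation,'' so there is nothing to compare against beyond confirming the argument. You invoke the hypothesis $b-a\notin\ker\pi$ in precisely the right place, to guarantee $\pi(a)\neq\pi(b)$ so that $G_k[\pi(a),\pi(b)]$ is a genuine line.
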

More generally,
a $d$-dimensional \emph{affine plane} in Euclidean space $\real^k$ is a subset
$P\subset \real^k$ which can be written in the form
$P = p + P_0,$
where $p\in \real^k$ and $P_0 \subset \real^k$ is a linear subspace of
dimension $d$.
The linear subspace $P_0$ is uniquely determined by $P$, the point $p$ is not.
The \emph{Gauss map} $G_k$, defined on the set of 
$d$-dimensional affine planes in $\real^k,$ 
is given by $G_k (P) := P_0.$
We may regard the translate $G_k (P)$ as a linear subspace of $\real^k$ or
as a point of the Grassmannian $G(d,k)$ of $d$-planes through the origin in $\real^k$.
If $P$ happens to be a linear subspace of $\real^k$, then
$G_k (P)=P.$ In particular, $G_k (\real^k)=\real^k.$
The behavior of Gauss maps under splittings, described in the following lemma,
will be useful in establishing both Lemma \ref{lem.productwhitneystrat} and 
Theorem \ref{thm.orbitspaceoffreeisbregular} on orbit spaces of free actions.
\begin{lemma} \label{lem.gausssplitting}
If $P_j \subset \real^j$ and  
$P_n \subset \real^n$ are affine planes, then $P = P_j \times P_n$ is an
affine plane in $\real^k = \real^j \times \real^n$ and
$G_{j+n} (P) = G_j (P_j) \times G_n (P_n).$
\end{lemma}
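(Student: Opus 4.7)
The plan is to prove this by simply unpacking the definition of ``affine plane'' and the Gauss map, since the statement is essentially a compatibility of direct product with affine-linear decomposition. First I would write each factor in the prescribed form, $P_j = p_j + P_{j,0}$ and $P_n = p_n + P_{n,0}$, where $P_{j,0} \subset \real^j$ and $P_{n,0} \subset \real^n$ are the unique linear subspaces determined by the Gauss map, i.e.\ $P_{j,0} = G_j(P_j)$ and $P_{n,0} = G_n(P_n)$.

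Next I would identify $\real^k = \real^j \times \real^n$ in the standard way and observe that the Cartesian product of the two affine sets admits the description
\[
P_j \times P_n = (p_j,p_n) + (P_{j,0} \times P_{n,0}),
\]
because a point $(x,y)$ lies in $P_j \times P_n$ if and only if $x - p_j \in P_{j,0}$ and $y - p_n \in P_{n,0}$, which is equivalent to $(x,y) - (p_j,p_n) \in P_{j,0} \times P_{n,0}$. Since the product $P_{j,0} \times P_{n,0}$ is a linear subspace of $\real^j \times \real^n$ of dimension $\dim P_{j,0} + \dim P_{n,0}$, this exhibits $P = P_j \times P_n$ as an affine plane of the required kind.

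Finally, applying the definition $G_{j+n}(P) = P - (p_j,p_n)$ (i.e.\ the uniquely determined linear translate) gives $G_{j+n}(P) = P_{j,0} \times P_{n,0} = G_j(P_j) \times G_n(P_n)$, which is the desired identity.

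I do not anticipate any genuine obstacle: the only potentially subtle point is checking that the linear subspace in the decomposition of $P$ is genuinely unique, so that the Gauss map is unambiguously evaluated on the product; but this is already part of the setup recalled just before the lemma and does not require new work here.
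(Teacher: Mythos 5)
Your proof is correct and is the natural argument; the paper simply asserts the lemma without proof, treating it as an immediate consequence of the definitions, which your write-up confirms.
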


\begin{example} \label{exple.gaussmapsofaffinelines}
Let $\ell \subset \real^j \times \real^n$ be an affine line
of the form
$\ell = \{ 0_{\real^j} \} \times \ell_n,$
where $\ell_n$ is an affine line in $\real^n$.
Then by Lemma \ref{lem.gausssplitting},
\[ G_{j+n} (\ell) = G_j (\{ 0_{\real^j} \}) \times G_n (\ell_n) 
   = \{ 0_{\real^j} \} \times G_n (\ell_n). \]
\end{example}

\begin{example} \label{exple.gaussmapsofsplitaffineplanes}
Let $T \subset \real^j \times \real^n$ be an affine plane
of the form
$T = \real^j \times T_n,$
where $T_n$ is an affine plane in $\real^n$.
Then by Lemma \ref{lem.gausssplitting},
\[ G_{j+n} (T) = G_j (\real^j) \times G_n (T_n) 
   = \real^j \times G_n (T_n). \]
\end{example}
The tangent space to a smooth manifold $M$ at a point $p\in M$ will
be denoted by $T_p M$.

Suppose that $(X,\Sa)$ is a stratified space whose underlying topological space
$X$ is a closed subset $X\subset M$ of a smooth manifold $M$ of dimension $n$ such 
that the strata of $\Sa$ are smooth submanifolds of $M$.
Let $R,S$ be two different strata in $\Sa$ such that
$R \subset \overline{S}$ and let $x$ be a point in $R$.
Recall that the pair $(R,S)$ is said to be \emph{Whitney (B) regular at $x$},
if there exists a smooth chart 
$\chi: U \to \real^n$ of $M$ around $x$ such that
the following condition holds:
Whenever 
\begin{itemize}
\item $(x_k)$ is a sequence of points in $R \cap U$ and 
\item $(y_k)$ is a sequence of points in $S \cap U$,
\end{itemize}
such that\\

\noindent (R1) $x_k \to x$ and $y_k \to x$ as $k\to \infty$,

\noindent (R2) $G[\chi (x_k), \chi (y_k)]$ converges in 
  $\real \pr^{n-1}$ to a point $\ell$, and

\noindent (R3) $G(T_{\chi (y_k)} \chi (S\cap U))$ converges in
 the Grassmannian $G(d,n)$, $d=\dim S,$ to a point $\tau$,\\

\noindent then
\[ \ell \subset \tau, \]
where both $\ell$ and $\tau$ are regarded as linear subspaces of $\real^n$.
(Note that in the above context, $x_k \not= y_k$ for every $k$ since
$R$ and $S$ are disjoint.)
If $(R,S)$ is Whitney $(B)$-regular at $x$ with respect to some chart $\chi$,
then it is Whitney $(B)$-regular with respect to any other smooth chart
(Pflaum \cite[Lemma 1.4.4]{pflaumhabil}) around $x$.
This is an important principle that we will rely on in the proof
of Theorem \ref{thm.orbitspaceoffreeisbregular}.
We say that $\Sa$ is \emph{Whitney (B)-regular} with respect to 
$X\subset M$, if $(R,S)$ is Whitney (B)-regular at every point $x\in R$
for every pair $(R,S)$ of different strata in $\Sa$ such that
$R \subset \overline{S}$. In this case, the pair
$(X,\Sa)$ is also called a \emph{Whitney stratified subset of $M$}.

Since every stratum of a Whitney stratified subset of $M$ is a smooth submanifold
of $M$, the set of dimensions of strata is bounded by the dimension of $M$.
We say that a Whitney stratified subset $(X,\Sa),$ $X\subset M,$ 
has \emph{dimension} $n$,
if $\dim S =n$ for some $S\in \Sa$ and $\dim S \leq n$ for every $S\in \Sa$.
For a general stratified space $(X,\Sa)$, there is no useful link
between the condition of frontier and the dimensions of strata.
Simple examples, arising for instance from the topologist's
sine curve, show that there may be strata $R,S$ with
$R\subset \overline{S} -S$, yet $\dim R = \dim S$ or possibly even
$\dim R > \dim S$ (\cite[p. 18, 1.1.12]{pflaumhabil}).
Mather noticed that this phenomenon cannot occur in the presence 
of (B)-regularity:

\begin{lemma} \label{lem.matherfrontierdimbound}
(Mather \cite[p. 6, Prop. 2.5; p. 14]{mather})
Let $(X,\Sa)$ be a Whitney stratified subset of $M$. If $R,S\in \Sa$
are strata with $R\subset \overline{S} - S$, then
$\dim R < \dim S.$
\end{lemma}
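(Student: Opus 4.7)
My plan is to deduce the strict inequality $\dim R < \dim S$ directly from Whitney (B) by exhibiting two families of secants in a chart in which $R$ has been straightened to a linear subspace. First I would fix $x \in R$ and a smooth chart $\chi \colon U \to \real^n$ of $M$ at $x$ with respect to which (B) holds for the pair $(R, S)$. Writing $r := \dim R$ and $s := \dim S$, I would then post-compose $\chi$ with a local diffeomorphism of $\real^n$ so as to arrange $\chi(x) = 0$ and $\chi(R \cap U) = (\real^r \times \{0\}) \cap \chi(U)$; by the chart-independence of (B)-regularity recalled just before the lemma (citing Pflaum), the adjusted chart still verifies (B). Since $x \in \overline{S}$, I would pick $y_k \in S$ with $y_k \to x$, write $\chi(y_k) = (u_k, v_k) \in \real^r \times \real^{n-r}$ (with $v_k \neq 0$ because $R \cap S = \emptyset$), and, after a subsequence, let $\tau \in G(s,n)$ denote the limit of $G(T_{\chi(y_k)} \chi(S \cap U))$.

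The key step is to apply (B) twice to obtain two complementary pieces of information about $\tau$. With $x_k := \chi^{-1}(u_k, 0) \in R$, the secant $G[\chi(x_k), \chi(y_k)] = \real \cdot (0, v_k)$ converges, after a further subsequence, to some line $\ell_1 = \real \cdot (0, w)$ with $|w| = 1$; (B) then forces the nonzero vector $(0, w) \in \{0\} \times \real^{n-r}$ to lie in $\tau$. On the other hand, for each nonzero $v = (v_r, 0) \in \real^r \times \{0\}$, taking $x'_k := \chi^{-1}(u_k + t_k v_r, 0) \in R$ with $t_k := |v_k|^{1/2}$ yields $|v_k|/t_k = |v_k|^{1/2} \to 0$, so the secant direction $(-t_k v_r, v_k)/\|(-t_k v_r, v_k)\|$ tends to $-v/|v|$; hence $G[\chi(x'_k), \chi(y_k)] \to \real \cdot v$, and (B) gives $v \in \tau$. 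This produces the inclusion $\real^r \times \{0\} \subset \tau$.

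Combining the two inclusions, $\tau$ contains both the $r$-plane $\real^r \times \{0\}$ and a nonzero vector in the complementary subspace $\{0\} \times \real^{n-r}$, so $\dim \tau \geq r + 1$. Since $\dim \tau = s$, the desired inequality $\dim R < \dim S$ follows. The main obstacle is the chart-straightening step: one must know that (B) continues to hold after the coordinate change that flattens $R$. This is exactly the chart-independence of (B)-regularity recalled in the paper, without which the explicit manipulations with the coordinates $(u_k, v_k)$ — and in particular the identification of the secant lines with lines of the form $\real \cdot (0, v_k)$ and $\real \cdot (-t_k v_r, v_k)$ — would not be legitimate.
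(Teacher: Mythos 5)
Your proof is correct, and it fills in what the paper only cites (Mather's notes give the standard argument; the paper defers to it). Your construction is essentially the classical one: after straightening $R$ to $\real^r\times\{0\}$ (legitimate by the chart-independence of (B)-regularity recalled just before the lemma), the ``projected'' points $x_k=\chi^{-1}(u_k,0)$ yield a secant limit in $\{0\}\times\real^{n-r}$, while the perturbed points $x'_k=\chi^{-1}(u_k+t_k v_r,0)$ with $t_k=|v_k|^{1/2}$ recover $\real^r\times\{0\}\subset\tau$ — that second family is, in effect, a direct derivation of Whitney's condition (A) from (B), plus one transverse secant direction. The only points one should state explicitly, and which you handle implicitly, are (i) passing to a single subsequence of $(y_k)$ so that both $G(T_{\chi(y_k)}\chi(S\cap U))\to\tau$ and $v_k/|v_k|\to w$ before invoking (B) for each family, and (ii) that $x_k,x'_k\in R\cap U$ for $k$ large, which holds because their $\chi$-images tend to $0\in\chi(U)$ and lie in $\real^r\times\{0\}$. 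With those observations the argument is complete: $\tau$ contains the $r$-plane $\real^r\times\{0\}$ and the vector $(0,w)\notin\real^r\times\{0\}$, so $s=\dim\tau\ge r+1$.
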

Let $(X,\Sa)$ be a Whitney stratified subset of $M$.
For $d\in \nat$, we define its \emph{$d$-skeleton} to be
\[ X_d := \bigcup \{ S \in \Sa ~|~ \dim S \leq d \}. \]
Mather's lemma implies the closedness of skeleta:
\begin{lemma} \label{lem.skeletaclosed}
Suppose $(X,\Sa)$ is a Whitney stratified subspace of $M$.
Then the $d$-skeleta $X_d$ are closed subsets of $X$.
\end{lemma}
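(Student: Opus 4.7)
The plan is to show closedness by a direct sequential argument: take any $x\in X$ that is the limit of a sequence in $X_d$, and use local finiteness together with Mather's Lemma \ref{lem.matherfrontierdimbound} to conclude that the stratum containing $x$ has dimension at most $d$.

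More precisely, let $x\in X$ lie in the closure of $X_d$ in $X$, and let $S\in\Sa$ be the unique stratum with $x\in S$. Choose a sequence $(x_k)$ in $X_d$ with $x_k\to x$. By the local finiteness axiom (S1), there is a neighborhood $U$ of $x$ in $X$ meeting only finitely many strata. All but finitely many $x_k$ lie in $U$, so after discarding a finite initial segment and passing to a subsequence, I may assume that every $x_k$ lies in a single stratum $R\in\Sa$ with $\dim R\leq d$. Then $x\in\overline{R}\cap S$, so by the condition of frontier (S3) we have $S\subset\overline{R}$.

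Now split into two cases. If $S=R$, then $\dim S=\dim R\leq d$, so $x\in S\subset X_d$. If $S\neq R$, then $S$ and $R$ are disjoint strata, so $S\subset\overline{R}-R$, and Mather's Lemma \ref{lem.matherfrontierdimbound} applies to give $\dim S<\dim R\leq d$; hence again $x\in S\subset X_d$. This shows $X_d$ contains all its limit points in $X$, so $X_d$ is closed in $X$.

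The argument is essentially a bookkeeping exercise once the three ingredients (local finiteness, condition of frontier, and Mather's dimension bound) are in place; no real obstacle is anticipated. The only subtle point is the use of local finiteness to reduce to a single stratum $R$ containing a subsequence of the $x_k$, which is what lets one actually invoke Mather's lemma on the pair $(S,R)$.
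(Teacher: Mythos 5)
Your proof is correct, and it uses exactly the ingredients the paper flags before stating the lemma (local finiteness, the condition of frontier, and Mather's dimension bound); the paper itself gives no explicit argument, so yours is a reasonable reconstruction. The sequential argument is justified because $X$ sits inside a smooth manifold and is therefore metrizable, so closure equals sequential closure.

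One remark on an alternative, purely topological packaging worth knowing: by the condition of frontier, the closure $\overline{R}$ of any stratum $R$ is a union of strata, and Mather's lemma shows those frontier strata have dimension strictly less than $\dim R$; hence $\overline{R}\subset X_d$ whenever $\dim R\leq d$, and $X_d=\bigcup_{\dim R\leq d}\overline{R}$. Since an open set meets $\overline{R}$ if and only if it meets $R$, local finiteness of $\Sa$ forces $\{\overline{R}\}$ to be locally finite, and a locally finite union of closed sets is closed. This avoids any appeal to sequences or metrizability, but buys nothing essential here; your version is equally valid in the present setting.
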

For Whitney stratified spaces $(X,\Sa)$ of dimension $n$, the complement
of the $(n-1)$-skeleton,
$X^\reg := X - X_{n-1},$
of $X$ will be called the \emph{regular set} of $(X,\Sa)$.
Lemma \ref{lem.skeletaclosed} shows that the regular set is open.

It is well-known that Whitney stratified subsets are
stable under taking products (Trotman \cite[p. 7]{trotman}).
For the sake of completeness and for the reader's convenience, we 
provide a detailed proof in the case where one of the factors
is a manifold.
\begin{lemma} \label{lem.productwhitneystrat}
Let $(X,\Sa),$ $X \subset M,$ be a Whitney stratified subset of a smooth manifold $M$
and let $E$ be any smooth manifold. Then 
$(X\times E, \Sa \times E),$
$X\times E \subset M\times E$, is a Whitney stratified subset of the smooth
manifold $M\times E$.
\end{lemma}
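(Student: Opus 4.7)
By Lemma \ref{lem.productstrat}, the product partition $\Sa \times E$ already stratifies $X \times E$, and its strata $S \times E$ are smooth submanifolds of $M \times E$, so only Whitney (B)-regularity needs to be verified. The plan is to reduce the Whitney condition for each pair $(R \times E, S \times E)$ of distinct strata with $R \subset \overline{S}$ to the already given Whitney condition for $(R,S)$, via the splitting behavior of Gauss maps recorded in Lemmas \ref{lem.projoflinesegments} and \ref{lem.gausssplitting}. I would fix such a pair and a base point $(x,e) \in R \times E$, then choose a chart $\chi: U \to \real^n$ of $M$ around $x$ realizing (B)-regularity of $(R,S)$ at $x$, together with any smooth chart $\psi: V \to \real^j$ of $E$ around $e$. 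The Whitney condition for $(R \times E, S \times E)$ at $(x,e)$ is to be verified in the product chart $\chi \times \psi: U \times V \to \real^n \times \real^j$.

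Take sequences $p_k = (x_k, e_k) \in (R \times E) \cap (U \times V)$ and $q_k = (y_k, f_k) \in (S \times E) \cap (U \times V)$ converging to $(x,e)$, with secant lines converging in $\real \pr^{n+j-1}$ to $\ell$ and tangent planes converging in the Grassmannian to $\tau$. The tangent plane of $(\chi \times \psi)(S \times V)$ at $(\chi(y_k), \psi(f_k))$ is the affine plane $T_{\chi(y_k)} \chi(S \cap U) \times \real^j$; by Lemma \ref{lem.gausssplitting} (cf.\ Example \ref{exple.gaussmapsofsplitaffineplanes}) and compactness of $G(d,n)$ with $d = \dim S$, passing to a subsequence forces $\tau = \tau_M \times \real^j$, where $\tau_M$ is the limit of $T_{\chi(y_k)} \chi(S \cap U)$ in $G(d,n)$. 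For the secant-line limit, set $v_k = \chi(y_k) - \chi(x_k) \in \real^n$ and $w_k = \psi(f_k) - \psi(e_k) \in \real^j$ and pass to a subsequence so that the unit vector $(v_k, w_k)/\|(v_k, w_k)\|$ converges to some $(\bar v, \bar w)$ spanning $\ell$.

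If $\bar v = 0$, Example \ref{exple.gaussmapsofaffinelines} gives $\ell \subset \{0\} \times \real^j \subset \tau_M \times \real^j = \tau$. If $\bar v \neq 0$, then $v_k \neq 0$ for all large $k$, so $x_k \neq y_k$ in $M$, and Lemma \ref{lem.projoflinesegments} applied to the projection $\pi: \real^n \times \real^j \to \real^n$ yields $G_n[\chi(x_k), \chi(y_k)] = \pi(G_{n+j}[(\chi \times \psi)(p_k), (\chi \times \psi)(q_k)])$. A short continuity argument shows these lines tend in $\real \pr^{n-1}$ to $\pi(\ell) = \real \bar v$, and Whitney (B)-regularity of $(R,S)$ at $x$ in the chart $\chi$ then forces $\pi(\ell) \subset \tau_M$, so $\ell \subset \tau_M \times \real^j = \tau$. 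The main obstacle is precisely this dichotomy in the secant-line limit: when $\bar v = 0$ the secant degenerates into pure $E$-directions where Whitney (B) for $(R,S)$ gives no information, and one must rely on the fact that $\tau$ contains the full $\real^j$-factor; when $\bar v \neq 0$, the correct normalization (by $\|(v_k, w_k)\|$ rather than by $\|v_k\|$ alone) is what keeps the limit direction well-defined even if $\|v_k\|$ is dominated by $\|w_k\|$, allowing the original hypothesis on $\chi$ to be invoked after projection.
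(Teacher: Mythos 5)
Your proof is correct and follows essentially the same strategy as the paper's: verify Whitney (B)-regularity in a product chart by projecting to the $M$-factor, using the splitting behavior of Gauss maps, with the key dichotomy depending on whether the limit secant line degenerates into the pure $E$-direction (your $\bar v = 0$ case corresponds to the paper's $\widetilde{\ell} \subset \ker\pi$ case, and in both the containment $\{0\}\times\real^j \subset \tau$ settles it immediately). The paper phrases the tangent-plane analysis via the closed Schubert cycle $\Sigma = \{P : P \supset \ker\pi\}$ and the continuous map $\pi_\Sigma$, whereas you work more concretely with unit vectors after passing to a subsequence; these are minor presentational differences, not different arguments.
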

\begin{proof}
By Lemma \ref{lem.productstrat},
the partition $\Sa \times E$ is a stratification of the
separable, locally compact Hausdorff space $X\times E$.
It remains to verify that the product strata fit together in
a Whitney (B)-regular manner.
Let $R\times E,$ $S\times E$ be two different strata in $\Sa \times E$
such that $R\times E \subset \overline{S\times E} = \overline{S} \times E$
and let $(x,e)$ be a point in $R\times E$.
Since $R\subset \overline{S}$, we may use (B)-regularity of $(R,S)$ at $x$
to obtain a smooth chart $\chi: U \to \real^n$ of $M$ around $x$
such that whenever
$(x_k)$ is a sequence of points in $R \cap U$ and 
$(y_k)$ a sequence of points in $S \cap U$ such that (R1) -- (R3) hold,
we have $\ell \subset \tau$.

Let $\epsilon: V \to \real^d$ be a smooth chart in $E$ about $e$, $d=\dim E$.
A smooth chart $\widetilde{\chi}: \widetilde{U} \to \real^{n+d}$
in $M\times E$ around $(x,e)$
is given by $\widetilde{U} := U \times V$, $\widetilde{\chi} = \chi \times \epsilon$.
We shall verify (B)-regularity in this chart.
Let 
\begin{itemize}
\item $(x_k, a_k)$ be a sequence of points in $(R\times E) \cap \widetilde{U}$ and 
\item $(y_k, b_k)$ a sequence of points in $(S\times E) \cap \widetilde{U}$
\end{itemize}
such that\\

\noindent (R1) $(x_k, a_k) \to (x,e)$ and $(y_k, b_k) \to (x,e)$ as $k\to \infty$,

\noindent (R2) $G[\widetilde{\chi} (x_k, a_k), 
                  \widetilde{\chi} (y_k, b_k)]$ converges in 
  $\real \pr^{n+d-1}$ to a point $\widetilde{\ell}$, and

\noindent (R3) $G(T_{\widetilde{\chi} (y_k, b_k)} 
                \widetilde{\chi} ((S\times E)\cap \widetilde{U}))$ converges in
 the Grassmannian $G(s+d,n+d)$, $s=\dim S,$ to a point $\widetilde{\tau}$.\\

We must show that $\widetilde{\ell} \subset \widetilde{\tau}$.
Let $\pi: \real^n \times \real^d \to \real^n$ denote the projection onto the
first factor.
Let $\Sigma \subset G(s+d, n+d)$ be the Schubert cycle
$\Sigma := \{ P ~|~ P \supset \ker \pi \}.$
We claim that the kernel $\{ 0_n \} \times \real^d$ of $\pi$ is contained
in $\widetilde{\tau}$, i.e. that $\widetilde{\tau} \in \Sigma$.
Indeed, writing
\begin{align*}
T_{\widetilde{\chi} (y_k, b_k)} 
                \widetilde{\chi} ((S\times E)\cap \widetilde{U})
&= T_{\widetilde{\chi} (y_k, b_k)} 
                \widetilde{\chi} ((S\cap U) \times V) \\                
&= T_{(\chi (y_k), \epsilon (b_k))} 
                (\chi (S\cap U) \times \epsilon (V)) \\
&= T_{\chi (y_k)} (\chi (S\cap U)) \times 
     T_{\epsilon (b_k)} \real^d,                
\end{align*}
we may apply Lemma \ref{lem.gausssplitting} to obtain
\begin{align*} 
G_{n+d} (T_{\widetilde{\chi} (y_k, b_k)} 
                \widetilde{\chi} ((S\times E)\cap \widetilde{U}))
&= G_{n+d} (T_{\chi (y_k)} (\chi (S\cap U)) \times 
     T_{\epsilon (b_k)} \real^d) \\
&= G_n (T_{\chi (y_k)} (\chi (S\cap U))) \times \real^d,                     
\end{align*}
which contains $\{ 0_n \} \times \real^d$ for every $k$.
In other words,
$G_{n+d} (T_{\widetilde{\chi} (y_k, b_k)} 
                \widetilde{\chi} ((S\times E)\cap \widetilde{U}))
       \in \Sigma$
for all $k$.
Therefore, the limit $\widetilde{\tau}$ contains
$\{ 0_n \} \times \real^d = \ker \pi$ as well because
$\Sigma$ is closed in $G(s+d, n+d)$.
This establishes the claim.

In the case where $\widetilde{\ell} \subset \ker \pi$, the claim implies that
$\widetilde{\ell} \subset \widetilde{\tau}$, as was to be shown.
Let us then assume that $\widetilde{\ell} \not\subset \ker \pi$.
In this case, the image
\[ \ell := \pi (\widetilde{\ell}) \]
under the linear map $\pi$ is a one-dimensional linear subspace of $\real^n$.
By (R1) above, the sequences $(x_k) \subset R\cap U$ and $(y_k) \subset S\cap U$ 
both converge to $x\in R$
(by continuity of the first factor projection).
This establishes (R1) for $(x_k),$ $(y_k)$.
We will verify that condition (R2) holds for $(x_k),$ $(y_k)$ with respect to the
line $\ell$, that is, we will show that
$G_n [\chi (x_k), \chi (y_k)]$ converges in $\real \pr^{n-1}$ to $\ell$.
Indeed, by definition of the chart $\widetilde{\chi},$
$\pi \widetilde{\chi} (x_k, a_k) = \chi (x_k)$
and
$\pi \widetilde{\chi} (y_k, b_k) = \chi (y_k).$
Since $\ker \pi = \{ 0_n \} \times \real^d$ is closed in $\real^n \times \real^d$,
the condition $\widetilde{\ell} \not\subset \ker \pi$
is an open condition, i.e. $\widetilde{\ell}$ has an open
neighborhood in $\real \pr^{n+d-1}$ such that every line in that
neighborhood is not contained in $\ker \pi$.
Thus there is a $k_0$ such that
\[ \widetilde{\chi} (y_k, b_k) - \widetilde{\chi} (x_k, a_k) \not\in \ker \pi \]
for all $k \geq k_0$.
So for these $k$, we may apply 
Lemma \ref{lem.projoflinesegments} to obtain
\[
[\chi (x_k), \chi (y_k)]
 = [\pi \widetilde{\chi} (x_k, a_k), \pi \widetilde{\chi} (y_k, b_k)]
 = \pi [\widetilde{\chi} (x_k, a_k), \widetilde{\chi} (y_k, b_k)]
\]
and
\[ G_n [\chi (x_k), \chi (y_k)]
   = G_n [\pi \widetilde{\chi} (x_k, a_k), \pi \widetilde{\chi} (y_k, b_k)] 
   = \pi (G_{n+d} [\widetilde{\chi} (x_k, a_k), \widetilde{\chi} (y_k, b_k)]). \]
By continuity of the projection $\pi$, the right hand side converges
to $\pi (\widetilde{\ell}) = \ell$ as $k\to \infty$. 
Hence the left hand side converges to $\ell$, which proves (R2)
for $(x_k),$ $(y_k)$ and $\ell$.

We move on to verifying (R3) for $(y_k)$.
The projection $\pi: \real^n \times \real^d \to \real^n$
induces a map
\[ \pi_\Sigma: \Sigma \longrightarrow G(s,n),~ P \mapsto \pi (P).  \]
For if $P\in \Sigma$, then we may choose a direct sum
decomposition $P = (\ker \pi) \oplus P_+$. The dimension of 
$P_+$ is $s$
and the restriction of $\pi$ to $P_+$ is injective.
Therefore, $\pi (P_+)$ is an $s$-dimensional linear subspace of $\real^n$.
Furthermore, $\pi (P_+) = \pi (P)$. Hence $\pi_\Sigma$ exists as claimed.
Since $\pi_\Sigma$ is continuous, the sequence of image points
\[ \pi_\Sigma (G_{n+d} (T_{\widetilde{\chi} (y_k, b_k)} 
                \widetilde{\chi} ((S\times E)\cap \widetilde{U}))) \]
converges to 
$\pi_\Sigma (\widetilde{\tau})$ in $G(s,n)$ as $k\to \infty$.
Now, we showed above that
\begin{equation} \label{equ.gndtwcgntctrd} 
G_{n+d} (T_{\widetilde{\chi} (y_k, b_k)} 
                \widetilde{\chi} ((S\times E)\cap \widetilde{U}))
    = G_n (T_{\chi (y_k)} (\chi (S\cap U))) \times \real^d. 
\end{equation}    
Applying $\pi$, we get
\begin{align*} 
\pi_\Sigma G_{n+d} (T_{\widetilde{\chi} (y_k, b_k)} 
                \widetilde{\chi} ((S\times E)\cap \widetilde{U}))
&= \pi_\Sigma (G_n (T_{\chi (y_k)} (\chi (S\cap U))) \times \real^d) \\
&= G_n (T_{\chi (y_k)} (\chi (S\cap U))).
\end{align*}    
We conclude that $G_n (T_{\chi (y_k)} (\chi (S\cap U)))$
converges to 
\[ \tau := \pi_\Sigma (\widetilde{\tau}) \]
in $G(s,n)$ as $k\to \infty$.
So the data $(x_k),$ $(y_k)$, $\ell$ and $\tau$ satisfy
(R1) -- (R3).

From (B)-regularity of $(R,S)$ at $x$ we deduce that
$\ell \subset \tau$, i.e. 
$\pi (\widetilde{\ell}) \subset \pi (\widetilde{\tau})$.
Identity (\ref{equ.gndtwcgntctrd}) implies that
\begin{align*}
G_{n+d} (T_{\widetilde{\chi} (y_k, b_k)} 
                \widetilde{\chi} ((S\times E)\cap \widetilde{U}))
 &= \pi^{-1} (G_n (T_{\chi (y_k)} (\chi (S\cap U)))) \\
 &= \pi^{-1} (\pi G_{n+d} (T_{\widetilde{\chi} (y_k, b_k)} 
                \widetilde{\chi} ((S\times E)\cap \widetilde{U})))                
\end{align*}
This shows that in the limit
$\widetilde{\tau} = \pi^{-1} (\pi (\widetilde{\tau})).$
Consequently,
\[ \widetilde{\ell} \subset
   \pi^{-1} (\pi (\widetilde{\ell}))
   \subset \pi^{-1} (\pi (\widetilde{\tau})) = \widetilde{\tau},
\]
as was to be shown.
\end{proof}

\section{Free Actions on Whitney Stratified Spaces}

The main purpose of the present section is to explain that
if a compact Lie group $G$ acts smoothly and freely on a manifold 
which contains a $G$-invariant Whitney stratified subspace $X$,
then the orbit space $X/G$ is a Whitney stratified subspace of
the smooth manifold $M/G$ (Theorem \ref{thm.orbitspaceoffreeisbregular}).

Let $G$ be a compact Lie group and let
$(X,\Sa)$ be a stratified space.
We say that an action of $G$ on $X$ is
\emph{compatible with the stratification $\Sa$}, or
\emph{stratum preserving}, if every stratum
$S\in \Sa$ is a $G$-invariant subspace and the induced action $G\times S \to S$ of
$G$ on $S$ is smooth.
In this case we will also say that $\Sa$ is a \emph{$G$-stratification}
and that $(X,\Sa)$ is a \emph{$G$-stratified space}.

Suppose that $M$ is a smooth $G$-manifold and
$(X,\Sa),$ $X\subset M,$ is a Whitney stratified subset
such that $X$ is $G$-invariant and the induced action of $G$
on $X$ is compatible with the stratification $\Sa$.
Let $\Sa/G$ denote the collection of orbit spaces
$\{ S/G ~|~ S\in \Sa \}$. 
Since every stratum $S$ is nonempty, the spaces $S/G$ are not empty.
If for some $R,S\in \Sa$, $R/G$ and $S/G$ have a point in common,
then $R$ and $S$ have a point in common and are therefore equal.
Thus $R/G = S/G$. Furthermore, any point of $X/G$ lies in some $S/G$
since every point of $X$ lies in some stratum $S$. 
Thus $\Sa/G$ is a set-theoretic partition of $X/G$. 
 
\begin{lemma} \label{lem.closureofstratummodg}
The closure $\overline{S}$ in $X$ of a stratum $S\in \Sa$ is
a $G$-invariant subspace of $X$ and
the closure of $S/G \in \Sa/G$ in $X/G$
can be computed by
\begin{equation} \label{equ.closuresmodg}
 \overline{S/G} = \overline{S}/G.  
\end{equation}
\end{lemma}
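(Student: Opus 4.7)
The plan hinges on two elementary but crucial facts: the quotient map $\pi: X \to X/G$ is continuous and open, and for each $g \in G$ the translation $L_g: X \to X$, $x \mapsto gx$, is a homeomorphism. Recall also that for any $G$-invariant subset $A \subset X$ one has $\pi^{-1}(\pi(A)) = A$.

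For the first assertion ($G$-invariance of $\overline{S}$), the fact that $L_g$ is a self-homeomorphism of $X$ implies it commutes with topological closure: $L_g(\overline{S}) = \overline{L_g(S)}$. Since $S$ is $G$-invariant, $L_g(S) = S$, hence $L_g(\overline{S}) = \overline{S}$ for every $g\in G$, and $\overline{S}$ is $G$-invariant. In particular the orbit space $\overline{S}/G \subset X/G$ is well-defined as a subspace.

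For (\ref{equ.closuresmodg}) I establish both inclusions separately. For ``$\supseteq$'', observe that $\overline{S}$ is closed and $G$-invariant, so $\pi^{-1}(\pi(\overline{S})) = \overline{S}$ is closed in $X$; by the defining property of the quotient topology, $\pi(\overline{S}) = \overline{S}/G$ is closed in $X/G$. Since $S/G \subset \overline{S}/G$, passing to closures in $X/G$ yields $\overline{S/G} \subset \overline{S}/G$. For ``$\subseteq$'', the preimage $\pi^{-1}(\overline{S/G})$ is closed in $X$ by continuity of $\pi$, is automatically $G$-invariant as a preimage under $\pi$, and contains $\pi^{-1}(S/G) = S$. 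It must therefore contain $\overline{S}$, so applying $\pi$ gives $\overline{S}/G = \pi(\overline{S}) \subset \pi(\pi^{-1}(\overline{S/G})) \subset \overline{S/G}$.

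Everything is point-set topology and there is no genuine obstacle; the only point requiring slight care is to remember that $\pi(A)$ is closed whenever $A$ is a closed $G$-invariant subset (so one cannot simply quote ``$\pi$ closed'' for arbitrary $A$), and this is exactly the reason one first needs the $G$-invariance established in part one before invoking the identity $\pi^{-1}(\pi(\overline{S})) = \overline{S}$ in the second part.
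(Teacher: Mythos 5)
Your proof is correct and follows the same overall structure as the paper's (first establish $G$-invariance of $\overline{S}$, then prove each inclusion), but two of your three sub-arguments are packaged differently in a way worth noting. For $G$-invariance, you replace the paper's element chase with the observation that each translation $L_g$ is a homeomorphism and so commutes with closure, which is cleaner. More substantively, for the inclusion $\overline{S/G}\subset\overline{S}/G$, the paper invokes compactness of $G$ to conclude that $\pi\colon X\to X/G$ is a closed map, whereas you avoid compactness entirely: since $\overline{S}$ is a saturated closed set, $\pi^{-1}(\pi(\overline{S}))=\overline{S}$ is closed, so $\pi(\overline{S})$ is closed by the definition of the quotient topology. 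This is strictly more elementary and shows that this particular inclusion holds for any quotient map, not just closed ones; the paper's use of compactness here is convenient but not needed. For the reverse inclusion your preimage argument ($\pi^{-1}(\overline{S/G})$ is closed, contains $S$, hence contains $\overline{S}$) is a tidier reformulation of the paper's sequential neighborhood argument. One small remark: you list $G$-invariance of $\pi^{-1}(\overline{S/G})$ as an ingredient in the last step, but it is not actually used there — the conclusion $\pi(\pi^{-1}(\overline{S/G}))=\overline{S/G}$ needs only surjectivity of $\pi$.
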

\begin{proof}
We show first that $\overline{S}$
is indeed a $G$-invariant subspace of $X$:
Let $s$ be a point of the closure $\overline{S}$ and $g\in G$.
For $gs$ to lie in $\overline{S}$, it is necessary and sufficient
that every open neighborhood $V\subset X$ of $gs$ has nonempty intersection
with $S$. The set $U:= g^{-1} V$ is open and
\[ s = g^{-1} (gs) \in g^{-1} V =U. \]
Thus $U$ intersects $S$ nontrivially and there exists a point
$s' \in U\cap S$. The point $gs'$ lies in $S$, since $S$ is $G$-invariant.
Moreover, $gs' \in g (g^{-1} V)=V$. We conclude that 
$V\cap S$ is not empty, as was to be shown.
Thus $\overline{S}$ is $G$-invariant and $\overline{S}/G$ is defined.
We prove the equality claimed in (\ref{equ.closuresmodg}). 
Since $G$ is compact, the orbit projection
$\pi: X\to X/G$ is closed (\cite[p. 38, Thm. 3.1.(2)]{bredontransfgroups}).
Consequently, $\pi (\overline{S})$ is closed in $X/G$.
As $\overline{S}$ is $G$-invariant, $\pi (\overline{S}) = \overline{S}/G$.
Therefore, $\overline{S}/G$ is closed and since it contains $S/G$, it must
also contain the closure of $S/G$,
\[ \overline{S/G} \subset \overline{S}/G. \]
To establish the converse inclusion, let
$Gs_0 \in \overline{S}/G$ be any point, $s_0 \in \overline{S}$.
This point is in $\overline{S/G}$ precisely when
every open neighborhood $U \subset X/G$ of $Gs_0$ intersects
$S/G$ nontrivially.
The set $V:= \pi^{-1} (U)$ is open in $X$ and
$s_0 \in Gs_0 \subset \pi^{-1} (U) =V.$
Since $s_0 \in \overline{S}$, its open neighborhood $V$ must intersect
$S$ nontrivially, i.e. there exists a point $s_1 \in V\cap S$.
Its orbit $\pi (s_1) = Gs_1$ is a point of $U$, and a point of $S/G$
(as $S$ is $G$-invariant).
Hence $U \cap (S/G)$ is not empty. This places $Gs_0$ in $\overline{S/G}$ and
finishes the proof of (\ref{equ.closuresmodg}).
\end{proof}

\begin{lemma} \label{lem.smodgisstratn}
If $G$ acts freely (and smoothly) on $M$, then
the partition $\Sa/G$ is a stratification of $X/G$.
\end{lemma}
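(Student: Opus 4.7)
The plan is to verify the three stratification axioms (S1)--(S3) for $\Sa/G$ on $X/G$ in turn, using that the orbit projection $\pi : X \to X/G$ is both open (general fact for group quotients) and closed (here $G$ is compact, as cited in the proof of Lemma \ref{lem.closureofstratummodg}), together with Lemma \ref{lem.closureofstratummodg}.

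First I would record that $X/G$ is separable, locally compact and Hausdorff: $M/G$ is a smooth manifold because $G$ is a compact Lie group acting smoothly and freely on $M$ (the classical quotient manifold theorem), and $X/G = \pi(X) \subset M/G$ is closed because $X$ is closed and $\pi$ is a closed map. Next, each $S/G \in \Sa/G$ inherits a smooth manifold structure: the action of $G$ on the stratum $S$ is smooth (by the compatibility hypothesis) and free (by restriction from $M$), and $G$ is compact, so the quotient manifold theorem again gives $S/G$ the structure of a smooth manifold (of dimension $\dim S - \dim G$). To see that $S/G$ is locally closed in $X/G$, note that $\pi(\overline{S}) = \overline{\pi(S)} = \overline{S/G}$ by Lemma \ref{lem.closureofstratummodg}, and the restriction of the open map $\pi$ to the $G$-invariant subspace $\overline{S}$ is open onto $\overline{S}/G$, so $S/G = \pi(S)$ is open in $\overline{S/G}$.

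For (S1), given a point $Gx \in X/G$, local finiteness of $\Sa$ furnishes an open neighborhood $V \subset X$ of $x$ meeting only finitely many strata. Since $\pi$ is open, $\pi(V)$ is an open neighborhood of $Gx$ in $X/G$, and the elementary identity
\[
\pi(V) \cap (S/G) \neq \emptyset \iff V \cap GS \neq \emptyset \iff V \cap S \neq \emptyset,
\]
valid because $S$ is $G$-invariant, shows that $\pi(V)$ meets exactly as many classes $S/G$ as $V$ meets strata $S$, hence only finitely many.

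For (S3), I would transfer the poset ordering of $\Sa$ to $\Sa/G$ via $R/G \leq S/G \iff R \leq S$ (this is well-defined since distinct strata have distinct orbit spaces). Using Lemma \ref{lem.closureofstratummodg}, the equivalence
\[
(R/G) \cap \overline{S/G} \neq \emptyset \iff (R/G) \cap (\overline{S}/G) \neq \emptyset \iff R \cap \overline{S} \neq \emptyset
\]
holds, where the last step uses that $\overline{S}$ is $G$-invariant: if $Gr = Gs'$ with $r\in R$, $s' \in \overline{S}$, then $r = gs' \in \overline{S}$. By (S3) for $\Sa$, this is equivalent to $R \subset \overline{S}$, i.e. $R \leq S$, which in turn is equivalent to $R/G \subset \overline{S}/G = \overline{S/G}$. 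This verifies the condition of frontier for $\Sa/G$.

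There is no real obstacle; the only points requiring care are the interchangeability of closures with the orbit projection (already handled by Lemma \ref{lem.closureofstratummodg} via closedness of $\pi$) and the local finiteness count, which reduces to the $G$-invariance of strata.
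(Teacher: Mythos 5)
Your proposal is correct and follows essentially the same route as the paper: establish local closedness of $S/G$ via openness of the orbit projection together with Lemma \ref{lem.closureofstratummodg}, verify (S1) by using a $G$-saturated open neighborhood (your $\pi(V)$ is the same set as the paper's $GU$ viewed in $X/G$) and $G$-invariance of strata, obtain (S2) from the quotient manifold theorem, and deduce (S3) from the condition of frontier for $\Sa$ via the closure formula $\overline{S/G} = \overline{S}/G$. The only cosmetic difference is that in (S3) you invoke $G$-invariance of $\overline{S}$ where the paper invokes $G$-invariance of $R$; both are immediate.
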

\begin{proof}
Since $G$ is compact, the orbit projection
$\pi: M\to M/G$ is closed.
Consequently, $\pi (X)$ is closed in $M/G$.
Since $X$ is $G$-invariant, $\pi (X)=X/G$.
Therefore,  
the closed equivariant embedding $X\subset M$ induces a
closed embedding $X/G \subset M/G$, with $M/G$ a smooth manifold,
as $G$ is compact and acts freely and smoothly.
This implies in particular that $X/G$ is separable, locally compact, and Hausdorff,
since these properties are inherited from the smooth manifold $M/G$.

We prove next that the subspaces $S/G,$ $S\in \Sa,$ are locally closed in $X/G$.
We will use the following criterion:
A subspace is locally closed if and only if it is open in its closure.
As $S$ is locally closed in $X$ by assumption,
$S$ is an open subset of $\overline{S}$.
Since $\overline{S}$ is $G$-invariant by Lemma \ref{lem.closureofstratummodg},
there is an orbit projection
$\pi: \overline{S} \to \overline{S}/G$, which is an open map.
We conclude that $\pi (S)$ is open in $\overline{S}/G$.
Now $\pi (S)=S/G$ (since $S$ is $G$-invariant) and
$\overline{S}/G = \overline{S/G}$ by (\ref{equ.closuresmodg}) of the Lemma.
Hence $S/G$ is open in $\overline{S/G}$, as was to be verified.

It remains to check the stratification conditions (S1) -- (S3).
To verify (S1) we must show that $\Sa/G$ is locally finite.
Let $Gx$ be a point of $X/G$, $x\in X$. 
Using (S1) for $\Sa$, we find an open neighborhood $U\subset X$
of $x$ in $X$ which intersects nontrivially only 
$S_{i_1},\ldots, S_{i_k}\in \Sa$, that is,
$U\cap S_j = \varnothing$ for all $j\in J := P- \{ i_1, \ldots, i_k \}$.
Since $GU \subset X$ is the union of the open sets $gU$, $g\in G$,
it is open in $X$. Since $GU$ is also a union of orbits, we can
regard it as a subset $GU \subset X/G$ of the orbit space of $X$.
As such, it is also open, since the quotient map $X\to X/G$ is an
open map. The point $Gx$ is contained in $GU$. Thus the latter constitutes
an open neighborhood of $Gx$ in $X/G$.
We claim that $GU$ intersects only finitely many strata in $\Sa /G$.
Indeed, suppose that 
$(GU) \cap (S_j/G) \not= \varnothing$ for some $j\in J$.
Then there is some $u\in U$ and $s \in S_j$ such that
$Gu = Gs$. Thus $u=gs$ for some $g\in G$. This would imply that
$u\in S_j$, since $S_j$ is $G$-invariant. We thus arrive at a contradiction
to $U\cap S_j = \varnothing$. Therefore, $(GU) \cap (S_j/G) = \varnothing$ 
for all $j\in J$, showing that $\Sa/G$ is locally finite.

For (S2), we have to show that $S/G$ is a smooth manifold for every
$S\in \Sa$. By compatibility of the action with $\Sa$,
the induced action of $G$ on $S$ is smooth. Since the induced action
is also free and $G$ is compact, $S/G$ is a manifold with a unique
smooth structure such that the quotient map $S \to S/G$ is a smooth
submersion. In fact, $S/G$ is then a smooth submanifold of $M/G$.

Lastly, we verify the condition of frontier (S3)
using Lemma \ref{lem.closureofstratummodg}.
Suppose then that $R/G$ intersects the closure 
of $S/G$ nontrivially, $R,S\in \Sa$.
Then by (\ref{equ.closuresmodg}),
$(R/G) \cap (\overline{S}/G) \not= \varnothing.$
There exists thus a point
$r\in R$ and a point $s_0 \in \overline{S}$ such that
$Gr = Gs_0 \in (R/G) \cap (\overline{S}/G)$.
Some group element $g\in G$ moves $r$ to $s_0$, $gr=s_0$.
As $R$ is $G$-invariant, $gr \in R$. Therefore, $s_0 \in R\cap \overline{S}$.
By the condition of frontier for $\Sa$,
$R$ is contained in $\overline{S}$.
This implies $R/G \subset \overline{S}/G = \overline{S/G}$.
\end{proof}

As pointed out earlier, the closed equivariant inclusion $X\subset M$ induces a 
closed inclusion $X/G \subset M/G$.
If $G$ acts freely (and smoothly), then $M/G$ is a manifold with a unique
smooth structure such that the quotient map $\pi: M \to M/G$ is
a smooth submersion (Lee \cite[p. 218, Theorem 9.16]{lee}).
In fact, $\pi$ is a smooth principal $G$-bundle
(Tu \cite[p. 22, Theorem 3.3]{tuintrolectequivcoh}).

\begin{thm} \label{thm.orbitspaceoffreeisbregular}
Let $G$ be a compact Lie group which acts smoothly and freely on
a smooth manifold $M$. Suppose that
$(X,\Sa),$ $X\subset M,$ is a Whitney stratified subset
such that $X$ is $G$-invariant and the induced action of $G$
on $X$ is compatible with the stratification $\Sa$.
Then $(X/G, \Sa/G)$, $X/G \subset M/G$, is a Whitney stratified subset.
\end{thm}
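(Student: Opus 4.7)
The plan is to reduce (B)-regularity of $(X/G, \Sa/G)$ in $M/G$ to the (already established) (B)-regularity of $(X, \Sa)$ in $M$, via an equivariant local trivialization of the smooth principal $G$-bundle $\pi \colon M \to M/G$. Lemma \ref{lem.smodgisstratn} already supplies the stratification structure of $(X/G, \Sa/G)$, so only the Whitney condition on adjacent pairs of strata needs verification. Fix strata $R/G, S/G \in \Sa/G$ with $R/G \subset \overline{S/G}$. By Lemma \ref{lem.closureofstratummodg}, $\overline{S/G} = \overline{S}/G$; lifting an arbitrary $r \in R$ to a point of $\overline{S}$ and using the $G$-invariance of $\overline{S}$ (also from Lemma \ref{lem.closureofstratummodg}) converts the inclusion to $R \subset \overline{S}$, so (B)-regularity of $(R, S)$ at every point of $R$ is available.

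Fix $Gx \in R/G$ with $x \in R$. As noted just above the theorem, $\pi$ is a smooth principal $G$-bundle, so there is a $G$-invariant open neighborhood $U = \pi^{-1}(V)$ of $x$ in $M$ and a $G$-equivariant local trivialization $\phi \colon U \xrightarrow{\cong} V \times G$ with $\phi(x) = (Gx, e)$. Pick smooth charts $\chi \colon V \to \real^n$ centered at $Gx$ and $\epsilon \colon W \to \real^d$ of $G$ about $e$ with $\epsilon(e) = 0$, where $d = \dim G$. On $\phi^{-1}(V \times W)$ define the smooth chart
\[ \widetilde{\chi}(\phi^{-1}(v, g)) := (\chi(v), \epsilon(g)). \]
Since $R$ and $S$ are $G$-invariant, $\phi$ carries $S \cap U$ to $(S/G \cap V) \times G$, hence
\[ \widetilde{\chi}(S \cap \phi^{-1}(V \times W)) = \chi(S/G \cap V) \times \epsilon(W), \]
and at any point $y$ with $\phi(y) = (\pi(y), e)$ Lemma \ref{lem.gausssplitting} yields
\[ G_{n+d}(T_{\widetilde{\chi}(y)} \widetilde{\chi}(S \cap \phi^{-1}(V \times W))) = G_n(T_{\chi(\pi(y))} \chi(S/G \cap V)) \times \real^d. \]

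To verify (B)-regularity of $(R/G, S/G)$ at $Gx$ in the chart $\chi$, consider sequences $(a_k) \subset (R/G) \cap V$ and $(b_k) \subset (S/G) \cap V$ converging to $Gx$ with $G_n[\chi(a_k), \chi(b_k)] \to \ell$ in $\real\pr^{n-1}$ and $G_n(T_{\chi(b_k)} \chi(S/G \cap V)) \to \tau$ in $G(s, n)$, $s = \dim S/G$. Lift canonically through the local section $g = e$ by setting $x_k := \phi^{-1}(a_k, e) \in R$ and $y_k := \phi^{-1}(b_k, e) \in S$; then $x_k, y_k \to x$ and $\widetilde{\chi}(x_k) = (\chi(a_k), 0)$, $\widetilde{\chi}(y_k) = (\chi(b_k), 0)$. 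Hence the segment $[\widetilde{\chi}(x_k), \widetilde{\chi}(y_k)]$ lies entirely in $\real^n \times \{0\}$, and Lemma \ref{lem.gausssplitting} gives $G_{n+d}[\widetilde{\chi}(x_k), \widetilde{\chi}(y_k)] \to \ell \times \{0\} =: \widetilde{\ell}$ in $\real\pr^{n+d-1}$, while the tangent identity above forces $G_{n+d}(T_{\widetilde{\chi}(y_k)} \widetilde{\chi}(S \cap \phi^{-1}(V \times W))) \to \tau \times \real^d =: \widetilde{\tau}$. Applying (B)-regularity of $(R, S)$ at $x$ in the chart $\widetilde{\chi}$ (valid by the chart-independence principle of Pflaum \cite[Lemma 1.4.4]{pflaumhabil}) produces $\widetilde{\ell} \subset \widetilde{\tau}$, i.e.\ $\ell \times \{0\} \subset \tau \times \real^d$, which is equivalent to $\ell \subset \tau$.

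The main technical point is the lifting step: taking the lifts through the fixed local section $\{g = e\}$ keeps the vertical coordinates identically zero, so the lifted projective limit $\widetilde{\ell}$ sits in $\real^n \times \{0\}$ and the $\real^d$-directions from the Lie group factor are absorbed cleanly into the $\real^d$ summand of $\widetilde{\tau}$, allowing the upstairs (B)-inclusion to descend to the desired downstairs inclusion without obstruction.
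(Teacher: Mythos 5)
Your proposal is correct and follows essentially the same route as the paper's own proof: lift the downstairs sequences through a local trivialization of the principal bundle $\pi\colon M \to M/G$ via the constant section at the identity, work in a product chart splitting off a $\real^{\dim G}$ factor, identify $\widetilde{\ell}$ and $\widetilde{\tau}$ as the obvious products using Lemma \ref{lem.gausssplitting}, and descend the (B)-inclusion from upstairs. The only cosmetic differences are that you obtain the trivialization directly from the principal bundle structure rather than invoking Ehresmann's fibration theorem, and you order the $\real^n \times \real^d$ factors opposite to the paper's $\real^j \times \real^n$; neither affects the argument.
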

\begin{proof}
By Lemma \ref{lem.smodgisstratn}, $\Sa/G$ is a stratification of $X/G$.
Let $R,S\in \Sa$ be strata, $R\not= S,$ such that
$R/G \subset \overline{S/G}$
in $X/G,$ and let $\xi$ be a point of $R/G$. 
We shall prove that $(R/G, S/G)$ is Whitney (B)-regular at $\xi$.

To begin with, we shall construct a suitable chart
$\chi: U \to \real^n$ of $M/G$ around $\xi$, where $n=\dim (M/G)$.
The regularity is subsequently to be verified in this chart.
The smooth submersion $\pi: M \to M/G$ is proper, as $G$ is compact.
Hence, by Ehresmann's fibration theorem, $\pi$ is smoothly locally trivial, i.e.
there exists an open neighborhood
$U \subset M/G$ of $\xi$ and a diffeomorphism
$\phi: \pi^{-1} (U) \to G \times U$ such that
\[ \xymatrix{
\pi^{-1} (U) \ar[rd]_{\pi|} \ar[rr]^\phi_\cong & & G\times U \ar[ld]^{\pro_2} \\
& U &
} \]
commutes.
The inverse image $\pi^{-1} (U)$ is $G$-invariant. 
Making $U$ smaller if necessary, we may assume that there is a 
diffeomorphism $\chi: U \to \real^n$ with $\chi (\xi)=0$.

Now let
\begin{itemize}
\item $(\xi_k)$ be a sequence of points in $(R/G) \cap U$ and 
\item $(\eta_k)$ a sequence of points in $(S/G) \cap U$,
\end{itemize}
such that (R1) -- (R3) hold in the chart $\chi$, that is,\\

\noindent (R1) $\xi_k \to \xi$ and $\eta_k \to \xi$ as $k\to \infty$,

\noindent (R2) $G[\chi (\xi_k), \chi (\eta_k)]$ converges in 
  $\real \pr^{n-1}$ to a point $\ell$, and

\noindent (R3) $G(T_{\chi (\eta_k)} \chi ((S/G)\cap U))$ converges in
 the Grassmannian $G(d,n)$, $d=\dim S/G,$ to a point $\tau$.\\

We must show that
$\ell \subset \tau,$
where both $\ell$ and $\tau$ are regarded as linear subspaces of $\real^n$.
The idea will be to lift the points $\xi, \xi_k, \eta_k$ to points in $M$
in such a way that (B)-regularity for $(X,\Sa)$, $X\subset M,$ can be applied.
Then we will deduce the desired containment relation from the corresponding
relation obtained upstairs.
Let $e\in G$ denote the neutral element and set
\[ x := \phi^{-1} (e,\xi),~
   x_k := \phi^{-1} (e,\xi_k),~
   y_k := \phi^{-1} (e,\eta_k). \]
Since $\pi \circ \phi^{-1} = \pro_2$,
the points $x, x_k, y_k$ are indeed lifts with respect to $\pi$ of
the corresponding points $\xi, \xi_k, \eta_k$.
Since $\pi^{-1} (R/G) = R$ and $\pi^{-1} (S/G)=S$, this shows
that $x, x_k \in R$ and $y_k \in S$.

We shall next construct a smooth chart
$\widetilde{\chi}: \widetilde{U} \to \real^{n+j},$ $j=\dim G,$
$\widetilde{U} \subset M$ open,
wherein we will then verify that (R1) -- (R3) hold for $x, (x_k)$ and $(y_k)$.
Let $\gamma: U_e \to \real^j$ be a smooth chart in $G$ around $e\in G$,
$\gamma (e)=0$.
The set
$\widetilde{U} := \phi^{-1} (U_e \times U)$
is open in $M$ and contains $x$, $(x_k)$ and $(y_k)$. 
The diffeomorphism $\widetilde{\chi}$ is constructed as the composition
\[
\widetilde{U} \stackrel{\phi}{\longrightarrow} 
  U_e \times U \stackrel{\gamma \times \chi}{\longrightarrow}
  \real^j \times \real^n = \real^{j+n}.
\] 
It maps $x$ to the origin.
Let us verify (R1) -- (R3) for $x, (x_k)$ and $(y_k)$ in the chart
$\widetilde{\chi}$.

As for (R1), we need to show that the sequences $(x_k)$ and $(y_k)$
both converge to $x$.
This follows from the continuity of $\phi^{-1}$, observing that
$(e,\xi_k) \to (e,\xi)$ and $(e,\eta_k) \to (e,\xi)$ in
$U_e \times U$.

To establish (R2), we shall prove that
$G[\widetilde{\chi} (x_k), \widetilde{\chi} (y_k)]$ converges in 
$\real \pr^{n+j-1}$ to a point $\widetilde{\ell}$.
Indeed,
\begin{align*}
[\widetilde{\chi} (x_k), \widetilde{\chi} (y_k)]
&= [\widetilde{\chi} (\phi^{-1} (e,\xi_k)), \widetilde{\chi} (\phi^{-1} (e,\eta_k))] 
  = [(\gamma \times \chi) (e,\xi_k)), (\gamma \times \chi)(e,\eta_k))] \\
&= [(\gamma (e), \chi (\xi_k)), (\gamma (e), \chi (\eta_k))] 
  = \{ \gamma (e) \} \times [\chi (\xi_k), \chi (\eta_k)] \\
&= \{ 0_{\real^j} \} \times [\chi (\xi_k), \chi (\eta_k)]
\end{align*}
and thus
\[
G [\widetilde{\chi} (x_k), \widetilde{\chi} (y_k)]
= \{ 0_{\real^j} \} \times G[\chi (\xi_k), \chi (\eta_k)].
\]
(Parallel translating the secant $[\chi (\xi_k), \chi (\eta_k)]$
in $\real^n$ to $0_{\real^n}$ and then regarding it as a line in
$\real^{j+n}$ under the linear standard embedding
$\real^n = \{ 0_{\real^j} \} \times \real^n \subset \real^j \times \real^n$,
is the same as parallel translating
the secant $\{ 0_{\real^j} \} \times [\chi (\xi_k), \chi (\eta_k)]$
in $\real^{j+n}$ to $0_{\real^{j+n}}$.
See Example \ref{exple.gaussmapsofaffinelines}.)
As $G[\chi (\xi_k), \chi (\eta_k)]$ converges to $\ell$,
$\{ 0_{\real^j} \} \times G[\chi (\xi_k), \chi (\eta_k)]$ converges to
\begin{equation} \label{equ.welliszerotimesell}
\widetilde{\ell} := \{ 0_{\real^j} \} \times \ell.  
\end{equation}
Therefore, $G [\widetilde{\chi} (x_k), \widetilde{\chi} (y_k)]$
converges to $\widetilde{\ell}$.

We turn to (R3), where we need to explain that
$G(T_{\widetilde{\chi} (y_k)} \widetilde{\chi} (S\cap \widetilde{U}))$ converges in
the Grassmannian $G(d,n+j)$, $d=\dim S,$ to a point $\widetilde{\tau}.$
We shall first compute what the manifold $S\cap \widetilde{U}$
looks like in the coordinate system $\widetilde{\chi}$.
We claim that
\begin{equation} \label{equ.chitildesintutildeisrjtimeschismodgintu}
\widetilde{\chi}(S \cap \widetilde{U})
 = \real^j \times \chi ((S/G) \cap U).
\end{equation}
To prove this claim,
let $(g,\zeta)$ be a point in $U_e \times ((S/G)\cap U).$
Then $\phi^{-1} (g,\zeta) \in \phi^{-1} (U_e \times U) = \widetilde{U}$
and
$\pi \phi^{-1} (g,\zeta) = \pro_2 (g,\zeta) = \zeta \in S/G.$
Hence,
$\phi^{-1} (g,\zeta) \in \pi^{-1} (S/G) \cap \widetilde{U},$
from which we deduce that
\[ (g,\zeta) = \phi(\phi^{-1} (g,\zeta)) \in
  \phi (\pi^{-1} (S/G) \cap \widetilde{U}). \]
Therefore,
\[ U_e \times ((S/G)\cap U) \subset 
   \phi (\pi^{-1} (S/G) \cap \widetilde{U}). \]  
Conversely, suppose that
$(g,\zeta) \in \phi (\pi^{-1} (S/G) \cap \widetilde{U})$.
Then
$\phi^{-1} (g,\zeta) \in \pi^{-1} (S/G) \cap \widetilde{U}$,
so that
$\zeta = \pro_2 (g,\zeta) = \pi \phi^{-1} (g,\zeta) \in S/G.$
Furthermore, $\zeta \in U$, as $\im \phi = G\times U$.
Therefore, $\zeta \in (S/G) \cap U$.
Since
\[ \phi^{-1} (g,\zeta) \in
    \pi^{-1} (S/G) \cap \widetilde{U} \subset \widetilde{U}
    = \phi^{-1} (U_e \times U), \]
the pair $(g,\zeta)$ is a point of $U_e \times U$. In particular,
we find that $g\in U_e$. We conclude that
$(g,\zeta) \in U_e \times ((S/G)\cap U)$.
We have shown that
\[
   U_e \times ((S/G)\cap U) 
   = \phi (\pi^{-1} (S/G) \cap \widetilde{U}). 
\]
Using this equation, we compute
\begin{align*}
\widetilde{\chi} (S\cap \widetilde{U})
&= (\gamma \times \chi) \phi (S\cap \widetilde{U}) 
 = (\gamma \times \chi) \phi (\pi^{-1} (S/G) \cap \widetilde{U}) \\
&= (\gamma \times \chi) (U_e \times ((S/G)\cap U)) 
  = \gamma (U_e) \times \chi ((S/G) \cap U) \\
&= \real^j \times \chi ((S/G) \cap U),
\end{align*}
as claimed. This establishes (\ref{equ.chitildesintutildeisrjtimeschismodgintu}).
Using this description of $\widetilde{\chi} (S\cap \widetilde{U})$,
we compute the tangent space at $\widetilde{\chi} (y_k)$:
\begin{align*}
T_{\widetilde{\chi} (y_k)} \widetilde{\chi} (S\cap \widetilde{U})
&= T_{\widetilde{\chi} (\phi^{-1} (e,\eta_k))} (\real^j \times \chi ((S/G)\cap U))
 = T_{(\gamma (e), \chi (\eta_k))} (\real^j \times \chi ((S/G)\cap U))\\
&= T_{(0, \chi (\eta_k))} (\real^j \times \chi ((S/G)\cap U))
 = T_0 (\real^j) \times T_{\chi (\eta_k)} \chi ((S/G)\cap U).
\end{align*}
Applying the Gauss map $G$ in $\real^j \times \real^n,$ the planes
\begin{align*}
G_{j+n}(T_{\widetilde{\chi} (y_k)} \widetilde{\chi} (S\cap \widetilde{U}))
&= G_{j+n} (T_0 (\real^j) \times T_{\chi (\eta_k)} \chi ((S/G)\cap U)) \\
&= \real^j \times G_n (T_{\chi (\eta_k)} \chi ((S/G)\cap U))
\end{align*}
(see Example \ref{exple.gaussmapsofsplitaffineplanes})
converge to the plane
\begin{equation} \label{equ.wtrjtimest}
\widetilde{\tau} := \real^j \times \tau, 
\end{equation}
using (R3) for $(\eta_k)$ in the chart $\chi$.
This finishes the verification of (R1) -- (R3) for $x, (x_k), (y_k)$
in the coordinate system $\widetilde{\chi}$.

The relation $R/G \subset \overline{S/G}$ implies that
$R \subset \overline{S}$.
By assumption, $(R,S)$ is (B)-regular at $x\in R$.
This means that there exists \emph{some} chart around $x$ in which,
whenever (R1) -- (R3) are satisfied,
the limit of secants is contained in the limit of tangent planes
associated to $x,(x_k),(y_k)$. But then this is true in \emph{any}
chart around $x$, in particular in the chart $\widetilde{\chi}$.
Having already verified (R1) -- (R3) in the coordinate system 
$\widetilde{\chi}$, we therefore conclude that
$\widetilde{\ell} \subset \widetilde{\tau}.$
Thus by (\ref{equ.welliszerotimesell}) and (\ref{equ.wtrjtimest}),
$\{ 0_{\real^j} \} \times \ell \subset \real^j \times \tau.$
Applying the standard projection
$\pro_2: \real^j \times \real^n \to \real^n,$ we have
\[ \pro_2 (\{ 0_{\real^j} \} \times \ell) = \ell,~
   \pro_2 (\real^j \times \tau) = \tau  \]
and thus
\[ \ell = \pro_2 (\{ 0_{\real^j} \} \times \ell) \subset 
   \pro_2 (\real^j \times \tau) = \tau \]
in $\real^n$, as was to be shown.
\end{proof}

A Whitney stratified space $(X,\Sa)$ of dimension $n$
is called a \emph{pseudomanifold} if
$X^\reg$ is dense in $X$ and whenever a stratum in $\Sa$ is not
contained in $X^\reg$, its dimension is at most $n-2$.

\begin{lemma} \label{lem.quotientoffreeispseudomfd}
We adopt the assumptions of Theorem \ref{thm.orbitspaceoffreeisbregular};
in particular $G$ acts freely.
If $(X,\Sa)$ is a pseudomanifold, then $(X/G, \Sa/G)$ is a pseudomanifold.
\end{lemma}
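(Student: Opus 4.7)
The plan is to verify the two pseudomanifold conditions for $(X/G, \Sa/G)$ using Theorem \ref{thm.orbitspaceoffreeisbregular} (which already supplies the Whitney (B)-regular structure), together with the openness of the orbit projection $\pi \colon X \to X/G$ and the dimension formula $\dim(S/G) = \dim S - \dim G$ for every stratum $S \in \Sa$, coming out of Lemma \ref{lem.smodgisstratn}.

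First I would set up the dimension bookkeeping. Let $n = \dim X$ and $j = \dim G$. Since $G$ acts freely and smoothly on each $S\in \Sa$, the formula $\dim(S/G)=\dim S - j$ gives $\dim(X/G) = n-j$. Moreover, $X^\reg \subset X$ is by definition the union of the dimension-$n$ strata of $\Sa$, each of which is $G$-invariant by hypothesis; hence $X^\reg$ is itself $G$-invariant, and $\pi(X^\reg) = X^\reg/G$ coincides with $(X/G)^\reg$, the union of the dimension-$(n-j)$ strata of $\Sa/G$.

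Second, I would establish density of $(X/G)^\reg$ in $X/G$. Given a nonempty open $U \subset X/G$, openness of the orbit projection makes $\pi^{-1}(U)$ a nonempty open subset of $X$; density of $X^\reg$ in $X$ then supplies a point $x \in X^\reg \cap \pi^{-1}(U)$, and $\pi(x) \in (X/G)^\reg \cap U$ exhibits a point of $(X/G)^\reg$ in $U$.

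Third, I would verify the codimension-two bound. Suppose a stratum $T = S/G \in \Sa/G$ is not contained in $(X/G)^\reg$. Then $\dim T < n-j$, hence $\dim S < n$, so $S$ is not contained in $X^\reg$. The pseudomanifold hypothesis applied to $(X,\Sa)$ gives $\dim S \leq n-2$, and therefore $\dim T = \dim S - j \leq n-j-2$, as required. There is no real obstacle in this argument; it is essentially dimension bookkeeping, organized around the openness of $\pi$ and the relation $\dim(S/G) = \dim S - \dim G$ guaranteed by the freeness of the action.
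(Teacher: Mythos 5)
Your proof is correct and follows essentially the same argument as the paper: the same dimension bookkeeping via $\dim(S/G)=\dim S - \dim G$, density of $(X/G)^\reg$ inherited from density of $X^\reg$, and the codimension-two bound by pushing the bound on $\dim S$ down to $S/G$. The only differences are cosmetic: the paper proves density by lifting a point and approximating the lift by a sequence in $X^\reg$, whereas you pull back an open set; and note that $\pi^{-1}(U)$ is open by \emph{continuity} of $\pi$ (and nonempty by surjectivity), not by openness of $\pi$, though this slip does not affect the validity of the argument.
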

\begin{proof}
Let $n$ denote the dimension of $X$ and $j$ the dimension of $G$.
Thus there is an $n$-dimensional stratum $S$ of $X$. By freeness
(and compactness of $G$), the quotient
$S/G$ is a smooth manifold of dimension $n-j$. If $R/G$ is any
stratum of $X/G$, then $\dim R \leq n$, so $\dim (R/G)\leq n-j.$
This shows that $\dim (X/G) = n-j.$

We prove that the regular part of $X/G$ is dense in $X/G$:
Let $y$ be a point in $X/G$ and let
$\pi: X \to X/G$ denote the canonical quotient map.
Choose a point $x\in X$ such that $\pi (x)=y$.
As $(X,\Sa)$ is a pseudomanifold, the regular part $X^\reg$ is
dense in $X$.
Thus there is a sequence $(x_i) \subset X^\reg$ of regular points
such that $(x_i)$ converges to $x$ as $i\to \infty$.
As $\pi$ is continuous, $y_i := \pi (x_i)$ converges to $\pi (x)=y$.
We claim that the points $y_i$ are contained in $(X/G)^\reg$.
Fix $i$.
As $x_i$ lies in $X^\reg$, there is a stratum $S\in \Sa$ such that
$\dim S=n$ and $x_i\in S$. 
Then $y_i = \pi (x_i) \in \pi (S) = S/G$ and $\dim (S/G)=n-j$ by freeness.
Since $\dim (X/G)=n-j$, this shows that $y_i$ is in $(X/G)^\reg$.
We have shown that $(X/G)^\reg$ is dense in $X/G$.

Let $S/G \in \Sa/G$ be a stratum of $X/G$ which is not contained in $(X/G)^\reg$.
Then $\dim (S/G) < \dim (X/G)=n-j$.
By freeness, $\dim (S/G) = (\dim S)-j$.
Therefore, $(\dim S)-j < n-j$, i.e. $\dim S<n$.
In particular, $S$ is not contained in $X^\reg$.
Since $(X,\Sa)$ is a pseudomanifold, we have $\dim S \leq n-2$
and thus $\dim (S/G) = (\dim S)-j \leq n-j-2$, as was to be shown.
\end{proof}

\section{Whitney Stratified Approximations to the Borel Construction}
\label{sec.whitneyapproxtoborel}

Suppose that $G$ is a compact Lie group which acts smoothly on
a manifold $M$ and that
$(X,\Sa),$ $X\subset M,$ is a Whitney stratified subset
such that $X$ is $G$-invariant and the induced action of $G$
on $X$ is compatible with the stratification $\Sa$.
Let $EG \to BG$ be a universal $G$-bundle.
Borel's homotopy quotient $X_G = EG \times_G X$, giving rise to
equivariant cohomology, is generally infinite dimensional.
Our construction of equivariant $L$-classes requires approximations
$X_G (k)$ of $X_G$, $k=1,2,\ldots,$ that are finite dimensional.
Beyond finite dimensionality, the demands on the approximations are the
following:
If $X$ is compact, then every $X_G (k)$ should be compact.
The corresponding approximations $M_G (k)$ of $M_G = EG \times_G M$
should be finite-dimensional smooth manifolds such that
$X_G (k) \subset M_G (k)$ is a Whitney stratified subset.
Moreover, if $X$ is a pseudomanifold that satisfies the Witt condition,
then $X_G (k)$ should inherit these properties.
Such approximations will be constructed in the present section.
They are based on compact smooth Stiefel manifolds $EG_k$, 
endowed with a free and smooth $G$-action, that serve
as finite dimensional $(k-1)$-connected approximations to the free contractible
$G$-space $EG$ as $k\to \infty$.
The desired approximations of $X_G$ will then have the form
$X_G (k) = EG_k \times_G X$. The key point is to show that they are Whitney stratified
in the smooth manifold $M_G (k) = EG_k \times_G M$
(Theorem \ref{thm.xgkwhitneystratpsdmfdwitt}).
We will also prove that $X_G (k)$ inherits pseudomanifold and Witt properties
from $X$.\\

We start with some general preliminaries.
Let $G$ be a Lie group and $K\subset H\subset G$ closed subgroups.
The multiplication $G\times G\to G$ restricts to a free right action
$G\times H\to G$. 
The notation $G/K$ will mean the set of \emph{left} cosets $gK$ of $K$ in $G$.
Suppose that $H$ is compact and $K$ is normal in $H$.
Then $H/K$ is a Lie group,
the free right action $G\times H\to G$ descends to a free right action
$(G/K) \times (H/K) \to G/K,$ and
the projection $G/K \to G/H$ is a smooth (locally trivial) principal
$(H/K)$-bundle. Note that $G/H$ is diffeomorphic to $(G/K)/(H/K)$.

Let $G$ be a compact Lie group of dimension $d$.
There exists an $n$ such that $G$ can be embedded in the orthogonal
group $\Or (n)$ as a closed subgroup $G\subset \Or (n)$.
We choose and fix such an embedding.
We use the convention $\Or (0) = \{ 1 \}$, the trivial group. This allows
us to take $n=0$ when $G$ is the trivial group.
The sequence
$\Or_k := \Or (n+k),$ $k=1,2,3,\ldots,$
of orthogonal groups is a nested sequence
\[ \Or_1 \subset \Or_2 \subset \Or_3 \subset \cdots \]
of closed subgroups by mapping $A\in \Or_k$ to
\[   
\begin{pmatrix} A & 0 \\ 0 & 1
\end{pmatrix} \in \Or_{k+1}.
\]
Using the standard closed embedding $\Or (n) \times \Or (k) \subset \Or (n+k)$
given by
\[
(A,B) \mapsto \begin{pmatrix} A & 0 \\ 0 & B \end{pmatrix} =: A\oplus B,
\]
the diagram of closed subgroup inclusions
\[ \xymatrix@R=15pt{
\Or (n+k) \ar@{^{(}->}[r] & \Or (n+k+1) \\
\Or (n) \times \Or (k) \ar@{^{(}->}[u] \ar@{^{(}->}[r]
& \Or (n) \times \Or (k+1) \ar@{^{(}->}[u] \\
G \times \Or (k) \ar@{^{(}->}[u] \ar@{^{(}->}[r]
& G \times \Or (k+1) \ar@{^{(}->}[u]
} \]
commutes.
Let $K_k$ be the closed subgroup of $\Or_k$ given by
$K_k := 1_n \times \Or (k) \subset \Or_k$ and
set 
\[ EG_k := \Or_k / K_k. \] 
Homogeneous spaces obtained as quotients of a Lie group by a closed subgroup
are smooth manifolds such that
the quotient map is smooth and the action of the Lie group on the quotient
is smooth. Thus $EG_k$ is a smooth $\Or_k$-manifold.
It is compact, as $\Or_k$ is compact.
In fact, $EG_k = V_n (\real^{n+k})$
is the Stiefel manifold of orthonormal $n$-frames in $\real^{n+k}$ 
and its dimension is
\[ \dim EG_k = nk + \smlhf n(n-1). \]
For example, when $n=1$,  
$EG_k = V_1 (\real^{1+k}) = S^k$ and
when $n=2$,
$EG_k = V_2 (\real^{2+k}) = S (TS^{k+1})$ is
the unit sphere bundle of the tangent bundle of the $(k+1)$-sphere,
$\dim EG_k =2k+1.$
Let $H_k \subset \Or_k$ be the closed subgroup given by
\[  H_k := G \times \Or (k) \subset 
      \Or (n) \times \Or (k) \subset \Or (n+k) = \Or_k. \]
The subgroup $H_k$ contains $K_k$ as a normal subgroup.     
Applying the above preliminary considerations to
the closed subgroups
$K_k \lhd H_k < \Or_k,$
we deduce that
$H_k /K_k$ is a Lie group, the free action
$\Or_k \times H_k \to \Or_k$ descends to a free action
$(\Or_k/K_k) \times (H_k/K_k) \to \Or_k/K_k,$ and
the projection 
\[ p_k: EG_k = \Or_k/K_k \to \Or_k/H_k \] 
is a smooth locally trivial principal
$(H_k/K_k)$-bundle. We note that $\Or_k/H_k$ is diffeomorphic to 
$(\Or_k/K_k)/(H_k/K_k)$. The structure group $H_k /K_k$ is
\[ H_k / K_k = (G\times \Or (k))/(1_n \times \Or (k)) = G. \]
In particular, $EG_k$ is a free $G$-space. The notation $EG_k$
indicates that $EG_k$ consists of an underlying space together
with a particular $G$-action.
If two groups $G,G'$ are both embedded in $\Or (n)$,
then the underlying spaces of $EG_k$ and $EG'_k$ are equal,
but the notation reminds us that $EG_k$ is to be considered as a free $G$-space,
while $EG'_k$ is to be considered as a free $G'$-space.
The base space
\[ BG_k := \Or_k / H_k  \]
is a compact smooth manifold.
Thus we obtain a smooth, locally trivial principal $G$-bundle
\[ p_k: EG_k \longrightarrow BG_k, \]
whose total and base space are both compact smooth manifolds.
This principal bundle structure also shows that
\[ 
\dim BG_k = \dim EG_k - \dim G = nk + \smlhf n(n-1) - d,~ d=\dim G.
\]
We shall henceforth briefly write
\[ a := \smlhf n(n-1) - d. \]

\begin{remark} \label{rem.orthoandspecialortho}
The canonical embedding
$\SO (n+k) \subset \Or (n+k)$ induces for $k\geq 1$ a 
smooth identification
\[ \frac{\SO (n+k)}{1_n \times \SO (k)}
   \cong
   \frac{\Or (n+k)}{1_n \times \Or (k)} = EG_k. \]
If $G$ is a closed subgroup of $\SO (n)$, 
then this identification is $G$-equivariant, 
and one obtains a smooth identification
\[ \frac{\SO (n+k)}{G \times \SO (k)}
   \cong
   \frac{\Or (n+k)}{G \times \Or (k)} = BG_k. \]
Since $\Or (n)$ can be embedded into $\SO (n+1)$
(via $A\mapsto A\oplus (\det A)$), we could therefore also work with embeddings 
of $G$ into special orthogonal groups.
\end{remark}

The connectivity of Stiefel manifolds is well-known
(see e.g. Husemoller \cite[Ch. 8.11, p. 103]{husemoller}):
\begin{lemma} \label{lem.egkiskm1conn}
The manifold $EG_k$ is $(k-1)$-connected.
\end{lemma}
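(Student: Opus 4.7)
The plan is to prove $(k-1)$-connectedness by induction on the frame size $n$, using the standard sphere fibration obtained by forgetting all but the first vector of a frame. Recall that as a smooth manifold $EG_k$ is the Stiefel manifold $V_n(\real^{n+k})$ of orthonormal $n$-frames in $\real^{n+k}$.

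For the base case $n=0$, $G$ is the trivial group and $EG_k = \Or (k)/\Or (k)$ is a point, which is trivially $(k-1)$-connected. For $n\geq 1$, I would set up the smooth locally trivial fiber bundle
\[
p\colon V_n(\real^{n+k}) \longrightarrow S^{n+k-1},\quad (v_1,\ldots,v_n) \longmapsto v_1,
\]
whose fiber over a unit vector $v_1$ is the space of orthonormal $(n-1)$-frames in the orthogonal hyperplane $v_1^\perp \cong \real^{n+k-1}$. This yields
\[
V_{n-1}(\real^{(n-1)+k}) \longrightarrow V_n(\real^{n+k}) \stackrel{p}{\longrightarrow} S^{n+k-1}.
\]

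Next I would invoke the inductive hypothesis: the fiber $V_{n-1}(\real^{(n-1)+k})$ is $(k-1)$-connected. The base $S^{n+k-1}$ is $(n+k-2)$-connected, hence also $(k-1)$-connected since $n\geq 1$. The associated long exact sequence of homotopy groups
\[
\pi_{i+1}(S^{n+k-1}) \longrightarrow \pi_i(V_{n-1}(\real^{(n-1)+k})) \longrightarrow \pi_i(V_n(\real^{n+k})) \longrightarrow \pi_i(S^{n+k-1})
\]
sandwiches $\pi_i(V_n(\real^{n+k}))$ between two trivial groups for every $0\leq i \leq k-1$, and path connectedness of total space for $i=0$ follows from path connectedness of fiber and base. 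Hence $EG_k = V_n(\real^{n+k})$ is $(k-1)$-connected, completing the induction.

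There is no real obstacle here; the only care needed is keeping the indices straight so that the fiber in the inductive step appears precisely in the form $V_{n-1}(\real^{(n-1)+k})$ to which the hypothesis applies, and verifying that the quotient description $EG_k = \Or (n+k)/(1_n \times \Or (k))$ does coincide with the Stiefel manifold $V_n(\real^{n+k})$ by sending a coset to the first $n$ columns of a representative matrix. This is why the statement is quoted as well-known, with a reference to Husemoller.
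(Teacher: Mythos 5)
Your proof is correct and is exactly the standard inductive argument via the sphere fibrations $V_{n-1}(\real^{(n-1)+k}) \to V_n(\real^{n+k}) \to S^{n+k-1}$ that underlies the connectivity statement the paper simply cites from Husemoller without proof. The index-bookkeeping is handled properly (the fiber appears precisely in the form needed for the inductive hypothesis, and $n\geq 1$ guarantees the base sphere is at least $(k-1)$-connected), so there is nothing to add.
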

Since the relation $K_k = \Or_k \cap K_{k+1}$ holds in $\Or_{k+1}$
(Mimura, Toda \cite[p. 88]{mimuratoda}),
there is an induced submanifold inclusion
\[ EG_k = \Or_k / K_k 
    = \Or_k / (\Or_k \cap K_{k+1}) \subset \Or_{k+1} / K_{k+1} = EG_{k+1},  \]
which is $G$-equivariant, since
\[ (A_{n+k} \cdot (g_n \oplus 1_k)) \oplus 1_1
   = (A_{n+k} \oplus 1_1)\cdot (g_n \oplus 1_{k+1}) \]
for any $A_{n+k} \in \Or (n+k),$ $g_n \in G \subset \Or (n)$.   
The codimension of this inclusion is independent of $k$, in fact
\[
 \dim EG_{k+1} - \dim EG_k = n.
\]
Similarly, as the relation $H_k = \Or_k \cap H_{k+1}$ holds in $\Or_{k+1}$,
there is an induced submanifold inclusion
\[ \beta_k: BG_k = \Or_k / H_k 
    = \Or_k / (\Or_k \cap H_{k+1}) \subset \Or_{k+1} / H_{k+1} = BG_{k+1}  \]
such that there is a commutative diagram
\begin{equation}  \label{dia.egktoegkplusone}
\xymatrix{
EG_k \ar[d]_{p_k} \ar@{^{(}->}[r] & EG_{k+1} \ar[d]^{p_{k+1}} \\
BG_k \ar@{^{(}->}[r]^{\beta_k} & BG_{k+1}
} 
\end{equation}
Since the horizontal inclusion of total spaces is $G$-equivariant, this
constitutes a morphism of principal $G$-bundles and the diagram is cartesian.
The codimension of the inclusion $BG_k \subset BG_{k+1}$ equals
the codimension of $EG_k$ in $EG_{k+1},$
\[
 \dim BG_{k+1} - \dim BG_k =n.
\]
As $k\to \infty$, Lemma \ref{lem.egkiskm1conn} shows that the bundles 
$p_k$ may serve as a
finite-dimensional smooth approximation to the 
universal principal $G$-bundle $EG \to BG$ by taking
\[ EG := \bigcup_{k=1}^\infty EG_k,~
   BG := \bigcup_{k=1}^\infty BG_k. \]

If $E$ and $X$ are left $G$-spaces, then the \emph{diagonal action}
of $G$ on the product $E\times X$ is the left action given by
$g \cdot (e,x) = (ge,gx),$ $g\in G,$ $e\in E,$ $x\in X$.
If $E$ is a \emph{right} $G$-space and $X$ a left $G$-space, then
$E$ becomes a left $G$-space by $ge := eg^{-1}$ so that the
above formula applies, in which case we have
$g \cdot (e,x) = (ge,gx) = (eg^{-1}, gx)$.
The twisted product of $E$ and $X$ is the orbit space
$E \times_G X := (E\times X)/G$ under the diagonal action.
If $G$ acts freely on $E$, then the diagonal action on $E\times X$
is free as well.

Let $G$ be a compact Lie group which acts smoothly from the left on
a manifold $M$. 
As $G$ acts freely and smoothly on $EG_k$, its diagonal action on the 
product manifold $M' := EG_k \times M$ is also free and smooth.
Hence, as $G$ is compact, the orbit space
\[ M_G (k) := EG_k \times_G M = M'/G  \]
is a smooth manifold.
Suppose that
$(X,\Sa),$ $X\subset M,$ is a Whitney stratified subset
such that $X$ is $G$-invariant and the induced action of $G$
on $X$ is compatible with the stratification $\Sa$.
The diagonal action of $G$ on $M'$ leaves the subspace 
$X' := EG_k \times X \subset M'$
invariant. The restriction of this action to $X'$ is the
diagonal action of $G$ on $X'$.
Set
\[ X_G (k) := EG_k \times_G X = X'/G.   \]
Let $q_k: X_G (k) \to BG_k$ be the factorization of the 
composition $EG_k \times X \to EG_k \stackrel{p_k}{\longrightarrow} BG_k$
by the quotient map $EG_k \times X \to X_G (k)$.
Then $(X_G (k), q_k, BG_k, X, G)$ is the fiber bundle with
fiber $X$ associated to the principal bundle
$(EG_k, p_k, BG_k, G)$.
In particular, the projection map $q_k$ is locally trivial.

The equivariant closed inclusion $X\subset M$ induces 
an equivariant closed inclusion $X' \subset M'$ and
a map
$EG_k \times_G X \to EG_k \times_G M,$
which is a closed inclusion
\[ X_G (k) \subset M_G (k). \]
The space $X_G (k)$ is endowed with a stratification as follows:
Let $\Sa' := EG_k \times \Sa$ be the product stratification
of $X'$ (Lemma \ref{lem.productstrat}).
According to Lemma \ref{lem.productwhitneystrat}, 
$(X', \Sa'),$
$X' \subset M',$ is a Whitney stratified subset of $M'$.
We note that the diagonal action on $X'$ is compatible with
the stratification $\Sa'$, since
every stratum $EG_k \times S,$ $S\in \Sa,$ is $G$-invariant
due to the $G$-invariance of $S$, and the induced action
$G\times (EG_k \times S) \to EG_k \times S$ is smooth due
to the smoothness of $G\times S\to S$ (and the smoothness
of $G\times EG_k \to EG_k$).
As $G$ acts freely and smoothly on $M'$,
Lemma \ref{lem.smodgisstratn} shows that
\[ \Sa_G (k) := \Sa'/G \]
is a stratification of $X'/G= X_G (k)$.

Let $\bar{m}$ and $\bar{n}$ denote lower and upper middle perversity
functions in the sense of intersection homology theory.
These are dual to each other and if $(X,\Sa)$ is an oriented Whitney stratified
pseudomanifold of dimension $n$, then the shifted Verdier dual 
$\dual IC^{\bar{m}}_X [n]$ 
of the rational intersection chain complex $IC^{\bar{m}}_X$ of sheaves is
isomorphic to $IC^{\bar{n}}_X$ in the derived category of constructible
complexes of sheaves on $X$.
A Whitney stratified pseudomanifold $(X,\Sa)$ is said
to satisfy the \emph{Witt condition} if
the intersection homology groups $IH^{\bar{m}}_k (L^{2k};\rat)$
of all even dimensional links $L^{2k}$ of (connected components of)
strata vanishes. This holds if and only if the canonical morphism
$IC^{\bar{m}}_X \to IC^{\bar{n}}_X$ is a quasi-isomorphism, in which
case $IC^{\bar{m}}_X$ is self-dual when $X$ is oriented
(Siegel \cite{siegel}, Goresky and MacPherson \cite[5.6.1]{gmih2}).
If $(X,\Sa)$ satisfies the Witt condition, we will call it a
\emph{Witt space}. 
For example, if $\Sa$ contains only strata of even codimension, then $(X,\Sa)$
is evidently a Witt space. Pure-dimensional complex algebraic varieties $X$
can be Whitney stratified in this way and thus are Witt spaces,
since they are also pseudomanifolds.
If $(X,\Sa)$ is a Witt space and $N$ a smooth manifold
without boundary, then $(X\times N, \Sa \times N)$ is a Witt space,
since both stratifications have the same links $L^{2k}$.

\begin{thm} \label{thm.xgkwhitneystratpsdmfdwitt}
Let $G$ be a compact Lie group which acts smoothly on
a smooth manifold $M$. Suppose that
$(X,\Sa),$ $X\subset M,$ is a Whitney stratified subset
such that $X$ is $G$-invariant and the induced action of $G$
on $X$ is compatible with the stratification $\Sa$.
Then
$(X_G (k),\Sa_G (k))$ is a Whitney stratified subspace of
the smooth manifold $M_G (k)$.
If $X$ is a pseudomanifold, then so is $X_G (k)$.
If $X$ is compact, then so is $X_G (k)$.
If $X$ satisfies the Witt condition, then so does $X_G (k)$.
\end{thm}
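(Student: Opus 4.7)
The plan is to combine the two regularity principles already at our disposal: first promote $X$ to the $G$-space $X' = EG_k \times X$ inside $M' = EG_k \times M$, then divide out by the resulting free diagonal action. For the Whitney regularity assertion, the paragraph preceding the theorem statement already records that $(X', \Sa')$ with $\Sa' = EG_k \times \Sa$ is a Whitney stratified subset of $M'$ via Lemma \ref{lem.productwhitneystrat}, and that the diagonal $G$-action on $M'$ is free, smooth, stratum-preserving on $X'$, with smooth induced action on each stratum. Theorem \ref{thm.orbitspaceoffreeisbregular} applied in this setting yields that $(X_G(k), \Sa_G(k)) = (X'/G, \Sa'/G)$ is a Whitney stratified subset of $M'/G = M_G(k)$. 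Compactness of $X_G(k)$ when $X$ is compact is immediate, since $X_G(k) = (EG_k \times X)/G$ is a continuous image of the compact product $EG_k \times X$, recalling that $EG_k$ is a compact smooth manifold.

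For the pseudomanifold property I would first verify it for $X'$ and then invoke Lemma \ref{lem.quotientoffreeispseudomfd}. The regular set of $X'$ equals $X^{\reg} \times EG_k$, dense in $X' = X \times EG_k$ because $X^{\reg}$ is dense in $X$. Any stratum of $\Sa'$ not contained in $(X')^{\reg}$ has the form $S \times EG_k$ with $S \in \Sa$ not contained in $X^{\reg}$; by hypothesis $\dim S \leq \dim X - 2$, whence $\dim(S \times EG_k) \leq \dim X' - 2$. Hence $(X', \Sa')$ is a pseudomanifold, and Lemma \ref{lem.quotientoffreeispseudomfd} gives that $X_G(k) = X'/G$ is one as well.

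The Witt condition is the part requiring the most care. It concerns only the intersection homology of even-dimensional links of strata, and I would argue that both operations preserve these links up to homeomorphism. For the product step, the link of a stratum $S \times EG_k$ at a point $(x, e)$ coincides with the link of $S$ at $x$, since a smooth chart on $EG_k$ about $e$ contributes a Euclidean factor to the local structure, and links are computed transverse to strata. For the quotient step, the principal $G$-bundle $X' \to X_G(k)$ locally trivialises as $W \times G \to W$ in a manner carrying strata to strata, so every point of $X_G(k)$ has a distinguished neighborhood stratified-homeomorphic to one in $X'$; consequently the link of $EG_k \times_G S$ at a point is homeomorphic to the link of $EG_k \times S$ upstairs, and therefore to a link of $S$ in $X$. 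The rational middle-perversity intersection homology of even-dimensional links of $X_G(k)$ therefore agrees with that of the corresponding links in $X$, which vanishes by the Witt hypothesis. The delicate input here is the invariance of links up to stratified homeomorphism for Whitney stratified pseudomanifolds, furnished by the local conical structure from Thom--Mather theory; beyond this the argument is a routine combination of Lemmas \ref{lem.productwhitneystrat}, \ref{lem.quotientoffreeispseudomfd} and Theorem \ref{thm.orbitspaceoffreeisbregular}.
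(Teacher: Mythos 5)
Your overall architecture — first pass to $X' = EG_k \times X \subset M' = EG_k \times M$ via Lemma \ref{lem.productwhitneystrat}, then divide by the free diagonal action via Theorem \ref{thm.orbitspaceoffreeisbregular} and Lemma \ref{lem.quotientoffreeispseudomfd} — matches the paper exactly for the Whitney regularity, pseudomanifold, and compactness assertions, and those parts are correct as you wrote them.

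For the Witt condition you take a genuinely different route. The paper works with the \emph{other} fiber bundle, namely $q_k: X_G(k) \to BG_k$ with fiber $X$ and base a smooth manifold: cover $BG_k$ by Euclidean charts $U_\alpha$, observe that $q_k^{-1}(U_\alpha) \cong U_\alpha \times X \cong \real^{nk+a} \times X$ is Witt because the product of a Witt space with a smooth manifold has the same links, and conclude by topological invariance of intersection homology together with the locality of the Witt condition. You instead go through the principal $G$-bundle $X' \to X_G(k)$ with fiber $G$: you show $X'$ is Witt, then want to bring this \emph{down} through the quotient. That direction requires a desuspension, not a suspension: locally $X' \cong W \times G$ with $W$ open in $X_G(k)$, so knowing $W \times G$ (hence $W \times \real^d$) is Witt must imply $W$ is Witt. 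This is true but nontrivial; the paper isolates exactly this as Lemma \ref{lem.wittdesuspension} (the Witt Desuspension Principle) and uses it only later, in Section \ref{sec.freeactions}, where the base of the relevant bundle is singular and the $X$-over-manifold trick is unavailable. Your phrase that every point of $X_G(k)$ has a neighborhood ``stratified-homeomorphic to one in $X'$'' is literally false on dimension grounds and is really this desuspension step in disguise; invoking Thom--Mather local conical structure is one way to make it rigorous, but it is heavier machinery than the theorem needs. The paper's route, using $q_k$ to compare $X_G(k)$ directly to products of $X$ with Euclidean space, buys you a one-step argument that avoids desuspension entirely; your route buys nothing extra here, though it does foreshadow the argument that becomes unavoidable in Section \ref{sec.freeactions}.
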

\begin{proof}
The compact Lie group $G$ acts smoothly and freely on the smooth
manifold $M'$. 
We have already explained above that the pair $(X',\Sa'),$ $X'\subset M',$ 
is a Whitney stratified subset
such that $X'$ is $G$-invariant and the induced action of $G$
on $X'$ is compatible with the stratification $\Sa'$.
By Theorem \ref{thm.orbitspaceoffreeisbregular} on free actions,
$(X'/G, \Sa'/G)$, $X'/G \subset M'/G$, is a Whitney stratified subset.
This subset is nothing but
$(EG_k \times_G X, \Sa'/G)$, $EG_k \times_G X \subset EG_k \times_G M$,
i.e. $(X_G (k), \Sa_G (k))$, $X_G (k) \subset M_G (k)$.

The product of a pseudomanifold with a smooth manifold, equipped with
the product stratification as in Lemma \ref{lem.productstrat}, is a
pseudomanifold.
Thus if $(X,\Sa)$ is a pseudomanifold, then $(X',\Sa')$ is a pseudomanifold,
and $(X'/G, \Sa'/G)$ is a pseudomanifold according to 
Lemma \ref{lem.quotientoffreeispseudomfd}, as $G$ acts freely on $M'$.
If $X$ is compact, then $X_G (K)$ is compact, since it is a quotient of the product
space $X' = EG_k \times X$, both of whose factors are compact.

Suppose $(X,\Sa)$ satisfies the Witt condition. We use the fiber bundle
$q_k: X_G (k) \to BG_k$ with fiber $X$.
As $BG_k$ is a manifold, it can be covered by open sets $U_\alpha$
which are homeomorphic to $\real^{nk+a}$.
Since $q_k$ is locally trivial, there are homeomorphisms 
$q^{-1}_k (U_\alpha) \cong U_\alpha \times X$.
As $(X,\Sa)$ satisfies the Witt condition, 
$(\real^{nk+a} \times X, \real^{nk+a} \times \Sa)$ satisfies the Witt condition.
Since intersection homology is topologically invariant,
$U_\alpha \times X$ and thus $q^{-1}_k (U_\alpha)$ satisfy the Witt condition.
So $X_G (k)$ is covered by open sets $q^{-1}_k (U_\alpha)$, each of which
are Witt. This implies that $X_G (k)$ is Witt.
\end{proof}

The equivariant inclusion $EG_k \hookrightarrow EG_{k+1}$
induces inclusions
\[ M_G (k) = EG_k \times_G M \hookrightarrow EG_{k+1} \times_G M = M_G (k+1) \]
and
\[ \xi_k: X_G (k) = EG_k \times_G X \hookrightarrow EG_{k+1} \times_G X = X_G (k+1), \]
which fit into a commutative square
\[ \xymatrix{
M_G (k) \ar@{^{(}->}[r] & M_G (k+1) \\
X_G (k) \ar@{^{(}->}[u] \ar@{^{(}->}[r] & X_G (k+1). \ar@{^{(}->}[u]
} \]
Thus a model of the Borel homotopy quotient $X_G$ is given by
\[ X_G = EG \times_G X
       = (\bigcup_{k=1}^\infty EG_k) \times_G X
       = \bigcup_{k=1}^\infty (EG_k \times_G X)
       = \bigcup_{k=1}^\infty X_G (k)
\]
and similarly $M_G = \bigcup_k M_G (k)$.

\section{Orientability}
\label{sec.orientability}

Let $X$ be a compact pseudomanifold, Whitney stratified in some ambient manifold.
The Goresky-MacPherson $L$-class $L_* (X)$ is defined only when the
pseudomanifold $X$ is orientable. Thus in order for $L_* (X_G (k))$
to be available, we must ensure that the approximating Borel pseudomanifolds
$X_G (k)$ can be oriented. When $X$ is a point, $X_G (k) = BG_k$,
so $BG_k$ should be an orientable manifold, at least an infinite
subsequence thereof. 
\begin{example}
We consider the group $G=\intg/_2$.
Choosing $n=1$ and the orthogonal embedding 
$\intg/_2 = \{ \pm 1 \} = \Or (1)$, we have
\[ (B\intg/_2)_k = \frac{\Or (1+k)}{\Or (1) \times \Or (k)}
    = \real \mathbb{P}^k, \]
which is orientable for $k$ odd, but not orientable for $k$ even.
As we will see, this is not due to an unsuitability of the group $\intg/_2$,
but rather to a choice of unsuitable embedding of it in an orthogonal group.
\end{example}

Any compact Lie group $G$ can be embedded in some orthogonal group
$\Or (n)$ in such a way that $G$ acts orientation preservingly on the
corresponding $EG_k$, $k\geq 2$. 
(See Proposition \ref{prop.gactsonegkorientpres}.)
We will show (Proposition \ref{prop.biinvorientgrpisorupondeloop}) that if $G$ is 
bi-invariantly orientable, then the corresponding $BG_k$ is
orientable and the fiber bundle $p_k: EG_k \to BG_k$ is orientable.
Note that the underlying fiber bundle of a principal $G$-bundle
is not generally oriented, even though the fiber $G$ is orientable
and, after having chosen a right-invariant orientation of $G$
(which always exists), the action of the structure group $G$ on the fiber $G$
by right translation is orientation preserving. The problem is that
while $G$ acts from the right on the total space, the transition functions
of the bundle act by \emph{left} translation.
(See Formula \ref{equ.transitionfnprincbundlelefttransl} below.)
This explains the
need for bi-invariant orientability of $G$; not every group is
bi-invariantly orientable.
Let $H^\BM_* (-)$ denote Borel-Moore homology.

\begin{lemma} \label{lem.connactionisorientpres}
Let $\overline{N}$ be an oriented manifold without boundary and $H$ a topological
group acting continuously on $\overline{N}$. If $H$ is path connected, then $H$
acts in an orientation preserving manner.
\end{lemma}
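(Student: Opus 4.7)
The plan is to reduce the claim to path-lifting in the orientation double cover. Let $n = \dim \oN$ and let $\pi : \hat{N} \to \oN$ denote the orientation double cover, whose fiber over $p \in \oN$ consists of the two generators of $H_n(\oN, \oN - \{p\}; \intg)$; the hypothesis that $\oN$ is oriented provides a distinguished continuous section $\omega : \oN \to \hat{N}$. For each $h \in H$, the homeomorphism $L_h : \oN \to \oN,$ $L_h(x) = h \cdot x,$ preserves orientation precisely when the induced isomorphism $(L_h)_*$ on local homology sends $\omega(p)$ to $\omega(h \cdot p)$ for every $p \in \oN$.

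Fix $h \in H$ and $p \in \oN$, and use path-connectedness of $H$ to choose a path $\gamma : [0,1] \to H$ with $\gamma(0) = e$ and $\gamma(1) = h$. Set $\beta(t) := \gamma(t) \cdot p$, a continuous path in $\oN$ from $p$ to $h \cdot p$. Extend $\gamma$ continuously to $\real$ by $\gamma(t) = e$ for $t \le 0$ and $\gamma(t) = h$ for $t \ge 1$. The map $\Gamma : \oN \times \real \to \oN \times \real,$ $\Gamma(x,t) = (\gamma(t) \cdot x, t),$ is a homeomorphism (with inverse $(y,t) \mapsto (\gamma(t)^{-1} \cdot y, t)$) by continuity of the $H$-action and of inversion in $H$. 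Since $\real$ is contractible and canonically oriented, the orientation double cover of the boundaryless product $\oN \times \real$ is naturally identified with $\hat{N} \times \real$, and functoriality of the double cover yields an induced homeomorphism $\hat{\Gamma}$ on it. Slicing, $\tilde{\beta}(t) := \hat{\Gamma}(\omega(p), t) \in \hat{N}$ is a continuous lift of $\beta$ starting at $\omega(p)$, and equals $(L_{\gamma(t)})_* \omega(p)$ on the nose. The composite $\omega \circ \beta$ is another continuous lift of $\beta$ through $\omega(p)$, so uniqueness of path lifting in the two-fold covering $\pi$ forces $\tilde{\beta}(t) = \omega(\beta(t))$ for all $t$. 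Evaluation at $t = 1$ yields $(L_h)_* \omega(p) = \omega(h \cdot p)$, as required.

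The step I expect to be the main obstacle is verifying continuity of the induced map on the orientation double cover associated to the family $\{L_{\gamma(t)}\}$: a priori the lifts $\widehat{L_{\gamma(t)}}$ exist pointwise by functoriality, but one needs them to vary continuously in $t$. Packaging the family into the single homeomorphism $\Gamma$ on the boundaryless manifold $\oN \times \real$ and applying the (standard, chart-by-chart) functoriality of the orientation double cover once to $\Gamma$ sidesteps any parameterized argument. Finally, should $\oN$ be disconnected, path-connectedness of $H$ forces each $L_h$ to preserve every connected component of $\oN$ (the induced map $H \to \pi_0(\oN)^{\pi_0(\oN)}$ is continuous with discrete target), so the argument above applies componentwise.
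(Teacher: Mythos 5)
Your argument is correct, but it takes a genuinely different route from the paper. The paper's proof is global: it observes that the map $(p,t)\mapsto\gamma(t)\cdot p$ is an isotopy from $\id_{\overline{N}}$ to $h$, invokes isotopy invariance of Borel--Moore homology to conclude $h_*=\id$, and in particular $h_*[\overline{N}]=[\overline{N}]$. Your proof is local: you work with the orientation double cover $\hat{N}\to\overline{N}$ and show directly that $(L_h)_*\omega(p)=\omega(h\cdot p)$ by unique path lifting, obtaining the needed continuity of $t\mapsto(L_{\gamma(t)})_*\omega(p)$ by packaging the family into the single homeomorphism $\Gamma$ of $\overline{N}\times\real$ and applying functoriality of the double cover once. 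Both proofs rest on the same geometric input (the isotopy through translations by $\gamma(t)$), but yours avoids invoking Borel--Moore homology and its isotopy invariance in favour of the more elementary covering-space machinery. The trade-off is that the paper's argument is shorter once one grants the Borel--Moore facts, while yours is more self-contained and makes the continuity issue explicit rather than burying it in a black box. One small point: your closing remark about disconnected $\overline{N}$ is superfluous — your lifting argument is carried out at a fixed $p$ and never requires $\overline{N}$ to be connected, so no reduction to components is needed (and as stated, the claim that $\pi_0(\overline{N})^{\pi_0(\overline{N})}$ is discrete would fail for infinitely many components, though this does not affect the proof).
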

\begin{proof}
The oriented manifold $\overline{N}$ has a fundamental class
$[\overline{N}] \in H^\BM_n (\overline{N};\intg),$ $n=\dim \overline{N}$.
An element $h\in H$ induces an automorphism
$h_*: H^\BM_n (\overline{N};\intg) \to H^\BM_n (\overline{N};\intg)$.
As $H$ is path connected, there exists a path 
$\gamma: I=[0,1] \to H$ from the identity $\gamma (0)=e$ to
$\gamma (1)=h$. The map $H: \overline{N} \times I \to \overline{N}$
given by $H(p,t) = \gamma(t)\cdot p$ is an isotopy from
$H_0 = \id_{\overline{N}}$ to $H_1 = h$. Since Borel-Moore homology
is isotopy invariant, we conclude that $h_* = \id$. In particular, 
$h_* [\overline{N}] = [\overline{N}]$, which implies that $h$ preserves
the orientation.
\end{proof}

\begin{prop} \label{prop.gactsonegkorientpres}
For any compact Lie group $G$, there exists an embedding
$G \subset \Or (n)$ such that $G$ acts orientation preservingly on
$EG_k$, $k\geq 2$.
\end{prop}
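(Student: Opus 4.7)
The plan is to arrange the embedding of $G$ into an orthogonal group so that it actually lands inside the \emph{special} orthogonal group. Once this is done, the $G$-action on $EG_k$ will factor through the path connected group $\SO(n+k)$, and Lemma \ref{lem.connactionisorientpres} will deliver orientation preservation at once.

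First, by the Peter--Weyl theorem, any compact Lie group $G$ admits a closed embedding into some $\Or(N)$. If that embedding already lies in $\SO(N)$, I would take it as given and set $n := N$. Otherwise, I would post-compose with the map $\Or(N) \to \SO(N+1)$, $A \mapsto A \oplus \det(A)$, already highlighted in Remark \ref{rem.orthoandspecialortho}. A routine check confirms this is an injective group homomorphism with image in $\SO(N+1)$ (since $\det(A \oplus \det(A)) = (\det A)^2 = 1$), so composing yields a closed embedding $G \hookrightarrow \SO(N+1)$, after which I set $n := N+1$. In either case one obtains an embedding $G \subset \SO(n)$, whence a chain of closed subgroup inclusions $G \subset \SO(n) \subset \SO(n+k) \subset \Or(n+k) = \Or_k$ for every $k$.

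For $k \geq 2$, Lemma \ref{lem.egkiskm1conn} shows that $EG_k$ is $(k-1)$-connected and in particular simply connected, hence orientable; I fix such an orientation. The $G$-action on $EG_k = \Or_k/K_k$ is by left translation and therefore factors through the left action of $\SO(n+k)$ on $EG_k$. Since $\SO(n+k)$ is path connected, Lemma \ref{lem.connactionisorientpres}, applied with $\overline{N} := EG_k$ and $H := \SO(n+k)$, yields that every element of $\SO(n+k)$, and in consequence every element of $G$, acts on $EG_k$ in an orientation preserving manner.

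The main ``obstacle'' here is conceptual rather than technical: one must notice that orientability of the action is a feature of the chosen embedding rather than of the abstract group $G$, as the preceding example of $\intg/_2 = \Or(1)$ illustrates. Once one aims for an embedding into a special orthogonal group, the proposition is a short consequence of the determinant trick in Remark \ref{rem.orthoandspecialortho} together with the isotopy argument packaged in Lemma \ref{lem.connactionisorientpres}.
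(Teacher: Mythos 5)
Your overall strategy is the same as the paper's: post-compose a given embedding with $A \mapsto A \oplus \det A$ to land in a special orthogonal group, then use simple connectivity of the Stiefel manifold and the isotopy argument of Lemma~\ref{lem.connactionisorientpres}. However, there is an error in your identification of the $G$-action, and as written the argument does not address the relevant action.

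You claim that the $G$-action on $EG_k = \Or_k/K_k$ ``is by left translation and therefore factors through the left action of $\SO(n+k)$.'' This is not the action under consideration. The $G$-action that makes $EG_k$ into a free $G$-space (and over which one forms $BG_k = EG_k/G$) is the \emph{right} multiplication action: it descends from the right action of $H_k = G \times \Or(k)$ on $\Or_k$ via the identification $H_k/K_k \cong G$, so $g \in G$ sends $A K_k \mapsto A(g \oplus 1_k) K_k$. This action factors through the right multiplication action of $\SO(n)$ (embedded as $\SO(n) \times 1_k \subset \SO(n+k)$), not through the left translation action of $\SO(n+k)$. Indeed, the left translation action of $G \times 1_k$ on $\Or_k/K_k$ is a genuinely different action — it is not even free in general — and proving that it preserves orientation says nothing about the principal bundle action one actually needs. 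The paper accordingly applies Lemma~\ref{lem.connactionisorientpres} with $H = \SO(n)$ acting (freely) by right multiplication, rather than with $H = \SO(n+k)$.

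The fix is minor: once you note that the relevant $G$-action is the right multiplication action, it extends to a right action of $\SO(n)$ on $EG_k$, and $\SO(n)$ is path connected, so Lemma~\ref{lem.connactionisorientpres} applies directly. With this correction your argument coincides with the paper's proof.
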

\begin{proof}
Let $G$ be a compact Lie group.
Choose an embedding $G\subset \Or (n-1)$ for some $n\geq 1$.
Composing with the embedding $\Or (n-1) \subset \SO (n)$
given by
\[ 
A \mapsto \begin{pmatrix} A & 0 \\ 0 & \det A \end{pmatrix},
\]
we obtain an embedding $G\subset \SO (n)$.
Consider the manifold 
$\overline{N} := \SO (n+k) / (1_n \times \SO (k)).$
The group $H := \SO (n)$, and hence $G$, acts freely on the connected manifold
$\overline{N}$.
For $k\geq 2$, the Stiefel manifold $\overline{N}$
is simply connected, so orientable. Choose an orientation of $\overline{N}$.
The group $H$ is path connected. 
Thus, by Lemma \ref{lem.connactionisorientpres}, it
acts in an orientation preserving manner on $\overline{N}$.
In particular, its subgroup $G$ acts orientation preservingly on $\overline{N}$.
By Remark \ref{rem.orthoandspecialortho}, 
$\overline{N}$ is $G$-equivariantly homeomorphic
to $EG_k = \Or (n+k)/(1_n \times \Or (k))$.
\end{proof}

Recall that any Lie group has precisely two right-invariant orientations.
We emphasize that the group $G$ appearing in the following lemma
need not be connected. The case of noncompact $M$ will be relevant for
the ensuing application of the lemma.
\begin{lemma} \label{lem.orientbaseofproduct}
Let $M$ be a connected orientable smooth manifold without boundary
and let $G$ be a compact Lie group.
Let $\omega_G$ be one of the two right-invariant orientations of $G$.
Given any orientation $\omega_{M\times G}$ of $M\times G$, the
following statements are equivalent:
\begin{enumerate}
\item The smooth right action of $G$ on $M\times G$ given by
 $(p,h)\cdot g = (p,hg)$ preserves the orientation $\omega_{M\times G}$.
\item There exists a unique orientation $\omega_M$ of $M$ such that
 the product orientation $\omega_M \times \omega_G$ agrees with $\omega_{M\times G}$.
\end{enumerate}
\end{lemma}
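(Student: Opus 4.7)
The plan is to parametrize all orientations of $M\times G$ by a sign function on $\pi_0(G)$ and then translate the action-preservation condition into a constancy statement about this sign function via right-invariance of $\omega_G$. First, I would fix an auxiliary orientation $\omega_M^0$ of $M$, which exists because $M$ is orientable and is unique up to sign because $M$ is connected. Since $M$ is connected, $\pi_0(M\times G) = \pi_0(G)$, so any orientation $\omega_{M\times G}$ is uniquely encoded by a function $s\colon \pi_0(G) \to \{\pm 1\}$ via
\[ \omega_{M\times G}|_{M\times G_\alpha} = s(\alpha)\cdot (\omega_M^0 \times \omega_G|_{G_\alpha}), \]
where $G_\alpha \subset G$ denotes the component indexed by $\alpha$. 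Condition (2) is then precisely the condition that $s$ be constant, the two constant values $\pm 1$ corresponding to $\omega_M = \pm \omega_M^0$.

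The direction (2) $\Rightarrow$ (1) is immediate: the right action of $g\in G$ on $M\times G$ is the diffeomorphism $\id_M \times R_g$, where $R_g$ denotes right translation by $g$, and right-invariance of $\omega_G$ means $R_g$ preserves $\omega_G$, hence $\id_M\times R_g$ preserves any product orientation $\omega_M\times \omega_G$.

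For (1) $\Rightarrow$ (2), I would note that $R_g$ carries $G_\alpha$ diffeomorphically onto the component $\alpha\cdot g \in \pi_0(G)$ containing $hg$ for any $h\in G_\alpha$, and right-invariance gives $R_g^*(\omega_G|_{G_{\alpha\cdot g}}) = \omega_G|_{G_\alpha}$. So the action-preservation condition (1) reduces to $s(\alpha\cdot g) = s(\alpha)$ for every $\alpha\in \pi_0(G)$ and every $g\in G$, i.e., $s$ must be constant on the orbits of the induced right action of $G$ on $\pi_0(G)$.

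The key step, which I expect to be the main technical point, is the observation that this right action of $G$ on $\pi_0(G)$ is transitive: given components $G_\alpha, G_\beta$ and any $h\in G_\alpha$, $h'\in G_\beta$, one has $R_{h^{-1}h'}(h) = h'$, so $\alpha\cdot (h^{-1}h') = \beta$. Transitivity forces $s$ to be globally constant, whence $\omega_{M\times G} = \pm(\omega_M^0\times\omega_G) = (\pm\omega_M^0)\times\omega_G$, establishing (2) with $\omega_M = \pm\omega_M^0$. Uniqueness of $\omega_M$ follows because the two product orientations arising from $\pm\omega_M^0$ differ on every component of $M\times G$, so at most one choice of $\omega_M$ agrees with $\omega_{M\times G}$.
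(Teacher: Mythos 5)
Your proposal is correct. The paper's own argument is only sketched: it suggests casting everything in terms of Borel–Moore fundamental classes, writing them as sums over connected components, and invoking the Künneth theorem for Borel–Moore homology $H^\BM_n(M;\intg)\otimes H_d(G_0;\intg)\to H^\BM_{n+d}(M\times G_0;\intg)$ to extract $\omega_M$ from the restriction of $\omega_{M\times G}$ to the piece $M\times G_0$ over the identity component. Your proof reaches the same decomposition (orientations on $M\times G$ correspond to sign functions on $\pi_0(G)$, because each $M\times G_\alpha$ is connected), but avoids Borel–Moore homology and the Künneth theorem entirely, replacing them with the elementary dichotomy of orientations on a connected manifold. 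The genuine content you make explicit, and which the paper's sketch leaves implicit, is \emph{why} the sign is forced to be consistent across all components of $G$ and not just on $G_0$: the action-preservation hypothesis~(1) combined with right-invariance of $\omega_G$ says the sign function is constant on orbits of the right $G$-action on $\pi_0(G)$, and that action is transitive. This cleanly isolates where~(1) enters and makes the (1)~$\Rightarrow$~(2) direction self-contained, which is a real expository improvement over the sketch in the paper.
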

The proof is straightforward and may for example be cast in terms
of fundamental classes in Borel-Moore homology. It is useful to write
these classes as sums over connected components. If $G_0$ is the identity
component of $G$, then one can use the
Künneth theorem for Borel-Moore homology
(Bredon \cite[p. 366, Thm. V.14.4]{bredonsheaftheory}) to obtain
a cross product isomorphism
\[  H^\BM_n (M;\intg) \otimes H_d (G_0;\intg) \stackrel{\times}{\longrightarrow}  
     H^\BM_{n+d} (M\times G_0;\intg) \cong \intg, \]
$n=\dim M,$ $d=\dim G,$ which can be used to construct $\omega_M$.

\begin{defn}
A Lie group is called \emph{bi-invariantly orientable} if it has
an orientation which is simultaneously left- and right-invariant.
\end{defn}
\begin{example}
Discrete groups are bi-invariantly orientable by assigning to each
point the orientation $+1$.
\end{example}
\begin{example}
Abelian Lie groups are bi-invariantly orientable, since every Lie group
has a left-invariant orientation and in the abelian case, left and
right translation by an element coincide.
\end{example}
\begin{example}
The orthogonal group $\Or (2)$ is not bi-invariantly orientable.
The conjugation action $(g,h) \mapsto ghg^{-1}$ of $\Or (2)$ on
itself restricts to an action of $\Or (2)$ on the identity component
$\SO (2) \subset \Or (2)$, which does not preserve any orientation
of $\SO (2)$, since conjugation by the element 
\[ g= \begin{pmatrix} 0 & 1 \\ 1 & 0\end{pmatrix} \in \Or (2) \]
is a reflection on $\SO (2)=S^1$.
\end{example}

The property of bi-invariant orientability can be rephrased as
a property of the adjoint representation:
\begin{prop} \label{prop.biinvorientadjrep}
A Lie group is bi-invariantly orientable if and only if
its adjoint representation is orientation preserving.
\end{prop}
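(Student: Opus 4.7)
The plan is to pass between the action of $G$ on itself by right translation and the conjugation action, using the fact that the adjoint representation is by definition the differential at the identity of conjugation.

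First, I would recall that every Lie group admits a left-invariant orientation: choose any orientation $\omega_e$ of $T_e G = \mfg$ and transport it by left translations $L_g$ to obtain an orientation $\omega$ of $G$ with $dL_g(\omega_h) = \omega_{gh}$ for all $g,h\in G$. Bi-invariant orientability is thus equivalent to the existence of some choice of $\omega_e$ whose associated left-invariant orientation $\omega$ is also preserved by every right translation $R_g$.

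Next, I would compute $dR_g$ in terms of left translations. Using the commutation $R_g \circ L_h = L_h \circ R_g$ together with $L_{g^{-1}} \circ R_g = C_{g^{-1}}$ (conjugation by $g^{-1}$), a short computation at the identity gives
\[
 dR_g|_h \circ dL_h|_e = dL_{hg}|_e \circ \Ad(g^{-1}).
\]
Applying both sides to $\omega_e$, one obtains $dR_g|_h(\omega_h) = dL_{hg}(\Ad(g^{-1})\omega_e)$. Hence $R_g$ preserves $\omega$ if and only if $\Ad(g^{-1})$ preserves $\omega_e$, which by $\det \Ad(g^{-1}) = (\det \Ad(g))^{-1}$ is equivalent to $\Ad(g)$ being orientation preserving. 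Since this must hold for every $g\in G$, we get the desired equivalence.

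For the reverse implication I would record that, conversely, if $\Ad(g)$ is orientation preserving for every $g$, then the above identity shows that \emph{any} left-invariant orientation $\omega$ is automatically right-invariant, so in fact every left-invariant orientation is bi-invariant in that case. The only technical point to handle carefully is keeping track of base points when identifying $T_hG$ with $\mfg$ via $dL_h$; no nontrivial obstacle is expected, as the argument reduces to the identity $C_g = L_g \circ R_{g^{-1}}$ and the definition $\Ad(g) = d(C_g)_e$.
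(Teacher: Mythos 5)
Your proposal is correct and takes essentially the same route as the paper: both rest on the factorization of conjugation through left and right translations (equivalently $R_g = L_g \circ C_{g^{-1}}$, which is your displayed identity applied at the identity), together with $\Ad(g) = d(C_g)_e$ and the observation that a left-invariant orientation always exists and is determined by its value on $\mathfrak{g}$. The only cosmetic difference is that you explicitly carry the general base point $h$ through the computation, whereas the paper verifies right-invariance at $e$ and leaves the propagation to arbitrary $h$ via $R_g \circ L_h = L_h \circ R_g$ implicit; the underlying argument is identical.
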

\begin{proof}
Let $G$ be a Lie group with Lie algebra $\mathfrak{g}$.
Choose and fix a left-invariant orientation of $G$.
In particular, this fixes an orientation of $\mathfrak{g}$.
On an element $g\in G$, the adjoint representation 
$\Ad: G \longrightarrow GL(\mathfrak{g})$ is a linear automorphism
$\Ad (g)= \phi_{g*,e}: \mathfrak{g} \to \mathfrak{g}$
given by the differential at $e\in G$ of the conjugation diffeomorphism
$\phi_g:G\to G,$ $\phi_g (h)=ghg^{-1}$.
With $L_g, R_g: G\to G$ denoting left and right translation by $g$,
the conjugation can be written as $\phi_g = L_g \circ R_{g^{-1}}$.
Its differential at $e$ is thus
$\phi_{g*,e} = L_{g*, g^{-1}} \circ R_{g^{-1}*, e}$.

Now suppose that a left-invariant orientation of $G$ can be
chosen in addition to be right-invariant. Then both
$L_{g*, g^{-1}}$ and $R_{g^{-1}*, e}$ are orientation preserving.
Thus their composition $\Ad (g) = \phi_{g*,e}$ is orientation preserving.
Conversely, if $\phi_{g*,e}$ is orientation preserving, then,
for a choice of left-invariant orientation of $G$,
$R_{g^{-1}*, e} = L_{g^{-1}*, e} \circ \phi_{g*,e}$ preserves the
orientation for every $g$. Thus the left-invariant orientation is
also right-invariant.
\end{proof}

\begin{prop} \label{prop.conngrpsarebiinvorient}
Connected Lie groups are bi-invariantly orientable.
\end{prop}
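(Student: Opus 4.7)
The plan is to reduce the claim to a statement about the adjoint representation via Proposition \ref{prop.biinvorientadjrep}, and then exploit connectedness together with the continuity of $\Ad$.

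First I would invoke Proposition \ref{prop.biinvorientadjrep} to reduce the problem to showing that the adjoint representation $\Ad : G \to GL(\mathfrak{g})$ takes values in the orientation preserving linear automorphisms of $\mathfrak{g}$. Once an orientation of $\mathfrak{g}$ has been fixed (coming from any left-invariant orientation of $G$), this amounts to showing that $\det \Ad(g) > 0$ for every $g \in G$.

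The main step is then a simple connectedness argument. The map $\Ad : G \to GL(\mathfrak{g})$ is continuous (in fact smooth), and $GL(\mathfrak{g})$ has exactly two connected components, distinguished by the sign of the determinant. Since $G$ is connected by hypothesis, the image $\Ad(G)$ is connected, and since $\Ad(e) = \id_{\mathfrak{g}}$ lies in the identity component $GL^+(\mathfrak{g})$, the whole image $\Ad(G)$ must be contained in $GL^+(\mathfrak{g})$. Hence $\Ad(g)$ is orientation preserving for every $g \in G$, and Proposition \ref{prop.biinvorientadjrep} finishes the proof.

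There is no real obstacle here; the argument is essentially a one-liner once Proposition \ref{prop.biinvorientadjrep} is in place. The only thing to watch is that the reformulation of bi-invariant orientability in terms of $\Ad$ implicitly depends on a choice of left-invariant orientation (which always exists), but this is exactly the setting of the preceding proposition.
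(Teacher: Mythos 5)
Your proof is correct and follows essentially the same strategy as the paper: reduce to orientation-preservation of the adjoint representation via Proposition \ref{prop.biinvorientadjrep}, then use connectedness of $G$ to place $\Ad(G)$ in the identity component $GL^+(\mathfrak{g})$. Your version is in fact a cleaner packaging of the same idea — the paper constructs an explicit smooth isotopy from $\id_G$ to the conjugation $\phi_g$ and differentiates it to produce a path from $\id_{\mathfrak{g}}$ to $\Ad(g)$, whereas you simply observe that $\Ad$ is continuous, so the connected set $\Ad(G)$ containing $\id_{\mathfrak{g}}$ must lie in $GL^+(\mathfrak{g})$.
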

\begin{proof}
Let $G$ be a connected Lie group. An element $g\in G$ defines
the conjugation $\phi_g: G\to G,$ $\phi_g (h)=ghg^{-1}$.
As $G$ is connected, there exists a smooth path
$\gamma: I\to G$ from $\gamma (0)=e$ to $\gamma (1)=g$.
An isotopy $\Phi: G\times I \to G$ from 
$\Phi_0 = \id_G$ to $\Phi_1 = \phi_g$ is given by
$\Phi (h,t) = \gamma (t)h \gamma (t)^{-1}$.
Note that $\Phi (e,t) = e$ for all $t\in I$.
Thus the differential $\Gamma (t) := (\Phi_t)_{*,e}$ of $\Phi_t$ at $e\in G$ 
defines a path $\Gamma: I\to \operatorname{GL} (\mathfrak{g})$
from $\Gamma (0) = (\Phi_0)_{*,e} = (\id_G)_{*,e} = \id_{\mathfrak{g}}$
to $\Gamma (1) = (\Phi_1)_{*,e} = (\phi_g)_{*,e} = \Ad (g)$.
Therefore, $\Ad (g)$ lies in the identity component of 
$\operatorname{GL} (\mathfrak{g})$. Thus $\Ad (g)$ has positive determinant,
which shows that the adjoint representation preserves the orientation.
By Proposition \ref{prop.biinvorientadjrep}, $G$ is bi-invariantly orientable.
\end{proof}

In light of the above examples and results, the next proposition
applies in particular to finite groups, abelian groups, and connected groups.
\begin{lemma} \label{lem.totalspaceorientbaseorient}
Let $H$ be a bi-invariantly orientable compact Lie group, 
let $N,\overline{N}$ be smooth manifolds without boundary, and
let $p: \overline{N} \to N$ be a smooth principal $H$-bundle.
If the manifold $\overline{N}$ is oriented and $H$
acts in an orientation preserving manner on $\overline{N}$, 
then $p,$ together with a bi-invariant orientation of $H,$
induces an orientation on $N$.
\end{lemma}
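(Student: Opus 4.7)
The plan is to build $\omega_N$ locally through $H$-equivariant trivializations of $p$, apply Lemma \ref{lem.orientbaseofproduct} on each trivializing chart, and then use the standard form of the transition cocycle to verify that the local orientations glue.

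To set things up, I would cover $N$ by connected open sets $U_\alpha$ admitting smooth $H$-equivariant trivializations $\phi_\alpha : p^{-1}(U_\alpha) \to U_\alpha \times H$. Pushing the restriction of $\omega_{\overline{N}}$ forward along $\phi_\alpha$ produces an orientation $\omega_{U_\alpha \times H}$ of $U_\alpha \times H$ which is invariant under the right $H$-action $(x,h)\cdot g = (x, hg)$, since the right action on $\overline{N}$ preserves $\omega_{\overline{N}}$ and $\phi_\alpha$ intertwines the two right actions. With $\omega_H$ the given bi-invariant orientation of $H$ (so in particular right-invariant), Lemma \ref{lem.orientbaseofproduct} yields a unique orientation $\omega_{U_\alpha}$ of $U_\alpha$ satisfying $\omega_{U_\alpha} \times \omega_H = \omega_{U_\alpha \times H}$.

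The crux is to show $\omega_{U_\alpha}|_{U_{\alpha\beta}} = \omega_{U_\beta}|_{U_{\alpha\beta}}$ on each overlap $U_{\alpha\beta} = U_\alpha \cap U_\beta$. By $H$-equivariance, the transition diffeomorphism $T = \phi_\beta \circ \phi_\alpha^{-1}$ on $U_{\alpha\beta} \times H$ has the standard form $T(x,h) = (x, g_{\beta\alpha}(x)\, h)$ for a smooth cocycle $g_{\beta\alpha} : U_{\alpha\beta} \to H$ --- in other words, the structure group acts on the fiber by \emph{left} translation. Both product orientations $\omega_{U_\alpha} \times \omega_H$ and $\omega_{U_\beta} \times \omega_H$ pull back to $\omega_{\overline{N}}$, so $T$ is orientation-preserving. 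Differentiating $T$ at $(x_0, h_0)$ produces a block-lower-triangular Jacobian
\[
dT_{(x_0,h_0)} = \begin{pmatrix} I & 0 \\ \ast & (L_{g_{\beta\alpha}(x_0)})_\ast \end{pmatrix},
\]
whose determinant factors as the product of the determinants of the two diagonal blocks. Since $\omega_H$ is left-invariant --- this is precisely where bi-invariance is used --- the fiber block has positive determinant relative to $\omega_H$; orientation-preservation of $T$ then forces the base block, the identity map of $T_{x_0} U_{\alpha\beta}$ considered from $\omega_{U_\alpha}(x_0)$ to $\omega_{U_\beta}(x_0)$, to have positive determinant, which means exactly that $\omega_{U_\alpha}(x_0) = \omega_{U_\beta}(x_0)$. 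The main obstacle is precisely this compatibility: principal-bundle transitions act on the fiber by left translation, whereas Lemma \ref{lem.orientbaseofproduct} requires only right-invariance of $\omega_H$, so without the additional left-invariance the fiber sign could jump as $g_{\beta\alpha}$ varies. Bi-invariance of $\omega_H$ closes this gap, and a standard common-refinement argument then shows that the resulting $\omega_N$ is independent of the trivializing cover.
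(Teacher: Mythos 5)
Your proof is correct and in substance matches the Jacobian-based alternative that the paper gives immediately after its primary isotopy/Borel--Moore homology argument: the same reduction to a block-lower-triangular differential for the transition map, with the fiber block orientation preserving by left-invariance of $\omega_H$. You conclude slightly more directly by reading off $\omega_{U_\alpha}=\omega_{U_\beta}$ from the determinant factorization rather than first proving that the transition preserves $\omega_{U_\alpha\times H}$ and then invoking the uniqueness clause of Lemma \ref{lem.orientbaseofproduct}; as a small dividend, your version needs only connected trivializing charts, whereas the paper's isotopy argument contracts $\sigma(tx)$ to $\sigma(0)$ and therefore uses charts diffeomorphic to $\real^n$ (a good cover).
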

\begin{proof}
Our convention is that $H$ acts on the right on $\overline{N}$.
We fix a bi-invariant orientation $\omega_H$ of $H$.
Let $\{ U_\alpha \}$ be a good open cover of $N$ by charts $U_\alpha \cong \real^n$
so that the principal $H$-bundle trivializes over $U_\alpha$. 
(Recall that an open cover is \emph{good} if all nonempty finite intersections
of sets in the cover are diffeomorphic to $\real^n$; such a cover always exists.)
Let
$\phi_\alpha: p^{-1} (U_\alpha) \stackrel{\simeq}{\longrightarrow}
U_\alpha \times H$ be an $H$-equivariant diffeomorphism such that
\[ \xymatrix{
p^{-1} (U_\alpha) \ar[rd]_p \ar[rr]^{\phi_\alpha} & & U_\alpha \times H \ar[ld] \\
& U_\alpha &
} \]
commutes. Here $H$ acts on the product $U_\alpha \times H$ by
$(x,h)\cdot g = (x, hg),$ $x\in U_\alpha,$ $g,h \in H$
(as in Lemma \ref{lem.orientbaseofproduct}).
The preimage $p^{-1} (U_\alpha)$ receives an orientation as an open subset
of the oriented manifold $\overline{N}$.
Let $\omega_{U_\alpha \times H}$ be the unique orientation on
$U_\alpha \times H$ that renders $\phi_\alpha$ orientation preserving.
Since $p^{-1} (U_\alpha)$ is $H$-invariant, it is an $H$-space and, 
by assumption, $H$ acts on it in an orientation preserving way.
Since $\phi_\alpha$ is $H$-equivariant and orientation preserving,
it follows that $H$ acts on $U_\alpha \times H$ orientation preservingly
and statement (1) of Lemma \ref{lem.orientbaseofproduct} holds.
By this Lemma, there exists
a unique orientation $\omega_{U_\alpha}$ of $U_\alpha$ such that
$\omega_{U_\alpha} \times \omega_H = \omega_{U_\alpha \times H}$.
(This uses only the right-invariance of $\omega_H$.)
  
Suppose that $V := U_\alpha \cap U_\beta$ is not empty (and thus $V\cong \real^n$).
We claim that $\omega_{U_\alpha}$ agrees with $\omega_{U_\beta}$ on $V$.
Once this is known, the various $\omega_{U_\alpha}$ glue to yield a global
orientation of $N$. Let us then prove the claim.
The $H$-equivariant diffeomorphism
\[ \gamma := \phi_\beta \phi^{-1}_\alpha: V \times H \to V\times H \] 
is of the form
$\gamma  (x,h) = (x, \tau (x,h))$  
with smooth transition function $\tau: V\times H \to H$.
Since $\gamma$ is $H$-equivariant, the transition function satisfies
$\tau (x, hg) = \tau (x,h)\cdot g.$
In particular, for $h=e$,
$\tau (x, g) = \tau (x,e)\cdot g.$   
Let $\sigma: V \to H$ be the function $\sigma (x) := \tau (x,e)$.
Then $\tau$ has the form
\begin{equation} \label{equ.transitionfnprincbundlelefttransl}  
\tau (x, h) = \sigma (x)\cdot h 
\end{equation}
for all $x\in V,$ $h\in H$.
In the commutative diagram
\[ \xymatrix{
  &  (V \times H,\omega_{U_\alpha \times H}|) \ar[dd]^\gamma \\
(p^{-1} (V),\omega_{\overline{N}}|) \ar[ru]^{\phi_\alpha} \ar[rd]_{\phi_\beta} & \\
  & (V \times H,\omega_{U_\beta \times H}|),
} \]
the two slanted arrows are orientation preserving. Hence the vertical
arrow is orientation preserving. If we knew that
$\gamma: (V\times H,\omega_{U_\alpha \times H}|) \to 
  (V\times H,\omega_{U_\alpha \times H}|)$
were also orientation preserving, then it would follow that
\[ \omega_{U_\beta} \times \omega_H = \omega_{U_\beta \times H}| 
  = \omega_{U_\alpha \times H}| = \omega_{U_\alpha} \times \omega_H, \]
which implies $\omega_{U_\beta} = \omega_{U_\alpha}$ on $V$
by the uniqueness in statement (2) of Lemma \ref{lem.orientbaseofproduct}.
Thus it remains to prove that 
$\gamma: (V\times H,\omega_{U_\alpha \times H}|) \to 
  (V\times H,\omega_{U_\alpha \times H}|)$
is orientation preserving.

Using the identification $V\cong \real^n$, the space $V$ inherits from
$\real^n$ a smooth scalar multiplication $\real \times V \to V$, 
$(t,x) \mapsto tx$, $x\in V$, $t\in \real$. The origin in $\real^n$
corresponds to a point in $V$, which we will also call $0\in V$.
We use this multiplication to define a smooth homotopy
\[ \Gamma: V\times H \times I \longrightarrow
       V\times H,~
    \Gamma (x,h,t) := (x, \sigma (tx)\cdot h).  \]
For every $t\in I$, the map
$\Gamma_t: V\times H \to V\times H$ is a diffeomorphism with
inverse $(x,h) \mapsto (x, \sigma (tx)^{-1} h)$.
Thus $\Gamma$ is in fact an isotopy between
$\Gamma_1 = \gamma$ and $\Gamma_0$ given by
$\Gamma_0 (x,h) = (x,\sigma (0)\cdot h)$.
The latter map can be written as a product
$\Gamma_0 = \id_V \times L_{\sigma (0)}$, where
$L_{\sigma (0)}: H \to H$ is left translation by $\sigma (0)$.
The diffeomorphism $\gamma$ induces an isomorphism
\[ \gamma_*: H^\BM_{n+d} (V\times H;\intg) \stackrel{\simeq}{\longrightarrow} 
   H^\BM_{n+d} (V\times H;\intg)  \]
on Borel-Moore homology, $d=\dim H$. Let
$[\omega_{U_\alpha \times H}] \in H^\BM_{n+d} (V\times H)$ 
denote the fundamental class defined by the orientation 
$\omega_{U_\alpha \times H}$ (restricted to $V\times H$).
Under the cross product
\[ H^\BM_n (V;\intg) \otimes H_d (H;\intg) \stackrel{\times}{\longrightarrow}
   H^\BM_{n+d} (V\times H;\intg),   \]
the equation
$[\omega_{U_\alpha \times H}] = [\omega_{U_\alpha}] \times [\omega_{H}]$
holds by construction of $\omega_{U_\alpha}$.
Although Borel-Moore homology is not invariant under general homotopies,
it \emph{is} an isotopy-invariant. Therefore,
\[ \gamma_* = \Gamma_{1*} = \Gamma_{0*} = (\id_V \times L_{\sigma (0)})_*  \]
and thus
\begin{align*}
\gamma_* [\omega_{U_\alpha \times H}]
&= (\id_V \times L_{\sigma (0)})_* [\omega_{U_\alpha \times H}] \\
&= (\id \times L_{\sigma (0) *}) ([\omega_{U_\alpha}] \times [\omega_H]) 
  = [\omega_{U_\alpha}] \times L_{\sigma (0) *} [\omega_H].
\end{align*}
Since $\omega_H$ is also \emph{left}-invariant, 
 $L_{\sigma (0) *} [\omega_H] = [\omega_H]$.
We conclude that
\[
\gamma_* [\omega_{U_\alpha \times H}] = 
  [\omega_{U_\alpha}] \times [\omega_H] = [\omega_{U_\alpha \times H}],
\]
i.e. $\gamma: (V\times H,\omega_{U_\alpha \times H}|) \to 
  (V\times H,\omega_{U_\alpha \times H}|)$ is indeed orientation preserving,
as was to be shown.

We remark on the side that we may alternatively
analyze the differential $\gamma_*$ of $\gamma$ at a point $(x,h)\in V\times H$
in order to establish that $\gamma$ preserves $\omega_{U_\alpha \times H}$.
To compute this differential,
let $\Delta: V \to V\times V$ denote the diagonal map,
let $\mu: H \times H \to H$ denote the multiplication map,
let $L_h: H\to H$ denote left multiplication by $h\in H$, and
let $R_h: H\to H$ denote right multiplication by $h$.
The differential of $\mu$ at $(g,h)\in H\times H$ is given by
\[ \mu_{*, (g,h)} (v, w) = R_{h*} (v) + L_{g*} (w),~
    v \in T_g H,~ w \in T_h H. \]
We observe that $\gamma$ can be factored as
\[ \xymatrix@C=40pt{
V \times H \ar[drr]_\gamma \ar[r]^{\Delta \times 1_H} & 
 V\times V\times H \ar[r]^{1_V \times \sigma \times 1_H} 
  & V \times H \times H \ar[d]^{1_V \times \mu} \\
& & V \times H.   
} \]
Using this factorization, one finds on a tangent vector
$(u,w) \in T_x V \oplus T_h H,$
\begin{equation} \label{equ.differentialofgamma}
\gamma_{*,(x,h)} (u,w) = (u,~ R_{h*} (\sigma_{*,x} (u)) + L_{\sigma (x) *} (w)). 
\end{equation}

Choose a positively oriented ordered basis $B$ of the oriented vector space
$(T_x V \oplus T_h H, \omega_{U_\alpha} \times \omega_H)$ as follows:
Let $\{ e_1,\ldots, e_n \}$ be an ordered basis of $T_x V$ which is
positively oriented with respect to $\omega_{U_\alpha}$, and
let $\{ f_1,\ldots, f_d \}$ be an ordered basis of $T_h H$ which is
positively oriented with respect to $\omega_H$.
Then 
\[
B := \{ (e_1,0),\ldots, (e_n,0),  (0,f_1),\ldots, (0,f_d) \}
\]
is an ordered basis of $T_x V \oplus T_h H$ which is positively oriented
with respect to $\omega_{U_\alpha} \times \omega_H$.
Since the left translation $L_{\sigma (x)}: H\to H$ is
orientation preserving, the ordered basis
\[ \{ L_{\sigma (x)*,h} (f_1),\ldots, L_{\sigma (x)*,h} (f_d) \} \]
of $T_{\sigma (x)h} H$ is positively oriented with respect to $\omega_H$.
Thus 
\[
B' := \{ (e_1,0),\ldots, (e_n,0),  
  (0,L_{\sigma (x)*,h} (f_1)),\ldots, (0,L_{\sigma (x)*,h} (f_d)) \}
\]
is an ordered basis of $T_x V \oplus T_{\sigma (x)h} H$ which is positively oriented
with respect to $\omega_{U_\alpha} \times \omega_H$.
The differential $\gamma_{*,(x,h)}$ is orientation preserving if and only if
the determinant of the matrix representation $M$ of $\gamma_{*,(x,h)}$
with respect to the bases $B,B'$ is positive.
Using (\ref{equ.differentialofgamma}),
the $(n+d)\times (n+d)$-matrix $M$ has the block form
\[ M = \begin{pmatrix}
  1_n & 0_{n\times d} \\ * & 1_d
  \end{pmatrix}, \] 
which indeed has determinant $1>0$, as was to be shown.
\end{proof}

\begin{prop} \label{prop.biinvorientgrpisorupondeloop}
Let $G$ be a compact Lie group.
If $G$ is bi-invariantly orientable, then there exists an embedding
$G \subset \Or (n)$ such that the bundle $p_k: EG_k \to BG_k$
is orientable as a fiber bundle and
$BG_k$ is orientable as a manifold for every $k\geq 2$.
\end{prop}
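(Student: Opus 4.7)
The plan is to combine the embedding supplied by Proposition \ref{prop.gactsonegkorientpres} with Lemma \ref{lem.totalspaceorientbaseorient} to obtain orientability of $BG_k$, and then to observe that bi-invariance of the orientation of $G$ handles the fiber bundle orientability directly through the explicit form of the transition functions of a principal bundle.

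First, I would invoke Proposition \ref{prop.gactsonegkorientpres} to fix an embedding $G \subset \Or(n)$ such that $G$ acts in an orientation preserving manner on $EG_k$ for every $k \geq 2$. For such $k$, Lemma \ref{lem.egkiskm1conn} gives that $EG_k$ is $(k-1)$-connected, in particular simply connected, hence orientable; I fix an orientation on $EG_k$.

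Next, for orientability of $BG_k$ as a manifold: the smooth submersion $p_k : EG_k \to BG_k$ is a smooth principal $G$-bundle (as recalled in Section \ref{sec.whitneyapproxtoborel}). Since $G$ is bi-invariantly orientable, $EG_k$ is oriented, and $G$ acts on $EG_k$ orientation preservingly, Lemma \ref{lem.totalspaceorientbaseorient}, applied with $H = G$, $\overline{N} = EG_k$ and $N = BG_k$, produces an induced orientation on $BG_k$.

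Finally, for orientability of $p_k$ as a fiber bundle: as noted in the discussion preceding the proposition and established in the computation \ref{equ.transitionfnprincbundlelefttransl}, the transition functions of a principal $G$-bundle, regarded as a fiber bundle with typical fiber $G$, act on the fiber by \emph{left} translation $L_{\sigma(x)}$ for smooth maps $\sigma$ into $G$. A bi-invariant orientation $\omega_G$ of $G$ is in particular left-invariant, so every $L_{\sigma(x)}$ preserves $\omega_G$. Consequently the transition functions act fiberwise by orientation preserving diffeomorphisms with respect to $\omega_G$, i.e.\ $p_k$ is orientable as a fiber bundle. The main conceptual point, where bi-invariance is essential rather than just a convenient hypothesis, is exactly at this last step: a merely right-invariant orientation of $G$ (which always exists) would not in general be preserved by left translations, and this is precisely what the $\Or(2)$ example in the discussion shows can go wrong without the bi-invariance assumption.
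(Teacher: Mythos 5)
Your first two steps coincide with the paper's own proof: Proposition \ref{prop.gactsonegkorientpres} supplies the embedding and the orientation-preserving action on the oriented manifold $EG_k$ ($k\geq 2$), and Lemma \ref{lem.totalspaceorientbaseorient} then orients $BG_k$. For the orientability of $p_k$ as a fiber bundle, you diverge from the paper: having oriented fiber, base and total space, the paper simply invokes the general fact that a fiber bundle of manifolds with orientable fiber, base and total space is orientable as a bundle (and so relies on having first oriented $BG_k$). Your alternative, reading orientability of $p_k$ straight off formula (\ref{equ.transitionfnprincbundlelefttransl}) together with left-invariance of $\omega_G$, is correct and in fact more self-contained, since it does not presuppose that $BG_k$ is orientable.

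That said, your closing remark misidentifies where bi-invariance is essential. Your third step uses only left-invariance of the orientation on $G$, and since every Lie group admits a left-invariant orientation, that argument would go through for any compact $G$: the underlying $G$-fiber bundle of a principal $G$-bundle is always orientable as a fiber bundle. Bi-invariance is genuinely needed in the second step, inside Lemma \ref{lem.totalspaceorientbaseorient}: one uses right-invariance (via Lemma \ref{lem.orientbaseofproduct}) to split a local orientation of $U_\alpha$ off the product orientation of $U_\alpha\times G$, and then left-invariance to make these local orientations agree on overlaps, where the transition functions act by left translation. Correspondingly, the $\Or(2)$ example in the text exhibits failure of orientability of $BG_k$ itself, not of the fiber bundle $p_k$.
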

\begin{proof}
According to Proposition \ref{prop.gactsonegkorientpres},
there exists an embedding
$G \subset \Or (n)$ such that $G$ acts orientation preservingly on
the oriented manifold $EG_k$, $k\geq 2$.
Using the bi-invariant orientability of $G$,
an application of Lemma \ref{lem.totalspaceorientbaseorient} 
to the principal $G$-bundle $p_k:\overline{N}=EG_k \to BG_k=N$
yields an orientation on $N=BG_k$.
Now, a fiber bundle of manifolds whose fiber, base and total space are all orientable
is orientable as a bundle. Thus $p_k$ is orientable as a bundle.
\end{proof}

\begin{example}
Let $G$ be the orthogonal group $\Or (2)$, which is not
bi-invariantly orientable.
We embed $\Or (2) \subset \SO (3)$ as in the proof
of Proposition \ref{prop.gactsonegkorientpres}, i.e. by
$A\mapsto A\oplus \det A$.
None of the spaces
\[ \frac{\SO (3+k)}{\Or (2) \times \SO (k)}
   \cong
   \frac{\Or (3+k)}{\Or (2) \times \Or (k)} = BG_k \]
is orientable.
To see this, one may consult the fiber bundle
\[
\real \mathbb{P}^2 = \frac{\SO (3)}{\Or (2)} \longrightarrow
 \frac{SO(3+k)}{O(2) \times \SO (k)} \longrightarrow 
   \frac{\SO (3+k)}{\SO (3) \times \SO (k)}.
\]
The base is the Grassmannian $\widetilde{\Gr}_3 (\real^{3+k})$
of oriented $3$-planes in $\real^{3+k}$, which is an orientable
manifold.
If the total space were orientable, then the fiber $\real \mathbb{P}^2$
would be orientable, too, since its normal bundle is trivial.
\end{example}

\begin{remark}
Proposition \ref{prop.conngrpsarebiinvorient} together with
Proposition \ref{prop.biinvorientgrpisorupondeloop} implies  
that connected (compact) groups $G$ have orientable $BG_k$.
This may of course be established more directly by the following simple observation:
The principal $G$-bundle $p_k: EG_k \to BG_k$ induces an exact sequence
\[ \pi_1 (EG_k) \to \pi_1 (BG_k) \to \pi_0 (G) \to \pi_0 (EG_k) \]
on homotopy groups. For $k\geq 2$, $EG_k$ is simply connected
and the two outer groups vanish, so that the middle map is bijective. 
If $G$ is connected, then $\pi_0 (G)$ is trivial, and thus also
$\pi_1 (BG_k)$. So $BG_k$ is simply connected, thus orientable.
\end{remark}

\begin{prop} \label{prop.xgkwhitneystratpsdmfdoriented}
Let $G$ be a compact bi-invariantly orientable Lie group which acts smoothly on
a smooth manifold $M$. Suppose that
$(X,\Sa),$ $X\subset M,$ is a Whitney stratified oriented pseudomanifold
such that $X$ is $G$-invariant and the induced action of $G$
on $X$ is compatible with the stratification $\Sa$ and preserves the
orientation of $X$.
Then there exists an embedding $G\subset \Or (n)$ such that
the pseudomanifolds 
$(X_G (k),\Sa_G (k))$ (Theorem \ref{thm.xgkwhitneystratpsdmfdwitt}) 
are orientable for $k\geq 2$.
\end{prop}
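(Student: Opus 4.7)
The plan is to produce an orientation on the regular part $(X_G(k))^{\reg}$ of the pseudomanifold $X_G(k)$, and then appeal to the fact that the singular locus of any pseudomanifold has codimension at least two, so that an orientation of the regular stratum canonically determines an orientation of the entire pseudomanifold. I will obtain the orientation on the regular part by applying Lemma \ref{lem.totalspaceorientbaseorient} to a suitable principal $G$-bundle whose total space is a product of oriented manifolds with a diagonal, orientation-preserving $G$-action.

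Concretely, I would first invoke Proposition \ref{prop.gactsonegkorientpres} to fix an embedding $G \subset \Or(n)$ for which $EG_k$ is orientable and the given $G$-action on $EG_k$ preserves a chosen orientation, for every $k \geq 2$. The regular part $X^{\reg}$ is a $G$-invariant open submanifold of $X$ (each top-dimensional stratum of $\Sa$ is $G$-invariant by compatibility of the action with the stratification), and $G$ acts on $X^{\reg}$ preserving the restricted orientation, by assumption. Equip $EG_k \times X^{\reg}$ with the product orientation. Since the diagonal action $(e,x) \mapsto (ge, gx)$ factors as $(e,x) \mapsto (ge,x)$ followed by $(e,x) \mapsto (e, gx)$, each of which preserves the product orientation, the diagonal action preserves the product orientation as well.

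Next, observe that $G$ acts freely on $EG_k \times X^{\reg}$ (because it already acts freely on $EG_k$), so the quotient map $EG_k \times X^{\reg} \to EG_k \times_G X^{\reg}$ is a smooth principal $G$-bundle. The hypothesis that $G$ is bi-invariantly orientable lets me apply Lemma \ref{lem.totalspaceorientbaseorient} to this bundle and obtain an orientation on $EG_k \times_G X^{\reg}$. To close, I identify this oriented manifold with $(X_G(k))^{\reg}$ using the stratification $\Sa_G(k) = (EG_k \times \Sa)/G$ from Section \ref{sec.whitneyapproxtoborel}: a stratum $(EG_k \times S)/G$ has dimension $\dim EG_k + \dim S - \dim G$, so the top-dimensional strata of $X_G(k)$ are precisely those coming from the top-dimensional strata of $X$, namely those contained in $X^{\reg}$. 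Hence $(X_G(k))^{\reg} = EG_k \times_G X^{\reg}$, and $X_G(k)$ is orientable as a pseudomanifold (using that it is a pseudomanifold by Theorem \ref{thm.xgkwhitneystratpsdmfdwitt}).

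There is no real obstacle here beyond verifying these identifications carefully; the substantive work has already been done in Lemma \ref{lem.totalspaceorientbaseorient} (which is where the bi-invariance hypothesis on $G$ is genuinely used) and in Proposition \ref{prop.gactsonegkorientpres} (which is where the embedding $G \subset \Or(n)$ is adjusted). The only point that requires a small check is that the diagonal action preserves the product orientation on $EG_k \times X^{\reg}$, which is immediate from orientation-preservation on each factor.
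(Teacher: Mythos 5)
Your argument is correct but reorganizes the work differently from the paper. The paper first applies Proposition \ref{prop.biinvorientgrpisorupondeloop} (itself built on Proposition \ref{prop.gactsonegkorientpres} and Lemma \ref{lem.totalspaceorientbaseorient}) to orient $BG_k$, and then orients $X_G(k)$ by viewing it as the total space of the fiber bundle $q_k\colon X_G(k)\to BG_k$ with fiber $X$: the base is oriented, the fiber $X$ is oriented, and the transition functions take values in $G$, which acts orientation preservingly on $X$, so local product orientations glue. You instead bypass the fiber-bundle-with-singular-fiber argument entirely by passing to the regular part: you apply Lemma \ref{lem.totalspaceorientbaseorient} directly to the principal $G$-bundle $EG_k\times X^{\reg}\to EG_k\times_G X^{\reg}$ (a principal bundle of smooth manifolds), orient the base, identify that base with $(X_G(k))^{\reg}$ by a dimension count on the strata of $\Sa_G(k)$, and conclude using that an orientation of the regular part of a pseudomanifold is by definition an orientation of the pseudomanifold. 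Both routes use the same key ingredients (the orientation-preserving embedding $G\subset \Or(n)$ and the bi-invariance of $G$ fed into Lemma \ref{lem.totalspaceorientbaseorient}); the paper's version is more modular and produces a ready-to-reuse oriented $BG_k$, while your version has the small advantage of never needing to speak of orienting a fiber bundle whose fiber is singular — all the gluing happens in the category of smooth manifolds. One point worth stating explicitly in your argument is which side-convention you use for the $G$-action on $EG_k\times X^{\reg}$: the hypothesis of Lemma \ref{lem.totalspaceorientbaseorient} is a right action, and the twisted product in Section \ref{sec.whitneyapproxtoborel} is formed from the diagonal action $g\cdot(e,x)=(eg^{-1},gx)$; this does not affect the conclusion (each coordinate of the diagonal action is still orientation preserving), but the factorization you sketch should be written with this convention to match the lemma's hypotheses.
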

\begin{proof}
Since $G$ is bi-invariantly orientable, there exists 
by Proposition \ref{prop.biinvorientgrpisorupondeloop} an 
embedding $G\subset \Or (n)$ such that every $BG_k,$ $k\geq 2,$ is oriented.
By assumption, the structure group $G$ of the fiber bundle
\[  X \hookrightarrow X_G (k) \stackrel{q_k}{\longrightarrow} BG_k \]
acts orientation preservingly on the fiber $X$, and the base space
$BG_k$ is an oriented manifold. Thus the total space $X_G (k)$
is canonically oriented as well. In fact if $U\subset BG_k$
is a chart and $q^{-1}_k (U) \cong U \times X$ a local trivialization over
$U$, then $U$ is oriented as an open subset of $BG_k$ and
$U\times X$ receives the product orientation. Then 
$q^{-1}_k (U)$ receives its orientation from the trivialization.
If $V\subset BG_k$ is another chart, with $U\cap V \not= \varnothing$,
then the orientations on $q^{-1}_k (V)$ and $q^{-1}_k (U)$
agree over $q^{-1}_k (U\cap V)$, since the transition functions
with values in $G$ act in an orientation preserving way.
(Another way of proving this is by looking at the Serre spectral
sequence of the above fiber bundle.)
\end{proof}
In the remainder of the paper, we will usually assume that
the embedding $G\subset \Or (n)$ of a bi-invariantly orientable group $G$ 
has been chosen so that the $BG_k,$ $k\geq 2,$ are orientable.

\section{Equivariant Homology}
\label{sec.equivarianthomology}

Equivariant homology, denoted $H^G_* (X),$ is an inverse limit
of the ordinary singular homology groups $H_* (X_G (k))$ as $k\to \infty$.
The inverse system is given by Gysin restrictions
$H_{i+n} (X_G (k+1)) \to H_i (X_G (k))$.
We review the construction and basic properties of equivariant homology groups.
They should be such that
compact oriented $G$-pseudomanifolds $X$ possess an equivariant fundamental class
$[X]_G \in H^G_m (X),$ $m=\dim X$ (see Section \ref{ssec.equivfundclass}),
and when $X$ is a manifold, $H^G_* (X)$ should be equivariantly
Poincar\'e dual to the equivariant cohomology
$H^*_G (X) = H^* (EG \times_G X)$ of Borel (see Section \ref{ssec.equivpd}).
The viewpoint on equivariant homology as adopted in this paper is 
compatible with the approach used by
Brylinski and Zhang in \cite{brylinskizhang},
Ohmoto in \cite{ohmoto1} and Weber in \cite{weber}. Here, we need to use compact topological
approximations $X_G (k)$ instead of the complex algebraic approximations
(which are generally noncompact) used in the algebraic setting. 
It turns out that the inverse limit stabilizes at finite $k$, as
we show in Lemma \ref{lem.gysinstabilization}.

We summarize the setup:
Let $G$ be a compact bi-invariantly orientable Lie group of dimension $d$.
Choose an embedding $G\subset \Or (n)$ as a closed subgroup such that
$BG_k$ is orientable for $k\geq 2$ (Proposition \ref{prop.biinvorientgrpisorupondeloop}).
Choose and fix orientations for every $BG_k$.
Suppose that $G$ acts smoothly on
a smooth manifold $M$ which contains a compact stratified oriented pseudomanifold
$(X,\Sa)$ of dimension $m$ as a $G$-invariant Whitney stratified subset
such that the induced action of $G$
on $X$ is compatible with the stratification $\Sa$ and preserves the
orientation of $X$.
The spaces $X_G (k) = EG_k \times_G X$ are compact oriented pseudomanifold
approximations of the Borel construction $X_G = EG \times_G X$
and they fiber over $BG_k$ with fiber $X$. The approximations
$BG_k$ of $BG$ are closed smooth manifolds of dimension
\[ b_k := nk + a,~  a= \smlhf n(n-1) - d. \]
The morphism (\ref{dia.egktoegkplusone}) of principal $G$-bundles
induces a morphism 
\begin{equation} \label{dia.xgktoxgkplusone}
\xymatrix{
X_G (k) \ar[d]_{q_k} \ar@{^{(}->}[r]^{\xi_k} & X_G (k+1) \ar[d]^{q_{k+1}} \\
BG_k \ar@{^{(}->}[r]^{\beta_k} & BG_{k+1}
} \end{equation}
of associated $X$-fiber bundles.
The closed inclusion $\beta_k$ is a smooth codimension $n$
embedding of $BG_k$ as a submanifold of $BG_{k+1}$.
The normal bundle $\nu_k$ of this embedding is a smooth
oriented vector bundle of rank $n$ over $BG_k$. (The orientation is
induced by the orientations on $BG_k$ and $BG_{k+1}$.) 
Since Diagram (\ref{dia.egktoegkplusone}) is cartesian, 
Diagram (\ref{dia.xgktoxgkplusone}) is cartesian as well,
for
\begin{align*} 
 \beta^*_k X_G (k+1)
 &= \beta^*_k (EG_{k+1} \times_G X)
   = (\beta^*_k EG_{k+1}) \times_G X \\
 &= EG_k \times_G X = X_G (k).  
\end{align*}
By Lemma \ref{lem.normalbundlesofxgk} of the Appendix, the inclusion 
$\xi_k$ is (even PL) normally nonsingular and its
normal bundle $\mu_k$ is given by the oriented vector bundle
$\mu_k = q^*_k \nu_k.$
Oriented normally nonsingular inclusions such as $\xi_k$ have associated Gysin restrictions.
The Gysin restriction 
\[ \xi^!_k: H_{i+n} (X_G (k+1)) \longrightarrow H_i (X_G (k)) \]
can be described as the composition
\[ H_{i+n} (X_G (k+1)) \longrightarrow
   H_{i+n} (X_G (k+1), X_G (k+1) - X_G (k))
  \stackrel{\simeq}{\longrightarrow}  
   H_i (X_G (k)), \]
where the second map is the excision isomorphism to a tubular neighborhood
composed with the Thom isomorphism of the oriented normal bundle $\mu_k$.
Keeping $i$ fixed, these Gysin restrictions $\xi^!_k$
make $\{ H_{i+kn} (X_G (k)) \}_{k=1,2,\ldots}$
into an inverse system with respect to the directed set
$(\{ k=1,2,\ldots \}, \leq)$.

\begin{defn}
Let $i$ be an integer.
The \emph{equivariant homology group} $H^G_i (X)$ in degree $i$
of $X$ is the inverse limit
\[ H^G_i (X) := \underset{\longleftarrow_k}{\lim}
  H_{i+ nk + a} (X_G (k)).
 \]
\end{defn}
We shall often refer to $i$ as the \emph{equivariant} degree of
a class in $H^G_i (X)$, as opposed to the degrees of representative elements
in the inverse system.

\begin{remark} \label{rem.equivabovedimvanishing}
The dimension of $X_G (k)$ is $m + nk + a$,
where $m=\dim X$. Therefore,
\[ H^G_i (X) =0 \text{ for } i > m = \dim X. \]
\end{remark}

\subsection{Stabilization}
\label{ssec.stabilization}

The starting point of the stabilization analysis is to determine
the connectivity of the inclusion $\xi_k$.
\begin{lemma} \label{lem.xikiskconnected}
For all $k\geq 1,$
the map $\xi_k: X_G (k) \hookrightarrow X_G (k+1)$ is
$k$-connected and induces an isomorphism
$(\xi_k)_*: H_i (X_G (k);\intg) \longrightarrow H_i (X_G (k+1);\intg)$
for $i<k$ and an epimorphism for $i=k$.
\end{lemma}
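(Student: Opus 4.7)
The plan is to obtain a connectivity estimate for $\xi_k$ from the connectivity of the Stiefel manifold inclusion $EG_k \hookrightarrow EG_{k+1}$, and then to deduce the homology statement from the Hurewicz theorem.

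First, I would show that the inclusion $EG_k \hookrightarrow EG_{k+1}$ is $k$-connected. By Lemma \ref{lem.egkiskm1conn}, $EG_k$ is $(k-1)$-connected and $EG_{k+1}$ is $k$-connected, so the long exact sequence of homotopy groups for the homotopy fiber of the inclusion forces that fiber to be $(k-1)$-connected. Taking the product with $\operatorname{id}_X$, the resulting inclusion
\[ \alpha_k : EG_k \times X \hookrightarrow EG_{k+1} \times X \]
is $k$-connected as well, since the homotopy fiber of $\alpha_k$ over $(e,x)$ agrees (up to homeomorphism, not just weak equivalence) with the homotopy fiber of $EG_k \hookrightarrow EG_{k+1}$ over $e$.

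Next, since $G$ acts freely on each $EG_k$, the diagonal action of $G$ on $EG_k \times X$ is free, so the quotient maps $EG_k \times X \to X_G(k)$ and $EG_{k+1} \times X \to X_G(k+1)$ are principal $G$-bundles, and $\alpha_k,$ $\xi_k$ assemble into a morphism of principal $G$-bundles
\[ \xymatrix{
G \ar@{=}[d] \ar[r] & EG_k \times X \ar[d]^{\alpha_k} \ar[r] & X_G(k) \ar[d]^{\xi_k} \\
G \ar[r] & EG_{k+1} \times X \ar[r] & X_G(k+1)
} \]
with the identity map on the fibers $G$. A five-lemma comparison of the two long exact sequences in homotopy (equivalently, the observation that the homotopy fibers of $\alpha_k$ and $\xi_k$ agree up to weak equivalence) then yields that $\xi_k$ is $k$-connected.

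Finally, the homology statement is a standard consequence of the relative Hurewicz theorem: a $k$-connected map between spaces of the homotopy type of CW complexes induces an isomorphism on $H_i(-;\intg)$ for $i < k$ and an epimorphism in degree $k$. Since the $X_G(k)$ are triangulable Whitney stratified pseudomanifolds (see the Appendix on PL structures), they have the homotopy type of CW complexes, and the conclusion applies. The main substantive step is the connectivity estimate for $\xi_k$; the passage from homotopy to homology, and the connectivity of $EG_k \hookrightarrow EG_{k+1}$, are routine.
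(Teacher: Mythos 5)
Your argument is correct, but it runs the five-lemma/homotopy-fiber comparison through a different fibration than the paper does. You observe that $\alpha_k = \iota_k \times \mathrm{id}_X$ is $k$-connected (since $\iota_k : EG_k \hookrightarrow EG_{k+1}$ is), and then descend along the principal $G$-bundles $EG_k\times X \to X_G(k)$ and $EG_{k+1}\times X \to X_G(k+1)$ — a cartesian square of $G$-bundles with the identity on the fiber $G$ and $\alpha_k$ upstairs — to conclude that $\xi_k$ is $k$-connected. The paper instead proceeds in two steps: it first establishes that $\beta_k : BG_k \hookrightarrow BG_{k+1}$ is $k$-connected (using the principal $G$-bundle $EG_k \to BG_k$), and then compares the $X$-fiber bundles $q_k : X_G(k) \to BG_k$ and $q_{k+1} : X_G(k+1) \to BG_{k+1}$, with identity on the fiber $X$ and $\beta_k$ on the base, deducing the connectivity of $\xi_k$ from that of $\beta_k$; the paper also handles $\pi_0$ by a separate homotopy-lifting argument. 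Your one-step route is slightly more economical and, via the homotopy-pullback observation, treats $\pi_0$ uniformly, since it sidesteps the exactness-of-pointed-sets issue at the tail of the homotopy sequence; the paper's two-step route has the side benefit of recording the $k$-connectivity of $\beta_k$ explicitly, which is reused later (e.g., in the Gysin stabilization Lemma \ref{lem.gysinstabilization}). One minor imprecision in your write-up: the homotopy fiber of $\alpha_k$ is $\mathrm{hofib}(\iota_k) \times P X$ with $PX$ a (contractible) based path space, so it agrees with $\mathrm{hofib}(\iota_k)$ only up to homotopy equivalence, not homeomorphism; this has no effect on the connectivity conclusion.
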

\begin{proof}
For the special case $X=\pt$, the map $\xi_k$ is the map
$\beta_k: BG_k \to BG_{k+1}$. 
Consider the morphism of long exact sequences induced 
on homotopy groups by the morphism (\ref{dia.egktoegkplusone}) 
of principal $G$-bundles.
It is then a standard consequence of Lemma \ref{lem.egkiskm1conn}
that $\beta_k$ is $k$-connected for all $k=1,2,\ldots$.
We return to general $X$.
The morphism (\ref{dia.xgktoxgkplusone}) of $X$-fiber bundles
induces a morphism of exact sequences
\[ \xymatrix{
\pi_0 (X) \ar[d]^\simeq \ar[r] &
\pi_0 (X_G (k)) \ar[d]^{(\xi_k)_*} \ar[r] &
\pi_0 (BG_k)=0 \ar[d]^{(\beta_k)_*} \\
\pi_0 (X) \ar[r] &
\pi_0 (X_G (k+1)) \ar[r] &
\pi_0 (BG_{k+1})=0,
} \]
which shows 
that the middle vertical map $(\xi_k)_*$ is a surjection.
It is in fact a bijection, as one verifies by using the
homotopy lifting property of the (locally trivial) principal $G$-bundle
$EG_{k+1} \times X \to EG_{k+1} \times_G X =X_G (k+1)$
obtained by restricting the smooth principal $G$-bundle
$EG_{k+1} \times M \to M_G (k+1)$, together with the fact
that $EG_k$ is path connected.
For $i\geq 1,$ 
the connectivity statement for $\xi_k$ in degree $i$ follows from
the morphism of long exact sequences
\[ \xymatrix{
\pi_{i+1} (BG_k) \ar[d]^{(\beta_k)_*} \ar[r] &
\pi_i (X) \ar[d]^\simeq \ar[r] &
\pi_i (X_G (k)) \ar[d]^{(\xi_k)_*} \ar[r] &
\pi_i (BG_k) \ar[d]^{(\beta_k)_*} \ar[r] &
\pi_{i-1} (X) \ar[d]^\simeq \\
\pi_{i+1} (BG_{k+1}) \ar[r] &
\pi_i (X) \ar[r] &
\pi_i (X_G (k+1)) \ar[r] &
\pi_i (BG_{k+1}) \ar[r] &
\pi_{i-1} (X)
} \]
induced by the morphism (\ref{dia.xgktoxgkplusone}) 
of $X$-fiber bundles, together with the fact that $\beta_k$
is $k$-connected as noted above.
The homological consequence is then standard, since we have
observed above that
$(\xi_k)_*: \pi_0 (X_G (k)) \to \pi_0 (X_G (k+1))$
is a bijection for all $k\geq 1$.  
\end{proof}

\begin{lemma} \label{lem.cohomstabilization}
(Cohomological Stabilization.)
The map $\xi^*_k: H^i (X_G (k+1);\intg) \to H^i  (X_G (k);\intg)$ 
is an isomorphism for $i<k$ and a monomorphism for $i=k$. 
\end{lemma}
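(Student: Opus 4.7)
The plan is to deduce the cohomological statement directly from the homological connectivity of $\xi_k$ established in Lemma \ref{lem.xikiskconnected}, by means of the universal coefficient theorem applied to the pair $(X_G(k+1), X_G(k))$.

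First I would observe that since $\xi_k$ is $k$-connected by Lemma \ref{lem.xikiskconnected}, one has $\pi_j(X_G(k+1), X_G(k))=0$ for all $j\leq k$. The spaces $X_G(k)$ and $X_G(k+1)$ are compact Whitney stratified pseudomanifolds by Theorem \ref{thm.xgkwhitneystratpsdmfdwitt}, hence triangulable by the results of Goresky and Verona mentioned in the introduction, so the pair has the homotopy type of a CW pair. The relative Hurewicz theorem then yields
\[ H_j(X_G(k+1), X_G(k); \intg) = 0 \quad \text{for all } j \leq k. \]

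Next I would apply the universal coefficient theorem for singular cohomology to the pair, which produces a split short exact sequence expressing $H^i(X_G(k+1), X_G(k); \intg)$ in terms of $\Hom(H_i(X_G(k+1),X_G(k)),\intg)$ and $\Ext(H_{i-1}(X_G(k+1),X_G(k)),\intg)$. For $i \leq k$ both relative homology groups vanish by the previous step, so
\[ H^i(X_G(k+1), X_G(k); \intg) = 0 \quad \text{for all } i \leq k. \]

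Finally, substituting this vanishing into the long exact cohomology sequence of the pair
\[ \cdots \to H^i(X_G(k+1), X_G(k)) \to H^i(X_G(k+1)) \stackrel{\xi^*_k}{\longrightarrow} H^i(X_G(k)) \to H^{i+1}(X_G(k+1), X_G(k)) \to \cdots \]
will yield both claims: for $i<k$, the relative groups on both sides of $\xi^*_k$ vanish (since $i+1\leq k$ still holds), forcing $\xi^*_k$ to be an isomorphism; for $i=k$, only the left-hand relative group is known to vanish, so $\xi^*_k$ is merely injective. I do not anticipate a substantive obstacle: the argument is a mechanical translation of Lemma \ref{lem.xikiskconnected} into cohomology via universal coefficients, and the only item requiring a moment of verification is the CW homotopy type needed to invoke relative Hurewicz, which is supplied by the triangulability of Whitney stratified pseudomanifolds.
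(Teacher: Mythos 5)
Your argument is correct in outline, but it takes a slightly different route from the paper's and contains one avoidable detour. The paper's proof applies the universal coefficient theorem to the \emph{absolute} spaces $X_G(k)$ and $X_G(k+1)$, obtaining a morphism of short exact UCT sequences induced by $\xi_k$, and then quotes the four/five-lemma; the isomorphism/epimorphism statements for $(\xi_k)_*$ on $H_i(-;\intg)$ that feed into those UCT sequences are already recorded explicitly in Lemma~\ref{lem.xikiskconnected}. You instead compute the relative homology of the pair, apply UCT to the pair, and read off the result from the long exact cohomology sequence of the pair. Both routes are correct ways of dualizing the homological conclusion.

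Two comments on your version. First, the appeal to the relative Hurewicz theorem to obtain $H_j(X_G(k+1),X_G(k);\intg)=0$ for $j\le k$ is unnecessary and carries a mild caveat: the standard relative Hurewicz theorem assumes simple connectivity of the subspace (or requires a universal-cover/Serre spectral sequence refinement when $\pi_1$ is nontrivial, and needs $n\ge 2$), and $X_G(k)$ need not be simply connected. You can bypass Hurewicz entirely: Lemma~\ref{lem.xikiskconnected} already states that $(\xi_k)_*$ is an isomorphism on $H_i(-;\intg)$ for $i<k$ and an epimorphism for $i=k$, and plugging these facts into the long exact homology sequence of the pair $(X_G(k+1),X_G(k))$ yields $H_j(X_G(k+1),X_G(k);\intg)=0$ for $j\le k$ without any connectivity-to-homology translation. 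Second, once the relative vanishing is established, the remaining UCT-for-the-pair and long-exact-cohomology-sequence steps are exactly right, and the bookkeeping of which relative group controls injectivity ($H^i$) versus surjectivity ($H^{i+1}$) is correct. In short, the approach is sound, slightly less economical than the paper's five-lemma argument, and should replace the Hurewicz step with the direct long-exact-sequence deduction to close the gap cleanly.
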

\begin{proof}
This follows readily from Lemma \ref{lem.xikiskconnected} by
examining the morphism of universal coefficient sequences
associated to $\xi^*_k$ and using the four/five-lemma.
\end{proof}

\begin{remark} \label{rem.equivcohominvlim}
We note that the \emph{equivariant cohomology} in degree $i$ is given by
\[ H^i_G (X) = H^i (EG \times_G X)
    = \underset{\longleftarrow_k}{\lim}~
      \big( H^i  (X_G (k)) \stackrel{\xi^*_k}{\longleftarrow} H^i (X_G (k+1)) \big), \]
where the cohomological restriction $\xi^*_k$ is contravariantly induced
by the inclusion $\xi_k$.
The reason is that as the CW complex $X_G = EG \times_G X$ is the union of the
increasing sequence of subcomplexes $X_G (k),$ there is an exact sequence
\[
0 \to \underset{\longleftarrow_k}{{\lim}^1} H^{i-1} (X_G (k))
 \longrightarrow H^i (X_G) \longrightarrow
 \underset{\longleftarrow_k}{\lim} H^i (X_G (k)) \to 0.
\]
By the cohomological stabilization Lemma 
\ref{lem.cohomstabilization}, the $\xi^*_k$ are isomorphisms for $k>i$.
Thus the inverse system satisfies the Mittag-Leffler condition and
$\underset{\longleftarrow}{{\lim}^1} H^{i-1} (X_G (k))=0$.
In particular,
\begin{equation} \label{equ.cohomgxiscohomxgkforkgri} 
H^i_G (X;\intg) \cong
  H^i (X_G (k);\intg) \text{ for } k>i.
\end{equation}
\end{remark}

For nonsingular $X$, the following stabilization result can alternatively
be deduced from equivariant Poincar\'e duality, 
see Lemma \ref{lem.nonsingstabilization}.
Recall that $G\subset \Or (n)$ and $a := \smlhf n(n-1) - d,$ $d=\dim G$.
The condition $n\geq 3$ in the next lemma can of course always be satisfied by
re-embedding $G$ into a higher dimensional orthogonal group.
\begin{lemma} \label{lem.gysinstabilization}
(Homological Gysin Stabilization.)
Suppose that $n\geq 3,$ $k\geq 2,$ and let $m$ be the dimension of $X$.
Then the Gysin restriction $\xi^!_k: H_{i+n} (X_G (k+1);\rat) \to H_i (X_G (k);\rat)$
is an isomorphism for $i > m + (n-1)k + a$. Hence,
\[ H^G_j (X; \rat) \cong
   H_{j+nk+a} (X_G (k); \rat)  \]
for $k > m-j$.
\end{lemma}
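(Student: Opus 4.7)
The plan is to reduce the Gysin isomorphism claim to a vanishing statement about the open complement $U := X_G(k+1) \setminus X_G(k)$, and then to establish that vanishing by combining the Serre spectral sequence of a fiber bundle $U \to V$ with the Thom-Gysin long exact sequence for the submanifold pair $BG_k \subset BG_{k+1}$.

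Unwinding the definition, $\xi^!_k$ factors as $H_{i+n}(X_G(k+1);\rat) \to H_{i+n}(X_G(k+1), U;\rat)$ followed by the isomorphism $H_{i+n}(X_G(k+1), U;\rat) \xrightarrow{\cong} H_i(X_G(k);\rat)$ (excision composed with the Thom iso for the oriented rank-$n$ normal bundle $\mu_k = q^*_k \nu_k$). The long exact sequence of the pair $(X_G(k+1), U)$ yields
\[
H_{i+n}(U;\rat) \to H_{i+n}(X_G(k+1);\rat) \xrightarrow{\xi^!_k} H_i(X_G(k);\rat) \to H_{i+n-1}(U;\rat),
\]
so the first assertion reduces to showing $H_j(U;\rat) = 0$ for $j \in \{i+n-1,\, i+n\}$ whenever $i > m + (n-1)k + a$.

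The restriction of $q_{k+1}$ makes $U$ the total space of a fiber bundle with fiber $X$ over the open manifold $V := BG_{k+1} \setminus BG_k$. The Serre spectral sequence combined with $H_q(X;\rat) = 0$ for $q > m$ reduces the required vanishing of $H_j(U;\rat)$ to showing $H_p(V;\mathcal{V}) = 0$ for every local system $\mathcal{V}$ of $\rat$-vector spaces and every $p \geq n(k+1) + a - k$. To establish this, I invoke the Thom-Gysin sequence for the pair $(BG_{k+1}, V)$ with local coefficients,
\[
H_{p+1}(BG_{k+1};\mathcal{V}) \xrightarrow{\beta^!_k} H_{p+1-n}(BG_k;\mathcal{V}) \to H_p(V;\mathcal{V}) \to H_p(BG_{k+1};\mathcal{V}) \xrightarrow{\beta^!_k} H_{p-n}(BG_k;\mathcal{V}).
\]
Since $BG_k$ and $BG_{k+1}$ are closed oriented smooth manifolds (Proposition \ref{prop.biinvorientgrpisorupondeloop}, using $k \geq 2$ and $n \geq 3$), Poincaré duality with local coefficients identifies $\beta^!_k$ in homological degree $j$ with the restriction $\beta^*_k$ in cohomological degree $b_{k+1} - j$. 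A local-coefficient upgrade of Lemma \ref{lem.cohomstabilization} applied to $X = \pt$ then forces $\beta^*_k$ to be an isomorphism in degree $< k$ and a monomorphism in degree $k$, so $\beta^!_k$ is iso for $j > n(k+1)+a-k$ and mono for $j = n(k+1)+a-k$. Exactness then squeezes $H_p(V;\mathcal{V}) = 0$ for every $p \geq n(k+1)+a-k$.

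Summing up, $H_j(U;\rat) = 0$ for $j > m + (n-1)k + a + n - 1$, which handles both $j = i+n-1$ and $j = i+n$ precisely when $i > m + (n-1)k + a$; this proves the first assertion. For the second, substituting $i = j + nk + a$, the iso range $i > m + (n-1)k + a$ becomes $k > m - j$, a condition preserved when $k$ is replaced by any $k' \geq k$. Hence every Gysin map in the inverse system from stage $k$ onward is an isomorphism, so the inverse limit collapses to $H_{j+nk+a}(X_G(k);\rat)$. The main technical obstacle I foresee is the local-coefficient strengthening of cohomological stabilization; I would justify it by noting that Lemma \ref{lem.xikiskconnected} applied to $X = \pt$ yields $\pi_i(BG_{k+1}, BG_k) = 0$ for $i \leq k$, whence $(BG_{k+1}, BG_k)$ admits a relative CW structure without cells of dimension $\leq k$, and the relative cellular chain complex with any local coefficient system of $\rat$-vector spaces then vanishes in degrees $\leq k$, giving the upgraded statement by the usual long exact sequence and universal coefficient arguments.
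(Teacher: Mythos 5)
Your proposal is correct, and it reaches the same key vanishing statement (that the homology of the complement $V=BG_{k+1}-BG_k$ with suitable local coefficients vanishes in the appropriate range) but by a genuinely different route. The paper's proof passes to the universal cover of $U=V$, establishes $H^{\BM}_i(\widetilde U)=0$ for $i\leq k$ from the long exact Borel--Moore sequence of the closed inclusion $\widetilde{BG_k}\subset\widetilde{BG_{k+1}}$, applies Poincar\'e duality for the noncompact manifold $\widetilde U$, and then descends to local coefficients via the universal coefficient spectral sequence together with the semisimplicity of $\rat[\pi]$ (Maschke), which is where the finiteness of $\pi=\pi_0(G)$ enters. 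You stay entirely at the level of local coefficients: you use the Gysin long exact sequence of the pair $(BG_{k+1},V)$ with a local system, identify the Gysin maps with cohomological restrictions $\beta^*_k$ via Poincar\'e duality with local coefficients for the \emph{closed} manifolds $BG_k$ and $BG_{k+1}$, and then invoke a local-coefficient strengthening of the cohomological stabilization, which you correctly ground in the fact that the $k$-connected CW pair $(BG_{k+1},BG_k)$ can be replaced, rel $BG_k$, by a pair with no relative cells of dimension $\leq k$ (the relative Hurewicz/CW-approximation argument, which does not require simple connectivity since $\pi_1(BG_k)\to\pi_1(BG_{k+1})$ is an isomorphism). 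This buys you a proof that avoids the universal cover, Borel--Moore homology, Poincar\'e duality for open manifolds, the universal coefficient spectral sequence, and Maschke's theorem, at the cost of needing the less elementary Thom--Gysin sequence and Poincar\'e duality with local coefficients.

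Two small points you should make explicit. First, your Gysin long exact sequence requires the local system $\mathcal{V}$ to be defined on all of $BG_{k+1}$, not merely on $V$; in the Serre spectral sequence application this is automatic because $\underline{H}_q(X;\rat)$ on $V$ is the restriction of the monodromy local system of the ambient fibration $q_{k+1}\colon X_G(k+1)\to BG_{k+1}$, but you should state this (alternatively, note that since the codimension is $n\geq 3$, general position gives $\pi_1(V)\cong\pi_1(BG_{k+1})$, so every local system on $V$ extends; this is where the hypothesis $n\geq 3$ is used in your argument, parallel to its use in the paper). Second, for readability it would be worth flagging that the identification of the homological Gysin map $\beta^!_k$ with the cohomological restriction $\beta^*_k$ under Poincar\'e duality with local coefficients is the standard compatibility of cap products and Umkehr maps, valid up to sign, which does not affect the iso/mono conclusions.
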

\begin{proof}
Let $\pi = \pi_1 (BG_2)$ be the fundamental group of $BG_k$, $k=2$.
By Lemma \ref{lem.xikiskconnected} (for $X$ a point), 
the inclusion $\beta_k: BG_k \hookrightarrow BG_{k+1}$
is $k$-connected. Thus
$(\beta_k)_*: \pi_1 (BG_k) \to \pi_1 (BG_{k+1})$ is an isomorphism, $k\geq 2$,
which shows that $\pi = \colim_k \pi_1 (BG_k)$.
A compact subspace of a CW complex is contained in a finite subcomplex.
Thus for every compact subset 
$C \subset BG = \bigcup_k BG_k$, there exists a $k$ such that
$C \subset BG_k$.
Therefore,
$\pi_1 (BG) = \colim_k \pi_1 (BG_k) = \pi.$
Now $\pi_1 (BG)$ is the component group
$\pi_0 (G)$, which is finite as $G$ is compact.
This shows that $\pi$ is a finite group.

Let $U = BG_{k+1} - BG_k$ be the complement of $BG_k$ in
$BG_{k+1}$. Since the codimension of $BG_k$ in $BG_{k+1}$
is $n\geq 3,$ general position 
(or a Seifert-Van Kampen argument using the sphere bundle of the normal
bundle $\nu_k$ of $BG_k$)
implies that $U$ is path connected and
that the open inclusion 
$i: U \hookrightarrow BG_{k+1}$ induces an isomorphism
\[ i_*: \pi_1 (U) \stackrel{\simeq}{\longrightarrow} \pi_1 (BG_{k+1}) 
   = \pi.  \]
In particular, $\pi_1 (U) \cong \pi$ is finite.

Next, we will prove the following statement:
If $L$ is any local coefficient system of finite dimensional rational 
vector spaces on $U$, then
\begin{equation} \label{equ.hpukp1liszero}
H_p (U;L)=0~ 
   \text{ for } p\geq (n-1)k + n + a.
\end{equation}   
Since $\pi$ is a finite group, Maschke's theorem shows that
the group ring $\rat [\pi]$ is semi-simple.
We view $L$ as a left $\rat [\pi]$-module.
Let $\widetilde{U}$ be the universal cover of $U$ (which is path connected)
and consider the universal coefficient spectral sequence
\[ E^2_{p,q} = \Tor^{\rat [\pi]}_{p,q} (H_* (\widetilde{U};\rat), L)  
  \Rightarrow H_{p+q} (U; L). \]
Since $\rat [\pi]$ is semi-simple, $E^2_{p,q}=0$ for $p>0$.
Consequently, the spectral sequence collapses to an isomorphism
\[ H_* (U;L) \cong H_* (\widetilde{U};\rat) \otimes_{\rat [\pi]} L.  \]
Thus it suffices to prove that $H_p (\widetilde{U};\rat)$ vanishes
for $p$ in the indicated range.
Let $\widetilde{BG_{k+1}}$ be the universal cover of $BG_{k+1}$.
Since the closed inclusion
$\beta_k: BG_k \hookrightarrow BG_{k+1}$ induces 
a $\pi_1$-isomorphism, a homotopy lifting argument with respect to the
covering projection shows that
the pullback cover $\beta^*_k (\widetilde{BG_{k+1}})$ of $BG_k$ is path connected
and in fact simply connected.
Thus $\beta^*_k (\widetilde{BG_{k+1}})$ is the universal cover
$\widetilde{BG_k}$ of $BG_k$.
Similar observations apply to the
pullback cover $i^* (\widetilde{BG_{k+1}})$, given by
\[
i^* (\widetilde{BG_{k+1}}) 
   = \widetilde{BG_{k+1}} - \widetilde{BG_k}.  
\]
Since $\widetilde{BG_k}$ is a submanifold of $\widetilde{BG_{k+1}}$ of
codimension $n\geq 3$, and $\widetilde{BG_{k+1}}$ is path connected, 
general position implies that
$i^* (\widetilde{BG_{k+1}})$ is path connected.
Since the open inclusion $i: U \hookrightarrow BG_{k+1}$
induces a $\pi_1$-isomorphism, the
pullback cover $i^* (\widetilde{BG_{k+1}})$ 
is the universal cover $\widetilde{U}$ of $U$.
This shows that
\[ \widetilde{U} = \widetilde{BG_{k+1}} - \widetilde{BG_k},    \]
where the complement is taken with respect to the inclusion
$\widetilde{\beta_k}: \widetilde{BG_k} \hookrightarrow \widetilde{BG_{k+1}}$
which lifts $\beta_k$. Since the latter is $k$-connected,
$\widetilde{\beta_k}$ is $k$-connected as well. Thus
$(\widetilde{BG_{k+1}}, \widetilde{BG_k})$ is a $k$-connected
pair of path connected spaces, which implies that
\[ \widetilde{\beta_k}_*: H_i (\widetilde{BG_k}) \to H_i (\widetilde{BG_{k+1}}) \]
is an isomorphism for $i<k$ and an epimorphism for $i=k$.
Hence, in view of the long exact sequence
\[ \cdots \to H_i (\widetilde{BG_k}) 
   \to H_i (\widetilde{BG_{k+1}}) 
   \to H^\BM_i (\widetilde{U})
  \to H^\BM_{i-1} (\widetilde{BG_k}) \to \cdots
\]
associated to the closed inclusion $\widetilde{\beta_k}$,
the group $H^\BM_i (\widetilde{U})$ vanishes for $i\leq k$.
Thus also
$H^\BM_i (\widetilde{U};\rat)=0$ for $i\leq k$.
We write $b_k = nk + a$
for the dimension of $BG_k$. The set $\widetilde{U}$ is open in
the smooth manifold $\widetilde{BG_{k+1}}$ and hence itself a smooth manifold
of dimension $b_{k+1}$.
By Poincar\'e duality,
$H^\BM_i (\widetilde{U};\rat) \cong
   H^{b_{k+1} -i} (\widetilde{U};\rat)$
so that by the universal coefficient theorem,
$H_{b_{k+1} -i} (\widetilde{U};\rat)=0$ for $i\leq k$.
Setting $p = b_{k+1} -i,$ the condition $i\leq k$ is equivalent
to $p\geq (n-1)k + n + a$.
This finishes the proof of statement (\ref{equ.hpukp1liszero}).

We consider the open subset
\[ U_X := X_G (k+1) - X_G (k)  \]
of $X_G (k+1)$.
The fiber bundle $q_{k+1}: X_G (k+1) \to BG_{k+1}$ restricts to a fiber bundle
$U_X \to U$ with fiber $X$.
The terms $E^2_{p,q}$ of the Serre spectral sequence 
\[ E^2_{p,q} = H_p (U; \underline{H}_q (X;\rat))
   \Rightarrow H_{p+q} (U_X; \rat)  \]
of this bundle vanish if
$p\geq (n-1)k + n + a$ (by statement (\ref{equ.hpukp1liszero})),
or $q>m$.
Thus if $p+q \geq m + (n-1)k + n + a$, then
$H_p (U; \underline{H}_q (X;\rat))=0$, so by the spectral sequence,
$H_{p+q} (U_X; \rat)=0$.
Suppose that 
$i>m + (n-1)k + n + a$.
Then the two outer terms of the exact sequence
\[ H_i (U_X;\rat)
   \to H_i (X_G (k+1);\rat)
   \to H_i (X_G (k+1), U_X;\rat)
   \to H_{i-1} (U_X;\rat)  \]
vanish, so that the middle map is an isomorphism.
The Gysin restriction
\[ \xi_k^!: H_i (X_G (k+1);\rat) 
 \longrightarrow H_{i-n} (X_G (k);\rat) \]
can be described as the composition
\[ H_i (X_G (k+1);\rat) \longrightarrow 
   H_i (X_G (k+1), U_X;\rat) \stackrel{\simeq}{\longrightarrow}
H_{i-n} (X_G (k);\rat),
\]
and is thus an isomorphism if $i>m + (n-1)k + n + a$.
The statement to be proven follows by shifting the degree.
\end{proof}
The above Gysin stabilization is illustrated by Example \ref{exple.circlegroup},
where we consider the circle group.
Proving that equivariant homology is independent of choices
(Theorem \ref{thm.equivhomolindepofgrpemb}, 
Proposition \ref{prop.transferidentcompatblktokp1}) 
requires certain bundle transfer maps to be isomorphisms in
appropriate ranges. This bundle transfer stabilization will be 
developed next.
\begin{lemma} \label{lem.transferisisom}
(Homological Bundle Transfer Stabilization.)
Let $p:Y \to B$ be an oriented fiber bundle whose fiber $F$ is a compact 
oriented smooth
$d$-dimensional manifold, whose structure group $G$ is a compact Lie
group acting smoothly on $F$ and whose base $B$ is a finite CW complex
of dimension $b$. 
If $F$ is $(k-1)$-connected, then the transfer
$p^!: H_i (B;\intg) \to H_{i+d} (Y;\intg)$ is an isomorphism for $i>b-k$.
\end{lemma}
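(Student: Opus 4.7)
The plan is to reduce the claim to a collapse result for the Serre spectral sequence of the fibration $F\hookrightarrow Y\stackrel{p}{\to}B$, combined with Poincar\'e duality on the fiber. (I tacitly assume $F$ is closed, since otherwise $H_d(F;\intg)=0$ makes the transfer the zero map.)

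First I would record the homological pinching
\[ H_q(F;\intg)=0 \text{ for } q\in[1,k-1]\cup[d-k+1,d-1]. \]
The first interval is Hurewicz applied to $(k-1)$-connectedness. For the second, Poincar\'e duality on $F$ gives $H_q(F;\intg)\cong H^{d-q}(F;\intg)$, and the universal coefficient theorem combined with the already established vanishing on $[1,k-1]$ (and the fact that $H_0(F;\intg)=\intg$ is torsion-free) yields $H^j(F;\intg)=0$ for $j\in[1,k-1]$, which is precisely the right-hand vanishing. Orientation of $F$ gives $H_d(F;\intg)=\intg$, and orientation of the bundle makes the local system $\underline{H}_d(F;\intg)$ on $B$ constant.

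Next I would analyze the Serre spectral sequence $E^2_{p,q}=H_p(B;\underline{H}_q(F;\intg))\Rightarrow H_{p+q}(Y;\intg)$ on the antidiagonal $p+q=i+d$ for a fixed $i>b-k$. Since $F$ is a closed oriented $(k-1)$-connected $d$-manifold with $H_d(F;\intg)\not=0$, one has $k\leq d$, so the potentially nonzero $q$-indices lie in $\{0\}\cup[k,d-k]\cup\{d\}$, and the corresponding $p=i+d-q$ satisfies $p>b$ in every slot other than $(p,q)=(i,d)$; hence $H_p(B;L)=0$ for any local system $L$, since $B$ is a CW complex of dimension $b$. Likewise, no nontrivial differentials enter or leave $E^r_{i,d}$ for $r\geq 2$: outgoing differentials have $q$-target exceeding $d$, while incoming ones originate from $E^r_{i+r,d-r+1}$ with $d-r+1\in\{0\}\cup[k,d-k]$, forcing $r=d+1$ or $r\in[k+1,d-k+1]$, and in every such case $i+r>b$ kills the source term. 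Thus $H_{i+d}(Y;\intg)$ is concentrated in the slot $E^\infty_{i,d}=E^2_{i,d}=H_i(B;\intg)$, producing an abstract isomorphism $H_i(B;\intg)\cong H_{i+d}(Y;\intg)$.

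The step I expect to be most delicate is identifying this abstract isomorphism with the transfer $p^!$. Over a trivializing open $U\subset B$ with $p^{-1}(U)\cong U\times F$, $p^!$ agrees by construction with cross product with the fiber fundamental class $[F]\in H_d(F;\intg)$, which coincides with the split inclusion $H_i(U;\intg)=H_i(U)\otimes H_d(F)\hookrightarrow H_{i+d}(U\times F;\intg)$ picked out by the bottom filtration of the (trivially collapsed) local Serre spectral sequence. Naturality of the transfer and of the spectral sequence in the base, together with a patching argument over a good open cover of $B$, then identifies the globally defined $p^!$ with the edge homomorphism $H_i(B;\intg)=E^\infty_{i,d}\hookrightarrow H_{i+d}(Y;\intg)$, completing the proof.
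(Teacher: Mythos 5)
Your spectral-sequence/Poincar\'e-duality route to the vanishing is a genuinely different path from the paper's, and that half of the argument is correct. The paper embeds $Y$ fiberwise into $B\times\real^s$ via the Mostow--Palais theorem and then applies Alexander duality to the complementary fiber $S^s-(F\times 0)$ together with the Serre spectral sequence of the complement bundle $(B\times S^s)-\theta(Y)\to B$; you instead apply Poincar\'e duality and universal coefficients directly to $F$ to pinch $H_*(F;\intg)$ into degrees $\{0\}\cup[k,d-k]\cup\{d\}$, and then run the Serre spectral sequence of $p\colon Y\to B$ itself. Both invoke the same underlying duality on the fiber (Alexander duality for $S^s-F$ is Poincar\'e--Lefschetz duality for $F$ in disguise) and both produce the range $i>b-k$. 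Your version is arguably more direct because it avoids the auxiliary embedding.

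However, the last step has a real gap, and it is precisely where the paper's embedding machinery earns its keep. You need to identify $p^!$ \emph{as actually defined} --- via the Pontrjagin--Thom collapse $T(p)\colon S^sB^+\to \Th(\nu)$ followed by the Thom isomorphism, which is the definition used in the paper's own proof --- with the isomorphism $H_i(B;\intg)=E^2_{i,d}=E^\infty_{i,d}\hookrightarrow H_{i+d}(Y;\intg)$ coming from the filtration of the Serre spectral sequence. Your proposed ``patching over a good open cover'' does not establish this: two natural maps $H_i(B;\intg)\to H_{i+d}(Y;\intg)$ that agree over $U$, $V$, and $U\cap V$ and that both intertwine the Mayer--Vietoris boundary maps need \emph{not} agree over $U\cup V$ --- their difference is only constrained to vanish on the image of $H_i(U)\oplus H_i(V)$ and to land in the image of $H_{i+d}(p^{-1}U)\oplus H_{i+d}(p^{-1}V)$, which leaves room for a nonzero discrepancy. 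A correct argument along your lines would have to show that $p^!$ is compatible with the skeletal filtrations of $B$ and $Y$ (so that it induces a map of spectral sequences, shifting $q$ by $d$) and that the induced map on $E^2_{*,0}(B)\to E^2_{*,d}(Y)$ is multiplication by the orientation class of $F$; passing to $E^\infty$ then gives the identification. That is doable but is a nontrivial piece of work that your sketch glosses over. The paper's proof sidesteps the whole reconciliation: since $T(p)_*$ is literally (up to excision) the map induced by the inclusion of pairs $(S^sB^+,\varnothing)\hookrightarrow(S^sB^+,\,S^sB^+-\theta(Y))$, it sits in a long exact sequence whose flanking groups are shown to vanish in the stated range, and no comparison between two differently-constructed isomorphisms is ever needed.
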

\begin{proof}
We shall first construct a fiberwise embedding
$\theta: Y \hookrightarrow B \times \real^s,$ $s$ large,
over $B$ which is normally nonsingular and possesses a normal bundle $\nu$
admitting the interpretation of a vertical normal bundle for $p$.
We use techniques of Becker and Gottlieb \cite{beckergottlieb}.
Let $\widetilde{Y} \to B$
be the underlying principal $G$-bundle of $p:Y\to B$, so that
$Y = \widetilde{Y} \times_G F.$
By the Mostow-Palais equivariant embedding theorem,
there exists, for sufficiently large $n$, 
an orthogonal $G$-module structure on the real vector space $\real^n$ 
and a smooth equivariant embedding $F \subset \real^n$.
Restricting the $G$-action to the complement $\real^n -F$ of the
invariant subspace $F\subset \real^n$ defines a smooth $G$-action on
$\real^n - F$.
Let $\eta$ be the vector bundle over $B$ with projection
$E(\eta)= \widetilde{Y} \times_G \real^n \to B.$
The equivariant embedding $F\subset \real^n$ induces a fiberwise embedding
\[ Y = \widetilde{Y} \times_G F \subset \widetilde{Y} \times_G \real^n = E(\eta) \]
over $B$.
Since $B$ is a finite CW complex, we can choose a complementary vector
bundle $\zeta$ over $B$ with
$\eta \oplus \zeta = B \times \real^s,$
the trivial rank $s$ vector bundle over $B$.
We obtain an embedding $\theta: Y \hookrightarrow B \times \real^s$
as the composition
$Y \subset E(\eta) \subset E(\eta \oplus \zeta) = B \times \real^s.$
By construction, the composition of $\theta$ with the
projection $B\times \real^s \to B$ to the first factor is $p$.
The embedding $\theta$ is normally nonsingular. In fact, its normal bundle
$\nu$ is given by the Whitney sum of $p^* (\zeta)$ and the
normal bundle of $Y$ in $E(\eta)$.
The vertical tangent bundle $T^\vee Y$ of $p$ is the vector bundle over $Y$
given by $\widetilde{Y} \times_G TF \to \widetilde{Y} \times_G F=Y$.
It is oriented, since the fiber bundle $p:Y\to B$ is oriented.
Since $T^\vee Y \oplus \nu = Y\times \real^s$ is trivial, $\nu$ is oriented.

The complement $(B\times \real^s) - \theta (Y)$ fibers over $B$ with
fiber $\real^s- (F \times \{ 0 \})$.
Indeed, let $P\to B$ be the underlying principal $\Or (s-n)$-bundle of
$\zeta$. The total space of $\eta \oplus \zeta$ is
\begin{align*}    
E(\eta \oplus \zeta) 
&= E(\eta) \times_B E(\zeta)
  = (\widetilde{Y} \times_G \real^n) \times_B (P\times_{\Or (s-n)} \real^{s-n}) \\
&= (\widetilde{Y} \times_B P) 
 \times_{G \times \Or (s-n)} (\real^n \oplus \real^{s-n}),
\end{align*}
and the Whitney sum $\eta \oplus \zeta$ has the underlying principal
bundle
$\widetilde{Y} \times_B P \to B,$
with structure group $G \times \Or (s-n)$.
In terms of this description, the image $\theta (Y)$ is given by
\[    
 \theta (Y) =
 (\widetilde{Y} \times_B P) 
 \times_{G \times \Or (s-n)} (F \oplus \{ 0_{s-n} \}),
\]
and the complement is
\[    
(B\times \real^s) - \theta (Y) =
 E(\eta \oplus \zeta) - \theta (Y) =
 (\widetilde{Y} \times_B P) 
 \times_{G \times \Or (s-n)} ((\real^n \times \real^{s-n}) - 
      (F \times \{ 0_{s-n} \})).
\]
This endows the complement with a fiber bundle projection
$(B\times \real^s) - \theta (Y) \to B$
whose fiber is $\real^s - (F \times \{ 0_{s-n} \})$.
The action of $G \times \Or (s-n)$ on $\real^s = \real^n \times \real^{s-n}$
extends to an action on
the one point compactification $S^s = \real^s \cup \infty$
with $\infty$ as a fixed point.
In the fiberwise one point compactification $B\times S^s$
of $B\times \real^s$, the complement of $\theta (Y)$ is given by
\[    
(B\times S^s) - \theta (Y) =
 (\widetilde{Y} \times_B P) 
 \times_{G \times \Or (s-n)} (S^s - (F \times \{ 0_{s-n} \})).
\]
This endows the complement in the fiberwise compactification 
with a fiber bundle projection
\begin{equation} \label{equ.fiberbundlebtssminusthetay}  
(B\times S^s) - \theta (Y) \longrightarrow B  
\end{equation}
whose fiber is $S^s - (F \times \{ 0_{s-n} \})$.
This bundle has a section $\sigma$ given by $\sigma (x) = (x,\infty),$
$x\in B$. The suspension $S^s B^+$ is the quotient
$S^s B^+ = (B \times S^s) / \sigma (B)$
and may be viewed as the Thom space of the trivial vector bundle 
$B\times \real^s$.
The relative Serre spectral sequence of the fiber bundle
(\ref{equ.fiberbundlebtssminusthetay}) has $E^2$ term
$E^2_{p,q} = H_p (B; \underline{H}_q (S^s - (F\times 0), \infty))$
and converges to
\[ H_{p+q} ((B\times S^s) - \theta (Y), \sigma (B)) 
   = \widetilde{H}_{p+q} (S^s B^+ - \theta (Y)). \]
By Alexander duality,
$\widetilde{H}_q (S^s - (F\times 0); \intg)
   \cong \widetilde{H}^{s-1-q} (F; \intg) =0$
for $s-1-q < k$, since $F$ is $(k-1)$-connected.
So $\widetilde{H}_q (S^s - (F\times 0); \intg)=0$ for $q > s-1-k$.
Suppose that $i = p+q > b + s-1-k$.
Then $p>b$ or $q> s-1-k$.
In the first case, $H_p (B; \underline{H}_q (S^s - (F\times 0), \infty))=0,$
as the cellular chain complex of the universal cover of $B$ has no cells
of dimension higher than $b$.
In the second case, i.e. when $q > s-1-k,$
$E^2_{p,q}$ also vanishes since the stalk of the local system
$\underline{H}_q (S^s - (F\times 0), \infty)$ vanishes
according to the Alexander duality argument.
This shows that
\[ \widetilde{H}_i (S^s B^+ - \theta (Y))=0 \text{ for } i > b + s-1-k.  \]

The bundle transfer
$p^!: H_i (B) \to H_{i+d} (Y)$ is given by the composition
\[ H_i (B) \cong
  \widetilde{H}_{i+s} (S^s B^+)
   \stackrel{T(p)_*}{\longrightarrow} 
     \widetilde{H}_{i+s} (\Th (\nu))
      \stackrel{\Phi~ \simeq}{\longrightarrow}
 H_{i+s-(s-d)} (Y), \]
see \cite{banaglbundletransfer}, \cite{banaglko}. 
Here, the first map is the suspension isomorphism and
$T(p): S^s B^+ \to \Th (\nu)$ is the Umkehr map
(i.e. Pontrjagin-Thom collapse)
associated to the fiber preserving normally nonsingular embedding
$\theta: Y \hookrightarrow B \times \real^s$.
Cap product with the Thom class of the oriented vector bundle $\nu$ 
defines the Thom isomorphism $\Phi$.
The above description of $p^!$ shows that the transfer is an isomorphism
if and only if $T(p)_*$ is.
Excision identifies
\[ \widetilde{H}_{i+s} (\Th (\nu))
   \cong H_{i+s} (S^s B^+,~ S^s B^+ - \theta (Y)). \]
Under this identification, $T(p)_*$ fits into the exact sequence
\[
\widetilde{H}_{i+s} (S^s B^+ - \theta (Y)) \to
\widetilde{H}_{i+s} (S^s B^+) \stackrel{T(p)_*}{\longrightarrow}
H_{i+s} (S^s B^+, S^s B^+ - \theta (Y)) \to
\widetilde{H}_{i+s-1} (S^s B^+ - \theta (Y)).
\]
(Since the Pontrjagin-Thom collapse is the identity near $\theta (Y)$,
the induced homomorphism $T(p)_*$ agrees with the homomorphism
induced on homology by the inclusion
$(S^s B^+,\varnothing) \subset (S^s B^+, S^s B^+ - \theta (Y)$.)
Suppose that $i > b-k$.
Then $i+s-1 > b+s-1-k$ and thus the two outer groups in the exact
sequence vanish. Consequently, $T(p)_*$, and hence $p^!$, is
an isomorphism in the range $i > b-k$.
\end{proof}

\subsection{Equivariant Fundamental Classes}
\label{ssec.equivfundclass}

Suppose that the $G$-pseudomanifold $X$ is compact, $\dim X=m$, and
that $X$ oriented as a pseudomanifold.
Then $X$ has a fundamental homology class
$[X] \in H_m (X)$
in ordinary top dimensional homology.
In equivariant homology, it has an \emph{equivariant fundamental class}
\[ [X]_G \in H^G_m (X),  \]
as we shall now explain.
The approximations $X_G (k)$ of $X_G$ are compact pseudomanifolds of dimension
$m + nk + a$.
As $G$ is bi-invariantly orientable, every $BG_k$ is orientable by
Proposition \ref{prop.biinvorientgrpisorupondeloop}.
We choose and fix an orientation of $BG_k$ for every $k$.
Since the $G$-action on $X$ preserves the orientation of $X$,
the $X_G (k)$ are orientable by Proposition \ref{prop.xgkwhitneystratpsdmfdoriented}.
We choose and fix an orientation for every $X_G (k)$ compatibly with the
orientations of the $BG_k$. The $X_G (k)$ then possess fundamental classes
$[X_G (k)] \in H_{m + nk + a} (X_G (k)).$ The normal bundles $\nu_k$ of
$BG_k$ in $BG_{k+1},$ as well as the normal bundles of
$X_G (k)$ in $X_G (k+1),$ receive induced orientations.
Then the Gysin restriction
\[  \xi^!_k: H_{m + n(k+1) + a} (X_G (k+1)) \longrightarrow
   H_{m + nk + a} (X_G (k))  \]
sends
$[X_G (k+1)] \mapsto [X_G (k)].$
(This may be viewed as a special case of the author's Verdier-Riemann-Roch formula
for $L$-classes of singular spaces, recalled below as Theorem 
\ref{thm.myVRRforL}.)
Hence the sequence $\{ [X_G (k)] \}_{k=1,2,\ldots}$ is an element of the
inverse system and defines an element
$[X]_G \in H^G_m (X)$.

\subsection{Equivariant Poincar\'e Duality}
\label{ssec.equivpd}

Suppose that $X=M$ is an $m$-dimensional closed oriented manifold.
Then by classical Poincar\'e duality, cap product with the 
fundamental class $[X_G (k)] = [M_G (k)]$
is an isomorphism
\[ \xymatrix@C=40pt{ 
H^{m-i} (M_G (k)) \ar[r]^<<<<<<<{-\cap [M_G (k)]} &
    H_{i + nk + a} (M_G (k)).   
 } \]
The diagram
\[ \xymatrix@C=50pt{ 
H^{m-i} (M_G (k)) \ar[r]^<<<<<<<<<{-\cap [M_G (k)]}_\simeq &
    H_{i + nk + a} (M_G (k)) \\
 H^{m-i} (M_G (k+1)) \ar[u]^{\xi^*_k} 
    \ar[r]^<<<<<<<<{-\cap [M_G (k+1)]}_\simeq &
    H_{i + n(k+1) + a} (M_G (k))  
      \ar[u]_{\xi^!_k} 
 } \]
commutes up to sign, since
\[ \xi^!_k (\omega \cap [M_G (k+1)])
  = (-1)^{(m-i)n} \xi^*_k (\omega) \cap (\xi^!_k [M_G (k+1)])
  = (-1)^{(m-i)n} \xi^*_k (\omega) \cap [M_G (k)]  \]
for $\omega \in H^{m-i} (M_G (k+1))$. To eliminate the sign, we can
without loss of generality choose an embedding $G\subset \Or (n)$ with 
$n$ even.
Then the sequence of isomorphisms
$\{ -\cap [M_G (k)] \}_{k=1,2,\ldots}$
constitutes an isomorphism of inverse systems, and thus an
isomorphism
\begin{equation} \label{equ.equivpoincduality} 
\xymatrix{ 
H^{m-i}_G (M) \ar[r]^{-\cap [M]_G}_\simeq & 
   H^G_i (M). 
 } \end{equation}
on inverse limits, given by capping with the equivariant
fundamental class of $M$. This is \emph{equivariant Poincar\'e duality}.

In the special case where $X=M$ is a manifold, the
homological Gysin Stabilization Lemma \ref{lem.gysinstabilization}
can be deduced from the cohomological stability
(\ref{equ.cohomgxiscohomxgkforkgri}) by a simple argument based on equivariant
Poincar\'e duality:
\begin{lemma} \label{lem.nonsingstabilization}
(Nonsingular Stabilization.)
If $X=M$ is an $m$-dimensional closed oriented $G$-manifold, then
\[ H^G_j (M; \rat) \cong
   H_{j+nk+a} (M_G (k); \rat)  \]
for $k > m-j$.
\end{lemma}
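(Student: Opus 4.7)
The plan is to deduce this from the equivariant Poincaré duality isomorphism (\ref{equ.equivpoincduality}) combined with the cohomological stabilization result (\ref{equ.cohomgxiscohomxgkforkgri}) and classical Poincaré duality on the smooth manifold $M_G(k)$. The point of the lemma is that when $X=M$ is a closed oriented $G$-manifold we need not rerun the direct homological Gysin argument of Lemma \ref{lem.gysinstabilization}; everything follows by dualizing the already-known stability statement for $H^*_G$.

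First I would invoke equivariant Poincaré duality: since $M$ is a closed oriented $m$-dimensional $G$-manifold, capping with $[M]_G$ gives an isomorphism
\[
H^{m-j}_G (M;\rat) \stackrel{\cong}{\longrightarrow} H^G_j (M;\rat).
\]
Next, by Remark \ref{rem.equivcohominvlim} (i.e.\ formula (\ref{equ.cohomgxiscohomxgkforkgri})), the cohomological inverse system stabilizes: the restriction
\[
H^{m-j}_G (M;\rat) \stackrel{\cong}{\longrightarrow} H^{m-j} (M_G(k);\rat)
\]
is an isomorphism as soon as $k > m-j$, since for such $k$ all subsequent maps $\xi^*_\ell$ in the inverse system are isomorphisms by Lemma \ref{lem.cohomstabilization}.

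Finally I would apply classical Poincaré duality on $M_G(k)$, which by Theorem \ref{thm.xgkwhitneystratpsdmfdwitt} and Proposition \ref{prop.xgkwhitneystratpsdmfdoriented} (together with the assumption that the bi-invariantly orientable embedding $G\subset \Or(n)$ has been fixed so that $BG_k$ is oriented) is a closed oriented smooth manifold of dimension $m+nk+a$. Capping with $[M_G(k)]$ gives
\[
H^{m-j} (M_G(k);\rat) \stackrel{\cong}{\longrightarrow} H_{(m+nk+a)-(m-j)} (M_G(k);\rat) = H_{j+nk+a} (M_G(k);\rat).
\]
Composing the three isomorphisms yields the claim. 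The only thing to check (and the mildest ``obstacle'') is that these three isomorphisms in fact assemble into the canonical projection $H^G_j(M;\rat)\to H_{j+nk+a}(M_G(k);\rat)$ of the inverse limit onto its $k$-th stage; this is immediate from the fact that equivariant Poincaré duality was defined stagewise via the commutative (up to a sign absorbed into the convention $n$ even) squares in Section \ref{ssec.equivpd}, and that the cohomological stabilization square is tautological.
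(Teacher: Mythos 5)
Your argument is correct and is essentially the paper's own proof: the paper likewise combines equivariant Poincaré duality, cohomological stabilization via Remark \ref{rem.equivcohominvlim} (after passing from $\intg$ to $\rat$ coefficients), and ordinary Poincaré duality on the smooth closed oriented manifold $M_G(k)$ to identify $H^G_j(M;\rat)$ with $H_{j+nk+a}(M_G(k);\rat)$ for $k>m-j$. The only cosmetic difference is that the paper spells out the passage to rational coefficients via Lemma \ref{lem.xikiskconnected} and the universal coefficient theorem, whereas you appeal directly to Lemma \ref{lem.cohomstabilization}; either route is fine.
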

\begin{proof}
By Lemma \ref{lem.xikiskconnected}, 
the inclusion $\xi_k: M_G (k) \hookrightarrow M_G (k+1)$ 
induces an isomorphism
\[ (\xi_k)_*: H_i (M_G (k)) \longrightarrow H_i (M_G (k+1))  \]
for $i<k$. By the universal coefficient theorem,
$\xi^*_k: H^i (M_G (k+1);\rat) \to
    H^i (M_G (k);\rat)$
is an isomorphism for $i<k$.
Using (\ref{equ.cohomgxiscohomxgkforkgri}), 
equivariant Poincar\'e duality for $M,$ and ordinary
Poincar\'e duality for $M_G (k)$, and writing $j = m-i,$
\[
H^G_j (M;\rat)
 \cong H^i_G (M;\rat) 
 \cong H^i  (M_G (k);\rat) 
 \cong H_{m-i+nk+a}  (M_G (k);\rat)     
\]
for $k>i=m-j$.
\end{proof}

\subsection{The Equivariant Homology of a Point}

We discuss the special case where $X$ is a point.
Then $X_G (k) = EG_k \times_G \pt = BG_k$ so that
\[ H^G_i (\pt) = \underset{\longleftarrow_k}{\lim}
  H_{i+ nk + a} (BG_k).
 \]
In this case, with $m=0$, equivariant Poincar\'e duality
(\ref{equ.equivpoincduality}) shows that
\[  H^G_{-i} (\pt) \cong H^{i}_G (\pt) = H^i (BG). \] 

\begin{example} \label{exple.circlegroup}
We illustrate the approximations as $k\to \infty$ of the
equivariant homology $H^G_* (\pt)$ for the circle group $G=S^1=\SO(2)$.
For this group, we may take $n=2$. Then
\[ BS^1_k = \frac{\SO (2+k)}{\SO (2)\times \SO(k)} \]
is the oriented Grassmannian $\wgr_2 (\real^{2+k})$ of oriented $2$-planes
in $\real^{2+k}$. Its dimension is $\dim BS^1_k = 2k$.
The integral cohomology of these Grassmannians has been computed
by Van\v{z}ura in \cite{vanzura}; see also 
Mimura-Toda \cite[p. 129, Remark 4.8]{mimuratoda}.
The following table displays generators for the groups
$H^i (BS^1_k;\intg)$ up to $k=8$:
\[
\begin{array}{|c|c|c|c|c|c|c|c|c|} \hline
    & k=1 & k=2      & k=3     & k=4 & k=5 & 
       k=6 & k=7 & k=8 \\ \hline
H^0 & \cellcolor{Gray}  1 &  \cellcolor{Gray} 1      & \cellcolor{Gray}  1  
   & \cellcolor{Gray}  1 & \cellcolor{Gray}  1 & 
        \cellcolor{Gray} 1 & \cellcolor{Gray}  1 & \cellcolor{Gray}  1 \\ \hline
H^2 & t_1 & t_2, u_2 & \cellcolor{Gray} t_3     & \cellcolor{Gray} t_4 
   & \cellcolor{Gray} t_5 & 
      \cellcolor{Gray} t_6 & \cellcolor{Gray} t_7 & \cellcolor{Gray} t_8 \\ \hline            
H^4 &   0 & t_2 u_2  & u_3     & t^2_4, u_4 & \cellcolor{Gray} t^2_5 & 
       \cellcolor{Gray} t^2_6 & \cellcolor{Gray} t^2_7 & \cellcolor{Gray} t^2_8 \\ \hline
H^6 &   0 &   0      & t_3 u_3 & t_4 u_4 & u_5 & 
       t^3_6, u_6 & \cellcolor{Gray} t^3_7 & \cellcolor{Gray} t^3_8 \\ \hline
H^8 &   0 &   0      &   0     & t^2_4 u_4 & t_5 u_5 & 
       t_6 u_6 & u_7 & t^4_8, u_8 \\ \hline                     
H^{10} &   0 &   0      &   0     &   0 & t^2_5 u_5 & 
       t^2_6 u_6 & t_7 u_7 & t_8 u_8 \\ \hline    
H^{12} &   0 &   0      &   0     &   0 &   0 & 
       t^3_6 u_6 & t^2_7 u_7 & t^2_8 u_8 \\ \hline                 
H^{14} &   0 &   0      &   0     &   0 &   0 & 
         0 & t^3_7 u_7 & t^3_8 u_8 \\ \hline                  
H^{16} &   0 &   0      &   0     &   0 &   0 & 
         0 &   0 & t^4_8 u_8 \\ \hline  
\end{array}
\]
The cohomology in odd degrees vanishes.
The generator $t_k$ is the Euler class $e(\widetilde{\gamma}_2)$ 
of the tautological oriented $2$-plane 
bundle $\widetilde{\gamma}_2$ over $\wgr_2 (\real^{2+k})$.
The restriction 
$\beta^*_k: H^2 (BS^1_{k+1}) \to H^2 (BS^1_k)$ 
maps $t_{k+1}$ to $t_k$ because the Euler class is
natural and the restriction of the tautological bundle is the tautological bundle.
In the proof of the Nonsingular Stabilization Lemma \ref{lem.nonsingstabilization},
we noted that
\[ \xi^*_k: H^i (M_G (k+1);\rat) \longrightarrow
    H^i (M_G (k);\rat)  \]
is an isomorphism for $k>i$. For $M=\pt$, this means that
\[ \beta^*_k: H^i (BG_{k+1};\rat) \longrightarrow
    H^i (BG_{k};\rat)  \]
is an isomorphism for $k>i$. In the table, these stable fields are shaded in gray;
they do not contain any of the unstable classes $u_k$.
The inverse limit
\[ H^i_{S^1} (\pt;\rat) 
 = \underset{\longleftarrow_k}{\lim}~
      \big( H^i  (BS^1_k;\rat) 
      \stackrel{\beta^*_k}{\longleftarrow} H^i (BS^1_{k+1};\rat) \big) 
\]
is $\rat [t] = H^* (\cplx \mathbb{P}^\infty;\rat),$ $BS^1 \simeq \cplx \mathbb{P}^\infty$.

Let us move on to homology. The $S^1$-equivariant homology groups of a space $X$
are by definition
\[
H^{S^1}_i (X) 
 = \underset{\longleftarrow_k}{\lim}
  H_{i+ nk + \frac{1}{2}n(n-1)-d} (X_{S^1} (k)) 
 = \underset{\longleftarrow_k}{\lim}
  H_{i+ 2k} (X_{S^1} (k)).  
\]
For $X=\pt$,
\[ H^{S^1}_i (\pt) = \underset{\longleftarrow_k}{\lim}
  H_{i+2k} (BS^1_k). \]
Let $\tau_k = t_k \cap [BS^1_k]$ denote the Poincar\'e dual of the Euler class $t_k$ 
and let $\mu_k$ denote the Poincar\'e dual
of the unstable cohomology class $u_k$.
The following table displays generators for the homology groups
$H_i (BS^1_k;\intg)$:
\[
\begin{array}{|c|c|c|c|c|c|c|c|c|} \hline
    & k=1 & k=2      & k=3     & k=4 & k=5 & 
       k=6 & k=7 & k=8 \\ \hline
H_0    & \tau_1 & \tau_2 \mu_2 & \tau_3 \mu_3 & \tau^2_4 \mu_4 & \tau^2_5 \mu_5 & 
       \tau^3_6 \mu_6 & \tau^3_7 \mu_7 & \tau^4_8 \mu_8 \\ \hline
H_2    & \cellcolor{Gray1} [BS^1_1] & \tau_2, \mu_2 & \mu_3 & \tau_4 \mu_4 & \tau_5 \mu_5 & 
       \tau^2_6 \mu_6 & \tau^2_7 \mu_7 & \tau^3_8 \mu_8 \\ \hline
H_4    &   0 & \cellcolor{Gray1} [BS^1_2] & \cellcolor{Gray2} \tau_3 & \tau^2_4, \mu_4 & \mu_5 & 
       \tau_6 \mu_6 & \tau_7 \mu_7 & \tau^2_8 \mu_8 \\ \hline
H_6    &   0 &   0 & \cellcolor{Gray1} [BS^1_3] & \cellcolor{Gray2} \tau_4 
   & \cellcolor{Gray3} \tau^2_5 & 
       \tau^3_6, \mu_6 & \mu_7 & \tau_8 \mu_8 \\ \hline
H_8    &   0 &   0 &   0 & \cellcolor{Gray1} [BS^1_4] & \cellcolor{Gray2} \tau_5 & 
       \cellcolor{Gray3} \tau^2_6 & \tau^3_7 & \tau^4_8, \mu_8 \\ \hline
H_{10}    &   0 &   0 &   0 &   0 & \cellcolor{Gray1} [BS^1_5] & 
       \cellcolor{Gray2} \tau_6 & \cellcolor{Gray3} \tau^2_7 & \tau^3_8 \\ \hline
H_{12}    &   0 &   0 &   0 &   0 &   0 & 
       \cellcolor{Gray1} [BS^1_6] & \cellcolor{Gray2} \tau_7 
        & \cellcolor{Gray3} \tau^2_8 \\ \hline                                               
H_{14}    &   0 &   0 &   0 &   0 &   0 & 
         0 & \cellcolor{Gray1} [BS^1_7] & \cellcolor{Gray2} \tau_8 \\ \hline              
H_{16}    &   0 &   0 &   0 &   0 &   0 & 
         0 &   0 & \cellcolor{Gray1} [BS^1_8] \\ \hline
\end{array}
\]
The Gysin restrictions $\beta^!_k: H_{i+2} (BS^1_{k+1};\rat) \to H_i (BS^1_k;\rat)$
act in the table by moving one field up and one field left.
By the Gysin Stabilization Lemma \ref{lem.nonsingstabilization},
\[ H^G_j (M; \rat) \cong
   H_{j+nk+\frac{1}{2} n(n-1) -d} (M_G (k); \rat)  \]
for $k > m-j$, where $M$ is an $m$-dimensional closed oriented manifold.
For $M=\pt$ and $G=S^1$,
the Gysin restriction $\beta^!_k: H_{i+2} (BS^1_{k+1};\rat) \to H_i (BS^1_k;\rat)$
is an isomorphism for $i > m + (n-1)k + a = k$, and
\[ H^{S^1}_j (\pt; \rat) \cong H_{j+2k} (BS^1_k; \rat)  \]
for $k > -j$.
The groups $H^{S^1}_j (\pt) =0$ vanish for $j > \dim \pt =0$.
In equivariant degree $j=0$, the group
\[ H^{S^1}_0 (\pt) = \underset{\longleftarrow_k}{\lim}
  H_{0+2k} (BS^1_k) \cong H_2 (BS^1_1) \cong H_4 (BS^1_2) \cong 
   H_6 (BS^1_3) \ldots \cong \intg  \]
is generated by the sequence of fundamental classes
$[BS^1_1] \mapsfrom [BS^1_2] \mapsfrom [BS^1_3] \mapsfrom \ldots$.
For $j=-1$, the group
\[ H^{S^1}_{-1} (\pt) = \underset{\longleftarrow_k}{\lim}
  H_{-1+2k} (BS^1_k) \cong H_3 (BS^1_2) \cong H_5 (BS^1_3) \cong 
   H_7 (BS^1_4) \ldots =0  \]
vanishes, as is the case for all odd $j$.
 In equvariant degree $j=-2$, the group
\[ H^{S^1}_{-2} (\pt) = \underset{\longleftarrow_k}{\lim}
  H_{-2+2k} (BS^1_k) \cong H_4 (BS^1_3) \cong H_6 (BS^1_4) \cong 
   H_8 (BS^1_5) \ldots \cong \intg  \]
is generated by $\tau_3 \mapsfrom \tau_4 \mapsfrom \tau_5 \mapsfrom \ldots$,
and
\[ H^{S^1}_{-4} (\pt) = \underset{\longleftarrow_k}{\lim}
  H_{-4+2k} (BS^1_k) \cong H_6 (BS^1_5) \cong H_8 (BS^1_6) \cong 
   H_{10} (BS^1_7) \ldots \cong \intg  \]
has generator $\tau^2_5 \mapsfrom \tau^2_6 \mapsfrom \tau^2_7 \mapsfrom \ldots$.
These stability regions are shaded in gray in the homology table.
\end{example}

\subsection{Independence of Choices}
\label{ssec.equivhomolindepofchoices}

The above construction of equivariant homology involves a choice
of model for the manifolds $EG_k$ and $BG_k$. We shall prove that $H^G_* (X)$
is independent of these choices.
Thus let $G \subset \Or (n')$ be another embedding 
into some orthogonal group, yielding a smooth, free, compact,
$(k-1)$-connected $G$-manifold $E'G_k$ of dimension
\[ \dim E'G_k = n' k + \smlhf n' (n'-1). \]
The associated orbit space is a smooth closed manifold $B'G_k$.
As we did for $G\subset \Or (n)$,
we assume that the embedding $G\subset \Or (n')$ has been chosen
so that the action of $G$ on $E'G_k$ preserves the orientation 
(Proposition \ref{prop.gactsonegkorientpres}) and
$B'G_k$ is orientable (Proposition \ref{prop.biinvorientgrpisorupondeloop}). 
The space 
$X'_G (k) = E'G_k \times_G X$
fibers over $B'G_k$ with projection $q'_k: X'_G (k) \to B'G_k$
and fiber $X$.

\begin{thm} \label{thm.equivhomolindepofgrpemb}
For $k > m-j,$ bundle transfer induces an identification
\begin{equation} \label{equ.bundletransferidentif} 
H_{j + nk + \frac{1}{2} n(n-1) -d} (X_G (k);\intg) \cong  
   H_{j + n' k + \frac{1}{2} n'(n'-1) -d} (X'_G (k);\intg).  
\end{equation}   
\end{thm}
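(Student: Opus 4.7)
The plan is to introduce a common enlargement of the two approximations by exploiting the diagonal $G$-action on $EG_k \times E'G_k$. Define
\[ Y_G (k) := (EG_k \times E'G_k) \times_G X, \]
where $G$ acts diagonally on the three factors. Since $G$ acts freely and smoothly on $EG_k \times E'G_k$, the space $Y_G (k)$ fits into two natural projections
\[ q \colon Y_G (k) \longrightarrow X_G (k), \qquad q' \colon Y_G (k) \longrightarrow X'_G (k), \]
obtained by sending $[e,e',x]$ to $[e,x]$ and to $[e',x]$ respectively. These are the associated bundles to the principal $G$-bundles $EG_k \times X \to X_G (k)$ and $E'G_k \times X \to X'_G (k)$, with fibers $E'G_k$ and $EG_k$; local triviality follows from that of the underlying principal bundles.

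The next step is to check that the Homological Bundle Transfer Stabilization Lemma \ref{lem.transferisisom} applies to both $q$ and $q'$. The fibers $EG_k$ and $E'G_k$ are compact oriented smooth manifolds of dimensions $nk + \tfrac{1}{2}n(n-1)$ and $n'k + \tfrac{1}{2}n'(n'-1)$ respectively, and both are $(k-1)$-connected by Lemma \ref{lem.egkiskm1conn}. By the orientability choices for the embeddings $G \subset \Or (n)$ and $G \subset \Or (n')$ (Proposition \ref{prop.gactsonegkorientpres}), the structure group $G$ acts orientation preservingly on both fibers, so both are oriented fiber bundles. The bases $X_G (k)$ and $X'_G (k)$ are compact Whitney stratified pseudomanifolds of dimensions $b := m + nk + \tfrac{1}{2}n(n-1) - d$ and $b' := m + n'k + \tfrac{1}{2}n'(n'-1) - d$; by triangulability of Whitney stratified sets, they qualify as finite CW complexes of these dimensions.

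Applying Lemma \ref{lem.transferisisom}, the bundle transfers
\[ q^! \colon H_i (X_G (k);\intg) \longrightarrow H_{i + n'k + \frac{1}{2}n'(n'-1)} (Y_G (k);\intg), \]
\[ (q')^! \colon H_{i'} (X'_G (k);\intg) \longrightarrow H_{i' + nk + \frac{1}{2}n(n-1)} (Y_G (k);\intg) \]
are isomorphisms provided $i > b-k$ and $i' > b'-k$, respectively. Setting $i := j + nk + \tfrac{1}{2}n(n-1) - d$ and $i' := j + n'k + \tfrac{1}{2}n'(n'-1) - d$, both range conditions collapse to the single hypothesis $k > m-j$ of the theorem, and both transfers land in the \emph{same} degree
\[ j + (n+n')k + \tfrac{1}{2}n(n-1) + \tfrac{1}{2}n'(n'-1) - d \]
of $H_* (Y_G (k);\intg)$. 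The composition $(q^!)^{-1} \circ (q')^!$ then furnishes the canonical isomorphism realizing the identification (\ref{equ.bundletransferidentif}).

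The principal obstacle is not conceptual but organizational: one has to recognize that the symmetric double construction $Y_G (k)$ provides two bundle transfers that simultaneously fall in the stable range and share a common target group. Beyond this, the delicate points are verifying orientability of both fiber bundles (ultimately resting on the bi-invariant orientability of $G$ together with the chosen orthogonal embeddings) and ensuring that $X_G (k)$ and $X'_G (k)$ qualify as finite CW complexes of the claimed dimensions.
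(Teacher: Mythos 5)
Your proposal is correct and follows essentially the same route as the paper: the space $Y_G(k)$ you define is exactly the $\widehat{X}_G(k) = (EG_k \times E'G_k)\times_G X$ of the paper's proof, your projections $q, q'$ are the paper's $p_E, p_{E'}$, and the identification is obtained in both cases by composing one transfer isomorphism from Lemma~\ref{lem.transferisisom} with the inverse of the other. The only material the paper includes that you omit is the explicit verification that the square relating $\widehat{X}_G(k)$, $X_G(k)$, $\widehat{B}_k$, $BG_k$ is cartesian — but that is not needed for this theorem (it is used in the subsequent Theorem~\ref{thm.stageklclassindepofgrpemb} to compare vertical normal bundles).
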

\begin{proof}
We will briefly write $E_k = EG_k,$ $E'_k = E'G_k,$
$B_k = BG_k,$ $B'_k = B'G_k.$
The product manifold
$\widehat{E}_k := E_k \times E'_k$
is $(k-1)$-connected, since both factors are.
The group $G$ acts diagonally and freely on $\widehat{E}_k$.
We denote the orbit space of this action by
$\widehat{B}_k = E_k \times_G E'_k.$
Then $\widehat{B}_k$ is a closed smooth manifold.
The projection $E_k \times E'_k \to E_k$ is $G$-equivariant
and induces a fiber bundle
\[ E'_k \longrightarrow 
   \widehat{B}_k = E_k \times_G E'_k 
   \stackrel{p_B}{\longrightarrow} E_k/G = B_k \]
with structure group $G$. 
As the action of $G$ on the fiber $E'_k$ preserves the orientation,
the fiber bundle $p_B$ is oriented.
Since $B_k$ is oriented as well, we deduce that the
total space $\widehat{B}_k$ is oriented.
The pseudomanifold 
\[ \widehat{X}_G (k) = \widehat{E}_k \times_G X \]
fibers over $\widehat{B}_k$ with projection 
$\widehat{q}_k: \widehat{X}_G (k) \to \widehat{B}_k$
and fiber $X$.
Since $\widehat{B}_k$ is oriented, and the action of $G$ on $X$
preserves the orientation, the total space $\widehat{X}_G (k)$
and the fiber bundle $\widehat{q}_k$ is oriented. 
The projection $E_k \times E'_k \times X \to E_k \times X$ is $G$-equivariant
and induces an oriented fiber bundle
\[ E'_k \longrightarrow 
   \widehat{X}_G (k) = (E_k \times E'_k) \times_G X 
   \stackrel{p_E}{\longrightarrow} E_k \times_G X = X_G (k). \]
We thus have a commutative diagram of fiber bundles
\begin{equation} \label{equ.qkpeishqkpb}
\xymatrix@R=15pt@C=18pt{
 & X \ar[d] & X \ar[d] \\
E'_k \ar[r] & \widehat{X}_G (k) \ar[d]_{\widehat{q}_k} \ar[r]^{p_E}
   & X_G (k) \ar[d]^{q_k} \\ 
E'_k \ar[r] & \widehat{B}_k \ar[r]_{p_B}
   & B_k 
} \end{equation}
This diagram is in fact cartesian, as we will show next.
The universal property for the pullback
\[ \widehat{B}_k \times_{B_k} X_G (k)
   = (E_k \times_G E'_k) \times_{E_k /G} (E_k \times_G X) \]
yields a map
$\phi: \widehat{X}_G (k) = (E_k \times E'_k \times X)/G
  \to \widehat{B}_k \times_{B_k} X_G (k)$
given by
\[ \phi [e,e',x] = (\widehat{q}_k [e,e',x],~
                    p_E [e,e',x])
                = ([e,e'],~ [e,x]).  \]
We will construct a map
$\psi: \widehat{B}_k \times_{B_k} X_G (k)
  \longrightarrow (E_k \times E'_k \times X)/G.$
Let $([e_1, e'],[e_2, x])$ be an element of the fiber product.  
Thus
$[e_1] = p_B [e_1, e'] = q_k [e_2, x] = [e_2],$
that is, $e_1$ and $e_2$ are in the same $G$-orbit in $E_k$.
Therefore, there exists a $g\in G$ such that $ge_1 = e_2$.
The element $g$ is unique, since $G$ acts freely on $E_k$.
Note that then $[e_2, x] = [ge_1, x] = [e_1, g^{-1} x]$.
We set
\[ \psi ([e_1, e'],[e_2, x]) := [e_1, e', g^{-1} x].  \]
It can then be verified that $\psi$ is well-defined, and
that the maps $\phi$ and $\psi$ are inverse to each other.
Thus, via these identifications,
the space $\widehat{X}_G (k)$ is the fiber product of 
$\widehat{B}_k$ and $X_G (k)$ over $B_k$. This shows that
Diagram (\ref{equ.qkpeishqkpb}) is cartesian.

The oriented fiber bundle $p_E$ has an associated transfer map
\[ p^!_E: H_* (X_G (k);\intg) \longrightarrow 
          H_{*+ n' k + \frac{1}{2} n' (n'-1)} (\widehat{X}_G (k);\intg). \]
By Lemma \ref{lem.transferisisom}, the transfer
\[
p^!_E: H_{j + nk + \frac{1}{2} n(n-1) -d} (X_G (k);\intg) 
  \longrightarrow 
     H_{j + nk + \frac{1}{2} n(n-1) + n' k + \frac{1}{2} n' (n'-1) -d} 
           (\widehat{X}_G (k);\intg) 
\]
is an isomorphism for $k > m-j$. 
Now, the model $X'_G (k)$ is similarly the base of the
oriented fiber bundle
\[ E_k \longrightarrow 
   \widehat{X}_G (k) = (E_k \times E'_k) \times_G X 
   \stackrel{p_{E'}}{\longrightarrow} E'_k \times_G X = X'_G (k), \]
whose transfer
\[
p^!_{E'}: H_{j + n' k + \frac{1}{2} n' (n'-1) -d} (X'_G (k);\intg) 
  \longrightarrow 
     H_{j + nk + \frac{1}{2} n(n-1) + n' k + \frac{1}{2} n' (n'-1) -d} 
           (\widehat{X}_G (k);\intg) 
\]
is an isomorphism for $k > m-j$.
The transfer identification (\ref{equ.bundletransferidentif})
is then given by the isomorphism
$(p^!_{E'})^{-1} \circ p^!_E$.
\end{proof}

The transfer identification of the above 
Theorem \ref{thm.equivhomolindepofgrpemb} commutes
with the passage from $k$ to $k+1$, as we will show next.
The equivariant inclusion $E'G_k \hookrightarrow E'G_{k+1}$
induces a normally nonsingular inclusion
$\xi'_k: X'_G (k) \hookrightarrow X'_G (k+1)$.
\begin{prop} \label{prop.transferidentcompatblktokp1}
The diagram
\[ \xymatrix{
H_{j + n(k+1) + \frac{1}{2} n(n-1) -d} (X_G (k+1);\intg) 
   \ar@{=}[d]_{\rotatebox{90}{$\sim$}} \ar[r]^{\xi^!_k}
 & H_{j + nk + \frac{1}{2} n(n-1) -d} (X_G (k);\intg) \ar@{=}[d]^{\rotatebox{90}{$\sim$}} \\
H_{j + n' (k+1) + \frac{1}{2} n'(n'-1) -d} (X'_G (k+1);\intg) \ar[r]_{\xi'^!_k}
 & H_{j + n' k + \frac{1}{2} n'(n'-1) -d} (X'_G (k);\intg),  
} \]
whose vertical isomorphisms are given by the transfer identifications
of Theorem \ref{thm.equivhomolindepofgrpemb} ($k>m-j$), commutes.
\end{prop}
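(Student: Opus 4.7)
The plan is to reduce the commutativity to two naturality identities for the bundle transfers. Set $A := \widehat{X}_G(k)$, $B := \widehat{X}_G(k+1)$, and let $\widehat{\xi}_k: A \hookrightarrow B$ denote the closed embedding induced by the product inclusion $EG_k \times E'G_k \subset EG_{k+1} \times E'G_{k+1}$. Denote the four bundle projections of the natural cube by $p_E: A \to X_G(k),~ P_E: B \to X_G(k+1),~ p_{E'}: A \to X'_G(k),~ P_{E'}: B \to X'_G(k+1)$; all four are oriented fiber bundles with compact smooth fiber and hence carry transfer maps (Lemma \ref{lem.transferisisom}) which are isomorphisms in the stated range $k>m-j$. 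The vertical identifications of the diagram are, by construction of Theorem \ref{thm.equivhomolindepofgrpemb}, given by $(p^!_{E'})^{-1} \circ p^!_E$ on the left and $(P^!_{E'})^{-1} \circ P^!_E$ on the right. I claim
\begin{equation} \label{eq.twonaturalities}
 p^!_E \circ \xi^!_k = \widehat{\xi}^!_k \circ P^!_E, \qquad
 p^!_{E'} \circ \xi'^!_k = \widehat{\xi}^!_k \circ P^!_{E'}.
\end{equation}
Once \eqref{eq.twonaturalities} is established, the common composite $(p^!_{E'})^{-1} \circ \widehat{\xi}^!_k \circ P^!_E$ equals both $(p^!_{E'})^{-1} \circ p^!_E \circ \xi^!_k$ and $\xi'^!_k \circ (P^!_{E'})^{-1} \circ P^!_E$, using the second identity in the form $(p^!_{E'})^{-1} \circ \widehat{\xi}^!_k = \xi'^!_k \circ (P^!_{E'})^{-1}$. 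This is the desired commutativity.

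The two identities in \eqref{eq.twonaturalities} are symmetric; I describe the first. Factor $\widehat{\xi}_k = c_k \circ a_k$ through the intermediate mixed Borel approximation
\[ Y'_k := (EG_k \times E'G_{k+1}) \times_G X, \]
where $a_k: A \hookrightarrow Y'_k$ is induced by $E'G_k \hookrightarrow E'G_{k+1}$ (keeping the $EG_k$-factor fixed) and $c_k: Y'_k \hookrightarrow B$ is induced by $EG_k \hookrightarrow EG_{k+1}$. Both are PL normally nonsingular closed inclusions with oriented normal bundles by the same reasoning as in Lemma \ref{lem.normalbundlesofxgk}, so contravariant functoriality of Gysin restrictions gives $\widehat{\xi}^!_k = a^!_k \circ c^!_k$. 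The assignment $q_k: Y'_k \to X_G(k),~ [e,e',x] \mapsto [e,x]$, is an oriented fiber bundle with fiber $E'G_{k+1}$, and a direct orbit computation using the freeness of the $G$-action shows that the square
\[ \xymatrix{
Y'_k \ar[r]^{c_k} \ar[d]_{q_k} & B \ar[d]^{P_E} \\
X_G(k) \ar[r]^{\xi_k} & X_G(k+1)
} \]
is cartesian, with the normal bundle of $c_k$ being the $q_k$-pullback of the normal bundle $\nu_k$ of $\xi_k$. Base change for the Gysin restriction along the oriented fiber-bundle projection $P_E$ then gives $q^!_k \circ \xi^!_k = c^!_k \circ P^!_E$; this is standard and can be verified in a tubular neighborhood of $BG_k$ in $BG_{k+1}$, where both compositions reduce to the Thom isomorphism of $q^*_k \nu_k$. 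Consequently $\widehat{\xi}^!_k \circ P^!_E = a^!_k \circ q^!_k \circ \xi^!_k$, and the first identity in \eqref{eq.twonaturalities} reduces to the subbundle-transfer equality
\begin{equation} \label{eq.subbundletransfer}
 a^!_k \circ q^!_k = p^!_E
\end{equation}
of maps $H_*(X_G(k)) \to H_*(A)$.

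The principal obstacle is \eqref{eq.subbundletransfer}, since both transfers are constructed in Lemma \ref{lem.transferisisom} from auxiliary fiberwise equivariant embeddings and it is not immediate that they fit together. My approach is to choose a single $G$-equivariant linear embedding $E'G_{k+1} \subset \real^s$ whose restriction to the $G$-subspace $E'G_k$ is simultaneously the equivariant embedding used in the construction of $p^!_E$. With this compatible choice the fiberwise embeddings of $Y'_k$ and of $A$ into $X_G(k) \times \real^s$ factor the Pontrjagin--Thom collapses coherently: the composition $a^!_k \circ q^!_k$ first collapses $X_G(k) \times S^s$ onto the tubular neighborhood of $Y'_k$ and then onto the smaller tubular neighborhood of $A$ in $Y'_k$, and these two collapses compose to the single Pontrjagin--Thom collapse defining $p^!_E$. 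The delicate point, and where the main bookkeeping lies, is in verifying that the nested orientations on the normal bundles in $A \subset Y'_k \subset X_G(k) \times \real^s$ compose to give the orientation on the normal bundle of $A$ used to define $p^!_E$; this ultimately rests on the compatibilities between the orientations of the $BG_k$ and of the Stiefel fibers fixed in Section \ref{sec.orientability}. Once \eqref{eq.subbundletransfer} is verified, the second identity in \eqref{eq.twonaturalities} follows by the symmetric argument using the mirror factorization of $\widehat{\xi}_k$ through $(EG_{k+1} \times E'G_k) \times_G X$.
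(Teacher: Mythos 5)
Your proof is correct and shares the paper's top-level structure: both establish the two naturality identities $p^!_E \circ \xi^!_k = \widehat{\xi}^!_k \circ P^!_E$ and $p^!_{E'} \circ \xi'^!_k = \widehat{\xi}^!_k \circ P^!_{E'}$ (the paper draws the first as a commutative square and invokes symmetry for the second), and then conclude by the isomorphism algebra you spell out. The substantive difference lies in how the first square is justified. The paper asserts it commutes ``by functoriality of Gysin maps''; you correctly observe that this square is not cartesian --- the fiber product of $X_G (k)$ and $\widehat{X}_G (k+1)$ over $X_G (k+1)$ is $(EG_k \times E'G_{k+1}) \times_G X$, not $\widehat{X}_G (k)$ --- so a naive base-change argument does not apply directly. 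You therefore factor $\widehat{\xi}_k$ through the mixed space $Y'_k$, in which the remaining square with $c_k$ and $P_E$ is genuinely cartesian, and reduce what is left to the fiber-inclusion compatibility $a^!_k \circ q^!_k = p^!_E$ between the two transfer constructions, argued by a compatible choice of Mostow--Palais embedding. This is a more explicit unwinding of what ``functoriality'' means here, namely compatibility of Umkehr maps under composition of arbitrary proper maps with oriented stable normal data, not only in cartesian diagrams, and it traces the argument back to the construction in Lemma \ref{lem.transferisisom}. The cost is that the orientation bookkeeping in the last step is left unverified, which you flag. One small slip: you assign the level-$k$ composite $(p^!_{E'})^{-1} \circ p^!_E$ to the left vertical arrow and the level-$(k{+}1)$ composite $(P^!_{E'})^{-1} \circ P^!_E$ to the right, but the left column of the proposition's diagram is at level $k+1$ and the right at level $k$; the ensuing algebra is symmetric in the two identities and unaffected.
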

\begin{proof}
We use the notation of the proof of Theorem \ref{thm.equivhomolindepofgrpemb}.
The commutative diagram of $G$-equivariant maps
\[ \xymatrix{
E_k \times X \ar@{^{(}->}[r] & E_{k+1} \times X \\
E_k \times E'_k \times X \ar[u]^{\operatorname{proj}} \ar@{^{(}->}[r] 
  & E_{k+1} \times E'_{k+1} \times X \ar[u]_{\operatorname{proj}}
} \]
induces on orbit spaces a commutative diagram
\[ \xymatrix{
X_G (k) \ar@{^{(}->}[r]^{\xi_k} & X_G (k+1) \\
\widehat{X}_G (k) \ar[u]^{p_E} \ar@{^{(}->}[r]_{\widehat{\xi}_k} 
  & \widehat{X}_G (k+1). \ar[u]_{p_E}
} \]
The analog of Diagram (\ref{dia.egktoegkplusone}) is the morphism of
principal $G$-bundles given by the cartesian diagram
\begin{equation} \label{dia.hatofegktoegkplusone}
\xymatrix{
\widehat{E}_k \ar[d]_{\widehat{p}_k} \ar@{^{(}->}[r] 
  & \widehat{E}_{k+1} \ar[d]^{\widehat{p}_{k+1}} \\
\widehat{B}_k \ar@{^{(}->}[r]^{\widehat{\beta}_k} & \widehat{B}_{k+1},
} \end{equation}
and the analog of Diagram (\ref{dia.xgktoxgkplusone}) is the morphism of
$X$-fiber bundles given by the cartesian diagram
\[ \xymatrix{
\widehat{X}_G (k) \ar[d]_{\widehat{q}_k} 
         \ar@{^{(}->}[r]^{\widehat{\xi}_k} & 
       \widehat{X}_G (k+1) \ar[d]^{\widehat{q}_{k+1}} \\
\widehat{B}_k \ar@{^{(}->}[r]^{\widehat{\beta}_k} & \widehat{B}_{k+1},
} \]
whose bottom horizontal map
$\widehat{\beta}_k$ is a smooth embedding of $\widehat{B}_k$ as a
submanifold of $\widehat{B}_{k+1}$.
The normal bundle $\widehat{\nu}_k$ of this embedding
is oriented since $\widehat{B}_k$ and $\widehat{B}_{k+1}$ are oriented.
By Lemma \ref{lem.normalbundlesofxgk}, $\widehat{\xi}_k$ is normally nonsingular
with oriented normal bundle $\widehat{q}^*_k \widehat{\nu}_k$.
Hence it admits a Gysin restriction $\widehat{\xi}^!_k$ which fits into
the diagram
\[ \xymatrix{
H_{j + n(k+1) + a} (X_G (k+1)) 
   \ar[d]_{p^!_E} \ar[r]^{\xi^!_k}
 & H_{j + nk + a} (X_G (k)) 
   \ar[d]^{p^!_E} \\
  H_{j + (n+n')(k+1) + a + \frac{1}{2} n'(n'-1)} 
       (\widehat{X}_G (k+1)) 
   \ar[r]_{\widehat{\xi}^!_k}
 & H_{j + (n+n')k + a + \frac{1}{2} n'(n'-1)} 
       (\widehat{X}_G (k)),
} \]
with $a=\frac{1}{2} n(n-1) -d$.
The diagram commutes by functoriality of Gysin maps.
By symmetry, we also have
$p^!_{E'} \xi'^!_k = \widehat{\xi}^!_k p^!_{E'}$.
Therefore,
\[ (p^!_{E'})^{-1} p^!_E \xi^!_k
   = (p^!_{E'})^{-1} \widehat{\xi}^!_k p^!_E 
   = \xi'^!_k (p^!_{E'})^{-1} p^!_E. \]
(Both instances of $p^!_{E'}$ are simultaneously invertible,
since $k>m-j$ implies $k+1>m-j$.)
\end{proof}
By the proposition, 
the transfer identification
induces an isomorphism between the inverse systems 
\[ \{ H_{j + nk + \frac{1}{2} n(n-1) -d} (X_G (k);\intg) \}_{k=1,2,\ldots}
\text{ and }
\{ H_{j + n'k + \frac{1}{2} n'(n'-1) -d} (X'_G (k);\intg) \}_{k=1,2,\ldots}.
\]
This shows that equivariant homology is well-defined independent
of choices.

\section{The Equivariant $L$-Class of Compact Pseudomanifolds}

Let $G$ be a compact Lie group of dimension $d$.
We assume that $G$ is bi-invariantly orientable.
This is for example the case if $G$ is connected, or finite, or abelian. 
Suppose that $G$ acts smoothly on
a smooth manifold $M$ which contains a compact stratified oriented Witt pseudomanifold
$(X,\Sa)$ of dimension $m$ as a $G$-invariant Whitney stratified subset
such that the induced action of $G$
on $X$ is compatible with the stratification $\Sa$ and preserves the
orientation of $X$.
Then Theorem \ref{thm.xgkwhitneystratpsdmfdwitt} 
together with Proposition \ref{prop.xgkwhitneystratpsdmfdoriented}
shows that the $L$-classes
\[ L_i (X_G (k)) \in H_i (X_G (k); \rat) \]
are well-defined following Goresky-MacPherson \cite{gmih1} and Siegel \cite{siegel}.
For $k=1,2,\ldots,$ we have the fiber bundles
\[ X \to X_G (k) \stackrel{q_k}{\longrightarrow} BG_k,  \]
where $BG_k$ is a smooth compact manifold. 
The tangent bundle $TBG_k$ of $BG_k$ has associated Hirzebruch $L$-classes
$L^* (TBG_k) \in H^* (BG_k;\rat)$ in cohomology.
These can be pulled back to yield invertible classes
$q^*_k L^* (TBG_k) \in H^* (X_G (k);\rat)$.

\begin{defn} \label{def.stagekequivlclass}
For $k=1,2,\ldots$, the \emph{stage-$k$ equivariant $L$-class} of the $G$-space
$X$ is defined to be
\[ L^G_{*,k} (X) := q^*_k L^* (TBG_k)^{-1} \cap L_* (X_G (k)) 
   \in  H_* (X_G (k);\rat). \]
\end{defn}

We will show that $\{ L^G_{*,k} (X) \}_{k=1,2,\ldots}$
is an element in the inverse limit that defines the $G$-equivariant homology of $X$.
In order to do so, we use the author's Verdier-Riemann-Roch formula
for $L$-classes of singular spaces:

\begin{thm} \label{thm.myVRRforL}
(Verdier-Riemann-Roch for Gysin restriction of $L$-classes, 
\cite[Thm. 3.18, p. 1300]{banaglnyjm}.)
Let $g: Y \hookrightarrow Z$ be a normally nonsingular inclusion of closed
oriented PL Witt pseudomanifolds. Let $\nu$ be the oriented 
normal vector bundle of $g$. Then
\[ g^! L_* (Z) = L^* (\nu) \cap L_* (Y). \]
\end{thm}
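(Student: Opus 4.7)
The plan is to use the Goresky--MacPherson characterization of the $L$-class of a Witt pseudomanifold via signatures of transverse preimages of sphere maps, combined with a geometric comparison of such preimages under the tubular structure of $g$. Both sides of the claimed identity are classes in $H_*(Y;\rat)$; it suffices to show that they pair identically with every cohomology class of the form $f^*u$, where $f\colon Y\to S^k$ is PL, stratified transverse to a regular value $p\in S^k$, and $u\in H^k(S^k;\rat)$ is the generator. In a sufficient range these classes separate $H_*(Y;\rat)$, and for $\alpha=L_*(Y)$ they are pinned down by $\langle f^*u,\, L_*(Y)\rangle = \sign(f^{-1}(p))$.

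Since $g$ is PL normally nonsingular of codimension $c = \rk \nu$, I take a closed PL tubular neighborhood $U\subset Z$ of $Y$ realized as the total space of the closed PL disk bundle $\pi\colon D(\nu)\to Y$ with boundary $\partial U = S(\nu)$. The Gysin map $g^!$ factors as the composition of the restriction $H_i(Z;\rat)\to H_i(Z,Z\setminus Y;\rat)$, excision to $H_i(U,\partial U;\rat)$, and the Thom isomorphism of $\nu$.

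Now fix $f$ as above and put $W:=f^{-1}(p)$, a Witt subpseudomanifold of $Y$ of dimension $\dim Y - k$. For the right-hand side, by the projection formula for cap products and Hirzebruch's formula for the tangent bundle of the total space,
\[
\langle f^*u,\, L^*(\nu)\cap L_*(Y)\rangle
 = \langle f^*u \cup L^*(\nu),\, L_*(Y)\rangle
 = \sign\bigl(D(\nu|_W),\, S(\nu|_W)\bigr),
\]
invoking the signature interpretation of $L_*(W)$ on the Witt base $W$. For the left-hand side, I extend $f\circ \pi\colon U\to S^k$ to a PL map $F\colon Z\to S^k$ by collapsing $Z\setminus \operatorname{int}(U)$ to the antipode $-p$. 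Then $F$ is transverse to $p$ with $F^{-1}(p) = D(\nu|_W)$, and unwinding the Thom-isomorphism definition of $g^!$ yields $\langle f^*u,\, g^!L_*(Z)\rangle = \sign(F^{-1}(p)) = \sign(D(\nu|_W),\, S(\nu|_W))$, matching the right-hand side.

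The technical heart, and what I expect to be the main obstacle, is the disk-bundle signature identity on a Witt base,
\[
\sign\bigl(D(\nu|_W),\, S(\nu|_W)\bigr)
 = \bigl\langle L^*(\nu|_W),\, L_*(W)\bigr\rangle,
\]
which in the manifold case is a routine consequence of Hirzebruch's formula together with $TD(\nu)|_W = TW\oplus \nu|_W$ and the identification $\pi_! [D(\nu|_W),\, S(\nu|_W)] = [W]$. Extending it to the Witt case is best handled sheaf-theoretically, by proving that the intersection chain complex on $D(\nu|_W)$ is quasi-isomorphic to a Thom-twisted pullback of $IC^{\bar m}_W$ along $\pi$, whence the Cappell--Shaneson multiplicativity of the $L$-class of self-dual sheaves contributes exactly the factor $L^*(\nu|_W)$; alternatively, a $\Witt$-bordism argument combined with the Chern--Hirzebruch--Serre signature multiplicativity applied fiberwise over $W$ gives the same conclusion. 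Independence of the chosen extension $F$ and of the regular value $p$ then follows from $\Witt$-bordism invariance of the signature.
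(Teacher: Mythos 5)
This theorem is not proved in the present paper; it is stated and cited from \cite[Thm.\ 3.18]{banaglnyjm}, so there is no internal argument to compare against. Your opening strategy---pair both sides against $f^*u$ for $f\colon Y\to S^k$ PL transverse to $p$, and invoke the Goresky--MacPherson signature axiom---is a sensible starting point, but the argument as written has two fatal problems. The map $F\colon Z\to S^k$ is not well defined: you set $F=f\circ\pi$ on $U=D(\nu)$ and $F\equiv -p$ on $Z\setminus\operatorname{int}(U)$, and these prescriptions disagree on the overlap $\partial U=S(\nu)$, since $\pi$ carries $S(\nu)$ onto $Y$ and $f$ is not constant. More seriously, the disk-bundle signature identity you identify as the crux, $\sign(D(\nu|_W),S(\nu|_W))=\langle L^*(\nu|_W),\,L_*(W)\rangle$, is simply false. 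Take $W$ a closed oriented manifold with $\sign(W)\neq 0$ and $\nu|_W$ trivial of rank $c\geq 1$. Then $(D(\nu|_W),S(\nu|_W))=(W\times D^c,\,W\times S^{c-1})$, and the intersection form on $\operatorname{im}\big(H^{*}(W\times D^c,W\times S^{c-1})\to H^{*}(W\times D^c)\big)$ is carried by the Thom isomorphism to the pairing $(a,b)\mapsto\langle a\,b\,e(\nu|_W),[W]\rangle$ on $H^*(W;\rat)$, which vanishes identically because $e(\nu|_W)=0$; so the left side is $0$, while the right side is $\langle 1,L_*(W)\rangle=\sign(W)\neq 0$.

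What the projection formula together with the naturality $f^*u\cap L_*(Y)=\iota_*L_*(W)$ of the Goresky--MacPherson class actually yields is that the right-hand side of the theorem pairs with $f^*u$ to $\langle L^*(\nu|_W),L_*(W)\rangle$, not to the Novikov signature of a disk bundle. The genuine task, which is the content of \cite[Thm.\ 3.18]{banaglnyjm}, is to show that $\langle f^*u,\,g^!L_*(Z)\rangle$ equals this same quantity, and neither of your sketched routes does this. There is no ready-made Cappell--Shaneson ``multiplicativity'' that reads off the factor $L^*(\nu|_W)$ from a Thom-twisted pullback of $IC^{\bar{m}}_W$---establishing such a multiplicativity is precisely what the cited VRR theorems accomplish---and Chern--Hirzebruch--Serre multiplicativity concerns a single global signature of a fiber bundle with $\pi_1$-trivial monodromy; it is not an $L$-class statement and cannot be ``applied fiberwise over $W$.'' As it stands, the reduction relocates the difficulty rather than resolving it, and it passes through an incorrect intermediate identity.
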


\begin{prop} \label{prop.stagekp1lmapstostagekl}
Under the Gysin restriction
$\xi^!_k: H_{*+n} (X_G (k+1)) \longrightarrow H_* (X_G (k)),$
the stage-$(k+1)$ equivariant $L$-class of $X$ maps to the stage-$k$
equivariant $L$-class, i.e.
\[ \xi^!_k L^G_{*,k+1} (X) = L^G_{*,k} (X).  \]
\end{prop}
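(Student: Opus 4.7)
The plan is to combine the Verdier-Riemann-Roch formula of Theorem~\ref{thm.myVRRforL} with the projection formula for Gysin maps and the multiplicativity of the Hirzebruch $L$-class on short exact sequences of vector bundles, using that Diagram~(\ref{dia.xgktoxgkplusone}) is cartesian with oriented normal bundle $\mu_k = q_k^* \nu_k$.

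First, I would apply Theorem~\ref{thm.myVRRforL} directly to the oriented PL normally nonsingular inclusion $\xi_k: X_G(k) \hookrightarrow X_G(k+1)$ of closed oriented Witt pseudomanifolds (the Witt and orientability properties are guaranteed by Theorem~\ref{thm.xgkwhitneystratpsdmfdwitt} and Proposition~\ref{prop.xgkwhitneystratpsdmfdoriented}; normal nonsingularity by Lemma~\ref{lem.normalbundlesofxgk} of the Appendix). This yields
\[
\xi_k^! L_*(X_G(k+1)) = L^*(\mu_k) \cap L_*(X_G(k)) = q_k^* L^*(\nu_k) \cap L_*(X_G(k)).
\]

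Next, I would expand $\xi_k^! L^G_{*,k+1}(X)$ using the projection formula $\xi_k^!(\omega \cap z) = \xi_k^*(\omega) \cap \xi_k^!(z)$, together with the identity $\xi_k^* \circ q_{k+1}^* = q_k^* \circ \beta_k^*$ coming from the commutativity of Diagram~(\ref{dia.xgktoxgkplusone}):
\[
\xi_k^! L^G_{*,k+1}(X) = q_k^* \beta_k^* L^*(TBG_{k+1})^{-1} \cap q_k^* L^*(\nu_k) \cap L_*(X_G(k)).
\]
The key identity now is the normal bundle splitting $\beta_k^* TBG_{k+1} \cong TBG_k \oplus \nu_k$, which by the multiplicativity of the Hirzebruch $L$-class gives
\[
\beta_k^* L^*(TBG_{k+1}) = L^*(TBG_k) \cdot L^*(\nu_k),
\]
and hence $\beta_k^* L^*(TBG_{k+1})^{-1} = L^*(TBG_k)^{-1} \cdot L^*(\nu_k)^{-1}$ after pullback (both classes are invertible since they start with~$1$).

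Substituting this back, the factors $q_k^* L^*(\nu_k)^{-1}$ and $q_k^* L^*(\nu_k)$ cancel inside the cap product (cohomology cup product is commutative and cap satisfies $(\alpha \cup \beta) \cap z = \alpha \cap (\beta \cap z)$), leaving precisely $q_k^* L^*(TBG_k)^{-1} \cap L_*(X_G(k)) = L^G_{*,k}(X)$. The main obstacle to watch is sign conventions for Gysin restriction in the projection formula and the interaction with pulling back along $\beta_k$; these are handled cleanly because $\xi_k$ is an inclusion of even codimension data with compatible orientations on normal bundles inherited from the fixed orientations on the $BG_k$ (and hence on the $X_G(k)$), so no spurious signs arise.
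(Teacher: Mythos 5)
Your proposal is correct and follows essentially the same route as the paper's proof: apply the VRR formula (Theorem~\ref{thm.myVRRforL}) to the normally nonsingular inclusion $\xi_k$ with normal bundle $\mu_k = q_k^*\nu_k$, use the projection formula together with $\xi_k^* q_{k+1}^* = q_k^*\beta_k^*$ from the cartesian square~(\ref{dia.xgktoxgkplusone}), expand $\beta_k^* L^*(TBG_{k+1})^{-1}$ via the splitting $\beta_k^* TBG_{k+1} \cong TBG_k \oplus \nu_k$ and multiplicativity of $L^*$, and cancel $q_k^* L^*(\nu_k)$ against its inverse. The only quibble is your sign remark: the absence of signs is not because the codimension $n$ is even but because the cohomological components of $L^*(TBG_{k+1})^{-1}$ sit in degrees divisible by four, so the Koszul sign $(-1)^{(\deg\omega)\,n}$ in the projection formula is always $+1$; the conclusion is nonetheless unaffected.
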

\begin{proof}
We will use diagram (\ref{dia.xgktoxgkplusone}).
The restriction of $TBG_{k+1}$ to $BG_k$ splits as
$\beta^*_k TBG_{k+1} = TBG_k \oplus \nu_k$, where $\nu_k$ is the normal
bundle of $BG_k$ in $BG_{k+1}$. Hence,
\begin{align*}
\xi^!_k L^G_{*,k+1} (X)
&= \xi^!_k (q^*_{k+1} L^* (TBG_{k+1})^{-1} \cap L_* (X_G (k+1))) \\
&= \xi^*_k (q^*_{k+1} L^* (TBG_{k+1})^{-1}) \cap \xi^!_k L_* (X_G (k+1)) \\
&= (q^*_k \beta^*_k L^* (TBG_{k+1})^{-1}) \cap \xi^!_k L_* (X_G (k+1)) \\
&= (q^*_k L^* (TBG_k \oplus \nu_k)^{-1}) \cap \xi^!_k L_* (X_G (k+1)), 
\end{align*}
so that, using the multiplicativity of Hirzebruch's $L$-class,
\begin{equation} \label{equ.xikshriekofstagekp1l}
\xi^!_k L^G_{*,k+1} (X)
= q^*_k L^* (TBG_k)^{-1} q^*_k L^*(\nu_k)^{-1} \cap \xi^!_k L_* (X_G (k+1)).
\end{equation}
Appendix \ref{sec.plstructures} establishes the PL context required for applying
the VRR-formula of Theorem \ref{thm.myVRRforL}.
By Lemma \ref{lem.normalbundlesofxgk}, the normal bundle $\mu_k$ of the
normally nonsingular inclusion $\xi_k$ is given by the oriented vector bundle
$\mu_k = q^*_k \nu_k.$
Thus by the VRR-Theorem \ref{thm.myVRRforL},
\[
\xi^!_k L_* (X_G (k+1)) = L^* (\mu_k) \cap L_* (X_G (k))
  = q^*_k L^* (\nu_k) \cap L_* (X_G (k)).
\]
Plugging this expression into (\ref{equ.xikshriekofstagekp1l}),
the factor $q^*_k L^* (\nu_k)$ cancels its inverse and we find
\[
\xi^!_k L^G_{*,k+1} (X)
= q^*_k L^* (TBG_k)^{-1} \cap L_* (X_G (k)) = L^G_{*,k} (X).
\]
\end{proof}
This proposition shows that the sequence $\{ L^G_{*,k} (X) \}_{k=1,2,\ldots}$
is an element of the inverse limit
\[ \underset{\longleftarrow_k}{\lim}
  H_{*+ nk + a} (X_G (k); \rat) = H^G_* (X;\rat).
\]
\begin{defn} \label{def.equivlclasssingular}
The \emph{equivariant $L$-class} of the $G$-space
$X$ is defined to be
\[ L^G_* (X) := \{ L^G_{*,k} (X) \}_{k=1,2,\ldots}
   \in  H^G_* (X;\rat). \]
\end{defn}
The index $j$ in $L^G_j (X)$ thus indicates \emph{equivariant}
degree $j$.
Let us describe the equivariantly homogeneous components more
explicitly in equivariant degrees $m$ and $m-4$. 
Recall that $a = \smlhf n(n-1) - d.$
If $N$ is a smooth manifold, we find it convenient to write the
cohomological $L$-class of $N$ as
\[ L^* (N) = L^* (TN) = 1+ L^4 (TN) + L^8 (TN) + \cdots
   \in H^* (N;\rat) \]
with $L^j (TN) \in H^j (N;\rat)$, i.e. the superscript $j$ in $L^j (N)$
indicates the cohomological degree in which the class resides.
The inverse class $L^* (N)^{-1}$ is the $L$-class of the stable normal
bundle of $N$. Its components will be written as
$L^* (N)^{-1} = 1+ L^4 (TN)^{-1} + L^8 (TN)^{-1} + \cdots
   \in H^* (N;\rat)$
with $L^j (TN)^{-1} \in H^j (N;\rat)$.
The stage-$k$ equivariant $L$-class is
\[ L^G_{*,k} (X)
 = \big( 1+ q^*_k L^4 (BG_k)^{-1} + q^*_k L^8 (BG_k)^{-1} + \ldots \big) \]
\[ \hspace{3cm} \cap
   \big( L_{m+nk+a} (X_G (k)) + L_{m-4+nk+a} (X_G (k)) + \ldots \big), \]
with $L_{m+nk+a} (X_G (k)) = [X_G (k)]$ the fundamental class.
By the Gysin Stabilization Lemma \ref{lem.gysinstabilization},
$\xi^!_k: H_{i+n} (X_G (k+1);\rat) \to H_i (X_G (k);\rat)$
is an isomorphism for $i > m + (n-1)k + a$ 
(at least when $n\geq 3$ and $k\geq 2$), and
$H^G_j (X; \rat) \cong
   H_{j + nk + a} (X_G (k); \rat)$
for $k > m-j$.
For the top equivariant degree $j=m$,
stability holds from $k=2$ on. The equivariant groups are given by
\[ H^G_m (X;\rat)
  \cong H_{m+2n+a} (X_G (2);\rat) 
  \cong H_{m+3n+a} (X_G (3);\rat) 
  \cong H_{m+4n+a} (X_G (4);\rat)
  \cong \ldots. \]
The homogeneous component $L^G_m (X) \in H^G_m (X;\rat)$ of $L^G_* (X)$
in equivariant degree $m$ is then given by
\small
\[ L_{m+2n+a} (X_G (2)) = [X_G (2)] \mapsfrom
   L_{m+3n+a} (X_G (3)) = [X_G (3)] \mapsfrom
   L_{m+4n+a} (X_G (4)) = [X_G (4)] \mapsfrom \ldots. \]
\normalsize  
Thus $L^G_m (X) = [X]_G \in H^G_m (X;\rat)$.
In the next relevant equivariant degree $j=m-4$,
stability holds from $k=5$ on and the equivariant groups are  
\[ H^G_{m-4} (X;\rat)
  \cong H_{m-4+5n+a} (X_G (5);\rat) 
  \cong H_{m-4+6n+a} (X_G (6);\rat) 
  \cong \ldots. \]
The homogeneous component $L^G_{m-4} (X) \in H^G_{m-4} (X;\rat)$ of $L^G_* (X)$
in equivariant degree $m-4$ is the sequence
\[ L_{m-4+5n+a} (X_G (5)) +
     q^*_5 L^4 (BG_5)^{-1} \cap L_{m+5n+a} (X_G (5)) \]
\[ \hspace{1cm} \mapsfrom
   L_{m-4+6n+a} (X_G (6)) + 
     q^*_6 L^4 (BG_6)^{-1} \cap L_{m+6n+a} (X_G (6))  \] 
\[  \hspace{2cm} \mapsfrom
   L_{m-4+7n+a} (X_G (7)) +  
     q^*_7 L^4 (BG_7)^{-1} \cap L_{m+7n+a} (X_G (7))  \]
\[   \mapsfrom \ldots. \]

The above construction of the equivariant $L$-class involves a choice
of model for the manifolds $EG_k$ and $BG_k$. We shall prove that
the class is independent of these choices.
Thus, as in Section \ref{ssec.equivhomolindepofchoices}, 
let $G \subset \Or (n')$ be another embedding into some
orthogonal group, yielding a smooth, free, compact,
$(k-1)$-connected $G$-manifold $E'G_k$ of dimension
$n' k + \smlhf n' (n'-1).$
The associated orbit space is denoted by $B'G_k$; it is an
oriented compact smooth manifold.
The Witt space 
$X'_G (k) = E'G_k \times_G X$
fibers over $B'G_k$ with projection $q'_k: X'_G (k) \to B'G_k$
and fiber $X$.
The associated stage-$k$ equivariant $L$-class is
\[ L'^G_{*,k} (X) := q'^*_k L^* (TB'G_k)^{-1} \cap L_* (X'_G (k)) 
   \in  H_* (X'_G (k);\rat). \]
The proof of Theorem \ref{thm.stageklclassindepofgrpemb} below rests 
on the author's Verdier-Riemann-Roch formula
for $L$-classes of singular spaces under bundle transfer
homomorphisms. We recall the statement.
\begin{thm}  \label{thm.myVRRlclassbundletransfer}
(Verdier-Riemann-Roch for bundle transfer of $L$-classes, 
\cite[Thm. 8.1]{banaglbundletransfer}.)
Let $B$ be a closed PL Witt pseudomanifold
and let $F$ be a closed oriented PL manifold.
Let $\xi$ be an oriented PL $F$-block bundle over $B$
with total space $X$ and
oriented stable vertical normal PL microbundle $\mu$ over $X$.
Then $X$ is a Witt space and
the associated block bundle transfer 
$\xi^!: H_* (B;\rat)\to H_{*+\dim F} (X;\rat)$ sends the
Goresky-MacPherson $L$-class of $B$ to the product
\[ \xi^! L_* (B) = L^* (\mu) \cap L_* (X). \]
\end{thm}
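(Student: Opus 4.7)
The asserted formula is a Verdier-Riemann-Roch identity for the Goresky-MacPherson $L$-class under block bundle transfer---the Witt analog of the classical manifold fact that for a smooth oriented $F$-bundle $\xi:X\to B$ of closed manifolds, multiplicativity of the Hirzebruch $L$-class combined with the decomposition $TX = \xi^*TB\oplus T^vX$ yields $\xi^!L_*(B) = L^*(\mu)\cap L_*(X)$ with $\mu$ the stable vertical normal bundle. My plan is to reduce the singular case to this manifold computation by lifting the identity to Sullivan's orientation class in $\KO_*\otimes \intg [\smlhf]$.

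First I would verify that $X$ is a Witt pseudomanifold. Local triviality of the PL block bundle $\xi$ identifies links of strata in $X$ with products of the corresponding links of strata in $B$ with $F$ or with a PL ball of the appropriate dimension. Since the Witt condition---vanishing of middle-dimensional rational intersection homology of even-dimensional links---is preserved under taking products with closed oriented PL manifolds (via the intersection homology Künneth formula), $X$ inherits the Witt property from $B$.

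For the main identity, I use that for an oriented PL Witt space $Y$, the Pontrjagin character sends Sullivan's orientation $\Delta_Y \in \KO_*(Y)\otimes\intg[\smlhf]$ to $L_*(Y)$. The key step is the $\KO$-theoretic block bundle VRR formula
\[
\xi^!\,\Delta_B \;=\; U_{\KO}(\mu)\cap \Delta_X,
\]
for a block bundle transfer on $\KO$-homology constructed Becker-Gottlieb style via fiberwise PL embedding into a trivial bundle, where $U_{\KO}(\mu)$ denotes the Atiyah-Bott-Shapiro Thom class of $\mu$. I would prove this by stratum-by-stratum induction over the stratification of $B$: on the open dense manifold part it reduces to the classical smooth VRR, which itself follows from multiplicativity of the Atiyah-Bott-Shapiro orientation on $TX = \xi^*TB \oplus T^vX$ and the Thom isomorphism; the extension across singular strata uses the self-duality of the intersection chain sheaves. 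Applying the Pontrjagin character, using $\ph(U_{\KO}(\mu)) = L^*(\mu)\smile U(\mu)$ with $U(\mu)$ the rational cohomological Thom class of $\mu$, together with compatibility of the rational block bundle transfer $\xi^!$ with $\ph$ and with Thom isomorphisms, one obtains the asserted identity $\xi^! L_*(B) = L^*(\mu)\cap L_*(X)$ in $H_*(X;\rat)$.

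The main obstacle is the $\KO$-theoretic VRR in the singular setting. The classical smooth argument relies on the decomposition $TX = \xi^*TB\oplus T^vX$, which is unavailable since $X$ carries only a stratified structure. The substitute is the self-duality of the intersection chain complex $\icm$ on $X$, combined with its matching with the pullback of $\icm$ on $B$ up to a Thom twist by $\mu$ over the regular locus. The sheaf-theoretic bookkeeping required to extend this matching across the singular strata and across the block structure of $\xi$ is where the bulk of the work lies.
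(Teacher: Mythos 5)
This theorem is not proved in the present paper; it is recalled verbatim from the author's earlier work \cite[Thm.~8.1]{banaglbundletransfer}, so there is no in-paper argument to compare your proposal against. Your overall strategy --- lift to a $\KO[\smlhf]$-orientation $\Delta$, establish a transfer VRR there, and push down with the Pontrjagin character --- is consistent in spirit with the author's program (the introduction recalls $\ph(\Delta(X)) = L_*(X)$), though the title of the cited paper (``Bundle Transfer of $L$-Homology Orientation Classes'') indicates that the argument there operates with the symmetric $L$-spectrum $\syml^\bullet$ rather than with $\KO$, the rational formula being extracted afterward by applying $\ph$. Your route is the $2$-local shadow of that one, and your Witt-ness verification for $X$ is fine.

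The genuine gap is the step you yourself flag as ``the bulk of the work'': establishing $\xi^!\Delta_B = U_{\KO}(\mu)\cap\Delta_X$. The method you propose --- a ``stratum-by-stratum induction over the stratification of $B$'' that verifies the identity over the regular locus and then ``extends across singular strata using the self-duality of the intersection chain sheaves'' --- is not a workable argument for $\KO$-homology. The class $\Delta_X$ is a single global element of $\KO_*(X)\otimes\zhf$, not stratum-wise data; the restriction $\KO_*(X)\otimes\zhf \to \KO_*(X^\reg)\otimes\zhf$ is generally not injective, and there is no exact stratification sequence in $\KO$-homology that propagates an identity outward from the top stratum the way Deligne's iterated pushforward-and-truncate construction extends intersection chain sheaves. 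Self-duality of $\ic$-sheaves is a derived-category statement about complexes of sheaves and does not, by itself, transport a $\KO_*$-relation across strata. Proofs of formulas of this kind proceed instead via explicit spectrum-level Becker--Gottlieb Umkehr maps together with multiplicativity of Thom classes, or via Witt-bordism invariance of both sides of the identity; a stratum induction is not a substitute for either. (Also recheck your normalization: depending on conventions the Riemann--Roch formula reads $\ph(U_{\KO}(\mu)) = L^*(\mu)^{-1}\smile U(\mu)$, not $L^*(\mu)\smile U(\mu)$.)
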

This theorem holds in a rather general PL context. It is applicable
in the present paper, since our actions on singular spaces are the
restriction of smooth actions on ambient smooth manifolds. As mentioned
before, further PL details are provided in Appendix \ref{sec.plstructures}.
\begin{lemma} \label{lem.verticalnormalbundlepullsback}
Suppose that $p: Y\to B$ is a fiber bundle with smooth compact manifold fiber $F$,
compact Lie structure group $G$, and finite CW complex base $B$.
Let $f:B' \to B$ be a continuous map. Then the stable vertical normal
bundle $\nu^\vee_{Y'}$ of the pullback $F$-bundle $p': Y' = f^* Y \to B'$
is isomorphic to the pullback
$\overline{f}^* \nu^\vee_Y$ of the stable vertical normal bundle 
$\nu^\vee_Y$ of $Y$ over $B$,
where $\overline{f}: Y' \to Y$ is the map of $F$-fiber bundles covering $f$.
\end{lemma}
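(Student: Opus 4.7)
The plan is to reduce the lemma to the naturality of the associated bundle construction under pullback of principal bundles, combined with the essential uniqueness of stable normal bundles. Let $\widetilde{Y} \to B$ denote the underlying principal $G$-bundle of $p$, so that $Y = \widetilde{Y}\times_G F$. Pulling back commutes with the associated bundle construction: if $\widetilde{Y}' := f^*\widetilde{Y}$ is the principal $G$-bundle over $B'$, then $Y' = f^*Y$ is canonically identified with $\widetilde{Y}'\times_G F$, and the map $\overline{f}: Y' \to Y$ is the one induced by $\widetilde{Y}' \to \widetilde{Y}$. From this it follows that the vertical tangent bundle, being the associated bundle $T^\vee Y = \widetilde{Y}\times_G TF \to Y$, is itself natural under pullback: the canonical comparison yields an isomorphism $T^\vee Y' \cong \overline{f}^* T^\vee Y$ of vector bundles over $Y'$.

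Next I would follow the construction of $\nu^\vee_Y$ used in the proof of Lemma~\ref{lem.transferisisom}. By Mostow-Palais, choose an orthogonal $G$-module structure on some $\mathbb{R}^n$ and a smooth equivariant embedding $F\subset \mathbb{R}^n$. This yields a vector bundle $\eta = \widetilde{Y}\times_G \mathbb{R}^n$ over $B$, a fiberwise embedding $Y\subset E(\eta)$ whose normal bundle $N_Y$ complements $T^\vee Y$ in $p^*\eta$, and (using the finiteness of $B$) a complementary bundle $\zeta$ over $B$ with $\eta\oplus\zeta\cong B\times\mathbb{R}^s$. The stable vertical normal bundle is then $\nu^\vee_Y = p^*\zeta \oplus N_Y$, and satisfies $T^\vee Y \oplus \nu^\vee_Y \cong Y\times\mathbb{R}^s$.

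Now the same embedding $F\subset\mathbb{R}^n$ can be used on the $B'$ side, since it depends only on $F$ and $G$. Setting $\eta' := \widetilde{Y}'\times_G \mathbb{R}^n$ one has a canonical identification $\eta' = f^*\eta$, under which the fiberwise embedding $Y'\subset E(\eta')$ becomes the pullback of $Y\subset E(\eta)$. Taking $\zeta' := f^*\zeta$ gives a stable complement of $\eta'$ in $B'\times\mathbb{R}^s$. Naturality of normal bundles under transverse pullback then gives $N_{Y'} \cong \overline{f}^* N_Y$, and summing yields $\nu^\vee_{Y'} = p'^*\zeta' \oplus N_{Y'} \cong \overline{f}^*(p^*\zeta \oplus N_Y) = \overline{f}^* \nu^\vee_Y$, as claimed.

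The main obstacle is not conceptual but notational: one must be careful that the isomorphism is only well-defined up to stable equivalence, since the choice of $\zeta$ is unique only up to stable isomorphism. An equivalent and perhaps cleaner route is to argue entirely in reduced $\KO$-theory: the stable vertical normal bundle is characterized by $[\nu^\vee_Y] = -[T^\vee Y]$ in $\widetilde{\KO}(Y)$, and the naturality $T^\vee Y' \cong \overline{f}^* T^\vee Y$ together with functoriality of $\overline{f}^*$ on $\widetilde{\KO}$ forces $[\nu^\vee_{Y'}] = -[T^\vee Y'] = -\overline{f}^*[T^\vee Y] = \overline{f}^*[\nu^\vee_Y]$, which gives the stable bundle isomorphism required to apply Theorem~\ref{thm.myVRRlclassbundletransfer}.
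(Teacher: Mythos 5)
Your proof is correct and is precisely the argument the paper intends: the paper leaves the proof to the reader with the remark that it ``can for example be based on the description of the vertical normal bundle given in the proof of the homological bundle transfer stabilization Lemma \ref{lem.transferisisom},'' and your first two paragraphs carry out exactly that program (naturality of the associated-bundle construction under pullback of the underlying principal $G$-bundle, reuse of the Mostow--Palais embedding of $F$, pullback $\zeta' = f^*\zeta$ of the stable complement, and naturality of the fiberwise normal bundle). Your closing observation that one can shortcut via $[\nu^\vee_Y] = -[T^\vee Y]$ in $\widetilde{\operatorname{KO}}(Y)$ together with $T^\vee Y' \cong \overline{f}^*T^\vee Y$ is a valid and slightly cleaner alternative, essentially the same idea packaged more efficiently.
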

The proof of this lemma is straightforward and can for example be based on 
the description of the vertical normal bundle given in the proof of the
homological bundle transfer stabilization Lemma \ref{lem.transferisisom}.

\begin{thm} \label{thm.stageklclassindepofgrpemb}
For $k > m-j,$ the bundle transfer identification
(\ref{equ.bundletransferidentif}),
\[ 
H_{j + nk + \frac{1}{2} n(n-1) -d} (X_G (k);\rat) \cong  
   H_{j + n' k + \frac{1}{2} n'(n'-1) -d} (X'_G (k);\rat),  
\]   
maps the stage-$k$ equivariant $L$-class $L^G_{*,k} (X)$
to $L'^G_{*,k} (X)$.
\end{thm}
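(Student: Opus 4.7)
The plan is to mirror the argument of Proposition~\ref{prop.stagekp1lmapstostagekl}, now applying the VRR bundle-transfer formula of Theorem~\ref{thm.myVRRlclassbundletransfer} in place of its normally nonsingular inclusion counterpart. Following the notation from Theorem~\ref{thm.equivhomolindepofgrpemb} and Proposition~\ref{prop.transferidentcompatblktokp1}, the identification $(p^!_{E'})^{-1}\circ p^!_E$ factors through the \emph{product model} $\widehat{X}_G(k)=(E_k\times E'_k)\times_G X$, which by Theorem~\ref{thm.xgkwhitneystratpsdmfdwitt} (applied to the diagonal $G$-action on $E_k\times E'_k\times M$) is a compact oriented Witt pseudomanifold fibering via $\widehat{q}_k$ over the oriented smooth manifold $\widehat{B}_k$. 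It thus carries a natural stage-$k$ class
\[ \widehat{L}^G_{*,k}(X) := \widehat{q}_k^{\,*} L^*(T\widehat{B}_k)^{-1}\cap L_*(\widehat{X}_G(k)). \]
By symmetry between the roles of $E_k$ and $E'_k$, it suffices to prove that $p^!_E L^G_{*,k}(X) = \widehat{L}^G_{*,k}(X)$.

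For this, diagram~(\ref{equ.qkpeishqkpb}) identifies $p_E\colon \widehat{X}_G(k)\to X_G(k)$ as the pullback of $p_B\colon \widehat{B}_k\to B_k$ along $q_k$, with $\widehat{q}_k$ the covering map of total spaces. Writing $\eta_B$ for the stable vertical normal bundle of $p_B$ over $\widehat{B}_k$, Lemma~\ref{lem.verticalnormalbundlepullsback} yields $\mu_E\cong \widehat{q}_k^{\,*}\eta_B$ for the stable vertical normal bundle $\mu_E$ of $p_E$. Multiplicativity of Hirzebruch's $L$-class applied to the stable splitting $T\widehat{B}_k\oplus \eta_B\cong p_B^{\,*}TB_k$ then gives
\[ L^*(\mu_E)= \widehat{q}_k^{\,*}p_B^{\,*} L^*(TB_k)\cdot \widehat{q}_k^{\,*}L^*(T\widehat{B}_k)^{-1} \in H^*(\widehat{X}_G(k);\rat). \]
Combining Theorem~\ref{thm.myVRRlclassbundletransfer}, the projection formula for the block bundle transfer $p^!_E$, and the commutativity $q_k\circ p_E=p_B\circ \widehat{q}_k$, one computes
\begin{align*}
p^!_E L^G_{*,k}(X)
 &= p_E^{\,*} q_k^{\,*} L^*(TB_k)^{-1} \cap p^!_E L_*(X_G(k)) \\
 &= \widehat{q}_k^{\,*}p_B^{\,*} L^*(TB_k)^{-1} \cap L^*(\mu_E)\cap L_*(\widehat{X}_G(k)) \\
 &= \widehat{q}_k^{\,*} L^*(T\widehat{B}_k)^{-1} \cap L_*(\widehat{X}_G(k)) = \widehat{L}^G_{*,k}(X),
\end{align*}
where in the third line the factor $\widehat{q}_k^{\,*}p_B^{\,*} L^*(TB_k)$ cancels its inverse. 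The primed analogue $p^!_{E'}L'^G_{*,k}(X)=\widehat{L}^G_{*,k}(X)$ is identical, so the result follows.

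The main technical obstacle is verifying that the PL bundle hypothesis of Theorem~\ref{thm.myVRRlclassbundletransfer} is fulfilled by the oriented smooth fiber bundles $p_E$ and $p_{E'}$ built here from smooth principal $G$-bundles. This is precisely the content of Appendix~\ref{sec.plstructures}: after suitable PD homeomorphisms of total and base space the underlying smooth fiber bundle of a smooth principal $G$-bundle becomes an oriented PL block bundle whose fiber carries the unique PL structure underlying the smooth manifold $G$ (and likewise for the product model with fiber $E_k$ or $E'_k$). The projection formula for the block bundle transfer and its compatibility with pullbacks are equally established there, so once these PL foundations are invoked, the calculation above goes through cleanly.
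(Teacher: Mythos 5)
Your proposal is correct and follows essentially the same route as the paper's own proof: both compute $p^!_E L^G_{*,k}(X)$ via the projection formula, invoke Theorem~\ref{thm.myVRRlclassbundletransfer} and Lemma~\ref{lem.verticalnormalbundlepullsback} to express $p^!_E L_*(X_G(k))$ in terms of the vertical normal bundle pulled back from $p_B$, cancel against the stable splitting $T\widehat{B}_k \oplus \nu^\vee_B \cong p^*_B TBG_k$, and then close by the $E_k\leftrightarrow E'_k$ symmetry. The only differences are cosmetic (your $\mu_E,\eta_B$ vs.\ the paper's $\nu^\vee_E,\nu^\vee_B$, and your writing the cancellation via an explicit formula for $L^*(\mu_E)$ rather than combining the two cup factors under $\widehat{q}^*_k$).
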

\begin{proof}
We use the notation of the proof of Theorem \ref{thm.equivhomolindepofgrpemb}.
In view of Diagram (\ref{equ.qkpeishqkpb}),
the stage-$k$ equivariant $L$-class 
transfers under the bundle projection $p_E$ to 
\begin{align*}
p^!_E L^G_{*,k} (X)
&= p^!_E (q^*_k L^* (TBG_k)^{-1} \cap L_* (X_G (k))) \\
&= (p^*_E q^*_k L^* (TBG_k)^{-1}) \cap p^!_E L_* (X_G (k)) \\
&= (\widehat{q}^*_k p^*_B L^* (TBG_k)^{-1}) \cap p^!_E L_* (X_G (k)).
\end{align*}
By the author's Verdier-Riemann-Roch theorem
for $L$-classes, Theorem \ref{thm.myVRRlclassbundletransfer},
\[ p^!_E L_* (X_G (k))
   = L^* (\nu^\vee_E) \cap L_* (\widehat{X}_G (k)), \]
where $\nu^\vee_E$ is the stable vertical normal bundle associated to $p_E$.
Applying Lemma \ref{lem.verticalnormalbundlepullsback} 
to the cartesian Diagram (\ref{equ.qkpeishqkpb})
(with $p=p_B$ and $f=q_k$), we obtain
a vector bundle isomorphism
$\nu^\vee_E = \widehat{q}^*_k \nu^\vee_B,$
where $\nu^\vee_B$ is the stable vertical normal bundle associated to $p_B$.
Note that
\[ T\widehat{B}_k \oplus \nu^\vee_B \cong
   p^*_B TB_k \oplus \epsilon^N, \]
where $\epsilon^N$ denotes a trivial vector bundle of sufficiently high 
rank $N$.   
Therefore,
\begin{align*}
p^!_E L^G_{*,k} (X)
&= (\widehat{q}^*_k p^*_B L^* (TBG_k)^{-1}) 
   \cup \widehat{q}^*_k L^* (\nu^\vee_B) \cap L_* (\widehat{X}_G (k)) \\
&= \widehat{q}^*_k (p^*_B L^* (TBG_k)^{-1}
   \cup L^* (\nu^\vee_B)) \cap L_* (\widehat{X}_G (k)) \\
&= \widehat{q}^*_k L^* (T\widehat{B}_k)^{-1} \cap L_* (\widehat{X}_G (k)) 
  =: \widehat{L}^G_{*,k} (X).   
\end{align*}
By the same token,
$p^!_{E'} L'^G_{*,k} (X) = \widehat{L}^G_{*,k} (X),$ which implies
\[ ((p^!_{E'})^{-1} \circ p^!_E) (L^G_{*,k} (X))
   = L'^G_{*,k} (X),    \]
as was to be shown.
\end{proof}
Thus the transfer identification
induces an isomorphism between the inverse systems 
\[ \{ H_{j + nk + a} (X_G (k);\rat) \}_{k=1,2,\ldots}
\text{ and }
\{ H_{j + n'k + a'} (X'_G (k);\rat) \}_{k=1,2,\ldots} \]
that sends
the equivariant $L$-class $\{ L^G_{*,k} (X) \}_k$
to the class $\{ L'^G_{*,k} (X) \}_k$.
This shows that the equivariant $L$-class is well-defined independent
of choices.

\section{The Nonsingular Case}

The topological approach to equivariant characteristic classes of smooth
manifolds is based on the following principle; see e.g. 
Tu \cite[p. 242]{tuintrolectequivcoh}.
Let $X$ be a $G$-space and $\xi$ a $G$-equivariant vector bundle with
equivariant projection $\pi: E \to X$. Then $\pi$ induces a map
$\pi_G: E_G \to X_G$ on homotopy quotients which is the projection of a vector bundle 
$\xi_G$ over $X_G$. 
Given a theory of (nonequivariant) cohomological characteristic classes 
$c^i \in H^i$, one defines the corresponding \emph{equivariant characteristic class}
to be
\[ c^i_G (\xi) := c^i (\xi_G) \in H^i (X_G) = H^i_G (X). \]
Suppose that $X=M$ is a smooth $G$-manifold, where $G$ is a compact Lie group.
Then $G$ acts on the tangent bundle $TM$ of $M$ via the differential of the action
on $M$ and thus $TM$ is a $G$-equivariant vector bundle. 
Hence the above principle applies and yields equivariant classes
\[ c^i_G (M) := c^i_G (TM) = c^i ((TM)_G) \in H^i_G (M). \]
In particular, the \emph{equivariant cohomological $L$-class of $M$} is given by
\[ L^i_G (M) = L^i ((TM)_G) \in H^i_G (M; \rat). \]
In terms of finite dimensional approximations,
the homotopy quotient can be written as
\[ (TM)_G = EG \times_G TM = \bigcup_k (EG_k \times_G TM).  \]
Thinking of $H^i_G (M)$ as the inverse limit
\[ \underset{\longleftarrow_k}{\lim}~
      \big( H^i  (M_G (k)) \stackrel{\xi^*_k}{\longleftarrow} H^i (M_G (k+1)) \big) \]
(Remark \ref{rem.equivcohominvlim}),
$L^i_G (M)$ can be described by the sequence 
$\{ L^i (EG_k \times_G TM) \}_k$, since
the vector bundle map
\[ \xymatrix{
EG_k \times_G TM \ar[d] \ar@{^{(}->}[r]^{\widetilde{\xi}_k}
  & EG_{k+1} \times_G TM \ar[d] \\
EG_k \times_G M \ar@{^{(}->}[r]_{\xi_k}
  & EG_{k+1} \times_G M   
} \]
is a fiberwise isomorphism and thus
\[ EG_k \times_G TM = \xi^*_k (EG_{k+1} \times_G TM),~ 
   L^i (EG_k \times_G TM) = \xi^*_k L^i (EG_{k+1} \times_G TM). \]

\begin{thm} \label{thm.manifoldcase}
Let $G$ be a compact Lie group which bi-invariantly orientable,
e.g. $G$ connected or finite or abelian,
and let $X=M$ be a smooth oriented closed $G$-manifold such
that the action is orientation preserving.
Then the equivariant homological $L$-class $L^G_* (M)$ of Definition
\ref{def.equivlclasssingular} is the equivariant Poincar\'e dual of the 
equivariant cohomological $L$-class $L^*_G (M),$ that is,
\[ L^G_* (M) = L^*_G (M) \cap [M]_G. \]
\end{thm}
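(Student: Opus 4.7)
The plan is to verify the identity stage by stage, using the fibration
$q_k: M_G(k) \to BG_k$ with fiber $M$ and the standard splitting of
its tangent bundle, then pass to the inverse limit via equivariant
Poincaré duality as set up in Section \ref{ssec.equivpd}.

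First, since $M_G(k) \to BG_k$ is a smooth fiber bundle, the tangent
bundle splits as $TM_G(k) \cong T^{\vee} M_G(k) \oplus q_k^{*}TBG_k$,
where $T^{\vee}M_G(k)$ is the vertical tangent bundle of $q_k$. The
key geometric identification is
$T^{\vee}M_G(k) \;=\; EG_k \times_{G} TM,$
which holds because the $G$-action on $M$ differentiates to a $G$-action
on $TM$, and vertical tangent vectors of $q_k$ are precisely classes
$[e,v]$ with $v \in T_{x}M$. Under the cohomological stability isomorphism
$\xi_k^{*}$ from Lemma \ref{lem.cohomstabilization} and the description of
$H^{*}_G(M)$ as an inverse limit (Remark \ref{rem.equivcohominvlim}),
the family $\{L^{*}(EG_k \times_{G} TM)\}_{k}$ represents $L^{*}_G(M)$.

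Second, multiplicativity of Hirzebruch's $L$-class combined with this
splitting gives
\[
L^{*}\bigl(EG_k \times_{G} TM\bigr) \;=\;
L^{*}(TM_G(k)) \,\cup\, q_k^{*}L^{*}(TBG_k)^{-1}.
\]
Capping with the fundamental class of the closed oriented smooth manifold
$M_G(k)$ and using classical Poincaré duality
$L^{*}(TM_G(k)) \cap [M_G(k)] = L_{*}(M_G(k))$ (the Goresky--MacPherson
class of a smooth manifold is Poincaré dual to the Hirzebruch class),
one obtains
\[
L^{*}\bigl(EG_k \times_{G} TM\bigr) \cap [M_G(k)]
\;=\; q_k^{*}L^{*}(TBG_k)^{-1} \cap L_{*}(M_G(k))
\;=\; L^{G}_{*,k}(M)
\]
by Definition \ref{def.stagekequivlclass}. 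Thus the stage-$k$ cap
product computes precisely the stage-$k$ equivariant $L$-class.

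Finally, by the discussion preceding (\ref{equ.equivpoincduality}),
capping with $[M]_G$ is realized at stage $k$ by capping with $[M_G(k)]$,
and the two sides of the claimed identity define the same element of the
inverse system $\{H_{i+nk+a}(M_G(k);\rat)\}_k$; choosing the embedding
$G \subset \Or(n)$ with $n$ even (always possible by re-embedding) kills
the sign $(-1)^{(m-i)n}$ appearing in the compatibility of $-\cap [M_G(k)]$
with $\xi_k^{!}$, so the equality is on the nose. Taking the inverse limit
yields $L^{*}_G(M) \cap [M]_G = L^{G}_{*}(M)$. The only nontrivial step is
the identification $T^{\vee}M_G(k) = EG_k \times_{G} TM$ together with
verifying that under the stabilization isomorphisms (Lemma
\ref{lem.cohomstabilization} and the Gysin stability of
Lemma \ref{lem.nonsingstabilization}) the sequence
$\{L^{*}(EG_k \times_{G} TM)\}_k$ indeed represents $L^{*}_G(M)=L^{*}((TM)_G)$;
this is a naturality check for the Hirzebruch class along the pullback
squares $EG_k \times_G TM = \widetilde\xi_k^{*}(EG_{k+1}\times_G TM)$,
and is the one point that requires some care.
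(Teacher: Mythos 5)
Your proposal is correct and follows essentially the same route as the paper's proof: the vertical tangent bundle identification $T^{\vee}M_G(k) \cong EG_k\times_G TM$, the splitting $TM_G(k)\cong T^{\vee}M_G(k)\oplus q_k^*TBG_k$, multiplicativity of the Hirzebruch $L$-class, and classical Poincar\'e duality on the closed manifold $M_G(k)$, passed to the inverse limit. The paper starts the stage-$k$ computation from $L^G_{*,k}(M)$ and cancels $q_k^*L^*(TBG_k)^{-1}$ against $L^*(q_k^*TBG_k)$, whereas you start from $L^*(EG_k\times_G TM)\cap[M_G(k)]$; this is just a reordering of the same algebra.
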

\begin{proof}
The equivariant Poincar\'e duality isomorphism (\ref{equ.equivpoincduality})
is induced by the sequence of isomorphisms
$\{ -\cap [M_G (k)] \}_{k=1,2,\ldots}$.
With respect to the $M$-fiber bundle $q_k: M_G (k) \to BG_k$,
the tangent bundle of $M_G (k)$ splits as
$TM_G (k) \cong T^\vee M_G (k) \oplus q^*_k TBG_k,$
where $T^\vee M_G (k)$ is the vertical tangent bundle.
This bundle admits the description
\[ T^\vee M_G (k) = T^\vee (EG_k \times_G M) \cong EG_k \times_G TM.   \]
Now $EG_k \times_G TM =: (TM)_G (k)$ approximates the homotopy quotient
$(TM)_G = EG \times_G TM$.
For sufficiently large $k$, the stage-$k$ homological equivariant $L$-class of $M$ can
then be calculated as
\begin{align*}
L^G_{*,k} (M) 
&= q^*_k L^* (TBG_k)^{-1} \cap L_* (M_G (k)) \\
&= q^*_k L^* (TBG_k)^{-1} \cap (L^* (TM_G (k)) \cap [M_G (k)]) \\
&= q^*_k L^* (TBG_k)^{-1} \cap (L^* (T^\vee M_G (k) \oplus q^*_k TBG_k) \cap [M_G (k)]) \\
&= L^* (T^\vee M_G (k)) \cap [M_G (k)] \\
&= L^* ((TM)_G (k)) \cap [M_G (k)]. 
\end{align*}
\end{proof}

\begin{example} \label{exple.equivlofpoint}
For a point $M=\pt$, the vector bundle $(TM)_G$ is the
rank zero bundle over $M_G = BG$.
Therefore, $L^* ((TM)_G) = 1 \in H^* (BG;\rat)$ and by Theorem \ref{thm.manifoldcase},
\[ L^G_* (\pt) = L^*_G (\pt) \cap [\pt]_G = 1\cap [\pt]_G = [\pt]_G
  \in H^G_* (\pt;\rat) \cong H^{-*}_G (\pt) = H^{-*} (BG;\rat). \]
Concerning stage $k$, $M_G (k) = BG_k$ and $q_k$ is the identity. 
Hence the equivariant stage-$k$ class is
\[
L^G_{*,k} (\pt) 
 = q^*_k L^* (TBG_k)^{-1} \cap L_* (BG_k) 
 = L^* (TBG_k)^{-1} \cup L^* (TBG_k) \cap [BG_k] 
 = [BG_k],
\]  
which represents the equivariant fundamental class $[\pt]_G$ in the limit.
\end{example}

\section{Functoriality in the Group Variable}
\label{sec.functgroupvar}

Let $G$ be a bi-invariantly orientable compact Lie group acting on the
oriented pseudomanifold $X$, preserving its orientation.
An inclusion $G' \subset G$ of a closed bi-invariantly orientable subgroup $G'$
induces a restriction map
\begin{equation} \label{equ.restrfromgtogprime}
H^G_i (X) \longrightarrow H^{G'}_i (X)  
\end{equation}
on equivariant homology, as we shall recall next.
Let $G\subset \Or (n)$ be an embedding such that 
$G$ acts orientation preservingly on $EG_k$
and the manifolds $BG_k$ are orientable 
(Proposition \ref{prop.biinvorientgrpisorupondeloop}; $k\geq 2$).
We fix an orientation for every $BG_k$.
The embedding $G\subset \Or (n)$ induces by composition
an embedding $G' \subset \Or (n)$.
The underlying manifolds of $EG'_k$ and $EG_k$ are equal, namely
$\Or (n+k) / 1_n \times \Or (k)$.
Thus $G'$ acts orientation preservingly on $EG'_k = EG_k$
and the manifolds $BG'_k = EG_k /G'$ are orientable, $k\geq 2$.
We consider the fiber bundle
\[ G/G' \hookrightarrow BG'_k 
   \stackrel{\rho_k}{\longrightarrow} BG_k \]
given by the quotient map 
$BG'_k = EG_k /G' \to EG_k /G = BG_k.$
Since both the base and total space of $\rho_k$ are oriented,
this is an oriented fiber bundle.
As $EG_k /G' = EG_k \times_G (G/G')$, the structure group
of $\rho_k$ is $G$, and its underlying principal $G$-bundle
is $EG_k \to BG_k$.
More generally, the quotient map
\[ X_{G'} (k) = (EG_k \times X)/G' \longrightarrow 
        (EG_k \times X)/G = X_G (k) \]   
is the projection of a fiber bundle
\[ G/G' \hookrightarrow X_{G'} (k) 
   \stackrel{\pi_k}{\longrightarrow} X_G (k). \]
These projections fit into a cartesian diagram of fiber bundles
\begin{equation} \label{equ.gmodgprime}
\xymatrix@R=15pt@C=18pt{
 & X \ar[d] & X \ar[d] \\
G/G' \ar[r] & X_{G'} (k) \ar[d]_{q'_k} \ar[r]^{\pi_k}
   & X_G (k) \ar[d]^{q_k} \\ 
G/G' \ar[r] & BG'_k \ar[r]_{\rho_k}
   & BG_k. 
} \end{equation}
In particular, the fiber bundle $\pi_k$ is oriented, since
it is the pullback under $q_k$ of the oriented fiber bundle $\rho_k$. 
The dimension of $G$ is denoted by $d$, and the dimension of $G'$
shall be denoted by $d'$.
The oriented projection $\pi_k$ induces a bundle transfer homomorphism
\[ 
  \pi^!_k: H_{i+ nk + \frac{1}{2}n(n-1)-d} (X_G (k))
   \longrightarrow
   H_{i+ nk + \frac{1}{2}n(n-1)-d + (d-d')} (X_{G'} (k)).
\]
Under the passage from $k$ to $k+1$, these commute with the
Gysin restrictions $\xi^!_k$ by functoriality of Gysin maps.
Thus, taking inverse limits as $k\to \infty$, we obtain the 
desired restriction map (\ref{equ.restrfromgtogprime}).

\begin{thm} \label{thm.equivlchangeingroup}
Let $G' \subset G$ be a closed bi-invariantly orientable subgroup 
of the compact bi-invariantly orientable Lie group $G$
and let $X$ be a compact Witt $G$-pseudomanifold such that the $G$-action 
preserves the orientation of $X$.
Then the restriction map $H^{G}_* (X;\rat) \to H^{G'}_* (X;\rat)$
sends $L^{G}_* (X)$ to $L^{G'}_* (X)$.
\end{thm}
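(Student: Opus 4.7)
The plan is to verify the identity stage by stage: for every $k$ I will show that the bundle transfer
\[
\pi_k^! : H_*(X_G(k);\rat) \longrightarrow H_{*+(d-d')}(X_{G'}(k);\rat)
\]
of Section \ref{sec.functgroupvar} sends the stage-$k$ equivariant $L$-class $L^G_{*,k}(X)$ to $L^{G'}_{*,k}(X)$. Since the restriction map $H^G_*(X;\rat)\to H^{G'}_*(X;\rat)$ is by construction the inverse limit of the $\pi_k^!$, and since these transfers commute with the Gysin maps $\xi_k^!$ and $\xi'^!_k$ by functoriality, passing to inverse limits will then yield the theorem. The geometric backbone is the cartesian square (\ref{equ.gmodgprime}), which yields in particular the commutativity $q_k \circ \pi_k = \rho_k \circ q'_k$.

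The computation is a close variant of the proof of Theorem \ref{thm.stageklclassindepofgrpemb}. Using the projection formula for $\pi_k^!$ and the VRR theorem for bundle transfer of $L$-classes (Theorem \ref{thm.myVRRlclassbundletransfer}) applied to the oriented $(G/G')$-fiber bundle $\pi_k$, one computes
\begin{align*}
\pi_k^! L^G_{*,k}(X)
&= (q'_k)^* \rho_k^* L^*(TBG_k)^{-1} \cap \pi_k^! L_*(X_G(k)) \\
&= (q'_k)^* \rho_k^* L^*(TBG_k)^{-1} \cup L^*(\mu_k) \cap L_*(X_{G'}(k)),
\end{align*}
where $\mu_k$ denotes the stable vertical normal bundle of $\pi_k$. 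Since (\ref{equ.gmodgprime}) is cartesian, Lemma \ref{lem.verticalnormalbundlepullsback} (applied with $p=\rho_k$ and $f=q_k$) identifies $\mu_k \cong (q'_k)^* \nu^\vee_{\rho_k}$, where $\nu^\vee_{\rho_k}$ is the stable vertical normal bundle of $\rho_k$. The stable decomposition $TBG'_k \oplus \nu^\vee_{\rho_k} \cong \rho_k^* TBG_k \oplus \epsilon^N$ on $BG'_k$, combined with multiplicativity of the Hirzebruch $L$-class, yields $\rho_k^* L^*(TBG_k)^{-1} \cup L^*(\nu^\vee_{\rho_k}) = L^*(TBG'_k)^{-1}$. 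Substituting gives
\[
\pi_k^! L^G_{*,k}(X) = (q'_k)^* L^*(TBG'_k)^{-1} \cap L_*(X_{G'}(k)) = L^{G'}_{*,k}(X),
\]
as required.

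The main technical concern is that Theorem \ref{thm.myVRRlclassbundletransfer} requires the $(G/G')$-bundle $\pi_k$ to be an oriented PL block bundle with fiber the closed oriented PL manifold $G/G'$ and with an oriented stable vertical normal PL microbundle. Orientability has already been arranged in the discussion preceding the theorem, while for the PL structure one invokes Appendix \ref{sec.plstructures} not only for $G$ but also for the closed subgroup $G'$, applied to the smooth principal $G'$-bundle obtained by restricting the $G$-action on $EG_k \times M$ to $G'$. This makes (\ref{equ.gmodgprime}) into a cartesian diagram of PL block bundles with PL manifold fibers, so that Theorem \ref{thm.myVRRlclassbundletransfer} and Lemma \ref{lem.verticalnormalbundlepullsback} apply at each stage $k$, and the inverse limit argument completes the proof.
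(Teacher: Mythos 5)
Your proposal is correct and follows essentially the same route as the paper: apply $\pi_k^!$ at each stage, use the projection formula, invoke the VRR bundle-transfer theorem together with Lemma \ref{lem.verticalnormalbundlepullsback} on the cartesian square (\ref{equ.gmodgprime}), and cancel the $L$-class factors via the stable splitting $TBG'_k \oplus \nu^\vee_{\rho_k} \cong \rho_k^* TBG_k \oplus \epsilon^N$. The added remarks on compatibility with the Gysin maps in the inverse limit and on the PL structure are sound and simply make explicit points the paper handles elsewhere (Section \ref{sec.functgroupvar} and Appendix \ref{sec.plstructures}).
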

\begin{proof}
Using Diagram (\ref{equ.gmodgprime}), we compute the image of the 
stage-$k$ $G$-equivariant $L$-class
$L^G_{*,k} (X)$ under $\pi^!_k$:
\begin{align*}
\pi^!_k L^G_{*,k} (X)
&= \pi^!_k (q^*_k L^* (TBG_k)^{-1} \cap L_* (X_G (k))) \\
&= (\pi^*_k q^*_k L^* (TBG_k)^{-1}) \cap \pi^!_k L_* (X_G (k)) \\
&= (q'^*_k \rho^*_k L^* (TBG_k)^{-1}) \cap \pi^!_k L_* (X_G (k)).
\end{align*}
By the author's VRR-type theorem
for $L$-classes, Theorem \ref{thm.myVRRlclassbundletransfer},
\[ \pi^!_k L_* (X_G (k))
   = L^* (\nu^\vee) \cap L_* (X_{G'} (k)), \]
where $\nu^\vee$ is the stable vertical normal bundle associated to $\pi_k$.
Applying Lemma \ref{lem.verticalnormalbundlepullsback} 
to the cartesian Diagram (\ref{equ.gmodgprime})
(with $p=\rho_k$ and $f=q_k$), we obtain
a vector bundle isomorphism
$\nu^\vee = q'^*_k \nu^\vee_B,$
where $\nu^\vee_B$ is the stable vertical normal bundle associated to $\rho_k$.
Note that
$TBG'_k \oplus \nu^\vee_B \cong
   \rho^*_k TBG_k \oplus \epsilon^N,$
where $\epsilon^N$ denotes a trivial vector bundle of sufficiently high 
rank $N$. Therefore,
\begin{align*}
\pi^!_k L^G_{*,k} (X)
&= (q'^*_k \rho^*_k L^* (TBG_k)^{-1}) 
   \cup q'^*_k L^* (\nu^\vee_B) \cap L_* (X_{G'} (k)) \\
&= q'^*_k (\rho^*_k L^* (TBG_k)^{-1}
   \cup L^* (\nu^\vee_B)) \cap L_* (X_{G'} (k)) \\
&= q'^*_k L^* (TBG'_k)^{-1} \cap L_* (X_{G'} (k)) 
 = L^{G'}_{*,k} (X).   
\end{align*}
\end{proof}

\section{The Trivial Group and the Nonequivariant $L$-Class}

If $G = \{ 1 \}$ is the trivial group, then the Borel space is
$X_G = X_{\{ 1 \}}= E\{ 1 \} \times_{\{ 1 \}} X = E\{ 1 \} \times X \simeq X$.
In fact, one may of course take $E\{ 1 \}$ to be a point.
The trivial group embeds as $\{ 1 \} = \Or (0)$ into the orthogonal group with $n=0$.
The manifold approximations $EG_k$ used in this paper are then given by
$E\{ 1 \}_k = \Or(0+k) / (1_0 \times \Or (k)) = \pt,$
and $B\{ 1 \}_k = \pt$.
Thus the pseudomanifold approximations $X_G (k)$ are
$X_{\{ 1 \}} (k) = (E\{ 1 \}_k \times X)/ \{ 1 \} = X$ and
the equivariant homology $H^G_* (X)$ is
\[ H^{\{ 1 \}}_i (X)
    = \underset{\longleftarrow_k}{\lim}
  H_{i+ nk + \frac{1}{2}n(n-1)-d} (X_{\{ 1 \}} (k))
   = \underset{\longleftarrow_k}{\lim}
  H_{i} (X) = H_i (X), \]
the ordinary, nonequivariant, homology of $X$.
The stage-$k$ $\{ 1 \}$-equivariant $L$-class of $X$ is given by
\begin{align*} 
L^{\{ 1 \}}_{*,k} (X) 
&= q^*_k L^* (TB\{ 1 \}_k)^{-1} \cap L_* (X_{\{ 1 \}} (k)) \\
&= \id^* L^* (\pt \times \real^0)^{-1} \cap L_* (X) = L_* (X)
   \in  H_* (X;\rat), 
\end{align*}
the Goresky-MacPherson $L$-class of $X$. Hence
\[  L^{\{ 1 \}}_* (X) = L_* (X). \]

\begin{prop} \label{prop.equivlmapstononequivl}
Let $G$ be any compact bi-invariantly orientable Lie group acting on $X$.
The restriction map $H^G_* (X) \to H^{\{ 1 \}}_* (X) \cong H_* (X)$ 
induced by the inclusion of
the trivial group into $G$ sends $L^G_* (X)$ to the Goresky-MacPherson
class $L_* (X)$.
\end{prop}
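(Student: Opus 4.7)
The plan is to deduce this proposition as a direct special case of the functoriality result already established, namely Theorem \ref{thm.equivlchangeingroup}, combined with the identification just computed in the preceding paragraph showing $L^{\{1\}}_\ast(X) = L_\ast(X)$.

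First, I would check that the trivial group $\{1\}$ is a legitimate choice of closed subgroup to plug into Theorem \ref{thm.equivlchangeingroup}. Since discrete groups are bi-invariantly orientable (by the example assigning each point the orientation $+1$), the trivial group is bi-invariantly orientable, and it is obviously a closed subgroup of $G$. So Theorem \ref{thm.equivlchangeingroup} applies with $G' = \{1\}$, and yields that the restriction map
\[
H^G_\ast(X;\rat) \longrightarrow H^{\{1\}}_\ast(X;\rat)
\]
sends $L^G_\ast(X)$ to $L^{\{1\}}_\ast(X)$.

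Next, I would identify the target. The preceding computation shows that with the embedding $\{1\} = \Or(0)$, the approximations collapse to $X_{\{1\}}(k) = X$ with $q_k = \id$ and $B\{1\}_k = \pt$, so that $H^{\{1\}}_i(X) = H_i(X)$ and the stage-$k$ class $L^{\{1\}}_{\ast,k}(X)$ reduces on the nose to the Goresky--MacPherson class $L_\ast(X)$. Hence $L^{\{1\}}_\ast(X) = L_\ast(X)$ under the canonical identification $H^{\{1\}}_\ast(X;\rat) \cong H_\ast(X;\rat)$. Combining this with the previous paragraph gives the claim.

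Since both ingredients are already in place, there is essentially no obstacle; the only thing to be careful about is that the restriction map appearing in Theorem \ref{thm.equivlchangeingroup} and the restriction map appearing in the statement of Proposition \ref{prop.equivlmapstononequivl} are the same map. Both are constructed from the cartesian diagram of the quotient $EG_k/\{1\} \to EG_k/G$ via the bundle transfer $\pi_k^!$ built in Section \ref{sec.functgroupvar}, so this identification is tautological. Thus the proof amounts to a one-line citation of Theorem \ref{thm.equivlchangeingroup} together with the preceding identification $L^{\{1\}}_\ast(X) = L_\ast(X)$.
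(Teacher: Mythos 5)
Your high-level strategy is exactly the paper's: apply Theorem \ref{thm.equivlchangeingroup} with $G'=\{1\}$ and combine with the computation $L^{\{1\}}_*(X)=L_*(X)$. But you misidentify, and then dismiss, the one substantive point that makes this a proposition needing proof rather than a trivial corollary.

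The issue is not whether "the restriction map in Theorem \ref{thm.equivlchangeingroup}" and "the restriction map in Proposition \ref{prop.equivlmapstononequivl}" are the same map --- that is indeed fine. The issue is that the restriction map of Section \ref{sec.functgroupvar} is built using the embedding $\{1\}\subset G\subset \Or(n)$, so its target is the inverse limit over the model $X'_{\{1\}}(k)=E'_k\times X$ with $E'_k=\Or(n+k)/(1_n\times\Or(k))$, where the stage-$k$ image of $L^G_*(X)$ is $L'^{\{1\}}_{*,k}(X)=[E'_k]\times L_*(X)$. Your computation $L^{\{1\}}_*(X)=L_*(X)$, on the other hand, is carried out in the completely different polyhedral model coming from $\{1\}=\Or(0)$, where $X_{\{1\}}(k)=X$ literally. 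Passing between these two models of $H^{\{1\}}_*(X;\rat)$ and showing that the two corresponding sequences of $L$-classes agree is not tautological: it is exactly the content of the independence-of-choices result Theorem \ref{thm.stageklclassindepofgrpemb}, whose proof itself required the bundle transfer VRR formula Theorem \ref{thm.myVRRlclassbundletransfer}. The paper's own proof of this proposition consists almost entirely of invoking that comparison (via the transfer $p^!_E:H_j(X;\rat)\to H_{j+nk+\frac{1}{2}n(n-1)}(X'_{\{1\}}(k);\rat)$ along the projection $X'_{\{1\}}(k)\to X$). To complete your argument you need to cite Theorem \ref{thm.stageklclassindepofgrpemb} explicitly at this step; calling the identification "tautological" papers over the nontrivial content.
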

\begin{proof}
This is essentially a corollary to Theorem \ref{thm.equivlchangeingroup}.
However, the construction of the restriction map in Section \ref{sec.functgroupvar}
requires both groups to be embedded in the same $\Or (n)$.
Thus in the context of the restriction map, we may technically not use the model
$X_{\{ 1 \}} (k) = X$ as above, but need to use the model
\[ X'_{\{ 1 \}} (k) = E'_k \times_{\{ 1 \}} X = E'_k \times X,~
    E'_k = \Or (n+k) / (1_n \times \Or (k)),
\]
where $G\subset \Or (n)$.
The restriction map is thus technically a map
\[ H^G_j (X) \to
   \underset{\longleftarrow_k}{\lim}
  H_{j+ nk + \frac{1}{2}n(n-1)} (X'_{\{ 1 \}} (k)).
\]
It sends
$L^{G}_* (X)$ to $L'^{\{ 1 \}}_* (X)$
according to Theorem \ref{thm.equivlchangeingroup}.
Let $p_E: X'_{\{ 1 \}} (k) \to X = X_{\{ 1 \}} (k)$ be the projection
to the second factor.
By Theorem \ref{thm.stageklclassindepofgrpemb}, the
bundle transfer $p^!_E$ induces for $k > m-j$ an identification
(\ref{equ.bundletransferidentif}) 
\[
p^!_E: H_j (X;\rat) 
  = H_{j} (X_{\{ 1 \}} (k);\rat) \stackrel{\simeq}{\longrightarrow}  
   H_{j + nk + \frac{1}{2} n(n-1)} (X'_{\{ 1 \}} (k);\rat),  
\]
under which the Goresky-MacPherson $L$-class $L_* (X)= L^{\{ 1 \}}_* (X)$
is mapped to $L'^{\{ 1 \}}_{*,k} (X)$. This finishes the proof.
We remark on the side that in the model $X'_{\{ 1 \}} (k)$, the
stage-$k$ class takes the form
\[
L'^{\{ 1 \}}_{*,k} (X) 
 = q'^*_k L^* (TB'\{ 1 \}_k)^{-1} \cap L_* (X'_{\{ 1 \}} (k)) 
 = [E'_k] \times L_* (X)
   \in  H_* (X'_{\{ 1 \}} (k);\rat). 
\]
\end{proof}
The proposition shows that the equivariant $L$-class contains all of the information
of the (nonequivariant) Goresky-MacPherson $L$-class.

\section{Products}

Let $G,G'$ be compact bi-invariantly orientable Lie groups.
Then their direct product $P := G \times G'$ is compact and
bi-invariantly orientable. Let $M$ be a smooth $G$-manifold
and $M'$ a smooth $G'$-manifold.
Let $X\subset M$ be an oriented $G$-invariant pseudomanifold and $X'\subset M'$ an 
oriented $G'$-invariant pseudomanifold. (The Whitney stratification assumptions are
as in the previous sections; the actions preserve orientations and strata.)
Then $M\times M'$ is a smooth $P$-manifold under the action
$(g,g')\cdot (x,x') = (gx, g'x')$. The product
$X \times X' \subset M\times M'$ is a $P$-invariant subspace and
a Whitney stratified pseudomanifold. The action of $P$ on $X\times X'$
preserves the product strata and the product orientation.
Equivariant rational homology satisfies a Künneth-theorem:

\begin{prop} \label{prop.equivariantkunneth}
(Equivariant Künneth Theorem.)
The homological cross product induces an isomorphism
\begin{equation} \label{equ.equivariantkunneth} 
\bigoplus_{p+q=j} H^{G}_p (X;\rat) \otimes H^{G'}_q (X';\rat)
   \cong H^{G\times G'}_j (X\times X';\rat). 
\end{equation}   
\end{prop}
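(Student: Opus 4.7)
My plan is to exploit the product structure of the Borel approximations by a suitable choice of model, reduce to the classical topological Künneth theorem at each finite stage, and then pass to the inverse limit.

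First, I would set $P := G \times G'$ and use the product model $EP_k := EG_k \times EG'_k$, equipped with the coordinatewise $P$-action. Since the product of two $(k-1)$-connected spaces is $(k-1)$-connected, $EP_k$ is a free, compact, smooth, $(k-1)$-connected $P$-manifold. Applying the diagonal-combined-space argument from the proof of Theorem~\ref{thm.equivhomolindepofgrpemb} --- with the product model in place of one orthogonal model, and a standard orthogonal model $P \subset \Or(n) \times \Or(n') \subset \Or(n+n')$ in place of the other --- shows that this product model computes the same $P$-equivariant homology as the standard orthogonal one (only the defining properties of $EP_k$ enter Lemma~\ref{lem.transferisisom}). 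With this choice one obtains the key identification
\[
(X \times X')_P(k) \;=\; EP_k \times_P (X \times X') \;=\; X_G(k) \times X'_{G'}(k),
\]
fibering over $BG_k \times BG'_k$ with fiber $X \times X'$.

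Second, at each stage $k$ I would invoke the classical topological Künneth theorem over $\rat$ to obtain a cross product isomorphism
\[
\bigoplus_{p+q=j'} H_p(X_G(k);\rat) \otimes H_q(X'_{G'}(k);\rat) \stackrel{\times}{\longrightarrow} H_{j'}(X_G(k) \times X'_{G'}(k);\rat),
\]
and then verify that these stagewise Künneth isomorphisms commute with the Gysin restrictions, so as to assemble into a morphism of inverse systems. The inclusion $\zeta_k = \xi_k \times \xi'_k$ is normally nonsingular (as a product of two normally nonsingular inclusions, applying Lemma~\ref{lem.normalbundlesofxgk} to each factor), with oriented normal bundle isomorphic to the external Whitney sum $\mu_k \boxplus \mu'_k$. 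The multiplicativity of Thom classes under external products of oriented vector bundles then yields
\[
\zeta^!_k(\alpha \times \beta) \;=\; \xi^!_k(\alpha) \times \xi'^!_k(\beta),
\]
where any potential sign arising from the degree shifts can be eliminated by choosing even $n$ and $n'$ in the orthogonal embeddings (in the spirit of the Poincar\'e duality discussion around equation~(\ref{equ.equivpoincduality})).

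Finally, I would pass to the inverse limit. By Remark~\ref{rem.equivabovedimvanishing}, $H^G_p(X;\rat) = 0$ for $p > \dim X$ and $H^{G'}_q(X';\rat) = 0$ for $q > \dim X'$, so the direct sum in the statement has only finitely many nonzero summands in each fixed equivariant degree $j$. By the Gysin Stabilization Lemma~\ref{lem.gysinstabilization}, each of the three inverse systems stabilizes at a finite stage in every fixed equivariant degree; hence evaluation of the stagewise Künneth isomorphism at a sufficiently large $k$ promotes it to the required isomorphism on the inverse limits. I expect the main obstacle to be the compatibility check $\zeta^!_k(\alpha \times \beta) = \xi^!_k(\alpha) \times \xi'^!_k(\beta)$: although the multiplicativity of Gysin maps under external products is classical, it demands careful tracking of the orientations on $\nu_k, \nu'_k$, on their pullbacks $\mu_k = q_k^* \nu_k$ and $\mu'_k = q'^*_k \nu'_k$, and of the consistency between the product orientation on $BG_k \times BG'_k$ and the chosen orientation of the standard $BP_k$ under the identification of models.
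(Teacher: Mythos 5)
Your proposal is correct and follows essentially the same route as the paper's proof. The key moves are identical: you use the product model $EG_k \times EG'_k$ (which the paper denotes $\overline{E}_k$) to obtain the clean identification $(X\times X')_P(k) = X_G(k) \times X'_{G'}(k)$, you reduce to the classical Künneth theorem over $\rat$ at a finite stage, you justify the product model via the transfer argument of Theorem~\ref{thm.equivhomolindepofgrpemb} (noting, correctly, that its proof and that of Lemma~\ref{lem.transferisisom} only use the defining properties of $EP_k$, not the specific orthogonal embedding), and you close the argument via Gysin stabilization (Lemma~\ref{lem.gysinstabilization}) together with the observation from Remark~\ref{rem.equivabovedimvanishing} that only finitely many summands are nonzero in each fixed equivariant degree. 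The only organizational difference is one of emphasis: the paper keeps the standard orthogonal model $EP_k$ as the official definition and compares it to $\overline{E}_k$ by the two transfer diagrams (\ref{dia.hatandypk}), (\ref{dia.hatandoverlypk}), taking care at the end of the proof of the compatibility with the structure Gysin restrictions by an argument as in Proposition~\ref{prop.transferidentcompatblktokp1}, whereas you propose to prove the multiplicativity $\zeta^!_k(\alpha\times\beta)=\xi^!_k(\alpha)\times\xi'^!_k(\beta)$ directly to assemble a morphism of inverse systems. Both are valid; the paper's stabilize-at-a-single-$k$ route sidesteps the sign bookkeeping you flag, but your observation that the normal bundle of $\xi_k\times\xi'_k$ is the external Whitney sum $\mu_k\boxplus\mu'_k$ (via Lemma~\ref{lem.normalbundlesofxgk}) is exactly the right ingredient if one wanted to carry out the inverse-system approach in full.
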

\begin{proof}
Using bi-invariant orientability, choose embeddings 
$G \subset \SO (n)\subset \Or (n)$ and $G' \subset \SO (n')\subset \Or (n')$
in orthogonal groups such that the manifolds $BG_k, BG'_k$ are
oriented. The product group $P$ is embedded in
$\Or (n+n')$ via
\[ \xymatrix@R=15pt@C=15pt{
P=G\times G' \ar@{^{(}->}[r] & \SO (n) \times \SO (n') \ar@{^{(}->}[r] 
  \ar@{^{(}->}[d]
  & \Or (n) \times \Or (n') \ar@{^{(}->}[d] \\
& \SO (n+n') \ar@{^{(}->}[r] & \Or (n+n'). 
} \]
We set $Y := X \times X'$. 
The argument involves the $(k-1)$-connected free closed smooth manifolds
$EG_k,~ EG'_k,~ EP_k,$
their quotients
$BG_k = EG_k/G,~ BG'_k = EG'_k/G',~ BP_k = EP_k/P,$
as well as the approximating Borel pseudomanifolds
\[ X_G (k) = EG_k \times_G X,~ X'_{G'} (k) = EG'_k \times_{G'} X',~ 
     Y_P (k) = EP_k \times_P Y.   \]
Note that the embedding $G\times G' \subset \Or (n+n')$
is such that $G\times G'$ acts orientation preservingly on 
$EP_k \cong \SO (n+n'+k)/(1_{n+n'} \times \SO (k))$
$\cong \Or (n+n'+k)/(1_{n+n'} \times \Or (k))$ and, since $P$ is
bi-invariantly oriented,
$BP_k$ is oriented for every $k\geq 2$ by Lemma \ref{lem.totalspaceorientbaseorient}.
The pseudomanifold $Y_P (k)$ is oriented according to
Proposition \ref{prop.xgkwhitneystratpsdmfdoriented}.
It will be convenient to consider another model of $Y_P (k)$,
namely
$\overline{Y}_P (k) := \overline{E}_k \times_P Y,$
where 
$\overline{E}_k := EG_k \times EG'_k.$
The smooth closed manifold $\overline{E}_k$ is $(k-1)$-connected since both of its
factors are, and it is a free (smooth) $P$-space under the action
$(g,g')\cdot (e,e') = (ge, g' e')$. 
We endow $\overline{E}_k$ with the product orientation, for then the
action of $P$ on it preserves that orientation.
Let
$\overline{B}P_k = \overline{E}_k/P$ denote the orbit space, a smooth closed manifold.
The advantage of $\overline{Y}_P (k)$ over $Y_P (k)$ is that
the well-defined map
\[ (EG_k \times EG'_k) \times_{G \times G'} (X\times X')
   \longrightarrow (EG_k \times_G X) \times (EG'_k \times_{G'} X'),  \]
\[ (G\times G')(e,e',x,x') \mapsto (G(e,x), G'(e',x')),  \]   
determines a canonical identification
\[ \overline{Y}_P (k) = X_G (k) \times X'_{G'} (k) \]
with inverse
$(G(e,x), G'(e',x')) \mapsto (G\times G')(e,e',x,x').$
For $X=\pt =X'$ we have in particular a canonical identification
$\overline{B}P_k = BG_k \times BG'_k.$
We give both $\overline{B}P_k$ and $\overline{Y}_P (k)$ the product orientation.
The notation concerning dimensions will be
$m = \dim X$, $m' = \dim X'$, $d=\dim G$ and $d' = \dim G'$ so that
$\dim Y = m+m'$ and $\dim P = d+d'$.
We write $a = \smlhf n(n-1) -d,$ $a' = \smlhf n'(n'-1) -d'$.
By the Gysin Stabilization Lemma \ref{lem.gysinstabilization},
$H^G_p (X; \rat) \cong
   H_{p+nk+ a} (X_G (k); \rat)$
for $k > m-p,$ and
$H^{G'}_q (X'; \rat) \cong
   H_{q+n'k+ a'} (X'_{G'} (k); \rat)$
for $k > m'-q$.
The ordinary cross product is a map
\[
 H_{p+nk+ a} (X_G (k); \rat) \otimes
 H_{q+n'k+ a'} (X'_{G'} (k); \rat)
 \longrightarrow H_r (X_G (k) \times X'_{G'} (k);\rat),
\]
with $r = (p+q) +(n+n')k+ (a+a').$
Suppose that $j > m+m' = \dim Y$.
Then the right-hand side $H^P_j (Y;\rat)$ of (\ref{equ.equivariantkunneth})
vanishes (Remark \ref{rem.equivabovedimvanishing}).
If $p+q=j$, then $p>m$ or $q>m'$ so that $H^G_p (X;\rat)=0$
or $H^{G'}_q (X';\rat)=0$. Hence the left-hand side vanishes as well
and the equivariant Künneth formula holds.

Suppose then that $j \leq m+m'$ and take $k > m+m' -j$.
The ordinary, nonequivariant, Künneth theorem asserts that the direct sum
of cross products
\begin{equation} \label{equ.nonequivkunn}
\bigoplus_{p+q=j}
 H_{p+nk+a} (X_G (k); \rat) \otimes
 H_{q+n'k+a'} (X'_{G'} (k); \rat)
 \stackrel{\simeq}{\longrightarrow} H_r (X_G (k) \times X'_{G'} (k);\rat)
\end{equation}
is an isomorphism.
The sum may range only over $p,q$ with $p\leq m$ and $q\leq m'$,
since other terms do not contribute to the sum.
Then $k> m+m' - (p+q) = (m-p) + (m'-q) \geq \max (m-p,~ m'-q)$ so that
(\ref{equ.nonequivkunn}) may be written as
\[
\bigoplus_{p+q=j}
 H^G_{p} (X; \rat) \otimes
 H^{G'}_{q} (X'; \rat)
 \stackrel{\simeq}{\longrightarrow} H_r (\overline{Y}_P (k);\rat).
\]
It remains to identify the right-hand group with the $P$-equivariant
homology of $Y$.
By the Gysin Stabilization Lemma \ref{lem.gysinstabilization},
\[ H^P_j (Y; \rat) \cong
   H_{j+(n+n')k+\frac{1}{2} (n+n')(n+n'-1) -(d+d')} (Y_P (k); \rat)  \]
since we took $k > (m+m')-j$.
Now, an isomorphism
\begin{equation} \label{equ.isoypkybarpk}
(p^!_{E'})^{-1} \circ (p^!_E):
H_{j+(n+n')k+\frac{1}{2} (n+n')(n+n'-1) -(d+d')} (Y_P (k); \rat)
 \cong H_r (\overline{Y}_P (k);\rat)
\end{equation}
is given by the composition of two transfer isomorphisms as
in Theorem \ref{thm.equivhomolindepofgrpemb}.
Instead of Diagram (\ref{equ.qkpeishqkpb}), one uses the cartesian diagrams
of fiber bundles
\begin{equation} \label{dia.hatandypk}
\xymatrix{
 & Y \ar[d] & Y \ar[d] \\
\overline{E}_k \ar[r] & \widehat{Y}_P (k) \ar[d]_{\widehat{q}_k} \ar[r]^{p_E}
   & Y_P (k) \ar[d]^{q_k} \\ 
\overline{E}_k \ar[r] & \widehat{B}P_k \ar[r]_{p_B}
   & BP_k 
} \end{equation}
and
\begin{equation} \label{dia.hatandoverlypk}
\xymatrix{
 & Y \ar[d] & Y \ar[d] \\
EP_k \ar[r] & \widehat{Y}_P (k) \ar[d]_{\widehat{q}_k} \ar[r]^{p_{E'}}
   & \overline{Y}_P (k) \ar[d]^{\overline{q}_k} \\ 
EP_k \ar[r] & \widehat{B}P_k \ar[r]_{p_{\overline{B}}}
   & \overline{B}P_k. 
} \end{equation}
These two diagrams involve the following auxiliary objects:
The product manifold
$\widehat{E}P_k := EP_k \times \overline{E}_k$
is $(k-1)$-connected, since both factors are.
The group $P$ acts diagonally and freely on $\widehat{E}P_k$
with orbit space 
$\widehat{B}P_k := EP_k \times_P \overline{E}_k,$
a closed smooth manifold.
The projection $EP_k \times \overline{E}_k \to EP_k$ is $P$-equivariant
and induces a fiber bundle
\[ \overline{E}_k \longrightarrow 
   \widehat{B}P_k = EP_k \times_P \overline{E}_k 
   \stackrel{p_B}{\longrightarrow} EP_k/P = BP_k \]
with structure group $P$. 
As the action of $P$ on the fiber $\overline{E}_k$ preserves the orientation,
the fiber bundle $p_B$ is oriented.
Since the base $BP_k$ is oriented as well, we deduce that the
total space $\widehat{B}P_k$ is oriented.
The pseudomanifold 
$\widehat{Y}_P (k) := \widehat{E}P_k \times_P Y$
fibers over $\widehat{B}P_k$ with projection 
$\widehat{q}_k: \widehat{Y}_P (k) \to \widehat{B}P_k$
and fiber $Y$.
The projection $EP_k \times \overline{E}_k \times Y \to EP_k \times Y$ 
is $P$-equivariant and induces a fiber bundle
\[ \overline{E}_k \longrightarrow 
   \widehat{Y}_P (k) = (EP_k \times \overline{E}_k) \times_P Y 
   \stackrel{p_E}{\longrightarrow} EP_k \times_P Y = Y_P (k). \]
These are the bundles that appear in
(\ref{dia.hatandypk}).
The pullback of an oriented fiber bundle such as $p_B$ is oriented.
Thus $p_E$ is an oriented fiber bundle and therefore has an associated transfer.
The projection $EP_k \times \overline{E}_k \to \overline{E}_k$ 
is $P$-equivariant and induces a fiber bundle
\[ EP_k \longrightarrow 
   \widehat{B}P_k = EP_k \times_P \overline{E}_k 
   \stackrel{p_{\overline{B}}}{\longrightarrow} 
    \overline{E}_k/P = \overline{B}P_k \]
with structure group $P$. 
Since both the base and the total space of $p_{\overline{B}}$ are oriented,
this is an oriented fiber bundle.
The equivariant projection $EP_k \times \overline{E}_k \times Y 
 \to \overline{E}_k \times Y$ 
induces a fiber bundle
\[ EP_k \longrightarrow 
   \widehat{Y}_P (k) = (EP_k \times \overline{E}_k) \times_P Y 
  = EP_k \times_P (\overline{E}_k \times Y)
   \stackrel{p_{E'}}{\longrightarrow} \overline{E}_k \times_P Y = 
     \overline{Y}_P (k). \]
These bundles appear in (\ref{dia.hatandoverlypk}).
Since $p_{E'}$ is the pullback of the oriented bundle $p_{\overline{B}}$,
it is itself oriented and admits a transfer.
The fact that the two transfers $p^!_E$ and $p^!_{E'}$ are isomorphisms
is due to the homological transfer stabilization 
Lemma \ref{lem.transferisisom}.
The identification (\ref{equ.isoypkybarpk}) is compatible with 
the structure Gysin restrictions
of the inverse systems, as can be seen by an argument that proceeds as
the proof of Proposition \ref{prop.transferidentcompatblktokp1}.
\end{proof}

\begin{thm} \label{thm.productequivl}
Under the equivariant homological Künneth isomorphism (\ref{equ.equivariantkunneth}),
the equivariant $L$-class satisfies the product formula
\[ L^{G \times G'}_* (X\times X') 
      = L^G_* (X) \times L^{G'}_* (X'). \]   
\end{thm}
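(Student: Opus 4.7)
The plan is to establish the product formula at each finite stage $k$ and then pass to the inverse limit, using the alternative model $\overline{Y}_P(k) = X_G(k) \times X'_{G'}(k)$ for the Borel approximation of $Y = X \times X'$ that already figures in the proof of Proposition \ref{prop.equivariantkunneth}, together with the intermediate ``hat'' space $\widehat{Y}_P(k)$ which links $Y_P(k)$ and $\overline{Y}_P(k)$ via the oriented transfers $p_E$ and $p_{E'}$ of Diagrams (\ref{dia.hatandypk}) and (\ref{dia.hatandoverlypk}). Define the auxiliary stage-$k$ class
\[
\overline{L}^P_{*,k}(Y) := \overline{q}^*_k L^*(T\overline{B}P_k)^{-1} \cap L_*(\overline{Y}_P(k)) \in H_*(\overline{Y}_P(k);\rat),
\]
where $\overline{B}P_k = BG_k \times BG'_k$ carries the product orientation and $\overline{q}_k: \overline{Y}_P(k) \to \overline{B}P_k$ is the product projection $q_k \times q'_k$.

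First I would identify $\overline{L}^P_{*,k}(Y)$ with the cross product of the two factor classes. Under the product decomposition, the tangent bundle $T\overline{B}P_k$ is the external Whitney sum of the pullbacks of $TBG_k$ and $TBG'_k$, so multiplicativity of the Hirzebruch $L$-class gives $L^*(T\overline{B}P_k)^{-1} = L^*(TBG_k)^{-1} \times L^*(TBG'_k)^{-1}$. The Goresky-MacPherson $L$-class of a product of closed oriented Witt pseudomanifolds is the homological cross product of the factor classes (this is standard: it follows from the Künneth formula for intersection homology together with self-duality of the intersection chain sheaf in the Witt setting). Assembling these, together with the compatibility of cap and cross products, yields
\[
\overline{L}^P_{*,k}(Y) = L^G_{*,k}(X) \times L^{G'}_{*,k}(X').
\]

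Next I would compare $L^P_{*,k}(Y)$ on $Y_P(k)$ with $\overline{L}^P_{*,k}(Y)$ on $\overline{Y}_P(k)$ by routing through $\widehat{Y}_P(k)$. Applying the VRR formula for bundle transfer of $L$-classes (Theorem \ref{thm.myVRRlclassbundletransfer}) to the oriented fiber bundle $p_E$ of Diagram (\ref{dia.hatandypk}), together with Lemma \ref{lem.verticalnormalbundlepullsback} and the stable identity $T\widehat{B}P_k \oplus \nu^\vee_B \cong p^*_B TBP_k \oplus \epsilon^N$, the telescopic cancellation used in the proof of Theorem \ref{thm.stageklclassindepofgrpemb} produces
\[
p^!_E L^P_{*,k}(Y) = \widehat{q}^*_k L^*(T\widehat{B}P_k)^{-1} \cap L_*(\widehat{Y}_P(k)) =: \widehat{L}^P_{*,k}(Y).
\]
The symmetric computation applied to $p_{E'}$ of Diagram (\ref{dia.hatandoverlypk}), using the analogous identity $T\widehat{B}P_k \oplus \nu^\vee_{\overline{B}} \cong p^*_{\overline{B}} T\overline{B}P_k \oplus \epsilon^N$, yields $p^!_{E'} \overline{L}^P_{*,k}(Y) = \widehat{L}^P_{*,k}(Y)$. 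Hence in the stable range $k > (m+m')-j$ where both transfers are isomorphisms,
\[
(p^!_{E'})^{-1} \circ p^!_E \bigl( L^P_{*,k}(Y) \bigr) = \overline{L}^P_{*,k}(Y).
\]
Combined with Step 1 this says that the stage-$k$ class $L^P_{*,k}(Y)$ corresponds under the transfer identification to $L^G_{*,k}(X) \times L^{G'}_{*,k}(X')$. Since the equivariant Künneth isomorphism (\ref{equ.equivariantkunneth}) is, by its construction in the proof of Proposition \ref{prop.equivariantkunneth}, the composition of $(p^!_{E'})^{-1} \circ p^!_E$ with the ordinary cross product on $\overline{Y}_P(k)$, and since both the transfer identifications and the cross products are compatible with the Gysin restrictions $\xi^!_k$ governing the inverse system (cf.\ Proposition \ref{prop.transferidentcompatblktokp1}), passing to the inverse limit delivers the asserted equality $L^{G\times G'}_*(X\times X') = L^G_*(X) \times L^{G'}_*(X')$ in $H^{G\times G'}_*(X\times X';\rat)$.

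The main obstacle will be Step 2: carrying out the two VRR computations so that they really produce the \emph{same} class $\widehat{L}^P_{*,k}(Y)$ on $\widehat{Y}_P(k)$. This requires careful bookkeeping of the two stable vertical normal bundles $\nu^\vee_B$ and $\nu^\vee_{\overline{B}}$ of $p_B$ and $p_{\overline{B}}$, and checking that each of the two cohomological correction factors $L^*(T\widehat{B}P_k)^{-1}$, once re-expressed via the respective stable tangent identities, telescopes against $L^*(\nu^\vee)$ to leave exactly $L^*(TBP_k)^{-1}$ in one diagram and exactly $L^*(T\overline{B}P_k)^{-1}$ in the other. This is the direct product-group analog of the argument in Theorem \ref{thm.stageklclassindepofgrpemb}, but with the extra subtlety that $\overline{B}P_k$ is not of the form $BP_{k}$ for a single embedding of $P$, only a $(k-1)$-connected free quotient serving the same role.
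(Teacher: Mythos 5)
Your proposal is correct and follows essentially the same route as the paper's proof: define the auxiliary stage-$k$ class $\overline{L}^P_{*,k}(Y)$ on the product model $\overline{Y}_P(k) = X_G(k)\times X'_{G'}(k)$, identify it with $L^G_{*,k}(X)\times L^{G'}_{*,k}(X')$ via multiplicativity of the Hirzebruch class and the nonequivariant product formula for $L_*$, and transport it to $L^P_{*,k}(Y)$ on $Y_P(k)$ through $\widehat{Y}_P(k)$ using the VRR bundle-transfer theorem as in Theorem \ref{thm.stageklclassindepofgrpemb}. The bookkeeping concern you raise in your final paragraph is precisely the content of that earlier theorem's proof and is handled there; no extra subtlety arises from $\overline{B}P_k$ not being of the form $BP_k$, since the transfer argument only uses that it is a closed oriented manifold receiving an oriented $EP_k$-bundle from $\widehat{B}P_k$.
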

\begin{proof}
We use the notation of the proof of the
equivariant Künneth theorem, Proposition \ref{prop.equivariantkunneth}.
In particular, $P = G\times G',$ $Y = X\times X'$.
A $P$-equivariant stage-$k$ class in the homology of $\overline{Y}_P (k)$
is given by
$\overline{L}^P_{*,k} (Y) :=
      \overline{q}^*_k L^* (T\overline{B}P_k)^{-1} 
        \cap L_* (\overline{Y}_P (k)).$ 
By the nonequivariant product formula for the
Goresky-MacPherson $L$-class (\cite{blm}, \cite{woolf}),
\[ L_* (\overline{Y}_P (k))
  = L_* (X_G (k) \times X'_{G'} (k))
  = L_* (X_G (k)) \times L_* (X'_{G'} (k)). \]
Using the decomposition
\[ \xymatrix{
\overline{Y}_P (k) \ar[d]_{\overline{q}_k} \ar@{=}[r]
 & X_G (k) \times X'_{G'} (k) \ar[d]^{q_k \times q'_k} \\
\overline{B}P_k \ar@{=}[r]
 & BG_k \times BG'_k, 
} \]
we compute
\[
\overline{q}^*_k L^* (T\overline{B}P_k)
 = (q_k \times q'_k)^* L^* (TBG_k \times TBG'_k)
 = q^*_k L^* (TBG_k) \times q'^*_k L^* (TBG'_k).
\]
Therefore,
\begin{align*}
\overline{L}^P_{*,k} (Y)
&= (q^*_k L^* (TBG_k)^{-1} \times q'^*_k L^* (TBG'_k)^{-1}) 
    \cap (L_* (X_G (k)) \times L_* (X'_{G'} (k))) \\
&= (q^*_k L^* (TBG_k)^{-1} \cap L_* (X_G (k))) 
   \times (q'^*_k L^* (TBG'_k)^{-1} \cap L_* (X'_{G'} (k))) \\
&= L^G_{*,k} (X) \times L^{G'}_{*,k} (X').       
\end{align*}
On the other hand,
\[ (p^!_{E'})^{-1} \circ (p^!_E) L^P_{*,k} (Y)
    = \overline{L}^P_{*,k} (Y) \]
as in Theorem \ref{thm.stageklclassindepofgrpemb}, using the VRR-formula
Theorem \ref{thm.myVRRlclassbundletransfer} and Diagrams
\ref{dia.hatandypk} and \ref{dia.hatandoverlypk}.
\end{proof}

\section{The Case of a Trivial Action}

Suppose that the group $G$ acts trivially on $X$.
Then the Borel space of the action is $X_G = EG \times_G X = BG \times X$.
Its pseudomanifold approximations are given by
$X_G (k) = (EG_k \times X)/G = (EG_k /G)\times X = BG_k \times X$.
We may identify the $G$-space $X$ with the
$(G\times \{ 1 \})$-space $\pt \times X$. 
Then the equivariant Künneth theorem, Proposition \ref{prop.equivariantkunneth},
shows that
\[ H^G_* (X) = H^{G\times 1}_* (\pt \times X)
   \cong H^G_* (\pt) \otimes H_* (X).  \]
In Section \ref{ssec.equivfundclass}, we discussed the equivariant fundamental class.
For the point, this is a class
$[\pt]_G \in H^G_0 (\pt).$

\begin{prop} \label{prop.trivialaction}
If $G$ acts trivially on $X$, then
\[ L^G_* (X) = [\pt]_G \times L_* (X) \in H^G_* (X;\rat) \cong
                 H^G_* (\pt;\rat) \otimes H_* (X;\rat). \]
\end{prop}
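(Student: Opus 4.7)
The plan is to reduce the trivial-action case to the product formula by treating $X$ as a $(G \times \{1\})$-space. Since the action of $G$ on $X$ is trivial, the $G$-space $X$ is canonically isomorphic to the $(G \times \{1\})$-space $\pt \times X$, where $G$ acts on $\pt$ (the only possible action) and $\{1\}$ acts trivially on $X$. Under this identification, $X_G(k) = BG_k \times X$ corresponds to $(\pt)_G(k) \times X_{\{1\}}(k) = BG_k \times X$, and both the orientations and the bundle projections match up.

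First I would invoke Theorem \ref{thm.productequivl} for the groups $G$ and $G' = \{1\}$ acting on $\pt$ and $X$ respectively. This yields
\[ L^{G \times \{1\}}_*(\pt \times X) = L^G_*(\pt) \times L^{\{1\}}_*(X) \]
under the equivariant Künneth isomorphism of Proposition \ref{prop.equivariantkunneth}. Next I would identify the two factors on the right. By Example \ref{exple.equivlofpoint}, $L^G_*(\pt) = [\pt]_G \in H^G_0(\pt;\rat)$, since the stage-$k$ class reduces to $[BG_k]$. By the computation in the section on the trivial group, $L^{\{1\}}_*(X) = L_*(X)$ is the ordinary Goresky-MacPherson class.

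Finally I would verify that the canonical identification of $X$ (as a $G$-space) with $\pt \times X$ (as a $(G\times\{1\})$-space) induces an isomorphism $H^{G \times \{1\}}_*(\pt \times X;\rat) \cong H^G_*(X;\rat)$ sending $L^{G\times\{1\}}_*(\pt\times X)$ to $L^G_*(X)$. This is essentially tautological at the level of stage-$k$ classes: the approximating spaces $(\pt \times X)_{G\times\{1\}}(k)$ and $X_G(k)$ coincide (with the same orientations, since $\{1\}$ contributes trivially), and the pullbacks $q_k^* L^*(TBG_k)^{-1}$ that define the stage-$k$ $L$-classes agree as well.

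The only mildly delicate point is the compatibility of the Künneth identification with the chosen embeddings into orthogonal groups used to build the approximations. Since the embedding $G \subset \Or(n)$ yields the embedding $G \times \{1\} \subset \Or(n) \times \Or(0) \subset \Or(n)$ and the approximating Stiefel manifolds are literally the same, no transfer correction is needed in this comparison; the product formula of Theorem \ref{thm.productequivl} is therefore applied with its right-hand side factoring as $[\pt]_G \times L_*(X)$, yielding the conclusion $L^G_*(X) = [\pt]_G \times L_*(X)$.
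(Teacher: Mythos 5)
Your proof is correct and follows essentially the same route as the paper: both reduce to the product formula (Theorem \ref{thm.productequivl}) applied to the identification of the $G$-space $X$ with the $(G\times\{1\})$-space $\pt\times X$, then identify $L^G_*(\pt)=[\pt]_G$ via Example \ref{exple.equivlofpoint} and $L^{\{1\}}_*(X)=L_*(X)$ via the trivial-group computation. Your additional verifications — that the approximating spaces $X_G(k)$ and $(\pt\times X)_{G\times\{1\}}(k)$ literally coincide with matching orientations, and that the embeddings into orthogonal groups are compatible so no transfer correction intervenes — make explicit points the paper leaves implicit, which is a reasonable amount of extra care, not a different method.
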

\begin{proof}
This is a direct consequence of the product Theorem
\ref{thm.productequivl}, which asserts here that
\[ L^G_* (X) = L^{G \times 1}_* (\pt \times X) 
      = L^G_* (\pt) \times L^{\{ 1 \}}_* (X). \]  
In Proposition \ref{prop.equivlmapstononequivl}, we saw that 
$L^{\{ 1 \}}_* (X) = L_* (X)$, while Example \ref{exple.equivlofpoint}
shows that $L^G_* (\pt) = [\pt]_G$.
\end{proof}

\section{The Case of Free Actions}
\label{sec.freeactions}

We will show that if $G$ acts freely on $X$, then the equivariant
$L$-class $L^G_* (X)$ can be identified with the Goresky-MacPherson
class $L_* (X/G)$.

Suppose that $G$ acts freely on $M$, and $X\subset M$ is an invariant
compact oriented Witt pseudomanifold as before. The group $G$ is bi-invariantly
oriented and its action preserves the orientation of $X$.
Then $X/G$ is a Whitney stratified subset of the smooth manifold $M/G$
by Theorem \ref{thm.orbitspaceoffreeisbregular}, and a
pseudomanifold by Lemma \ref{lem.quotientoffreeispseudomfd}.
The orbit space $X/G$ is compact, since $X$ is.
The smooth principal $G$-bundle $M \to M/G$ restricts to a principal $G$-bundle
$X \to X/G$, and the induced smooth fiber bundle
$EG_k \hookrightarrow M_G (k) \to M/G$
restricts to a fiber bundle
\[ EG_k \hookrightarrow X_G (k) \stackrel{\pi_k}{\longrightarrow} X/G. \]
We note that the fiber of $\pi_k$ is a smooth manifold.
The pseudomanifolds $X_G (k)$ and $X/G$ are oriented.
To establish the Witt condition for $X/G$, we will employ the following
principle:
\begin{lemma} \label{lem.wittdesuspension}
(Witt Desuspension Principle.)
If $W$ is an oriented topologically stratified pseudomanifold such that the canonical
morphism $IC^{\bar{m}}_{W \times \real^j} \to
IC^{\bar{n}}_{W \times \real^j}$ of intersection chain sheaves
is an isomorphism (in the derived category) for some $j\geq 0$,
then $W$ satisfies the Witt condition.
\end{lemma}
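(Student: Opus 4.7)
The plan is to reduce the Witt condition for $W$ to the (hypothesized) Witt condition for the product $W\times\real^j$ by showing that the two stratifications have the same links. The case $j=0$ is trivial, so assume $j\geq 1$.

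First I would describe the product stratification on $W\times\real^j$: its strata are the subspaces $S\times\real^j$ for $S$ a stratum of $W$, with $\real^j$ treated as a single unstratified smooth manifold factor. The codimension of $S\times\real^j$ in $W\times\real^j$ equals the codimension of $S$ in $W$. Next, I would observe that the links match. Indeed, if $p\in S$ has a distinguished neighborhood of the form $U\times \oc L$ in $W$, where $U\subset S$ is open and $\oc L$ is the open cone on the link $L$ of $S$ at $p$, then a point $(p,q)\in S\times\real^j$ admits the distinguished neighborhood $U\times V\times \oc L$ in $W\times\real^j$, with $V$ a Euclidean neighborhood of $q$. Consequently, the assignment $S\mapsto S\times\real^j$ sets up a bijection between strata such that the link of a stratum of $W\times\real^j$ is homeomorphic to the link of the corresponding stratum of $W$. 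In particular, the collections of even-dimensional links $L^{2k}$ of strata of $W$ and of $W\times\real^j$ coincide.

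By the result of Siegel \cite{siegel} (see also Goresky--MacPherson \cite[5.6.1]{gmih2}) recalled in the introduction, the canonical morphism $IC^{\bar m}_{W\times\real^j}\to IC^{\bar n}_{W\times\real^j}$ is an isomorphism in the derived category if and only if $W\times\real^j$ satisfies the Witt condition, i.e.\ if and only if $IH^{\bar m}_k(L^{2k};\rat)=0$ for every even-dimensional link $L^{2k}$ of a (connected component of a) stratum of $W\times\real^j$. By the hypothesis, this vanishing holds. By the link correspondence established above, the same vanishing then holds for the even-dimensional links of strata of $W$, which is precisely the Witt condition for $W$.

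The only potential obstacle is ensuring the definitional chain (Witt $\Leftrightarrow$ vanishing of middle intersection homology of even-dimensional links $\Leftrightarrow$ the canonical morphism is a quasi-isomorphism) is available for \emph{topologically} stratified pseudomanifolds, not only Whitney stratified ones; but Siegel's argument is local and depends only on the distinguished neighborhood (cone) structure, so it applies verbatim in the topologically stratified setting. No further delicate input is required.
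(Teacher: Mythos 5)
Your proof is correct but takes a genuinely different route from the paper's. You argue geometrically: the strata of $W\times\real^j$ with the product stratification are $S\times\real^j$, and the link of $S\times\real^j$ at a point $(p,q)$ coincides with the link of $S$ at $p$ (a distinguished neighborhood $U\times \oc L$ of $p$ in $W$ yields the distinguished neighborhood $U\times V\times \oc L$ of $(p,q)$). Hence the Witt condition, characterized by the vanishing of $IH^{\bar m}_k(L^{2k};\rat)$ for even-dimensional links of strata, holds for $W$ if and only if it holds for $W\times\real^j$; the hypothesis gives the latter via the equivalence between the Witt condition and $\phi_{W\times\real^j}$ being a quasi-isomorphism. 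The paper instead argues sheaf-theoretically: it notes that the stratified projection $\pi: W\times\real^j\to W$ satisfies $\pi^* IC^{\bar p}_W\cong IC^{\bar p}_{W\times\real^j}$ because the exact functor $\pi^*$ commutes with the pushforwards $Ri_{k*}$ and the truncations in Deligne's construction, so $\phi_{W\times\real^j}$ identifies with $\pi^*(\phi_W)$; then the stalk of $\pi^*(\phi_W)$ at $(w,t)$ is the stalk of $\phi_W$ at $w$, so $\phi_W$ is an isomorphism, i.e.\ $W$ is Witt. Your approach is more elementary and avoids the Deligne machinery altogether, at the cost of explicitly invoking the link characterization of the Witt condition (which does hold in the topologically stratified setting, as you correctly flag, since links and distinguished neighborhoods are part of the local structure). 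The paper's approach bypasses links entirely and shows directly that the canonical morphism on $W$ is an isomorphism; this is slightly more robust if one wanted to prove an analogous statement for a sheaf-theoretic condition that lacked a clean link description. Both are valid proofs.
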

\begin{proof}
Using Deligne's iterated construction of intersection chain sheaves
(given by alternating pushforwards $Ri_{k*}$ and truncations $\tau_{\leq \bar{p}(k)}$),
the canonical morphism $\phi_W: IC^{\bar{m}}_W \to IC^{\bar{n}}_W$
is induced by inclusions $\tau_{\leq \bar{m}(k)} (-)\subset \tau_{\leq \bar{n}(k)} (-)$.
The constant sheaf $\real_{W-\Sigma}$ on the top stratum of $W$ pulls back
to the constant sheaf $\pi^* \real_{W-\Sigma} = \real_{(W-\Sigma)\times \real^j}$
under the restriction of the stratified projection $\pi: W\times \real^j \to W$
over the top stratum. By base change, the exact functor $\pi^*$ commutes with the 
pushforwards $Ri_{k*}$,
and it also commutes with truncation.
Hence there is a commutative diagram
\[ \xymatrix@R=10pt{
IC^{\bar{m}}_{W \times \real^j} \ar[r]^{\phi_{W \times \real^j}}_\simeq 
   \ar@{=}[d]_{\rotatebox{90}{$\sim$}}
 & IC^{\bar{n}}_{W \times \real^j} \ar@{=}[d]_{\rotatebox{90}{$\sim$}} \\
\pi^* IC^{\bar{m}}_W \ar[r]_{\pi^* (\phi_W)} 
 & \pi^* IC^{\bar{n}}_W,  
} \]
which shows that $\pi^* (\phi_W)$ is an isomorphism in the derived category.
The stalk isomorphism
\[ \xymatrix@C60pt{
(\pi^* IC^{\bar{m}}_W)_{(w,t)} 
  \ar[r]^{(\pi^* (\phi_W))_{(w,t)}} &
   (\pi^* IC^{\bar{n}}_W)_{(w,t)} 
} \]
at a point $(w,t) \in W \times \real^j$ identifies naturally with the stalk map
$(\phi_W)_{w}: (IC^{\bar{m}}_W)_{w} \to (IC^{\bar{n}}_W)_{w}.$
Consequently, $(\phi_W)_w$ is an isomorphism for every $w\in W$, and thus $\phi_W$ is an isomorphism. 
\end{proof}

\begin{lemma}
The orbit space $X/G$ satisfies the Witt condition.
\end{lemma}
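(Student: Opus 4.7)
The plan is to apply the Witt Desuspension Principle (Lemma \ref{lem.wittdesuspension}) locally, exploiting the fact that the fiber bundle $\pi_k: X_G (k) \to X/G$ has smooth manifold fiber $EG_k$, so that $X_G (k)$ locally looks like $(X/G) \times \real^j$ for $j := \dim EG_k = nk + \tfrac{1}{2} n(n-1)$.

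First I would record that $X/G$ is an oriented stratified pseudomanifold: the pseudomanifold property is Lemma \ref{lem.quotientoffreeispseudomfd}, and an orientation on the top stratum $X^{\reg}/G$ is produced by applying Lemma \ref{lem.totalspaceorientbaseorient} to the free, orientation-preserving, bi-invariantly orientable action of $G$ on the oriented manifold $X^{\reg}$. This orientation extends uniquely to an orientation of the whole pseudomanifold $X/G$.

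Now fix any $k$ and any point $y \in X/G$. Using local triviality of $\pi_k$, choose an open neighborhood $U \subset X/G$ of $y$ and a homeomorphism $\pi_k^{-1}(U) \cong U \times EG_k$ over $U$. Pick a smooth coordinate chart $V \cong \real^j$ around some point of $EG_k$, so that $U \times V \cong U \times \real^j$ sits as an open subset of $X_G (k)$. By Theorem \ref{thm.xgkwhitneystratpsdmfdwitt}, $X_G (k)$ is a Witt space; since the Witt condition is the statement that the canonical morphism $IC^{\bar m} \to IC^{\bar n}$ is a quasi-isomorphism, and this is a stalk-level (hence local) condition, the open subspace $U \times \real^j \subset X_G (k)$ is also Witt. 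Applying Lemma \ref{lem.wittdesuspension} to $W = U$ then yields that $U$ itself is Witt. As $y \in X/G$ was arbitrary, and the Witt condition is local, $X/G$ satisfies the Witt condition.

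There is no serious obstacle. The one point requiring a moment of care is confirming that the local trivialization of $\pi_k$, combined with a chart of the fiber $EG_k$, really produces an open product $U \times \real^j$ inside the Witt space $X_G (k)$, so that Lemma \ref{lem.wittdesuspension} is applicable on each such neighborhood; the induction-free, purely stalkwise nature of the desuspension argument makes this localization transparent.
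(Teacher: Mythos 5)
Your proof is correct and takes essentially the same route as the paper: localize over a trivializing open cover of $\pi_k$, observe that the corresponding open subsets of the Witt space $X_G(k)$ are Witt, and apply the Witt Desuspension Principle (Lemma \ref{lem.wittdesuspension}). The only (minor) difference is that you explicitly shrink the $EG_k$-factor to a coordinate chart $V\cong \real^j$ before invoking the desuspension principle, whereas the paper applies it directly to $U_\alpha\times EG_k$ and leaves that localization implicit; your added care is sound and does not change the argument.
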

\begin{proof}
Since the Witt condition is local, it suffices to cover $X/G$ by
open subsets $U_\alpha$, each of which is Witt.
Now let $\{ U_\alpha \}$ be an open cover of $X/G$ such that
the fiber bundle $\pi_k$ trivializes over $U_\alpha$.
The preimages $\pi^{-1}_k (U_\alpha)$ are thus over $U_\alpha$ homeomorphic to 
$U_\alpha \times EG_k$, with $EG_k$ a smooth manifold. 
Recall that the total space $X_G (k)$ is Witt 
(by Theorem \ref{thm.xgkwhitneystratpsdmfdwitt}).
The preimages are open in $X_G (k)$, hence themselves Witt.
Since intersection homology is topologically invariant,
the spaces $U_\alpha \times EG_k$ are Witt.
Then $U_\alpha$ is Witt by the desuspension principle,
Lemma \ref{lem.wittdesuspension}.
\end{proof}
The Goresky-MacPherson $L$-class $L_* (X/G)$ is thus defined.
The commutative diagram 
\[ \xymatrix@C=15pt@R=20pt{
 X_G (k) \ar[rd]_{\pi_k} \ar@{^{(}->}[rr]^{\xi_k} & & X_G (k+1) \ar[ld]^{\pi_{k+1}} \\
 & X/G &
} \]
yields a commutative diagram of transfer homomorphisms
\[ \xymatrix{
 H_{j+n(k+1)+a} (X_G (k+1);\rat) \ar[rr]^{\xi^!_k} 
   & & H_{j+nk+a} (X_G (k);\rat) \\
 & H_{j-d} (X/G;\rat), \ar[lu]^{\pi^!_{k+1}} \ar[ru]_{\pi^!_k} &
} \]
$a=\smlhf n(n-1)-d$.
Thus the maps $\pi^!_k$ induce a map
\[ \pi^!: H_{j-d} (X/G;\rat) \longrightarrow H^G_j (X;\rat)  \]
into the inverse limit.
By Lemma \ref{lem.transferisisom}, 
$\pi^!_k: H_{j-d} (X/G;\rat) \to H_{j+nk+\frac{1}{2} n(n-1) -d} (X_G (k);\rat)$
is an isomorphism for $k > m-j$ and it follows that
\begin{equation} \label{equ.pishriekxmodgtohgx}
\pi^!: H_{j-d} (X/G;\rat) \cong H^G_j (X;\rat) 
\end{equation}
is an isomorphism.

\begin{lemma} \label{lem.tveepikqktbgkstablyiso}
The vertical tangent bundle $T^\vee \pi_k$ of $\pi_k$ and 
the vector bundle $q_k^* (TBG_k)$ are
stably isomorphic over $X_G (k)$ ($q_k: X_G (k) \to BG_k$).
\end{lemma}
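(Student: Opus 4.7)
The strategy is to compare two natural splittings of $TX_G(k)$ coming from the fiber-bundle structures $\pi_k \colon X_G(k) \to X/G$ (fibre $EG_k$) and $q_k \colon X_G(k) \to BG_k$ (fibre $X$), and to exploit the canonical trivialisation of the vertical tangent bundle of a principal $G$-bundle by the fundamental vector fields of the $G$-action.

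First I would pick smooth (connection-induced) splittings
\[ TX_G(k) \;\cong\; T^\vee \pi_k \oplus \pi_k^* T(X/G) \;\cong\; T^\vee q_k \oplus q_k^* TBG_k, \]
which exist because $\pi_k$ and $q_k$ are smooth locally trivial fibre bundles.  Cancelling the common summand $\pi_k^* T(X/G)$ stably, the Lemma becomes equivalent to showing that $T^\vee q_k$ and $\pi_k^* T(X/G)$ differ by a stably trivial rank-$d$ summand, where $d=\dim G$.

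Next I would analyse $T^\vee q_k$ via its associated-bundle description $T^\vee q_k = EG_k \times_G TX$ on $X_G(k)$.  Let $\rho \colon X\to X/G$ denote the principal $G$-bundle underlying the given free action, and pick a $G$-invariant Ehresmann connection (available since $G$ is compact), which yields a $G$-equivariant splitting $TX \cong T^\vee\rho \oplus \rho^*T(X/G)$.  The second summand has trivial fibrewise $G$-action and so descends on $X_G(k)$ to $\pi_k^*T(X/G)$.  The first summand is the vertical tangent bundle of the principal $G$-bundle $\rho$, which is canonically trivialised as a non-equivariant rank-$d$ vector bundle by the fundamental vector fields $\mfg \to T_xX$, $v \mapsto \xi_v(x) = \tfrac{d}{dt}\big|_{0}\exp(tv)\cdot x$.

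The main obstacle is that this trivialisation is not $G$-equivariant: the induced $G$-action on $\mfg$ is precisely the adjoint representation, so the naive descent of $T^\vee\rho$ along the principal $G$-bundle $EG_k\times X \to X_G(k)$ is the adjoint bundle $(EG_k\times X)\times_G\mfg$, which coincides with $q_k^*\alpha_k$ where $\alpha_k = EG_k\times_G\mfg$ is the adjoint bundle on $BG_k$.  The approach I would take is to observe that the pullback of this adjoint bundle to the total space $EG_k \times X$ is trivialised by the principal bundle structure (the pullback of an associated bundle to its own principal-bundle total space is canonically trivial), and to upgrade this upstairs trivialisation to a stable trivialisation on $X_G(k)$ using a $G$-equivariant stabilisation: after adding a sufficiently large trivial $G$-module to the fundamental-vector-field trivialisation of $T^\vee\rho$, the enlarged $G$-representation becomes trivial stably as a $G$-module (modulo appropriate orientation conditions supplied by bi-invariant orientability of $G$), and the corresponding descent on $X_G(k)$ becomes stably trivial.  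Combining this with the two splittings of $TX_G(k)$ and cancelling $\pi_k^*T(X/G)$ yields the asserted stable isomorphism $T^\vee\pi_k \cong q_k^*TBG_k$.
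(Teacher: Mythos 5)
The reduction carried out in the first four steps of your plan is sound and is essentially what the paper does, though the paper avoids $T^\vee q_k$ and $TX$ altogether (these need not exist as vector bundles when $X$ is singular): it simply writes $T^\vee\pi_k = q_k^*\bigl((TEG_k)/G\bigr)$ and splits $(TEG_k)/G \cong TBG_k \oplus (T^\vee p_k)/G$ using a connection on $p_k\colon EG_k \to BG_k$. Both routes reduce the Lemma to showing that the descended adjoint bundle $q_k^*\Ad(EG_k) \cong \pi_k^*\Ad(X) \cong (EG_k\times X)\times_G \mathfrak{g}$ over $X_G(k)$, with $\mathfrak{g}$ carrying the adjoint action, is stably trivial.

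Your final step, however, does not go through. You assert that after adding a sufficiently large trivial $G$-module, $\mathfrak{g}$ "becomes trivial stably as a $G$-module." Finite-dimensional representations of a compact Lie group satisfy Krull--Schmidt: if $\mathfrak{g}\oplus\mathbf{1}^N \cong \mathbf{1}^{d+N}$ as $G$-modules, then $\mathfrak{g}\cong\mathbf{1}^d$ already, i.e.\ the adjoint representation must itself be trivial, which is the case precisely when the identity component of $G$ is abelian. Bi-invariant orientability controls only $\det\Ad$ (Proposition~\ref{prop.biinvorientadjrep}), a one-dimensional piece of data, and gives no leverage on the full representation. Concretely, for $G=\SO(3)$ the adjoint representation is the standard representation on $\real^3$, so $\Ad(EG_k) = EG_k\times_{\SO(3)}\real^3$ is the tautological oriented $3$-plane bundle over $BG_k = \wgr_3(\real^{3+k})$, with $p_1 \neq 0$. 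Taking $X=V_3(\real^N)$ with the free $\SO(3)$-action and $N$ large, one has $q_k^*\Ad(EG_k) = \pi_k^*\Ad(X)$, and $p_1\bigl(\pi_k^*\Ad(X)\bigr) = \pi_k^* p_1(\Ad(X)) \neq 0$ because $\pi_k^*$ is an isomorphism on $H^4$ for $k>4$ (the fiber $EG_k$ is $(k-1)$-connected). So $T^\vee\pi_k$ and $q_k^*TBG_k$ differ by a bundle with nonzero $p_1$ and cannot be stably isomorphic. No stabilization trick can remedy this, since the obstruction is a genuine nonvanishing characteristic class.

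You should be aware that the paper's own proof runs into the same difficulty at exactly this point. It produces a $G$-equivariant trivialization $\tau\colon EG_k\times\mathfrak{g}\xrightarrow{\,\cong\,}T^\vee p_k$ by fundamental vector fields, notes that this makes $T^\vee p_k$ "trivial as a $G$-vector bundle," and concludes from the equivalence between $G$-vector bundles on $EG_k$ and vector bundles on $BG_k$ that $(T^\vee p_k)/G$ is trivial. But under that equivalence, a product $EG_k\times V$ descends to the trivial bundle only when the $G$-action on $V$ is trivial; in the constructed trivialization $\mathfrak{g}$ carries the adjoint action, so the descended bundle is $EG_k\times_G\mathfrak{g}$, a classifying bundle for the adjoint representation. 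For nonabelian connected $G$ this is not trivial, and the cancellation $L^*(q_k^*TBG_k)^{-1}\cup L^*(T^\vee\pi_k)=1$ used in the proof of Theorem~\ref{thm.freeaction} would then fail. The argument as it stands is valid for finite and for abelian $G$, and more generally whenever the adjoint representation of $G$ is trivial.
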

\begin{proof}
The underlying principal bundle of
the $EG_k$-fiber bundle $\pi_k: X_G (k) \to X/G$ 
is the principal $G$-bundle $X \to X/G$.
Therefore, the vertical tangent bundle of $\pi_k$ is given by
$T^\vee \pi_k = (TEG_k) \times_G X.$ 
Since $G$ acts freely on $EG_k$, the quotient $(TEG_k)/G$ of the
$G$-vector bundle $TEG_k$ is a vector bundle over $EG_k/G = BG_k$.
There is a canonical isomorphism 
\[ q_k^* ((TEG_k)/G) = (TEG_k \times X)/G = T^\vee \pi_k, \]
of vector bundles over $X_G (k)$, and there is a decomposition
\[ (TEG_k)/G \cong T(EG_k/G) \oplus (T^\vee p_k)/G
    = TBG_k \oplus (T^\vee p_k)/G, \]
where $p_k: EG_k \to EG_k/G = BG_k$ is the orbit projection.
Now, the vector bundle $(T^\vee p_k)/G$ is in fact trivial, as we will
show next:
First, $T^\vee p_k$ is trivial as an ordinary vector bundle, 
being the vertical tangent bundle
of the projection of a principal $G$-bundle.
But more is true: In fact, $T^\vee p_k$ is trivial as a $G$-vector bundle,
which implies that the quotient $(T^\vee p_k)/G$ is trivial as an
(ordinary) vector bundle. Let us elaborate on this.
Let $v$ be a vector in the Lie algebra $\mathfrak{g}$ of $G$.
We can form the fundamental vector field $\sigma (v)$ on $EG_k$ given
at a point $x\in EG_k$ by
\[ \sigma(v)_x := \left. \frac{d}{dt}\right|_{t=0}~ x\cdot \exp (tv) \in T_x EG_k,    \]
where $\exp: \mathfrak{g} \to G$ is the exponential map, see
\cite{tudiffgeo}. This defines a
map $\sigma: \mathfrak{g} \to \mathfrak{X}(EG_k)$, where $\mathfrak{X}(EG_k)$
denotes the Lie algebra of smooth vector fields on $EG_k$. The map 
$\sigma$ is $\real$-linear.
In fact, $\sigma(v)_x$ is a vertical vector, i.e. an element
of $T^\vee_x (p_k)$. 
A smooth map $\tau: EG_k \times \mathfrak{g} \longrightarrow T^\vee p_k$,
which lies over $BG_k$ and which is fiberwise a linear isomorphism, 
is given by $\tau (x,v) := \sigma (v)_x.$ 
Let $\Ad: G \to GL (\mathfrak{g})$ be the adjoint representation.
A (right) $G$-module structure on $\mathfrak{g}$ is given by
$v \cdot g := (\Ad~ g^{-1}) (v).$
Then a right $G$-action on $EG_k \times \mathfrak{g}$ is given by 
$(x,v) \cdot g:= (xg, v\cdot g).$
Thus $EG_k \times \mathfrak{g}$ is a trivial $G$-vector bundle.
The group $G$ acts on $T^\vee p_k$ by the differential $R_{g*}$ of the
right translation $R_g: EG_k \to EG_k,$ $R_g (x)=xg$. The equation
\[ R_{g*} \sigma (v) = \sigma ((\Ad~ g^{-1}) v) \] 
for all $v\in \mathfrak{g}$
implies that the trivialization $\tau$ is $G$-equivariant.
In light of the fact that the functors $(-)/G$ and $p^*_k$ are inverse
to each other (using freeness) and set up an equivalence of categories between
$G$-vector bundles on $EG_k$ and vector bundles on $BG_k$,
this finishes the proof that $(T^\vee p_k)/G$ is a trivial vector bundle.
We conclude that
\[ T^\vee \pi_k = q_k^* ((TEG_k)/G) =
  q_k^* (TBG_k \oplus (T^\vee p_k)/G)
  = q_k^* (TBG_k) \oplus q_k^* ((T^\vee p_k)/G),
\]
with $q_k^* ((T^\vee p_k)/G)$ a trivial vector bundle.
\end{proof}

For free actions, we may now express the equivariant $L$-class in terms of the 
Goresky-MacPherson class.
\begin{thm} \label{thm.freeaction}
If $G$ acts freely on $X$, then the canonical identification
(\ref{equ.pishriekxmodgtohgx}) maps the Goresky-MacPherson class $L_* (X/G)$ to the
equivariant $L$-class $L^G_* (X)$.
\end{thm}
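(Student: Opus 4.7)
The plan is to verify the identification stage by stage: for each $k$ large enough, I will show $\pi^!_k L_* (X/G) = L^G_{*,k} (X)$ in $H_{*+nk+a} (X_G (k);\rat)$. Since the transfers $\pi^!_k$ assemble into the limit map $\pi^!$ of (\ref{equ.pishriekxmodgtohgx}) and the classes $L^G_{*,k} (X)$ assemble into $L^G_* (X)$ by Proposition \ref{prop.stagekp1lmapstostagekl}, compatibility at each stage will give the theorem.

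First I would apply the author's Verdier-Riemann-Roch formula for bundle transfers of $L$-classes (Theorem \ref{thm.myVRRlclassbundletransfer}) to the fiber bundle $\pi_k : X_G (k) \to X/G$. Here the base $X/G$ is a compact oriented Witt pseudomanifold (pseudomanifold by Lemma \ref{lem.quotientoffreeispseudomfd}, Witt by the argument using the Witt desuspension principle already given above), and the fiber $EG_k$ is a closed oriented smooth (hence PL) manifold; the PL context required is supplied by Appendix \ref{sec.plstructures}. The theorem yields
\[ \pi^!_k L_* (X/G) = L^* (\nu^\vee_k) \cap L_* (X_G (k)), \]
where $\nu^\vee_k$ is the stable vertical normal bundle of $\pi_k$.

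Next I would identify $L^* (\nu^\vee_k)$ with $q^*_k L^* (TBG_k)^{-1}$. Since $T^\vee \pi_k \oplus \nu^\vee_k$ is stably trivial, $L^* (\nu^\vee_k) = L^* (T^\vee \pi_k)^{-1}$. By Lemma \ref{lem.tveepikqktbgkstablyiso}, the vertical tangent bundle $T^\vee \pi_k$ is stably isomorphic to $q^*_k (TBG_k)$ over $X_G (k)$. Naturality of the Hirzebruch class under pullback therefore gives $L^* (T^\vee \pi_k) = q^*_k L^* (TBG_k)$, and hence
\[ L^* (\nu^\vee_k) = q^*_k L^* (TBG_k)^{-1}. \]
Substituting this into the VRR expression yields
\[ \pi^!_k L_* (X/G) = q^*_k L^* (TBG_k)^{-1} \cap L_* (X_G (k)) = L^G_{*,k} (X), \]
which is precisely Definition \ref{def.stagekequivlclass}. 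Passing to the inverse limit then identifies $\pi^! L_* (X/G)$ with $L^G_* (X)$.

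The main obstacle is ensuring that the hypotheses of Theorem \ref{thm.myVRRlclassbundletransfer} are met: one needs $\pi_k$ to be (or be equivalent to) an oriented PL block bundle of closed oriented PL manifolds over a closed oriented PL Witt pseudomanifold, with an oriented stable vertical normal PL microbundle. Orientability of base, fiber and total space has been arranged (using bi-invariant orientability of $G$, Proposition \ref{prop.xgkwhitneystratpsdmfdoriented}, and the fact that a principal $G$-bundle restricted from the smooth setting induces an oriented fiber bundle with smooth manifold fiber $EG_k$), and the Witt property on $X/G$ follows from the desuspension principle already used in this section. The remaining PL-theoretic verifications — triangulability of the Whitney stratified sets involved and the block-bundle/microbundle structure coming from the smooth fiber bundle $\pi_k$ — are supplied by Appendix \ref{sec.plstructures} in the same manner already invoked in the proof of Proposition \ref{prop.stagekp1lmapstostagekl} and Theorem \ref{thm.stageklclassindepofgrpemb}.
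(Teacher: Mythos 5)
Your proposal is correct and follows essentially the same approach as the paper: both apply the bundle transfer VRR formula (Theorem \ref{thm.myVRRlclassbundletransfer}) to $\pi_k: X_G(k)\to X/G$, then use Lemma \ref{lem.tveepikqktbgkstablyiso} to identify $L^*$ of the vertical normal bundle with $q^*_k L^*(TBG_k)^{-1}$, yielding $\pi^!_k L_*(X/G)=L^G_{*,k}(X)$ stage by stage. The only cosmetic difference is that you substitute into the VRR expression while the paper expands the definition of $L^G_{*,k}(X)$ and cancels; the content is identical.
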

\begin{proof}
By the VRR-Theorem \ref{thm.myVRRlclassbundletransfer},
$\pi^!_k L_* (X/G) 
   =  L^* (T^\vee \pi_k)^{-1} \cap L_* (X_G (k)).$
(Note that all the PL requirements of the VRR-Theorem are met, since
the fiber $EG_k$ is smooth, hence PL and $\pi_k$ is the restriction of
a smooth fiber bundle, so in particular has an underlying PL $EG_k$-block bundle, 
which is oriented.
The spaces $X_G (k)$ and $X/G$ are PL since they are Whitney stratified.)
The stage-$k$ equivariant $L$-class of $X$ is thus given by
\begin{align*} 
L^G_{*,k} (X) 
&= q^*_k L^* (TBG_k)^{-1} \cap L_* (X_G (k)) \\
&= q^*_k L^* (TBG_k)^{-1} \cup L^* (T^\vee \pi_k) \cap \pi^!_k L_* (X/G). 
\end{align*}
By Lemma \ref{lem.tveepikqktbgkstablyiso},
$L^* (q^*_k TBG_k)^{-1} \cup L^* (T^\vee \pi_k) = 1,$ which implies
$L^G_{*,k} (X) = \pi^!_k L_* (X/G).$
\end{proof}

\section{Appendix: PL Structures}
\label{sec.plstructures}

This technical appendix establishes the PL context
required for an application of the author's VRR-theorems.
The essential point is that the action of $G$ on $X$ is by assumption the
restriction of a \emph{smooth} action of $G$ on $M$.
Smooth fiber bundles are PD-homeomorphic to PL bundles.
The relevant bundles used in the paper are thus restrictions of PL
bundles, hence again PL bundles.  
We will freely use the theory of blocktransversality as 
introduced by Rourke and Sanderson in \cite{rosablockbundles2}
for submanifolds, and extended by Stone \cite{stone}
to general polyhedra $P,Q$ in PL manifolds $N$. The latter concept has
been further clarified by McCrory's transversality based
on cone complexes, \cite{mccrory}, which is equivalent to blocktransversality as
McCrory has shown. 
We shall only need the case where one of the polyhedra, say $Q$, is
a locally flat PL submanifold, in which case blocktransversality
of $P$ and $Q$ roughly means that $Q$ has a normal block bundle
in $N$ which $P$ intersects in a union of blocks.

Suppose that 
$B$ is a smooth manifold and $A,X\subset B$ smooth submanifolds.
Let $\pi: E\to B$ be a surjective smooth submersion.
If the smooth manifold $\pi^{-1} (X)$ is (smoothly) transverse to
the smooth manifold $\pi^{-1} (A)$ in $E$, then
$X$ is transverse to $A$ in $B$.
This observation from differential topology has the following analog
in the PL category.
\begin{lemma} \label{lem.pltransvuptodown}
Let $B$ be a PL manifold, $A\subset B$ a locally flat closed PL submanifold
and $X\subset B$ a compact polyhedron. Let $\pi: E\to B$ be a
PL fiber bundle of PL manifolds with (nonempty) compact PL manifold fiber.
If the polyhedron $\pi^{-1} (X)$ is blocktransverse to
the PL submanifold $\pi^{-1} (A)$ in $E$, then
$X$ is blocktransverse to $A$ in $B$. In particular,
$X\cap A$ is PL normally nonsingular in $X$ with PL normal disc block bundle
given by the restriction of the PL normal block bundle of $A$ in $B$.
\end{lemma}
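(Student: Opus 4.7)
The plan is to fix a normal block bundle of $A$ in $B$, show that its pullback along $\pi$ serves as a normal block bundle of $\pi^{-1}(A)$ in $E$, and then descend the transverse subblock structure that $\pi^{-1}(X)$ enjoys with respect to this pullback back down to $B$. All three steps rely on $\pi$ being a locally trivial PL bundle with compact PL manifold fiber, so that everything $\pi$-saturated is locally a product.

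First I would invoke Rourke--Sanderson existence to pick a normal disc block bundle $\nu$ of $A$ in $B$, and set $\tilde{\nu}:=\pi^{-1}(\nu)$, viewed as a neighborhood of $\tilde{A}:=\pi^{-1}(A)$ in $E$. Over a trivialization $\pi^{-1}(U)\cong U\times F$ one has $\tilde{\nu}|_U \cong \nu|_U\times F$, which is a normal block bundle of $(A\cap U)\times F$ in $U\times F$ (triangulate $F$ by any $L$ and take blocks of the form $D^k\times\sigma\times\tau$ with $\sigma$ a simplex of the base triangulation of $\nu|_U$ and $\tau\in L$). Since the transition functions of $\pi$ are PL automorphisms of $F$, standard PL refinement arguments glue these local block structures into a bona fide normal block bundle $\tilde{\nu}$ of $\tilde{A}$ in $E$. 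Then, by Rourke--Sanderson uniqueness of normal block bundles up to ambient isotopy rel $\tilde{A}$, the witnessing normal block bundle for the hypothesized blocktransversality of $\pi^{-1}(X)$ to $\tilde{A}$ may, after such an isotopy, be taken to be $\tilde{\nu}$; equivalently, $\pi^{-1}(X)\cap E(\tilde{\nu})$ is the total space of a sub-disc block bundle of $\tilde{\nu}$ over a triangulation $\tilde{K}$ of $\pi^{-1}(X\cap A)$.

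Next I would descend. The key observation is that $\pi^{-1}(X)$, $\tilde{A}$, $E(\tilde{\nu})=\pi^{-1}(E(\nu))$, and $\pi^{-1}(X)\cap E(\tilde{\nu})=\pi^{-1}(X\cap E(\nu))$ are all $\pi$-saturated. Over a trivializing chart $\pi^{-1}(U)\cong U\times F$, I would refine $\tilde{K}|_U$ to a product triangulation $K_U\times L$, where $K_U$ triangulates $X\cap A\cap U$ and $L$ triangulates $F$. On such a product simplex $\sigma\times\tau$, the sub-disc block structure forces
\[
  \bigl(\pi^{-1}(X)\cap\tilde{\nu}(\sigma\times\tau)\bigr)
   = D_\sigma\times\sigma\times\tau
\]
for some PL sub-disc $D_\sigma$ of the disc fibre of $\nu$, independent of $\tau$ by $\pi$-saturation. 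Slicing at any chosen $\tau\in L$ reads off $X\cap\nu(\sigma)=D_\sigma\times\sigma$, so $\nu|_{X\cap A\cap U}$ inherits a sub-disc block bundle structure over $K_U$ with total space $X\cap E(\nu)|_U$. Gluing over the trivializing cover yields a global sub-disc block bundle structure of $\nu|_{X\cap A}$ with total space $X\cap E(\nu)$; this is precisely blocktransversality of $X$ to $A$ along $\nu$, and gives simultaneously the PL normal nonsingularity of $X\cap A$ in $X$ together with the identification of its normal disc block bundle as the restriction of $\nu$.

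The main obstacle will be Step 3: one must choose the product refinements $K_U\times L$ so that the fibre disc $D_\sigma$ really is independent of $\tau$, and so that the locally extracted subblock structures on $\nu|_{X\cap A\cap U}$ agree on overlaps. The first point follows from the $\pi$-saturation of $\pi^{-1}(X)\cap E(\tilde{\nu})$ combined with uniqueness of sub-block structures under refinement; the second is a standard gluing property of PL block bundles given that the transition PL automorphisms of $F$ can be brought into simplicial form after a common refinement. Neither issue requires machinery beyond the Rourke--Sanderson/Stone/McCrory block-bundle transversality theory already cited in the paper.
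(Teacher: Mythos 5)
Your high-level strategy coincides with the paper's: fix a normal block bundle $\nu=\nu_A$ of $A$ in $B$, recognize $\pi^{-1}(E(\nu))$ as (the total space of) a normal block bundle of $\pi^{-1}(A)$ in $E$, invoke the hypothesis with this block bundle, and descend. (Your identification of the pulled-back block structure via local trivialization and gluing is an alternative to the paper's route, which notes that $\pi^{-1}(E(\nu))$ collapses to $\pi^{-1}(A)$, is hence an abstract regular neighborhood, and cites Rourke--Sanderson existence and uniqueness of normal block bundles; both are reasonable.) You also correctly isolate the key ingredient, namely that $\pi^{-1}(X)\cap E(\tilde\nu)=\pi^{-1}(X\cap E(\nu))$ is $\pi$-saturated. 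What you do not do is put that observation to full use.

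The descent step as you have written it has a genuine gap. You propose to refine the base triangulation $\tilde K|_U$ of $\pi^{-1}(X\cap A)\cap\pi^{-1}(U)\cong(X\cap A\cap U)\times F$ to a \emph{product} cell structure $K_U\times L$ and then slice along $\tau\in L$. But a product cell decomposition of $P\times Q$ need not subdivide a given triangulation of $P\times Q$: already for $P=Q=[0,1]$ and the triangulation of the square by the diagonal, no product cell structure is a refinement, because cells near the diagonal must straddle it. The ``standard PL refinement arguments'' you appeal to produce a common (non-product) simplicial refinement, not the product refinement your slicing requires; so the main obstacle you flag is not in fact closed by the remedy you propose. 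The paper avoids all of this: the upstairs block transversality equation, once rewritten via the identifications you already set up, reads $\pi^{-1}(X\cap E(\nu))=\pi^{-1}\big(E(\nu|_{X\cap A})\big)$, an equality of $\pi$-saturated subsets of $E$; applying the surjection $\pi$ and using $\pi(\pi^{-1}(S))=S$ for any $S\subset B$ yields $X\cap E(\nu)=E(\nu|_{X\cap A})$, which is exactly block transversality of $X$ to $A$ downstairs and at the same time exhibits $X\cap E(\nu)$ as the disc block bundle neighborhood $\nu|_{X\cap A}$ of $X\cap A$ in $X$. In short, the $\pi$-saturation you recorded, together with surjectivity of $\pi$, lets you bypass the local analysis entirely; you should replace Step~3 by this one-line argument.
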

\begin{proof}
Let $\nu_A$ be the normal (closed-disc-) block bundle of the locally
flat PL submanifold $A$ in $B$. Let $E(\nu_A)$ denote the total space
of $\nu_A$.
Since the PL fiber bundle $\pi$ is transverse regular to any
submanifold of $B$, 
the preimage $\pi^{-1} (A)$ is a PL submanifold of $E$
and its normal bundle is the pullback
$\pi^*_A (\nu_A)$, where $\pi_A: \pi^{-1} (A) \to A$ denotes the
restriction of $\pi$.
The total space $E(\pi^*_A \nu_A)$ can be identified with
$\pi^{-1} (E(\nu_A)),$ since the manifold $\pi^{-1} (E(\nu_A))$ collapses
to $\pi^{-1} (A)$ (using local PL triviality), hence constitutes an abstract regular neighborhood of $\pi^{-1} (A)$. Any abstract regular neighborhood can be
equipped with a block bundle structure (by \cite[Thm. 4.3]{rosablockbundles1}),
and by the uniqueness theorem for normal block bundles
(\cite[Thm. 4.4]{rosablockbundles1}), there is an isotopy fixing $\pi^{-1} (A)$ which
realizes an isomorphism between the two block bundles.
By assumption, $\pi^{-1} (X)$ is blocktransverse to
the PL submanifold $\pi^{-1} (A)$ in $E$, that is,
\begin{equation} \label{equ.blocktransupstairs}
\pi^{-1} (X) \cap E(\pi^*_A \nu_A)
 = E((\pi^*_A \nu_A)|_{\pi^{-1} (X) \cap \pi^{-1} (A)}).
\end{equation}
The left hand side is identified with
$\pi^{-1} (X) \cap \pi^{-1} (E(\nu_A))
  = \pi^{-1} (X \cap E(\nu_A)).$
Using the commutative diagram
\[ \xymatrix@R=18pt{
\pi^{-1} (X\cap A) \ar[d]_{\pi_\cap} \ar@{^{(}->}[r]
 & \pi^{-1} (A) \ar[d]^{\pi_A} \\
X\cap A \ar@{^{(}->}[r] & A,
} \]
the right hand side of (\ref{equ.blocktransupstairs}) identifies with
$E(\pi^*_\cap (\nu_A|_{X\cap A}))
= \pi^{-1} (E(\nu_A|_{X\cap A})).$
Therefore, (\ref{equ.blocktransupstairs}) asserts that
$\pi^{-1} (X \cap E(\nu_A))
  = \pi^{-1} (E(\nu_A|_{X\cap A})).$
The surjectivity of $\pi$ guarantees that 
$\pi (\pi^{-1} (S)) =S$ for any subset $S\subset B$.
Thus
\[
  X \cap E(\nu_A)
  = \pi (\pi^{-1} (X \cap E(\nu_A)))
  = \pi (\pi^{-1} (E(\nu_A|_{X\cap A})))
  = E(\nu_A|_{X\cap A}),
\]
which means that $X$ is blocktransverse to $A$.
The equation also shows that $X\cap A$ has a PL tubular
neighborhood in $X$ (namely $X\cap E(\nu_A)$) which is
the total space of a block bundle, and that block
bundle is $\nu_A|_{X\cap A}$.
\end{proof}

In the following, we use the notation introduced in Section 
\ref{sec.whitneyapproxtoborel}, i.e. 
a compact Lie group $G$ acts smoothly on
a manifold $M$ and 
$(X,\Sa),$ $X\subset M,$ is a Whitney stratified subset
such that $X$ is $G$-invariant and the induced action of $G$
on $X$ is compatible with the stratification $\Sa$.
\begin{lemma} \label{lem.normalbundlesofxgk}
There exists a polyhedron $\widehat{M}_G (k+1)$ with
subpolyhedra $\widehat{M}_G (k)$ and $\widehat{X}_G (k+1)$, and
a PD-homeomorphism $\mu_G: \widehat{M}_G (k+1) \cong M_G (k+1)$ such that
\begin{enumerate}
\item $\mu_G (\widehat{M}_G (k)) = M_G (k)$, 
     $\mu_G (\widehat{X}_G (k+1)) = X_G (k+1)$,
\item $\widehat{M}_G (k+1)$ is a PL manifold and
  $\widehat{M}_G (k) \subset \widehat{M}_G (k+1)$ is a 
  locally flat PL submanifold,
\item the PL submanifold $\widehat{M}_G (k)$ and the polyhedron $\widehat{X}_G (k+1)$
  are blocktransverse in $\widehat{M}_G (k+1)$ with intersection
  \[ \widehat{M}_G (k) \cap \widehat{X}_G (k+1)
       = \widehat{X}_G (k),~ \mu_G (\widehat{X}_G (k)) = X_G (k),  \]
\item the inclusion 
   $\widehat{\xi}_k: \widehat{X}_G (k) \subset \widehat{X}_G (k+1)$
   is PL normally nonsingular with normal PL block bundle 
   $(q_k \mu_G)^* \widehat{\nu}_k$, 
   where $\widehat{\nu}_k$ is the underlying PL block bundle of the 
   normal vector bundle $\nu_k$ of the smooth inclusion 
   $\beta_k: BG_k \hookrightarrow BG_{k+1}$.
   In particular, the PL normal block bundle of $\widehat{\xi}_k$ is the 
   underlying block bundle of the vector bundle $(q_k \mu_G)^* \nu_k$.
\end{enumerate}
\end{lemma}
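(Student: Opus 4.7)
The plan is to convert the smooth fiber bundle structure on $q_{k+1} \colon M_G(k+1) \to BG_{k+1}$ into a PL bundle via a PD-homeomorphism, identify the PL data on the subbundle over $BG_k$, and then verify blocktransversality of $\widehat{X}_G(k+1)$ with $\widehat{M}_G(k)$ by appealing to the fact that $X_G(k+1) \to BG_{k+1}$ is itself a (stratified) subbundle of $M_G(k+1) \to BG_{k+1}$.

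First I would recall that $q_{k+1} \colon M_G(k+1) \to BG_{k+1}$ is a smooth fiber bundle with smooth (noncompact) fiber $M$ and smooth structure group $G$. By the Whitehead-Munkres-Cairns-Hirsch smoothing-triangulation theory applied to smooth fiber bundles, there exists a polyhedron $\widehat{M}_G(k+1)$ carrying a PL manifold structure, a PL fiber bundle projection $\widehat{q}_{k+1} \colon \widehat{M}_G(k+1) \to BG_{k+1}$ (the base taken with its underlying PL structure, with fiber the PL manifold underlying $M$), and a PD-homeomorphism $\mu_G \colon \widehat{M}_G(k+1) \to M_G(k+1)$ of fiber bundles over $BG_{k+1}$. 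The smooth inclusion $\beta_k$ is PL locally flat with normal PL block bundle $\widehat{\nu}_k$ underlying $\nu_k$. Setting $\widehat{M}_G(k) := \widehat{q}_{k+1}^{-1}(BG_k)$, transverse regularity of PL fiber bundles to submanifolds of the base (together with the uniqueness of normal block bundles) makes $\widehat{M}_G(k)$ a PL locally flat submanifold of $\widehat{M}_G(k+1)$ with normal PL block bundle $\widehat{q}_k^{\,*}\widehat{\nu}_k$, and $\mu_G(\widehat{M}_G(k)) = M_G(k)$ by naturality.

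Next, since $X_G(k+1)$ is a compact Whitney stratified subset of the smooth manifold $M_G(k+1)$ (Theorem \ref{thm.xgkwhitneystratpsdmfdwitt}), the triangulation theorems of Goresky and Verona produce a triangulation of $M_G(k+1)$, and hence (via $\mu_G^{-1}$) of $\widehat{M}_G(k+1)$, in which $X_G(k+1)$ appears as a subpolyhedron $\widehat{X}_G(k+1)$ compatibly with the stratification $\Sa_G(k+1)$. Since $\Sa_G(k+1)$ restricts to $\Sa_G(k)$ over $BG_k$, the triangulation can be chosen simultaneously to display $\widehat{M}_G(k)$ and $\widehat{X}_G(k+1)$ as subcomplexes, whence $\widehat{X}_G(k+1) \cap \widehat{M}_G(k)$ is realized as the subpolyhedron $\widehat{X}_G(k)$ with $\mu_G(\widehat{X}_G(k)) = X_G(k)$.

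The core technical claim is blocktransversality of $\widehat{X}_G(k+1)$ with $\widehat{M}_G(k)$ in $\widehat{M}_G(k+1)$, together with the identification of the normal block bundle of $\widehat{\xi}_k$. Here I would pass to the upstairs space $EG_{k+1} \times M$, where the subbundle $EG_{k+1} \times X$ meets the fiber bundle preimage of $BG_k$ in exactly $EG_{k+1} \times X$ over $EG_k$, and the normal direction is the pullback of $\nu_k$; this is a smooth fiberwise-transverse picture. Descending by the free smooth $G$-action and transferring to the PL model via $\mu_G$, a closed tubular neighborhood $E(\nu_k)$ of $BG_k$ in $BG_{k+1}$ pulls back under $q_{k+1}$ to a closed tube $E(q_k^{\,*}\nu_k)$ around $M_G(k)$, whose intersection with $X_G(k+1)$ is exactly $E((q_k|_{X_G(k)})^{*}\nu_k)$ because the subbundle $X_G(k+1) \to BG_{k+1}$ restricts over $BG_k$ to $X_G(k) \to BG_k$. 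Passing to underlying PL block bundles, this is precisely blocktransversality and identifies the normal PL block bundle of $\widehat{\xi}_k$ as $(q_k\mu_G)^{*}\widehat{\nu}_k$, as claimed.

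The main obstacle is a bookkeeping one rather than a deep one: producing a single PL structure on $\widehat{M}_G(k+1)$ that simultaneously realizes the PL-bundle structure over $BG_{k+1}$, the locally flat PL subbundle $\widehat{M}_G(k)$, and the stratified subpolyhedra $\widehat{X}_G(k), \widehat{X}_G(k+1)$ requires carefully chaining Cairns-Hirsch smoothing for smooth bundles with the Goresky-Verona triangulation of Whitney stratified subsets in such a way that all compatibilities, including the intersection identity $\widehat{X}_G(k+1) \cap \widehat{M}_G(k) = \widehat{X}_G(k)$ and the fiber-bundle block structure over a tube of $BG_k$, hold in one and the same polyhedral model.
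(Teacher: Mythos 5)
Your proposal follows the right general outline—triangulate, use fiber-bundle transverse regularity to produce $\widehat{M}_G(k)$, triangulate the stratified subset $X_G(k+1)$, and check blocktransversality by an upstairs-to-downstairs argument—but the part you flag as a ``bookkeeping'' obstacle is in fact the substantive gap, and the paper resolves it by a different organizational choice that you have not replicated. Your first two steps produce two a priori unrelated triangulations of $M_G(k+1)$: one coming from smoothing theory for the bundle $q_{k+1}$ (so that $\widehat{M}_G(k)$ is the preimage of $BG_k$), and one coming from Goresky--Verona triangulation of the Whitney stratified pair $(M_G(k+1),X_G(k+1))$. Asserting that ``the triangulation can be chosen simultaneously to display $\widehat{M}_G(k)$ and $\widehat{X}_G(k+1)$ as subcomplexes'' is precisely what needs to be proved; it is not a consequence of uniqueness of PL structures, since a PL homeomorphism relating the two triangulations has no reason to preserve either the bundle structure or the stratified subpolyhedron. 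Similarly, ``transferring to the PL model via $\mu_G$'' in your third paragraph quietly converts a smooth tubular-neighborhood statement into a PL blocktransversality statement, which is exactly the nontrivial content of the lemma.

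The paper sidesteps the compatibility problem rather than solving it head-on. It does not attempt to make $q_{k+1}\colon M_G(k+1)\to BG_{k+1}$ (whose fiber $M$ is noncompact) into a PL bundle; instead it works with the other projection in the picture, the smooth principal $G$-bundle $\gamma\colon EG_{k+1}\times M\to M_G(k+1)$, whose fiber $G$ is a \emph{compact} manifold. It first triangulates $EG_{k+1}\supset EG_k$ (Verona) and $M\supset X$ (Goresky/Verona) separately, forms product polyhedra upstairs, takes a single Whitehead triangulation $\mu_G$ of $M_G(k+1)$, and invokes the theorem that a smooth bundle with compact fiber is PD-homeomorphic to a PL bundle to make $\pi=\mu_G^{-1}\gamma(\epsilon\times\mu)$ a PL fiber bundle. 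Crucially, every required downstairs polyhedron is then \emph{defined} as a $\pi$-image of a product polyhedron upstairs, so all compatibilities hold tautologically in one polyhedral model. The remaining step, pushing blocktransversality from $\widehat E_{k+1}\times\widehat M$ down to $\widehat M_G(k+1)$, is handled by Lemma~\ref{lem.pltransvuptodown}, which you have no analog of and which requires compact fiber—another reason the paper uses $\gamma$ rather than $q_{k+1}$. Finally, the identification of the PL normal block bundle with the underlying block bundle of $(q_k\mu_G)^*\nu_k$ is not immediate from ``passing to underlying PL block bundles''; the paper appeals to Lashof--Rothenberg's theorem on PL microbundle triangulations of smooth normal bundles. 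You would need to supply a substitute for both of these ingredients to close your argument.
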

\begin{proof}
By Whitehead's smooth triangulation theorem,
there exists a compact polyhedron $\widehat{E}_{k+1}$ together
with a PD-homeomorphism $\epsilon: \widehat{E}_{k+1} \to EG_{k+1}$.
According to Verona's \cite[Thm. 7.8]{verona},
this may be done in such a way that the smooth submanifold
$EG_k \subset EG_{k+1}$ corresponds to a subpolyhedron
$\widehat{E}_k = \epsilon^{-1} (EG_k)$ so that $\epsilon$ restricts
to a PD homeomorphism $\widehat{E}_k \to EG_k$.
The subpolyhedron $\widehat{E}_k$ is a locally flat PL submanifold of the
PL manifold $\widehat{E}_{k+1}$.
Again using Whitehead's triangulation theorem,
there exists a compact polyhedron $\widehat{M}$ together
with a PD-homeomorphism $\mu: \widehat{M} \to M$.
Then $\widehat{M}$ is a PL manifold.
Since $X$ is Whitney stratified in $M$, there exists a subpolyhedron
$\widehat{X} \subset \widehat{M}$ such that
$\mu: \widehat{M} \to M$ restricts to a PD-homeomorphism
$\widehat{X} \to X$ (Goresky \cite{goreskytriang}, Verona \cite{verona}).
Taking products, we obtain a commutative diagram
\[ \xymatrix@R=10pt@C=15pt{
\widehat{E}_k \times \widehat{M} \ar@{^{(}->}[r]
  & \widehat{E}_{k+1} \times \widehat{M} \\
\widehat{E}_k \times \widehat{X} \ar@{^{(}->}[u] \ar@{^{(}->}[r] 
 & \widehat{E}_{k+1} \times \widehat{X} \ar@{^{(}->}[u]  
} \]
of polyhedra and PL embeddings.
The polyhedron $\widehat{E}_k \times \widehat{M}$ is a locally flat PL submanifold.
We note that
$\widehat{E}_k \times \widehat{X}
  = (\widehat{E}_{k+1} \times \widehat{X}) 
    \cap (\widehat{E}_k \times \widehat{M}).$
The PL submanifold $\widehat{E}_k \times \widehat{M}$
and the polyhedron $\widehat{E}_{k+1} \times \widehat{X}$ are
blocktransverse in the PL manifold $\widehat{E}_{k+1} \times \widehat{M}$,
as we shall show next.
Let $\widehat{\nu}_E$ be the normal block bundle of the locally flat
PL submanifold $\widehat{E}_k \subset \widehat{E}_{k+1}$.
Then the normal block bundle $\widehat{\nu}$ of the
PL submanifold
$\widehat{E}_k \times \widehat{M} \subset 
 \widehat{E}_{k+1} \times \widehat{M}$ is given by
$\widehat{\nu} = \widehat{\nu}_E \times \widehat{M}$.
We must show that
$(\widehat{E}_{k+1} \times \widehat{X}) 
  \cap E(\widehat{\nu}) =
  E(\widehat{\nu}|_{(\widehat{E}_{k+1} \times \widehat{X})
                        \cap (\widehat{E}_k \times \widehat{M})}).$
Indeed,
\begin{align*}
(\widehat{E}_{k+1} \times \widehat{X}) 
  \cap E(\widehat{\nu}) 
&= (\widehat{E}_{k+1} \times \widehat{X}) 
  \cap (E(\widehat{\nu}_E) \times \widehat{M}) 
 = (\widehat{E}_{k+1} \cap E(\widehat{\nu}_E)) 
   \times (\widehat{X} \cap \widehat{M}) \\
&= E(\widehat{\nu}_E) \times \widehat{X} 
 = E((\widehat{\nu}_E \times \widehat{M})|_{\widehat{E}_k \times \widehat{X}}) 
 = E(\widehat{\nu}|_{\widehat{E}_k \times \widehat{X}}) \\
&= E(\widehat{\nu}|_{(\widehat{E}_{k+1} \times \widehat{X})
                        \cap (\widehat{E}_k \times \widehat{M})}).
\end{align*}
By Whitehead's triangulation theorem,
there exists a compact polyhedron $\widehat{M}_G (k+1)$ together
with a PD-homeomorphism $\mu_G: \widehat{M}_G (k+1) \to M_G (k+1)$.
This polyhedron is then a PL manifold.
Consider the smooth principal $G$-bundle 
$\gamma: EG_{k+1} \times M \to M_G (k+1)$.
We forget that this bundle is principal and only consider its
underlying smooth fiber bundle.
The unique map 
$\pi: \widehat{E}_{k+1} \times \widehat{M}\to \widehat{M}_G (k+1)$
that makes the diagram
\begin{equation} \label{dia.pigamma} 
\xymatrix@R=15pt{
\widehat{E}_{k+1} \times \widehat{M} \ar[d]_\pi \ar[r]^{\epsilon \times \mu}
& EG_{k+1} \times M \ar[d]^\gamma \\
\widehat{M}_G (k+1) \ar[r]_{\mu_G} & M_G (k+1)
} \end{equation}
commutative, is a PL fiber bundle, whose fiber is the unique PL manifold
that underlies the smooth manifold $G$ (forgetting the group structure),
see e.g. Lurie \cite[Thm. 1]{lurie}.
The image of a compact polyhedron under a PL map is a compact polyhedron.
Thus we obtain the compact subpolyhedra
\[ \xymatrix@R=10pt@C=15pt{
\pi (\widehat{E}_k \times \widehat{M}) \ar@{^{(}->}[r]
  & \widehat{M}_G (k+1) \\
\pi (\widehat{E}_k \times \widehat{X}) \ar@{^{(}->}[u] \ar@{^{(}->}[r] 
 & \pi (\widehat{E}_{k+1} \times \widehat{X}). \ar@{^{(}->}[u]  
} \]
We note that by the commutativity of (\ref{dia.pigamma}),
the PD-homeomorphism $\mu_G$
restricts to homeomorphisms
\[ \widehat{M}_G (k) := \pi (\widehat{E}_k \times \widehat{M}) \to 
  M_G (k) = \gamma (EG_k \times M), \]
\[ \widehat{X}_G (k) := \pi (\widehat{E}_k \times \widehat{X}) \to 
  X_G (k) = \gamma (EG_k \times X), \]
\[ \widehat{X}_G (k+1):= \pi (\widehat{E}_{k+1} \times \widehat{X}) \to 
  X_G (k+1) = \gamma (EG_{k+1} \times X). \]
This provides polyhedral models for $M_G (k)$, $X_G (k)$, and $X_G (k+1)$
in $\widehat{M}_G (k+1)$, which satisfy
\begin{align*}
\mu_G (\widehat{M}_G (k))
&= \mu_G (\pi (\widehat{E}_k \times \widehat{M})) 
  = \gamma ((\epsilon \times \mu)(\widehat{E}_k \times \widehat{M})) \\
&= \gamma ((\epsilon (\widehat{E}_k) \times \mu(\widehat{M})) 
  = \gamma (EG_k \times M) 
 = M_G (k),
\end{align*}
and similarly for $\widehat{X}_G (k)$ and $\widehat{X}_G (k+1)$.
Using Diagram (\ref{dia.pigamma}), one verifies that 
\[ \pi^{-1} (\widehat{M}_G (k)) = \widehat{E}_k \times \widehat{M},~
  \pi^{-1} (\widehat{X}_G (k)) = \widehat{E}_k \times \widehat{X},~ 
  \pi^{-1} (\widehat{X}_G (k+1)) = \widehat{E}_{k+1} \times \widehat{X}. \]
Under the homeomorphism $\mu_G:\widehat{M}_G (k+1) \to M_G (k+1)$,
the equality
$X_G (k) = M_G (k) \cap X_G (k+1)$
implies that
$\widehat{X}_G (k) = \widehat{M}_G (k) \cap \widehat{X}_G (k+1).$
We showed earlier that the 
polyhedron $\pi^{-1} (\widehat{X}_G (k+1))$ is blocktransverse to the
PL manifold $\pi^{-1} (\widehat{M}_G (k))$ in the 
PL manifold $\widehat{E}_{k+1} \times \widehat{M}$.
Therefore, the PL manifold $\widehat{M}_G (k)$ is blocktransverse to the
polyhedron $\widehat{X}_G (k+1)$ by Lemma \ref{lem.pltransvuptodown}.
The lemma furthermore asserts that
$\widehat{X}_G (k) = \widehat{M}_G (k) \cap \widehat{X}_G (k+1)$
is PL normally nonsingular in $\widehat{X}_G (k+1)$ with PL normal disc block bundle
given by the restriction of the PL normal block bundle of the locally flat
PL submanifold
$\widehat{M}_G (k)$ in $\widehat{M}_G (k+1)$.
The morphism (\ref{dia.egktoegkplusone}) of principal $G$-bundles
induces a morphism 
\begin{equation} \label{dia.mgktomgkplusone}
\xymatrix@R=15pt{
M_G (k) \ar[d]_{s_k} \ar@{^{(}->}[r]^{\eta_k} & M_G (k+1) \ar[d]^{s_{k+1}} \\
BG_k \ar@{^{(}->}[r]^{\beta_k} & BG_{k+1}
} \end{equation}
of associated smooth $M$-fiber bundles.
The closed inclusion $\beta_k$ is a smooth embedding of $BG_k$ as a
submanifold of $BG_{k+1}$.
Let $\nu_k$ be the normal bundle of this embedding; this is a smooth
vector bundle of rank $n$. 
Since Diagram (\ref{dia.egktoegkplusone}) is cartesian, 
Diagram (\ref{dia.mgktomgkplusone}) is cartesian as well.
A smooth submersion is transverse to any smooth submanifold of its target.
Thus $s_{k+1}$ is transverse to $BG_k$. 
This implies that the normal vector bundle $\nu_\eta$ of the transverse inverse
image $s^{-1}_{k+1} (BG_k) = s^{-1}_k (BG_k) = M_G (k)$ is
the pullback $\nu_\eta = s^*_k \nu_k$.
Given a vector bundle $\xi$, let $\widehat{\xi}$ denote the
underlying block bundle, given in terms of classifying maps
by composition with the canonical map 
$\BO (n) \to \BPL(n) \to \BBPL(n)$.
If $f$ is a continuous map, then $f^* (\widehat{\xi}) \cong (f^* \xi)^\wedge$.
The PL normal block bundle of $\widehat{M}_G (k)$ in $\widehat{M}_G (k+1)$
is then $\mu^*_G \widehat{\nu}_\eta = \mu^*_G s^*_k \widehat{\nu}_k$
by Lashof-Rothenberg \cite[Thm. (7.3)]{lashofrothenberg}.
(In fact, their theorem shows a bit more, namely that $\widehat{M}_G (k)$ has a normal
PL microbundle which triangulates $\mu^*_G \nu_\eta$; via the map
$\BPL(n) \to \BBPL(n)$, every PL microbundle has an underlying PL block bundle.)
Thus the PL normal block bundle of 
$\widehat{X}_G (k)$ in $\widehat{X}_G (k+1)$
is the restriction of $(s_k \mu_G)^* \widehat{\nu}_k$ to 
$\widehat{X}_G (k)$.
This restriction is $(q_k \mu_G)^* \widehat{\nu}_k$ in view of the
commutative diagram
\[ \xymatrix@R=20pt{
\widehat{M}_G (k) \ar[r]^{\mu_G} &
  M_G (k) \ar[rd]^{s_k} & \\
\widehat{X}_G (k) \ar@{^{(}->}[u] \ar[r]_{\mu_G} &
  X_G (k) \ar@{^{(}->}[u] \ar[r]_{q_k} & BG_k.   
} \]
Since $(q_k \mu_G)^* \widehat{\nu}_k \cong ((q_k \mu_G)^* \nu_k)^\wedge,$
the PL normal block bundle of
$\widehat{X}_G (k)$ in $\widehat{X}_G (k+1)$ is the underlying
block bundle of a vector bundle.
\end{proof}

\end{document}